\documentclass[12pt, a4paper]{article}
\usepackage{amsmath,amsthm,amssymb,amsfonts}
\usepackage[utf8]{inputenc}
\usepackage[T1]{fontenc}
\usepackage{lmodern}
\usepackage[english]{babel}
\usepackage{a4wide}
\textheight 24cm
\topmargin -1.5cm
\usepackage{graphicx}
\usepackage[usenames,dvipsnames]{color}
\usepackage[colorlinks=true, pdfstartview=FitV, linkcolor=black, citecolor=blue,urlcolor=blue]{hyperref}

\newtheorem{theorem}{Theorem}
\newtheorem{lemma}[theorem]{Lemma}
\newtheorem{proposition}[theorem]{Proposition}
\newtheorem{definition}[theorem]{Definition}

\newtheorem{corollary}[theorem]{Corollary}

\theoremstyle{remark}

\newtheorem{remark}[theorem]{Remark}

\numberwithin{equation}{section}
\numberwithin{theorem}{section}

\def\Xint#1{\mathchoice
{\XXint\displaystyle\textstyle{#1}}%
{\XXint\textstyle\scriptstyle{#1}}%
{\XXint\scriptstyle\scriptscriptstyle{#1}}%
{\XXint\scriptscriptstyle\scriptscriptstyle{#1}}%
\!\int}
\def\XXint#1#2#3{{\setbox0=\hbox{$#1{#2#3}{\int}$ }
\vcenter{\hbox{$#2#3$ }}\kern-.6\wd0}}

\def\dashint{\Xint-}

\makeatletter
\makeatother



\newcommand{\N}{{\mathbb N}}

\newcommand{\R}{{\mathbb R}}

\newcommand{\pa}{{\partial}}
\newcommand{\na}{{\nabla}}

\newcommand{\eps}{{\varepsilon}}

\def\curl{\hbox{curl \!}}
\def\div{\hbox{div \!}}

\title{Weak solutions of a viscous model \\ for fluid-bubbles interaction} 
\author{Cosmin Burtea\thanks{cosmin.burtea@imj-prg.fr, Université Paris Cité, Sorbonne Université, CNRS, IMJ-PRG, F-75013 Paris, France}, David Gérard-Varet \thanks{david.gerard-varet@imj-prg.fr, Université Paris Cité, Sorbonne Université, CNRS, IMJ-PRG, F-75013 Paris, France}}

\begin{document}
\maketitle

\begin{abstract}
    We present a system of Navier-Stokes type that describes the dynamics of several spherical bubbles of gas in a liquid. It is derived from a more complete model, where the bubbles are seen as inclusions of gas of homogeneous barotropic pressure with free surfaces. The usual condition of continuity of the stress is relaxed in order to preserve the sphericity of the bubbles through time. We construct weak solutions {\em \`a la Leray} for this relaxed system, up to collision between the bubbles. Although these solutions are reminiscent of weak solutions for fluid-solid interaction systems, accounting for the compression/dilation of the bubbles creates new and significant mathematical difficulties. 
\end{abstract}

\section{A new model for fluid-bubbles interaction} \label{sec1}

The evolution of one or several bubbles of gas in a liquid is a classical topic of fluid mechanics. One main domain of interest is bubble acoustics, as compression and dilation of the bubbles create sound waves in the liquid. Such waves are met in natural phenomena, like the roar of a waterfall, but also in many medical, industrial or military processes. They may be sometimes undesirable, when they scatter the signal emitted or received by sonars, for instance in military or geophysical surveying.  On the contrary, they may facilitate medical diagnosis, amplifying the signal in the regions where they are injected.
Another phenomenon of current interest is {\em sonoluminescence}, which is the emission of photons through the collapse of bubbles. It has potential  applications like in dental ultrasonics, but also potential negative impact, such as increasing erosion processes. Let us stress that in many contexts, the population of bubbles in a liquid has to be monitored and controlled, like in the oceans due to their greenhouse effect, or in many industrial processes such as production of paints. We refer to the remarkable review paper \cite{leighton}, see also  \cite{Prosperetti} for more. 

From the modeling point of view, it is natural to describe the liquid and the bubbles as immiscible domains with  interfaces. The liquid part is typically described by the incompressible Navier-Stokes equation, while the bubbles are governed by a compressible Navier-Stokes or Euler equation. Nevertheless, it is often argued that the pressure in the bubble can be considered as spatially homogeneous, see \cite{Prosperetti}, or \cite[Appendix A]{Biro}. One main justification is that the Mach number is so low that the pressure gradient at leading order is zero.  Under a barotropic law, the pressure at time $t$ in a bubble is then given for some $c_0 > 0$ by 
$$ p(t) = c_0 \rho(t)^\gamma = c |B(t)|^{-\gamma}, \quad c := c_0 M_0^\gamma$$
where $c$ is a positive constant, $\gamma > 1$ is the adiabatic exponent,  $M_0$ is the mass of the bubble (constant in time), and $|B(t)|$ is the volume of the bubble at time $t$. This leads to a system of the  following form, where the $N$ bubbles occupy the domains $B_1(t), \dots, B_N(t) \subset \R^3$ at time $t$, and the fluid domain  is 
$\Omega(t) := \R^3 \setminus  \cup_{i=1}^N B_i(t)$:   
\begin{equation} \label{fullFB}
\begin{aligned}
        \pa_t u + u \cdot \na u + \na p - \nu \Delta u  = 0, & \quad t > 0, \quad  x \in \Omega(t)   \\
        \div u  = 0, & \quad t > 0, \quad  x \in \Omega(t)   \\
         p_i(t) = c_i |B_i(t)|^{-\gamma}, & \quad t > 0,  \quad i \in \overline{1,N} \\
        \mathrm{T}(u,p) \mathfrak{n}\vert_{\pa B_i(t)} = - p_i \mathfrak{n}\vert_{\pa B_i(t)}  , & \quad t > 0,  \quad i \in \overline{1,N} \\
       \pa B_i(t) = \Theta_{t,0}(\pa B_i(0)),  & \quad t > 0,  \quad i \in \overline{1,N}. 
\end{aligned}
\end{equation}
The first two lines are the standard incompressible Navier-Stokes equations for the liquid (Euler equations when $\nu = 0$). In the third line, the pressure in the bubble $i$, denoted $p_i(t)$,  obeys the relation that we have just described. The fourth line expresses the continuity of the stress at the boundary of each bubble:  the stress $\mathrm{T}(u,p) \mathfrak{n}$ exerted by the fluid, with  $\mathrm{T}(u,p) := 2 \nu D(u) - p \mathrm{Id}$ the Newtonian stress tensor, is equal to the stress $-p_i \mathfrak{n}$ exerted by the bubble $i$.  Note that for any domain $O$, $\mathfrak{n}\vert_{\pa O}$ will always refer to the unit normal vector pointing outward $O$. We  could also have included surface tension, by replacing in the fourth line  $p_i$  by $p_i - 2 \alpha \kappa_i$  with $\alpha > 0$ and $\kappa_i$ the mean curvature of the surface $\pa B_i(t)$. As this additional surface tension would not play any significant role in the mathematical analysis of our relaxed model later, we neglect it.  Finally, the last line of the model expresses that the boundary of each bubble is preserved by the flow $\Theta_{t,0}$ of the fluid. This is a way to express that the boundary of each bubble is a material surface: the normal velocity of the bubble and the normal velocity of the fluid coincide, so that no exchange of mass across this boundary occurs. 

The system \eqref{fullFB}, although natural, is very complex, especially as it involves multiple free surfaces. It relates to the  free surface Navier-Stokes equations (or Euler when $\nu = 0$), which is a classical model for water waves:  there, the fluid domain is typically of the form $\Omega(t) = \{ z < \varphi(t,x,y) \}$, and  the stress tensor $\mathrm{T}(u,p)\mathfrak{n}$ at the boundary of the free-surface is given by the atmospheric pressure $p_{atm} \mathfrak{n}$. We refer to \cite{Beale,Tani,Guo-Tice} in the Navier-Stokes case, and to the book \cite{Lannes} for the inviscid equations. Generically, one obtains local existence of smooth solutions for smooth data. In the context of fluid-bubbles interaction, the local strong well-posedness of system \eqref{fullFB} was recently established in \cite{Marcel}. As far as we know, the only setting in which more was done on \eqref{fullFB} is the case of a single bubble $N=1$, when this bubble is initially spherical. Under radial symmetry of the initial data, the sphericity of the bubble is preserved, as well as the symmetry of the fields, providing a simpler set of equations. Denoting $B(t) = B(x(t), r(t))$ the spherical bubble at time $t$, an important subclass of solutions is then given by $x(t)= 0$, 
$u(t,x) = - \frac{\dot{r} r}{|x|^3} x$, and $r(t)$ satisfies the famous {\em Rayleigh-Plesset} equation  
$$ r \ddot{r} + \frac{3}{2} \dot{r}^2 + 4 \nu \frac{\dot{r}}{r}  = p(t) - p_\infty  $$
where $p_\infty$ is the uniform pressure at infinity. Substituting $p(t) = c |B(t)|^{-\gamma} = c \left(\frac{4\pi}{3}\right)^{-\gamma} r(t)^{-3\gamma}$ into this relation, we end up with a closed differential equation on $r$. 
Still in the radial setting, several variations are possible, for instance replacing the barotropic law in the single bubble by the law of perfect gas and adding an evolution equation on temperature. We refer to \cite{Biro} for an in-depth analysis of the well-posedness properties of such system. Recently, a nice  stability analysis of this radially symmetric setting was conducted in \cite{Weinstein}. See also \cite{Weinstein2} with an additional periodic forcing, and \cite{Weinstein3} where the introduction of asymmetric deformations of the bubbles is shown to trigger some strong instability. 

Clearly, the model \eqref{fullFB} is too complex to give rise to tractable numerical schemes, or even to get some qualitative insight into the dynamics of the bubbles.  This is especially true when the number of bubbles $N$ is very large and some simpler macroscopic effective model is needed. It is not even guaranteed that the time of existence of \eqref{fullFB} does not shrink to zero when $N$ goes to infinity. 
In order to simplify the analysis, many physics papers assume that the bubbles remain spherical, notably when performing heuristically the asymptotics $N \rightarrow +\infty$: see for instance \cite{Caflisch}. 
However, the preservation of sphericity of the bubbles through time is generically wrong under the evolution of \eqref{fullFB}. This can be understood as follows. Assuming we have a solution with spherical bubbles, we can consider the pressures $p_i(t)$ and the bubble trajectories  $B_i(t) = B(x_i(t), r_i(t))$  as given, and focus on the subproblem made of the  Navier-Stokes equations (\ref{fullFB}a)-(\ref{fullFB}b) together with the stress condition (\ref{fullFB}d). This problem has already enough boundary conditions, so that no further constraint can be put on $u \cdot \mathfrak{n}\vert_{\pa B_i}$. But for general configurations of bubbles, there is no reason for this normal velocity to be compatible with maintaining their sphericity.

Therefore, if we wish to describe the evolution of spherical bubbles, system \eqref{fullFB} needs to be relaxed. Such relaxation is not discussed in \cite{Caflisch}, where the manipulations are mostly formal,  and it does not seem to be much discussed in the literature. The only references we know in which a relaxed model is explicitly written are \cite{Gavrilyuk,Smereka, Hillairet1,Hillairet2}. The recent papers \cite{Hillairet1,Hillairet2} will be discussed specifically at the end of this section. Articles \cite{Gavrilyuk,Smereka} deal with the inviscid and irrotational case:  $\nu=0, \curl u  =0$. In particular, $u = \na \varphi$ for some potential $\varphi$ (say vanishing at infinity). The common  idea in both papers is to assume that the bubbles  $B_i = B(x_i, r_i)$ are spherical  and to write the equation satisfied by $\varphi$
$$ \Delta \varphi = 0 \quad \text{ in } \: \Omega, \quad \pa_\mathfrak{n}\phi\vert_{\pa B_i} = \dot{r}_i + \dot{x}_i \cdot \mathfrak{n}. $$  
(the time dependence is omitted to lighten notations). The potential  $\varphi$ can be seen as a functional of 
$X = (x_i)_{i \in \overline{1,N}}$, $R = (r_i)_{i \in \overline{1,N}}$
and of their time derivatives. The same is then true for the velocity:   $u =\mathcal{U}\left[\dot{X},\dot{R},X,R\right]$. The keypoint is then to introduce a Lagrangian  
$$\displaystyle L\left(\dot{X}, \dot{R}, X,R\right) := E_k - E_p := \frac{1}{2} \int_{\Omega} |u|^2 + \sum_{i=1}^N \frac{c_i}{3 \gamma - 3} r_i^{3-3\gamma}$$ 
where  $E_k$ is the kinetic energy of the fluid, and $E_p$ the sum of the potential energy of the bubbles. Writing the Lagrange equations associated with this Lagrangian, the authors derive a set of ordinary differential equations on $X,R$. Under the assumption that the bubbles are far enough from one another, this set of equations can  be approximated by a more explicit one  using  an  approximation of the field  $u = \mathcal{U}[X,R]$ given by the so-called method of reflections \cite{Richard-Juan}. Although efficient, the approach of \cite{Gavrilyuk, Smereka} is limited to the inviscid and curl-free setting, and does not connect directly  to the PDE system \eqref{fullFB}. 

Our ambition is to present and study a relaxed model for spherical bubbles, derived from \eqref{fullFB} and valid in both the inviscid and the viscous case. In the present paper, we will introduce this model, explain briefly its origin, and build weak solutions {\em \`a la Leray} in the viscous case ($\nu > 0)$. A much more complete discussion of the derivation of the model will be given in the companion paper \cite{Nous}. 

\medskip
Let us introduce the relaxation of \eqref{fullFB} that we have in mind. The bubbles are described at time $t$ by balls
\begin{equation} \label{def_balls}
    B_i(t) := B(x_i(t),r_i(t)), \quad i \in \overline{1,N}. 
    \end{equation}
The fluid domain is 
    \begin{equation} \label{def_fluid_domain}
    \Omega(t) := \R^3 \setminus \cup_{i=1}^n \overline{B_i(t)}.
    \end{equation} 
The unknowns of the model are the centers and radii of the bubbles:   
 \begin{equation} \label{defXR}
 X := (x_i)_{i \in \overline{1,N}}, \quad  R := (r_i)_{i \in \overline{1,N}}
 \end{equation}
as well as the fluid velocity and pressure $u,p$ in the fluid domain. Given $c_1, \dots, c_N > 0$, we claim that a natural set of governing equations is:    
\begin{equation} \label{main}
\begin{aligned}
        \pa_t u + u \cdot \na u + \na p - \nu \Delta u  = 0, & \quad t > 0, \quad x \in \Omega(t)  \\
        \div u  = 0, & \quad t > 0, \quad x \in \Omega(t) \\
         p_i = \frac{c_i}{4\pi} r_i^{-3\gamma}, & \quad t > 0,  \quad i \in \overline{1,N} \\
        \mathrm{T}(u,p) \mathfrak{n} \times \mathfrak{n}\vert_{\pa B_i}  = 0, &\quad t > 0, \quad i \in \overline{1,N},    \\
        \dashint_{\pa B_i} \mathrm{T}(u,p) \mathfrak{n} \cdot \mathfrak{n}  = - p_i, &\quad t > 0,  \quad  i \in \overline{1,N},  \\
        \dashint_{\pa B_i} \mathrm{T}(u,p) \mathfrak{n}  = 0, &\quad t > 0,  \quad  i \in \overline{1,N},  \\
         u \cdot \mathfrak{n}\vert_{\pa B_i(t)}  = \dot{r}_i + \dot{x}_i \cdot \mathfrak{n}, & \quad t > 0 , \quad i \in \overline{1,N}.   
\end{aligned}    
\end{equation}
It is completed with initial conditions: 
$$ X(0) = X_0, \quad R(0) = R_0, \quad u\vert_{t=0} = u_0 \: \text{ in } \: \Omega_0 := \R^3 \setminus \cup_{i=1}^N B(x_{0,i}, r_{0,i}).$$
Let us compare \eqref{main} to the full system \eqref{fullFB}. We notice that the Navier-Stokes equations, and the formula giving the pressure $p_i$ in $B_i$ stay the same, {\em after proper redefinition of the constant $c_i$} (the normalization by $\frac{1}{4\pi}$ will allow to have lighter formulas later). The last condition expresses again that $\pa B_i$ is a material interface between the fluid and  bubble $i$, as the normal component of the fluid velocity $u \cdot \mathfrak{n}$ equals the normal component of the bubble velocity $u_i \cdot \mathfrak{n} := \dot{x}_i \cdot \mathfrak{n} + \dot{r}_i$. But of course, the novelty is that the sphericity of the bubbles puts a restriction on the structure of $u \cdot \mathfrak{n}$ at $\pa B_i$. Contrary to the condition $\Theta_{t,0}(\pa B_i(0)) = \pa B_i(t)$ of \eqref{fullFB}, which can be interpreted as an evolution equation on the surface of the bubble, this evolution has now only limited degrees of freedom, namely the velocity of the center $\dot{x}_i$ and the compression/dilation speed $\dot{r}_i$. This imposes  to relax the original relation of continuity of the stress in \eqref{fullFB}. The continuity of the tangential stress  
$$\mathrm{T}(u,p) \mathfrak{n} \times \mathfrak{n}\vert_{\pa B_i} = 0$$
is kept, but  the continuity of the normal stress $\mathrm{T}(u,p) \mathfrak{n} = - p_i \mathfrak{n}$ is now replaced by two of its consequences, namely
$$   \dashint_{\pa B_i} \mathrm{T}(u,p) \mathfrak{n} \cdot \mathfrak{n}  = - p_i, \quad   \dashint_{\pa B_i} \mathrm{T}(u,p) \mathfrak{n}  = 0.$$
The unknowns $\dot{r}_i$ and $\dot{x}_i$ are somehow Lagrange multipliers associated with these two relations. 

The derivation of system \eqref{main} will be discussed in depth in \cite{Nous}. We give here only a short explanation of the relaxed conditions on the stress tensor. To understand where they come from, it is easier to start from the toy model 
\begin{equation}
\begin{aligned}
   u - \nu \Delta u +\na p   = f, & \quad \text{in } \Omega, \\
   \div u   = 0, & \quad \text{in } \Omega, \\
   \mathrm{T}(u,p) \mathfrak{n}\vert_{\pa B_i}  = - p_i \mathfrak{n}, & \quad i \in \overline{1,N}.
\end{aligned}
\end{equation}
This toy model is inspired by \eqref{fullFB}. Vaguely, it corresponds to a crude time discretization of the Navier-Stokes equation: given the domains $B_i$, $\Omega$ and pressure $p_i$ at time step $t_k$, we wish to compute the velocity field $u$ at time step $t_{k+1}$, in order then to update the domains of the bubbles. The boundary condition is  the same as in \eqref{fullFB}. Under appropriate regularity and decay assumptions on the data, one can show that this Stokes problem with a stress boundary condition is well-posed: see \cite[Chapter 7]{Boyer} for a detailed discussion (the discussion is carried in a bounded domain $\Omega$, but the addition of a zero order term in our toy model allows to adapt everything). Importanty, the solution $u$ has a variational characterization: it is the minimizer in the space 
$$ V := \{ v \in H^1(\Omega), \quad \div v = 0 \}$$
of the functional 
$$ \mathcal{F}(v) := \frac{1}{2} \int_{\Omega} (|v|^2 + 2\nu |D(v)|^2) -  \sum_{i=1}^N  \int_{\pa B_i} p_i \mathfrak{n} \cdot v  - \int_{\Omega} f \cdot v .$$
The problem with this toy model is the same as with \eqref{fullFB}:  $u \cdot \mathfrak{n}$ is then fully determined, and there is no reason that it has the right structure to preserve sphericity. Namely, there is no reason that it belongs to the space 
$$ W := \left\{ v \in V \: \text{ s. t. for all 
$i \in \overline{1,N}, \: \exists s_i \in \R, \: v_i \in \R^3; \: $ }  v \cdot \mathfrak{n} \vert_{\pa B_i} = s_i + v_i \cdot \mathfrak{n} \right\}.$$
In order to enforce this constraint, a natural idea is to minimise $\mathcal{F}$ on $W$ instead of $V$. But it can be shown by standard arguments that such minimizer is the solution $u \in W$ of 
\begin{equation}
\begin{aligned}
   u - \nu \Delta u +\na p  = f, &\quad \text{in } \Omega, \\
   \div u = 0, & \quad \text{in } \Omega, \\
     \mathrm{T}(u,p) \mathfrak{n} \times \mathfrak{n}\vert_{\pa B_i}  = 0, & \quad i \in \overline{1,N},    \\
        \dashint_{\pa B_i} \mathrm{T}(u,p) \mathfrak{n} \cdot \mathfrak{n}   = - p_i, &  \quad  i \in \overline{1,N},  \\
        \dashint_{\pa B_i} \mathrm{T}(u,p) \mathfrak{n}  = 0,  & \quad  i \in \overline{1,N}. 
\end{aligned}
\end{equation}
We recover in this way the integral constraints on the stress tensor. Note that in \eqref{main}, in the case $\nu=0$, the constraint $\displaystyle \dashint_{\pa B_i} \mathrm{T}(u,p) \mathfrak{n} \cdot \mathfrak{n}   = - p_i$ resumes to $\displaystyle \dashint_{\pa B_i} p = p_i$. Such constraint on the average pressure is briefly mentioned in \cite[p472]{Gavrilyuk}.  For more on the derivation of \eqref{main}, we refer again to our upcoming paper \cite{Nous} where it will be shown that system \eqref{main}, at least when $\nu=0$, can be obtained via Hamilton's stationary action principle using the formalism introduced in \cite{BurteaGavrilyukPerrin2024}.

\medskip
Our main ambition in the present paper is to build weak solutions of system \eqref{main}, until collision between the bubbles. Precise statements will be given in Section \ref{sec_statement} and the general strategy of the proof will be explained in Section \ref{sec_strategy}.  This work, and more generally the mathematical analysis of  bubbles in a fluid, are natural in view of the progress made in the well-posedness theory of incompressible fluid-solid systems, or in the derivation of effective models for rigid suspensions. Classical references in the former field are \cite{SanMartin,Feireisl,Takahashi,Grandmont,SueurGlass,Hillairet}. Classical ones in the latter are \cite{Hofer,Mecherbet,SanchezPalencia,Mazzucato,DuerinckxGloria,Gerard-VaretHillairet1}. These lists are by no mean exhaustive.  Let us stress that the boundary conditions (\ref{main}d) and (\ref{main}g) are of Navier type, so that they relate naturally to more recent works such as \cite{Gerard-VaretHillairet2,Sarka,Sarka2,Sarka3,Neustupa,Muha}. This appearance of Navier boundary conditions is of no surprise, considering that our starting point is \eqref{fullFB}: the connection between free-surface problems and Navier condition has been emphasized for long, see for instance \cite{MasmoudiRousset1,MasmoudiRousset2}.

Although related to the studies just mentioned, the analysis of \eqref{main} raises new and serious difficulties. First, and prior  to any specific construction scheme, both {\it a priori} estimates and proof of sequential continuity require some new arguments, to be  developped in Section \ref{sec_prelim}. Roughly, due to the new kind of  boundary condition (\ref{main}g), one relies on a new decomposition of the solution $u$ (resp. of a sequence of solutions  $(u^n)_{n \in \N}$) as 
$$ u = \tilde u + w, \quad \text{ resp. } u^n =  \tilde u^n + w^n  $$ 
where $\tilde u$ is tangent at $\pa \Omega$, resp. $\tilde u^n$ tangent at $\pa \Omega^n$ and where $w$, resp. $w^n$ is a harmonic vector field. See Section \ref{sec_functional_spaces} for more on this decomposition. The convergence of $\tilde u^n$ to $\tilde u$ (including strong convergence in $L^2$) relies on a generalization of Aubin-Lions lemma to moving domains taken from \cite{Moussa}. The convergence of $w^n$ to $w$ relies on domain continuity results for solutions of elliptic equations inspired by  \cite{Henrot}. 

Beyond sequential continuity, to design a good construction scheme also requires new ideas.  Indeed, standard approximation schemes for fluid-solid problems rely on {\em global approximations}, that is on approximate equations set on the union of the fluid and the solid domains (which is $\R^3$ in our setting). Such equations typically include penalization terms supported in the solid part. For instance, if $\chi^n_S$ is the indicator function of the (approximate) solid domain, these terms may take the form $ - n \div( \chi^n_S D(u^n))$ in the case of no-slip conditions \cite{SanMartin}, or $n \chi^n_S P_S^n(u^n)$ with $P^n_S$ some projector on the space of rigid velocity fields in the case of Navier conditions \cite{Gerard-VaretHillairet2}. In the case of bubbles, how to find such global approximation is unclear. On one hand, a global approximate solution $u^n$ should  be divergence-free inside the approximate fluid domain $\Omega^n$ to avoid dealing with the pressure. On the other hand, its divergence should be non-zero in the approximate bubbly part $\R^3 \setminus \Omega^n$ in order to accomodate the condition on $u \cdot \mathfrak{n}$. But then, there is no natural functional space independent of the domain in which finding $u^n$: choosing $L^2(\R^3)$ is not appropriate to the fluid part, choosing $L^2_\sigma(\R^3)$ is not appropriate to the bubbly part.  
Several other technical difficulties add up to this one, that will be described in due course. All the aspects related to the construction and compactness of approximate solutions  will be developed in Sections \ref{sec_prescribed} and \ref{sec_approx_compact}. 

\medskip
We conclude this introduction with the discussion of another interesting model for spherical bubbles, that was  introduced recently in \cite{Hillairet1} and completed by a one-dimensional study in \cite{Hillairet2}. The starting point of \cite{Hillairet1} is a system of immiscible compressible Navier-Stokes flows, one with velocity $u$ for the surrounding fluid, the others with velocities $u_i$ for the bubbles. The authors impose the continuity of the stress but also {\em the continuity of the velocities} at each interface. To simplify the dynamics of the bubbles, the limit of infinite shear viscosity is considered: the Navier-Stokes evolution inside the bubbles is replaced by the constraint 
$$ D(u_i) - \frac{1}{3} (\div u_i) \, \mathrm{Id}= 0, \quad i \in \overline{1,N}. $$
Under the assumptions that the bubbles remain spherical and  under an additional symmetry assumption on $u_i$, the previous relation implies: 
$$ u_i\vert_{B_i} = \dot{x}_i + \omega_i \times (x-x_i) + \frac{\dot{r}_i}{r_i} (x-x_i).$$
Continuity of the velocity at the interfaces yields a Dirichlet condition on $u$: for all $i \in \overline{1,N}$, 
$$ u\vert_{\pa B_i} = \dot{x}_i + \omega_i \times (x-x_i) + \frac{\dot{r}_i}{r_i} (x-x_i).$$
The final step to close the system is to derive an evolution equation for $\dot{x}_i$, $\dot{\omega}_i$ and $\dot{r}_i$. This is done by coming back to the  system of compressible fluid flows (with large but finite shear viscosity in the spherical bubbles) and considering test fields $\psi$ that satisfy also the constraint
$D(\psi) -  \frac{1}{3} \div \psi \mathrm{Id}= 0$. 

The system derived in \cite{Hillairet1}, for which the construction of weak solutions remains open, is very different from \eqref{main}. It makes a crucial use of the viscosity of the bubbles:  a no-slip condition is imposed at the boundary of the bubbles, and the limit of infinite shear viscosity is considered. On the contrary, in \eqref{main}, the viscosity of the bubble is fully neglected, and tangential discontinuity of the  velocity at the interface is allowed, like in the original free surface system \eqref{fullFB}. This makes possible to consider \eqref{main} in the purely inviscid case, where diffusion is omitted in the fluid part as well. This is common practice in physics studies related to bubbles. Another distinct feature is that the inertia of the bubbles does not show up in \eqref{main} (and already in \eqref{fullFB}), while it appears in \cite{Hillairet1}. This is again a common approximation, arguing that the mass of the bubbles is very small. Finally, the angular velocity $\omega_i$ of the bubble $i$ does not show up in \eqref{main}, as replacing $u_i$ by $u_i + \omega_i \times (x-x_i)$ does not affect the normal component at $\pa B_i$.

\section{Statement of the main result} \label{sec_statement}
\subsection{Functional spaces} \label{sec_functional_spaces}
Let $U_1, \dots, U_N$ smooth bounded domains of $\R^3$ with disjoint closures,  and let $U := \R^3 \setminus \cup_{i=1}^N \overline{U_i}$. We denote by $L^{2}\left(U\right)  $ the space of square-integrable functions over $U$ (scalar or vector valued depending on the context), 
and we distinguish two of its
subspaces: the classical 
\[
L_{\sigma}^{2}\left(U\right)  :=\left\{  w\in L^{2}\left(U\right)  :\operatorname{div}w=0\text{ and }w\cdot \mathfrak{n}=0\text{
on }\partial U\right\}
\]
and the less classical space 
\[
L_{\mathrm{dil}}^{2}\left(U\right)  :=\left\{  w\in
L^{2}\left(U\right)  :%
\begin{array}
[c]{l}%
\operatorname{div} w =0\text{ and}\\
\forall i\in \overline{1,N}:w\cdot \mathfrak{n}=s_{i}+v_{i}\cdot \mathfrak{n}\text{ on }\partial U_{i},  \text{where }\\
s_{i}\in\mathbb{R}\text{ and }v_{i}\in\mathbb{R}^{3}.
\end{array}
\right\}
\]
which allows for dilation (or contraction) of the boundaries of the bubbles. The space $L_{\mathrm{dil}}^{2}\left(  U\right)$ will be crucial to the analysis of \eqref{main}.   Obviously, $L_{\sigma}^{2}\left(  U\right)  $ is a subspace of
$L_{\mathrm{dil}}^{2}\left(  U\right)  $ which is a subspace of
$L^{2}\left(  U\right)  .$ Moreover,
\[
L_{\sigma}^{2}\left(  U\right)  =\overline{\left\{  v\in
C_{0}^{\infty}\left(  U\right)  :\operatorname{div}v=0\right\}
}^{\left\Vert \cdot\right\Vert _{L^{2}}},
\]
see \cite[Theorem III.2.3]{Galdi}. We consider
\[
\mathbb{G}\left(  U\right)  :=\left\{  w\in L_{\mathrm{dil}}%
^{2}\left(  U\right)  ,\left\langle w,u\right\rangle =0:\forall
u\in L_{\sigma}^{2}\left(  U\right)  \right\}
\]
so that 
\[
L_{\mathrm{dil}}^{2}\left(  U\right)  =L_{\sigma}%
^{2}\left(  U\right)  \oplus\mathbb{G}\left(  U\right)  .
\]
Moreover, we claim that $\mathbb{G}\left(  U\right)  $ is finite
dimensional (which implies that $L_{\mathrm{dil}}^{2}\left(  U\right)$ is closed in $L^2(U)$). Indeed, consider $u\in L^{2}_{\mathrm{dil}}\left(  U\right)  $ and
recall the Liouville decomposition
\[
u=\mathbb{P}u+\nabla q
\]
where $\mathbb{P}u\in L_{\sigma}^{2}\left(  U\right)  $ and
$q\in\dot{H}^{1}\left(  U\right) \cap L^2_{loc}(U)$  satisfies 
\begin{equation} \label{q}
\left\{
\begin{array}
[c]{l}%
-\Delta q=0,\text{ in }U\\
\frac{\partial q}{\partial \mathfrak{n}}=u\cdot \mathfrak{n}\text{ on }\partial U.
\end{array}
\right.    
\end{equation}
Thus, elements from $\mathbb{G}\left(  U\right)  $ are gradient fields
verifying%
\[
\left\{
\begin{array}
[c]{l}%
-\Delta q=0,\\
\frac{\partial q}{\partial \mathfrak{n}}=s_{i}+v_{i}\cdot \mathfrak{n}\text{ on }\partial
U_{i}  \text{ for all }i\in \overline{1,N}.
\end{array}
\right.
\]
Any element from $\mathbb{G}\left(  U\right)  $ can be written as a
linear combination of gradients of solutions of the equations
\begin{equation} \label{eq_qi}
\left\{
\begin{array}
[c]{l}%
-\Delta q_{i}=0\text{ on }U,\\
\frac{\partial q_i}{\partial \mathfrak{n}}=\delta_{ij}\text{ on }\partial U_{j},
\end{array}
\right.    
\end{equation}
for all $i\in \overline{1,N}$, and
\begin{equation} \label{eq_qik}
\left\{
\begin{array}
[c]{l}%
-\Delta q_{i}^{k}  =0\text{ on }U
,\\
\frac{\partial q_{i}^{k}  }{\partial \mathfrak{n}}=\delta_{ij}\mathfrak{n}^{k}\text{
on }\partial U_{j},
\end{array}
\right.
\end{equation}
for all $i\in \overline{1,N},$ $k\in \overline{1,3}.$ Thus these fields
generate $\mathbb{G}\left(  U\right)  $. Let us suppose that
\[
\sum_{i=1}^N  \alpha_{i}\nabla q_{i}+ \sum_{i=1}^N \sum_{k=1}^3\alpha_{i}^{k}\nabla q_{i}^{k} =0
\]
for some $\left(  \alpha_{i},\alpha_{i}^{k}\right)  _{i\in\overline{1,N},\text{ }k\in
\overline{1,3}}.$ Then, for any fixed $j\in\overline{1,N}$ and any $\ell\in\overline{1,3}$
we have that%
\begin{align*}
0 &  = \sum_{i}  \int_{\pa B_j} \alpha_{i} \pa_{\mathfrak{n}} q_{i}  + \sum_{i,k}  \int_{\pa B_j} \alpha_{i}^{k}\pa_{\mathfrak{n}} q_{i}^{k}  =\int_{\pa B_j} \alpha_{j} + \sum_k    \int_{\pa B_j} \alpha_j^k \mathfrak{n}^k \mathrm{d}S = \int_{\pa B_j} \alpha_{j}  ,\\
0 &  = \sum_{i}  \int_{\pa B_j} \alpha_{i}\pa_{\mathfrak{n}}  q_{i}  \, \mathfrak{n}^\ell + \sum_{i,k}  \int_{\pa B_j} \alpha_{i}^{k} \pa_{\mathfrak{n}} q_{i}^{k}    \, \mathfrak{n}^\ell = \int_{\partial U}\alpha_{j}^\ell (\mathfrak{n}^\ell)^2
\mathrm{d}S,
\end{align*}
relations that show the linear independence of $\left\{  \nabla q_{i},\nabla
q_{i}^{k}\right\}  _{i\in\overline{1,N},k\in\overline{1,3}}$. Thus the space
$\mathbb{G}\left(  U\right)  $ is indeed $4N$ dimensional. We can
orthonormalize it to produce, with a slight abuse of notations%
\[
\left\{  \nabla q_{i}\right\}  _{i\in\overline{1,4N}}%
\]
an orthonormal basis.
\begin{remark}
    The solution $q$ of system \eqref{q} is  defined up to an additive constant, which of course does not affect its gradient. However, it is known that there is a unique choice of this constant such that $q \in L^6(U)$, with inequality $\|q\|_{L^6} \le C \|\na q\|_{L^2}$. The same holds for $q_i, q_i^k$ solutions of \eqref{q1} and \eqref{q1k}. See \cite[Theorem II.6.1]{Galdi} for details. We make implicitly this choice in all what follows. 
\end{remark}

In our context of time dependent domains, $U_i$ will be replaced by $B_i = B_i(t)$ and $U$ by $\Omega = \Omega(t)$. Given an interval $I \subset \R$, we introduce the notation
\begin{equation} \label{def_QT}
Q(I) := \cup_{t \in I} \{t\} \times \Omega(t)       
\end{equation}
For  $1 \le p < \infty$, $1 \le q \le \infty$, we define the space 
\begin{equation*}
L^p(I ; L^q(\Omega)) := \left\{ u : Q(I)  \rightarrow \R^3 \: \text{ measurable}, \quad \int_0^T \| u(t, \cdot) \|_{L^q(\Omega(t))}^p dt < \infty \right\} 
\end{equation*}
One can define similarly $L^\infty(I ; L^q(\Omega))$, $\: L^p(I ; H^s(\Omega))$ for $s \in \N$. We further define 
$$ C^\infty_c(I ; L^q(\Omega)) := \left\{ 
\begin{aligned}
& u : Q(I) \rightarrow \R^3 \: \text{ measurable},  \:  \pa_t^k u \in L^\infty(I ; L^q(\Omega)) \: \forall k \in \N  \\
& u(t, \cdot) = 0 \text{ for $t$ outside a compact set of $I$}, 
\end{aligned}
\right\},      $$
One can define similarly  $C^\infty_c(I ; H^s(\Omega))$ for $s \in \N$. Eventually, we define for any $1 \le p \le \infty$
$$ L^p(I ; L^2_{\mathrm{dil}}(\Omega)) := \left\{  u \in L^p(I ; L^2(\Omega)), \: u(t) \in L^2_{\mathrm{dil}}(\Omega(t)) \: \text{ for a.e. } t \in I \right\}.  $$

\subsection{Definition of weak solutions}
In this paragraph, we introduce a notion of weak Leray type solution for system \eqref{main}. We  restrict to situations where the bubbles do not shrink to points, and do not collide or merge. This means that the families $X$ and $R$ in \eqref{defXR}  will always be considered on time intervals $[0,T)$ such that: for all $t \in (0,T)$, $\displaystyle (X(t),R(t))$ belongs to the admissible set 
 \begin{equation} \label{def_calA}
 \mathcal{A} := \left\{ (X_0,R_0) \in \R^{3N} \times (\R_+^*)^N, \: \forall i \neq j, \: |x_{0,i} - x_{0,j}| > r_{0,i} + r_{0,j}\right\}.    
 \end{equation}

As usual, the notion of weak solution is based on a variational formulation, deduced from formal multiplication of the momentum equation in \eqref{main} by a suitable test field $\psi$. Namely, we take $\psi$ such that
$$\psi \in C^\infty_c\left([0,T) ; H^s(\Omega)\right) \:  \forall s \in \N, \quad \psi(t) \in L^2_{\mathrm{dil}}(\Omega(t)) \:  \forall t \in [0,T) $$
and integrate over $\Omega(t)$. We get 
\begin{align*}
\int_{\Omega\left(t\right)}(\partial_{t}u +u \cdot \nabla u) \cdot \psi  &  =\int_{\Omega\left(  t\right)  }(\partial_{t}(u \cdot \psi)+ u \cdot \na (u \cdot  \psi))-\int_{\Omega  \left(  t\right)
} u \cdot (\partial_{t}\psi+ u \cdot \na \psi) \\
&  =\frac{d}{dt}\int_{\Omega\left(  t\right)  }(u \cdot \psi) -\int_{\Omega\left(  t\right)  }u \cdot (\partial_{t}\psi+ u \cdot \na \psi).
\end{align*}
The last equality comes from the following general formula about integrals on moving domains, to be used several times in the paper: for any function $f(t,\cdot): \Omega(t) \rightarrow \R$, and for $V_{\mathfrak{n}}$ the normal velocity of $\pa \Omega$: 
$$ \frac{d}{dt} \int_{\Omega(t)} f(t,x) dx = \int_{\Omega(t)} \pa_t f(t,x) dx + \int_{\pa \Omega(t)} f(t,x) V_{\mathfrak{n}}(t,x) d\sigma(x)  .$$
We recall that the normal velocity is defined as follows. Let $t \ge 0$, $x \in \pa \Omega(t)$. Given a neighborhood  $I_t$ of $t$  and a $C^1$ field $\gamma : I_t \rightarrow \R^3$ such that  $\gamma(s) \in \pa \Omega(s)$ for all $s \in I_t$ with $\gamma(t) = x$,  we define 
$$ V_\mathfrak{n}(t,x) := \gamma'(t) \cdot \mathfrak{n}(t,x) .$$
This formula  can be shown to be independent of the choice of $\gamma$. We refer to \cite[Chapter 2.5]{Pruss} for all necessary details.  In particular, if $\Theta_t$ is a diffeomorphism that sends $\Omega(0)$ to $\Omega(t)$, and $v$ is the associated velocity field: 
\begin{equation} \label{time_deriv_moving}
\begin{aligned}
 \frac{d}{dt} \int_{\Omega(t)} f(t,x) dx & = \int_{\Omega(t)} \pa_t f(t,x) dx + \int_{\pa \Omega(t)} f(t,x) V_{\mathfrak{n}}(t,x) d\sigma(x) \\ 
 & =  \int_{\Omega(t)} \pa_t f(t,x) dx + \int_{\pa \Omega(t)} f(t,x) v(t,x) \cdot \mathfrak{n}(t,x) d\sigma(x) \\
 & = \int_{\Omega(t)} \pa_t f(t,x) dx + \int_{\Omega(t)} \div(v f) dx .
\end{aligned}
\end{equation}
This is the identity that we used with $v = u$ and $f = u \cdot \psi$, taking into account that $\div (u (u \cdot \psi)) = u \cdot \na (u \cdot \psi)$ as $\div u = 0$. 

Now, note that  for all $i\in \overline{1,N}$, there exists $s_i \in \R$, $v_i \in \R^3$ such that 
\[
\psi  \cdot \mathfrak{n}   = s_i + v_{i} \cdot \mathfrak{n} \text{ on }
\pa B_{i}\left(  t\right).
\]
We compute, with $\mathrm{T} = \mathrm{T}(u,p)$:
\begin{align*}
&  \int_{\Omega\left(  t\right)  } (\na p - \nu \Delta u) \cdot \psi =  -\int_{\Omega\left(  t\right)  } \div \mathrm{T} \cdot \psi \\
 = &\int_{\Omega\left(  t\right)  }\mathrm{T} : \mathbb{D}\left(
\psi\right)  + \sum_{i=1}^{N}\int_{\partial B_{i}\left(
t\right)  }(\mathrm{T} \mathfrak{n}) \cdot \psi\mathrm{d} \sigma\\
 = & \int_{\Omega\left(  t\right)  }\mathrm{T} : \mathbb{D}\left(
\psi\right) +\sum_{i=1}^{N} \left[\int_{\partial B_{i}\left(
t\right)  }  (\mathrm{T} \mathfrak{n} \cdot \mathfrak{n}) (\psi \cdot \mathfrak{n}) \mathrm{d} \sigma + \int_{\partial B_{i}\left(  t\right)  }  (\mathrm{T} \mathfrak{n}\times \mathfrak{n})\cdot(\psi\times \mathfrak{n}) \mathrm{d} \sigma\right]\\
 = & \int_{\Omega\left(  t\right)  }\mathrm{T} : \mathbb{D}\left(
\psi\right) +\sum_{i=1}^{N} \left[\int_{\partial B_{i}\left(
t\right)  } (\mathrm{T} \mathfrak{n} \cdot \mathfrak{n})  \mathrm{d} \sigma \, s_i  + \int_{\partial B_{i}\left(
t\right)  } (\mathrm{T} \mathfrak{n} \cdot \mathfrak{n}) (v_i \cdot \mathfrak{n}) \mathrm{d} \sigma + \int_{\partial B_{i}\left(  t\right)  }  (\mathrm{T} \mathfrak{n}\times \mathfrak{n})\cdot(\psi\times \mathfrak{n}) \mathrm{d} \sigma\right]\\
  =  & \int_{\Omega\left(  t\right)  }\mathrm{T} : \mathbb{D}\left(
\psi\right) +\sum_{i=1}^{N} \left[\int_{\partial B_{i}\left(
t\right)  } (\mathrm{T} \mathfrak{n} \cdot \mathfrak{n})  \mathrm{d} \sigma \, s_i  + \int_{\partial B_{i}\left(
t\right)  } \mathrm{T} \mathfrak{n} \mathrm{d} \sigma \cdot v_i   + \int_{\partial B_{i}\left(  t\right)  }  (\mathrm{T} \mathfrak{n}\times \mathfrak{n})\cdot(\psi\times \mathfrak{n} - v_i \times \mathfrak{n}) \mathrm{d} \sigma\right].
\end{align*}
Taking into account the conditions on $\mathrm{T}$  in \eqref{main}, we find
\begin{align*}
 \int_{\Omega\left(  t\right)  } (\na p - \nu \Delta u) \cdot \psi  & = 2\nu\int_{\Omega\left(  t\right)  }\mathbb{D}(u) : \mathbb{D}\left(
\psi\right) - \sum_{i=1}^N c_i r_i^{
2 - 3\gamma} s_i \\
 = &  2\nu\int_{\Omega\left(  t\right)  }\mathbb{D}(u) : \mathbb{D}\left(
\psi\right) - \sum_{i=1}^N c_i r_i^{2- 3\gamma} \dashint_{\pa B_i(t)} (\psi \cdot \mathfrak{n})  \mathrm{d} \sigma.
\end{align*}
Integrating from $0$ to $t \le T$, we deduce the following variational formulation: 
\begin{equation} 
\begin{aligned}
\int_{\Omega\left(  t\right)  } u\left(  t\right)  \cdot\psi\left(
t\right)  +2\nu \int_{0}^{t}\int_{\Omega\left(  s\right)  }\mathbb{D}\left(
u\right)  \cdot\mathbb{D}\left(  \psi\right)  \mathrm{d}s - \sum_{i=1}^{N} \int_{0}^{t} c_i r_i^{2-3\gamma} 
\dashint_{\partial B_{i}\left(  s\right)  } \psi  \cdot \mathfrak{n}\mathrm{d}s\\
=\int_{\Omega(0)}  u_{0} \cdot\psi\left(0\right)
+\int_{0}^{t}\int_{\Omega\left(  s\right)  }  u \cdot (\partial_{t}\psi+ u \cdot \na \psi)\mathrm{d}s.
\end{aligned}
 \label{weak_form_fluid_bubble}
\end{equation}
This formal calculation leads to the following  
\begin{definition} ({\bf weak solutions})

\medskip
\noindent
Let $T > 0$, $(X_{0}, R_0) \in \mathcal{A}$, {\em cf.} \eqref{def_calA}.  Let   $\displaystyle \Omega(0) := \R^3 \setminus \cup_{i=1}^N \overline{B(x_{0,i}, r_{0,i})}$ and  
 $u_0 \in L^2_{\mathrm{dil}}(\Omega(0))$. 

\medskip
\noindent
A weak solution of \eqref{main} on $[0,T)$ with data $(X_0, R_0, u_0)$ is a triple $(X,R,u)$ such that 
\begin{itemize}
    \item[i)] $(X,R) \in W^{1,\infty}_{loc}([0,T); \R^{3N} \times \R^N)$ 
    \item[ii)] $(X(0),R(0)) = (X_0, R_0)$ and for all $t \in [0,T)$, $(X(t),R(t)) \in \mathcal{A}$.
    \item[iii)] $u \in L^\infty\left(0,T ; L^2_{\mathrm{dil}}(\Omega)\right)$, $ \: \na u \in L^2\left(0,T ; L^2(\Omega)\right)$, where $\Omega = \R^3 \setminus \cup_{i=1}^N \overline{B(x_i, r_i)}$. 
    \item[iv)] For all  $\psi\in L^{\infty}\left(0,T ; L_{\mathrm{dil}}^{2}(\Omega)\right)$ with $\partial_{t}\psi,\nabla\psi\in
L^2\left(0,T) ; L^{2}( \Omega)\right)$, the variational formulation \eqref{weak_form_fluid_bubble} holds for a.e. $t \in (0,T)$. 
\item[v)] For a.e. $t \in (0,T)$, for all $i \in \overline{1,N}$, $\: u \cdot \mathfrak{n} \vert_{\pa B_i} = \dot{x}_i \cdot \mathfrak{n} + \dot{r}_i$.  
\item[vi)] The inequality 
\begin{equation} \label{energy_inequality}
\frac12 \int_{\Omega(t)} |u(t)|^2 + 2\nu \int_{\Omega(t)} |\mathbb{D}(u)(s)|^2 \mathrm{d}s + \sum_{i=1}^N \frac{c_i}{3\gamma -3} r_i(t)^{3-3\gamma}  \le E_0 
\end{equation}
where 
\begin{equation} \label{def_E0}
   E_0 :=  \frac12 \int_{\Omega_0} |u_0|^2 + \sum_{i=1}^N \frac{c_i}{3\gamma -3} r_{0,i}^{3-3\gamma}   
\end{equation}
holds for almost every $t \in [0,T)$.
\end{itemize}
\end{definition}
\begin{remark}
We have chosen to consider settings in which the fluid domain $\Omega(0)$, and more generally $\Omega(t)$ for all $t$, is an exterior domain. As a consequence, given any initial family of dilation speeds $S_0 = (s_{0,i})_{i \in \overline{1,N}}$ and of translation velocities $V_0 = (v_{0,i})_{i \in \overline{1,N}}$, there exists  $u_0 \in L^2(\Omega(0))$ such that 
$$ \div u_0 = 0, \quad  u_0\cdot \mathfrak{n} \vert_{\pa B_i}  = s_{0,i} + v_{0,i} \mathfrak{n}, \quad \forall i \in \overline{1,N}. $$
Namely, one can take 
$$u_0 = \sum_{i=1}^N s_{0,i} \na q_i(0) + \sum_{i=1}^N \sum_{k=1}^3 (v_{0,i} \cdot e_k) \na q_i^k(0)$$
where the fields  $\na q_i(0)$ and $\na q_i^k(0)$ are the harmonic fields introduced in Section \ref{sec_functional_spaces}, in the special case $U = \Omega(0)$. This means that the space $L^2_{\mathrm{dil}}(\Omega(0))$ is reasonably  large. The situation would have been different in the case of a fluid in a bounded cavity, that is $\Omega(t) := U \setminus \cup_{i=1}^N B_i(t)$ with $U$ a bounded open domain. In such a case, under a natural non-penetration  condition $u_0 \cdot \mathfrak{n}\vert_{\pa U} = 0$  at the boundary of the cavity, one has the necessary compatibility condition: 
$$ \sum_{i=1}^n \int_{\pa B_i(0)} u_0 \cdot \mathfrak{n} =  \sum_{i=1}^n \int_{\pa B_i(0)} u_0 \cdot \mathfrak{n} - \int_{\pa U} u_0 \cdot \mathfrak{n} = - \int_{\Omega} \div u_0 = 0.$$
This condition, which can be shown to be also sufficient, reads  $\sum_{i=1}^N r_{0,i}^2 s_{0,i} = 0$. It is very stringent: it can in particular not be satisfied in the  case of a single bubble unless the bubble does not dilate or contract: incompressiblity of the liquid prevents any variation of volume. Considering an exterior domain allows to overcome this difficulty: the fluid somehow dilates or compresses at infinity. Another way out of this difficulty is to let the boundary of the cavity dilate or compress at the rate given by the bubbles. 
\end{remark}

\subsection{Existence of weak solutions up to collision}
Our main result is 
\begin{theorem} \label{main_thm}
Let $(X_{0}, R_0) \in \mathcal{A}$.  Let  $\displaystyle \Omega(0) := \R^3 \setminus \cup_{i=1}^N \overline{B(x_{0,i}, r_{0,i})}$ and   $u_0 \in L^2_{\mathrm{dil}}(\Omega(0))$. There exists $T > 0$ and a weak solution of \eqref{main} on $[0,T)$ with data $(X_0,R_0,u_0)$.  Moreover, either one can take $T$ arbitrary, or one can  choose $T$ such that
\begin{equation} \label{collision}
 \liminf_{t \rightarrow T_-} \: \inf_{i \neq j} \left(|x_i(t) - x_j(t)| - r_i(t) - r_j(t)\right) = 0.
 \end{equation}
\end{theorem}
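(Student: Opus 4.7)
The plan is to construct a weak solution via a Schauder-type fixed point combined with a Galerkin approximation on a prescribed-motion problem, then extend maximally in time. Because standard penalization schemes on $\R^3$ (familiar from fluid--rigid body problems) cannot reconcile $\div u=0$ in $\Omega(t)$ with $u\cdot\mathfrak{n}\vert_{\pa B_i}=\dot r_i+\dot x_i\cdot\mathfrak{n}$ on the bubble boundaries, I would instead \emph{freeze the geometry} at each step: given a Lipschitz trajectory $(X^{\ast},R^{\ast})\in W^{1,\infty}([0,T_0];\mathcal{A})$, solve a prescribed-motion fluid problem to obtain $u^{\ast}$, then update $(X,R)$ using condition (v), and look for a self-consistent fixed point on a small ball of Lipschitz trajectories.

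To solve the prescribed-motion problem, I exploit the splitting $L^2_{\mathrm{dil}}(\Omega(t))=L^2_\sigma(\Omega(t))\oplus\mathbb{G}(\Omega(t))$ with $\mathbb{G}$ of dimension $4N$ and write $u^{\ast}=\tilde u^{\ast}+w^{\ast}$, where $\tilde u^{\ast}\cdot\mathfrak{n}\vert_{\pa B_i^{\ast}}=0$ and $w^{\ast}=\sum_{j=1}^{4N} a_j^{\ast}(t)\,\nabla q_j^{\ast}$. The coefficients $a_j^{\ast}$ are determined by $(\dot X^{\ast},\dot R^{\ast})$, so $w^{\ast}$ is explicit once the geometry is prescribed, and $\tilde u^{\ast}$ satisfies a Navier--Stokes-type equation in the moving domain $\Omega^{\ast}(t)$ with tangential boundary conditions and a source coming from $w^{\ast}$. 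A Galerkin discretization on a finite set of tangential, divergence-free test fields (which automatically respects the integral stress constraints (d)--(f) by construction of the test space) yields approximate solutions $\tilde u^{\ast,n}$.

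The a priori estimates follow from testing \eqref{weak_form_fluid_bubble} against $\psi=u$, which is legitimate at the Galerkin level. Using that $\dashint_{\pa B_i}\dot x_i\cdot\mathfrak{n}\,\mathrm{d}\sigma=0$ on a sphere, one obtains
\begin{equation*}
\frac{1}{2}\frac{d}{dt}\int_{\Omega(t)}|u|^2+2\nu\int_{\Omega(t)}|\mathbb{D}(u)|^2
= -\sum_{i=1}^N c_i r_i^{2-3\gamma}\dot r_i
= -\frac{d}{dt}\sum_{i=1}^N \frac{c_i}{3\gamma-3}r_i^{3-3\gamma},
\end{equation*}
whose time integral is exactly \eqref{energy_inequality}. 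Since $\gamma>1$, the potential term blows up as $r_i\to 0$, so $E_0$ provides a uniform lower bound $r_i(t)\ge\rho_0>0$, preventing any bubble from degenerating. Combined with Korn's inequality on exterior domains (uniform so long as $(X,R)$ stays in a compact subset of $\mathcal{A}$), this gives $u\in L^\infty(L^2)\cap L^2(H^1)$; the trace on $\pa B_i$ together with (v) and the orthonormal decomposition of $\mathbb{G}$ then produces a uniform $W^{1,\infty}$ bound on $(X,R)$, enabling the Schauder fixed point to close.

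The delicate step is the passage to the limit in the nonlinear term $u\cdot\nabla u$, which requires strong $L^2$ compactness of $u^n$. Here I use the splitting $u^n=\tilde u^n+w^n$: for the tangential part $\tilde u^n$, I apply the moving-domain Aubin--Lions lemma from \cite{Moussa} after extracting a dual-space estimate on $\pa_t\tilde u^n$ from the equation tested against tangential divergence-free fields; for the finite-dimensional part $w^n$, the coefficients $a^n_j$ are Lipschitz uniformly in $n$, while the basis $\{\nabla q^n_j\}$ of $\mathbb{G}(\Omega^n(t))$ depends continuously on $(X^n,R^n)$ by elliptic domain-continuity results in the spirit of \cite{Henrot}, yielding strong $L^\infty(L^2)$ convergence of $w^n$. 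Arzelà--Ascoli gives uniform convergence of $(X^n,R^n)$, and one can then pass to the limit in the variational formulation, in the pointwise boundary identity (v), and in the energy inequality by lower semi-continuity. A standard continuation argument based on the energy bounds produces a maximal existence time: either $T=+\infty$, or $\min_{i\neq j}(|x_i-x_j|-r_i-r_j)$ must fail to stay bounded away from $0$, which is \eqref{collision}. The principal obstacle, as the authors stress, is the design of the construction itself --- the mismatch between incompressibility in $\Omega(t)$ and the dilation data on $\pa B_i(t)$ --- which is resolved by pushing all non-solenoidal data into the explicit, finite-dimensional piece $w$ and leaving a genuinely solenoidal problem for $\tilde u$; all subsequent compactness arguments are then organized around this splitting.
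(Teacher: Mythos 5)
There is a structural gap in your construction of the prescribed-motion subproblem, and it is precisely the point the paper identifies as the main difficulty. In your scheme you freeze the geometry $(X^\ast,R^\ast)$ \emph{and} pin the gradient part $w^\ast=\sum_j a_j^\ast\nabla q_j^\ast$ by declaring that the coefficients $a_j^\ast$ are determined by $(\dot X^\ast,\dot R^\ast)$; the remaining unknown $\tilde u^\ast$ is then solved with purely tangential, divergence-free test fields. But with such test fields ($s_i=0$, $v_i=0$ in the definition of $L^2_{\mathrm{dil}}$) the boundary terms carrying the bubble pressure $p_i=\frac{c_i}{4\pi}r_i^{-3\gamma}$ and the integral stress conditions (\ref{main}e)--(\ref{main}f) simply never enter the weak formulation --- they are not ``automatically respected'', they are dropped. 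Those conditions are exactly what determines $\dot r_i$ and $\dot x_i$ (they act as Lagrange multipliers), so your auxiliary problem contains no mechanism selecting the bubble dynamics. Worse, since you force $u^\ast\cdot\mathfrak{n}\vert_{\pa B_i^\ast}=\dot r_i^\ast+\dot x_i^\ast\cdot\mathfrak{n}$, the update of $(X,R)$ through condition (v) returns the input trajectory, so every admissible Lipschitz trajectory is a fixed point of your map: the Schauder argument closes vacuously and the limiting object does not satisfy \eqref{weak_form_fluid_bubble}, nor the energy inequality involving the potential energy $\sum_i \frac{c_i}{3\gamma-3}r_i^{3-3\gamma}$ (your formal energy identity is computed for the true system, not for the scheme you actually solve).

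The paper's resolution is different in exactly this respect. In the auxiliary problem \eqref{VF_prescribed} the geometry is prescribed (piecewise affine), but the solution is sought in the full space $L^2_{\mathrm{dil}}(\Omega[X,R](t))$ with the full class of $L^2_{\mathrm{dil}}$ test fields, so the fluid's normal trace defines its \emph{own} radii and centers $r_i[u]$, $x_i[u]$, which generically differ from the prescribed $r_i$, $x_i$; the pressure term is written with $r_i[u]$ and an extra boundary term $\frac12\int_{\pa B_i}\bigl(u\cdot\mathfrak{n}-\dot r_i-\dot x_i\cdot\mathfrak{n}\bigr)(\psi\cdot u)$ is added precisely to restore the energy inequality despite this mismatch (leading to the modified conditions \eqref{strong_formulation_prescribed}). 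The loop is then closed not by a Schauder fixed point (whose continuity would anyway be delicate for non-unique weak solutions) but by an explicit time-stepping iteration: on $[kh,(k+1)h]$ the bubbles move with the averaged speeds $\dashint\dot R[u^h]$, $\dashint\dot X[u^h]$ computed on the previous interval, and a separation estimate uniform in $h$ lets one pass to the limit $h\to 0$. Your compactness discussion (Moussa's moving-domain Aubin--Lions for the tangential part, domain continuity of $\nabla q_j$ for the $4N$-dimensional part, $H^2$-in-time bounds on $(X,R)$) matches the paper and would survive, but it cannot repair the auxiliary problem; also note that even at the Galerkin level the paper must avoid Stokes eigenbases on the moving domains (eigenfunctions need not depend differentiably on the domain) and instead transports a Riesz basis by an ALE diffeomorphism, a point your proposal leaves unaddressed.
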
 
\begin{remark}
From the energy inequality \eqref{energy_inequality}, one can deduce a lower bound on $r_i(t)$ for all $i$: this rules out the collapse of a bubble on a single point. This helps to understand  why the limitation on the time of existence of weak solutions may only occur from \eqref{collision}, that is from the collision between bubbles. 
\end{remark}
\begin{remark} \label{rem_H2}
    It will be shown in Section \ref{sec_a_priori} that the positions and radii of the particles $(X,R)$ are more than Lipschiz in time: they obey $H^2$ bounds. This feature will be crucial to the derivation of strong compactness of the velocity field. 
\end{remark}

\section{Strategy of proof} \label{sec_strategy}
We explain here the main steps of the proof of Theorem \ref{main_thm}. First, we will tackle in Section \ref{sec_prelim}  the derivation of {\em a priori} estimates and the proof of sequential continuity for weak solutions of system \eqref{main}.The decomposition 
$$ L^2_{\mathrm{dil}}(U) = L^2_\sigma(U) \oplus \mathbb{G}(U)$$
introduced in Section \ref{sec_functional_spaces} will be crucial here. It will be applied with $U = \Omega(t)$ and $U = \Omega^n(t)$, leading to a decomposition $u = \tilde u + w$, $u^n = \tilde u^n + w^n$, with $\tilde u$, resp. $\tilde u^n$ tangent at the boundary of $\pa \Omega$, resp. $\pa \Omega^n$, and $w,w^n$ gradient fields. While the compactness of $\tilde u^n$ will rely on a version  of Aubin-Lions compactness result in moving domains from  \cite{Moussa}, the compactness of $w^n$ will notably rely on $H^2$ uniform estimates on $(X^n,R^n)$, see Remark \ref{rem_H2}, and on the analysis of the basis $\na q_i^n$, $i \in \overline{1,4N}$ seen in Section \ref{sec_functional_spaces}. 

The results of Section \ref{sec_prelim} will  not be used as such  in the subsequent sections, but will be applied with appropriate modifications to approximate solutions.  As  emphasized in Section \ref{sec1}, standard  approximation schemes, based on  variational formulations on $\R^3$ with penalization terms, seem hard to adapt. Therefore, we  rather follow a time-stepping scheme. The broad idea is to divide the interval $(0,T)$ in small subintervals of length $h \ll 1$. Then, iteratively on each time interval $[kh,(k+1)h]$, one considers a system analogue to \eqref{main},  where: 
{\em i) the trajectory of the bubbles is prescribed, ii)  this trajectory is based on the velocity field computed at the previous step on $[(k-1)h,kh]$}. We will in this way obtain an approximate solution $u^h$, which will converge to a weak solution of \eqref{main} as $h \rightarrow 0$. This iteration and convergence process  will be described briefly in the next three paragraphs, and developped rigorously in Sections \ref{sec_prescribed} and \ref{sec_approx_compact}. 

\subsection{Simplified model with prescribed bubble dynamics.} \label{sec_simpler_model}
As alluded to above, our construction of weak solutions relies on the study of a subproblem where the dynamics of the bubbles is prescribed. Namely, we give ourselves the two fields 
$$X = (x_i)_{i \in \overline{1,N}} : \R_+ \rightarrow \R^{3N}, \quad R = (r_i)_{i \in \overline{1,N}} : \R_+ \rightarrow (\R_+^*)^{N}.$$
More on $(X,R)$ will be specified just below. We define 
$$ B_i[X,R] := B(x_i,r_i), \quad i \in \overline{1,N}, \quad \Omega[X,R] := \R^3 \setminus \cup_{i=1}^N \overline{B_i[X,R]}.$$
Let $u_0 \in L^2_{\mathrm{dil}}(\Omega[X,R](0))$ and $\tilde{R}_0 = (\tilde{r}_{0,i})_{i \in \overline{1,N}}\in (\R_+^*)^N$.  We consider the following variational formulation:  find $T > 0$ and $u$ such that for all  $\psi\in L^{\infty}\left(0,T ; L_{\mathrm{dil}}^{2}(\Omega[X,R])\right)$ with $\partial_{t}\psi,\nabla\psi\in
L^2\left(0,T ; L^{2}( \Omega[X,R])\right)$,

\begin{equation}  \label{VF_prescribed}
\begin{aligned}
& \int_{\Omega[X,R]\left(  t\right)  } u\left(  t\right)  \cdot\psi\left(
t\right)  +2\nu \int_{0}^{t}\int_{\Omega[X,R]\left(  s\right)  }\mathbb{D}\left(
u\right)  \cdot\mathbb{D}\left(  \psi\right)  \mathrm{d}s \\
& - \sum_{i=1}^{N}  \int_0^t c_i \left( r_i[u] \right)^{2-3\gamma} 
\dashint_{\partial B_{i}[X,R]\left(  s\right)  } \psi  \cdot \mathfrak{n}\mathrm{d}s\\
& - \frac12 \sum_{i=1}^{N} \int_{0}^{t}  \int_{\partial B_{i}[X,R]\left(  s\right)  } \left(u \cdot \mathfrak{n} - \dot{r}_i - \dot{x}_i \cdot \mathfrak{n}\right) (\psi  \cdot u) \mathrm{d}s \\
& =\int_{\Omega[X,R](0)}  u_{0} \cdot\psi\left(0\right)
+\int_{0}^{t}\int_{\Omega[X,R]\left(  s\right)  }  u \cdot (\partial_{t}\psi+ u \cdot \na \psi)\mathrm{d}s,
\end{aligned}
\end{equation}
where 
\begin{equation}  \label{def_ri_u}
   r_i[u] :=  \tilde{r}_{0,i} + \int_0^s \dashint_{\partial B_{i}[X,R]\left(  s'\right)} u \cdot \mathfrak{n} \,  \mathrm{d}s'.
\end{equation}
For later purpose, we also define 
\begin{equation}  \label{def_xi_u}
   x_i[u] :=  x_i(0) + 3 \int_0^s \dashint_{\partial B_{i}[X,R]\left(  s'\right)} u \cdot \mathfrak{n} \, \mathfrak{n}  \mathrm{d}s'.
\end{equation}
A key block of our analysis is the following theorem 
\begin{theorem} \label{thm_prescribed}
Let $X,R$ {\em affine} fields, meaning that the time derivatives $\dot{X}$, $\dot{S}$ are constants. Let $T > 0$ such that 
\begin{equation} \label{separation_condition}
\delta := \frac{1}{4}\inf_{[0,T]} \: \inf_{i \neq j } \, \left(|x_i - x_j| - r_i - r_j \right) > 0    
\end{equation}
Let $\displaystyle u_0 \in L^2_{\mathrm{dil}}(\Omega[X,R](0))$ and $\tilde{R}_0 \in (\R_+^*)^N$. 
There exists $\displaystyle u  \in L^\infty\left(0,T ; L^2_{\mathrm{dil}}(\Omega[X,R])\right)$, such that $\na u \in L^2\left(0,T ; L^2(\Omega[X,R])\right)$ and  such that for all $\psi\in L^{\infty}\left(0,T ; L_{\mathrm{dil}}^{2}(\Omega[X,R])\right)$ with $\displaystyle \partial_{t}\psi,\nabla\psi\in
L^2\left(0,T ; L^{2}( \Omega[X,R])\right)$, \eqref{VF_prescribed} holds for a.e. $t \in [0,T)$. 

\medskip
Moreover, the following energy inequality holds for a.e. $t \in (0,T)$:
\begin{equation*}
\frac12 \int_{\Omega[X,R](t)} |u(t)|^2 + 2 \nu \int_{\Omega[X,R](t)} |\mathbb{D}(u)(s)|^2 \mathrm{d}s + \sum_{i=1}^N \frac{c_i}{3\gamma -3} r_i[u](t)^{3-3\gamma}  \le \tilde E_0 
\end{equation*}
where $\tilde E_0 := \frac12 \int_{\Omega[X_0,R_0]} |u_0|^2 + \sum_{i=1}^N \frac{c_i}{3\gamma -3} \tilde r_{0,i}^{3-3\gamma}$. 
\end{theorem}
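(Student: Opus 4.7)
The strategy is a Faedo--Galerkin scheme adapted to the moving domain. Since $X,R$ are affine and the separation condition \eqref{separation_condition} holds uniformly on $[0,T]$, one can construct a family of diffeomorphisms $\Phi_t : \overline{\Omega[X,R](0)} \to \overline{\Omega[X,R](t)}$, smooth in $(t,x)$ on $[0,T] \times \R^3$, that coincides with $x \mapsto x_i(t) + (r_i(t)/r_{0,i})(x-x_{0,i})$ in a neighborhood of each $\pa B_i[X,R](0)$ and with the identity far from the bubbles (glued via cutoffs). Using $\Phi_t$, together with the harmonic fields $\na q_i(t), \na q_i^k(t)$ of Section \ref{sec_functional_spaces} which depend smoothly on $t$, I assemble a time-dependent orthonormal basis $(e_k(t,\cdot))_{k \geq 1}$ of $L^2_{\mathrm{dil}}(\Omega[X,R](t))$: the first $4N$ vectors span $\mathbb{G}(\Omega[X,R](t))$, and the remaining vectors form a smoothly-transported basis of $L^2_\sigma(\Omega[X,R](t))$.

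For each $n \geq 1$, I look for $u^n(t,\cdot) = \sum_{k=1}^n \alpha_k^n(t)\, e_k(t,\cdot)$ solving the projection of \eqref{VF_prescribed} onto $\mathrm{span}\{e_1(t),\dots,e_n(t)\}$ in differential form, with initial data $\alpha_k^n(0) = \langle u_0, e_k(0,\cdot)\rangle_{L^2(\Omega[X,R](0))}$. This is a system of ODEs for $\alpha^n$ whose right-hand side depends on $u^n$ through the memory quantity $r_i[u^n]$ from \eqref{def_ri_u}. Since $\alpha^n \mapsto r_i[u^n]$ is locally Lipschitz as long as $r_i[u^n]$ stays in a compact subset of $(0,\infty)$, Cauchy--Lipschitz gives a maximal Galerkin solution on some interval $[0,T_n) \subset [0,T]$.

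The key is the energy estimate obtained by testing against $\psi=u^n$ (admissible as $u^n(t) \in L^2_{\mathrm{dil}}(\Omega[X,R](t))$). The convective term reduces by integration by parts and $\div u^n = 0$ to the boundary flux $\tfrac{1}{2}\sum_i \int_{\pa B_i} (u^n \cdot \mathfrak{n})|u^n|^2\,d\sigma$; combined with the correction term $-\tfrac{1}{2}\sum_i \int_{\pa B_i}(u^n \cdot \mathfrak{n}-\dot r_i-\dot x_i\cdot\mathfrak{n})|u^n|^2\,d\sigma$ and with the boundary contribution from applying \eqref{time_deriv_moving} to $\int_{\Omega(t)} \tfrac{1}{2}|u^n|^2$ (whose normal velocity at $\pa B_i$ is $\dot r_i+\dot x_i\cdot\mathfrak{n}$), these boundary terms cancel, leaving $\tfrac{d}{dt}\tfrac{1}{2}\int|u^n|^2$. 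The viscous term produces $2\nu \int |\mathbb{D}(u^n)|^2$ (with Korn's inequality transported via $\Phi_t$), and the pressure-like term integrates in time to $\sum_i \tfrac{c_i}{3\gamma-3}\bigl(r_i[u^n](t)^{3-3\gamma} - \tilde r_{0,i}^{3-3\gamma}\bigr)$, using $\dashint_{\pa B_i} u^n \cdot \mathfrak{n} = \dot r_i[u^n]$ from \eqref{def_ri_u}. This yields the announced energy inequality uniformly in $n$, and since $\gamma > 1$ it also yields a uniform strictly positive lower bound on $r_i[u^n]$, ruling out blow-up of the Galerkin ODE and giving global existence on $[0,T]$.

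It remains to pass to the limit $n \to \infty$. The energy inequality provides $u^n$ bounded in $L^\infty(0,T;L^2) \cap L^2(0,T;H^1)$ on the moving domain. Splitting $u^n = \tilde u^n + w^n$ along $L^2_\sigma \oplus \mathbb{G}$ as in Section \ref{sec_functional_spaces}, the component $w^n$ lives in the finite-dimensional space $\mathbb{G}(\Omega[X,R](t))$ whose $4N$ coefficients satisfy $H^1_t$ bounds extracted by testing the Galerkin equation against the $\na q_i, \na q_i^k$, hence are precompact in $C^0_t$; the solenoidal tangential component $\tilde u^n$ is compact in $L^2(0,T;L^2_{\mathrm{loc}})$ via the moving-domain Aubin--Lions result of \cite{Moussa}, with the required time-derivative bound extracted in a dual space of divergence-free, tangent-to-$\pa\Omega(t)$ test fields. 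The memory quantity $r_i[u^n]$ converges to $r_i[u]$ by continuity of its integral definition, using trace convergence of $u^n\cdot\mathfrak{n}$ on $\pa B_i$. The main obstacle, anticipated in Section \ref{sec_prelim}, is the coordinated passage to the limit in the quadratic convective term and the boundary-correction term, which requires strong trace convergence to handle the product $(u^n\cdot\mathfrak{n})|u^n|^2$; once this is done, every term in \eqref{VF_prescribed} passes to the limit, and the energy inequality is inherited by lower semicontinuity.
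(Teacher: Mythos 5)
Your overall architecture (a Galerkin scheme in a basis transported by a diffeomorphism, adapted to the splitting $L^2_\sigma \oplus \mathbb{G}$, energy estimate by testing with $u^n$, then compactness) is the same as the paper's, but two of the claims on which your scheme rests are precisely the points where the real work lies, and as stated they do not hold. First, you posit a ``time-dependent orthonormal basis'' of $L^2_{\mathrm{dil}}(\Omega[X,R](t))$ whose solenoidal part is ``smoothly transported''. Transport by the natural (Piola-type) operator $\mathcal{S}(t)$ of Lemma \ref{diffeoLemma}/\eqref{S(t)1} does \emph{not} preserve orthonormality, and using instead an orthonormal eigenbasis of the Stokes operator on each $\Omega(t)$ is excluded because eigenvalues/eigenfunctions need not be differentiable in $t$ (Remark \ref{rem_Riesz_basis}). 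The paper therefore works with a \emph{Riesz} basis $e_\ell^m(t)=\mathcal{S}(t)e_\ell^m(0)$ and must control the Gram matrix $\mathcal{G}^{m,n}(t)$ (Proposition \ref{Prop_gram_surject}); the $H^1$-type spectral equivalence needed later only holds on a short interval $[0,T^\ast]$ (Proposition \ref{Norm_H1_Spect}), forcing an iteration in time. Moreover, you work directly on the unbounded exterior domain: there, no Stokes eigenbasis exists, and your basis of $L^2_\sigma(\Omega(t))$ with the required $H^1$ structure is left unspecified. The paper handles this by a double approximation, truncating to $\Omega^m(t)=\Omega(t)\cap B(0,m)$ for the solenoidal part while keeping the $4N$ gradient fields global, which is why \eqref{VF_prescribed_m_n} carries extra integrals over $B(0,m)^c$ and the boundary term on $\partial B(0,m)$; the limit $m\to\infty$ is then a separate step. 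None of this has an analogue in your proposal.

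Second, the compactness step is asserted rather than proved, and it is the heart of the matter. To apply the moving-domain Aubin--Lions theorem (or the classical one after pulling back by $\mathcal{S}(t)^{-1}$, as the paper does in Proposition \ref{compactness}), you need a bound on $\langle \partial_t \tilde u^n,\Psi\rangle$ for \emph{arbitrary} divergence-free test fields $\Psi$, whereas the Galerkin solution satisfies the equation only against fields in the moving finite-dimensional span. One must therefore project $\Psi$ onto that span with uniform $H^1$ control of the projection, and this is exactly where the Gram-matrix estimates \eqref{gram_surject_H1}, the spectral characterization of Proposition \ref{Norm_H1_Spect}, and the time-derivative-of-basis identity \eqref{transport_o} enter (Proposition \ref{ineg_deriv}); your single sentence ``with the required time-derivative bound extracted in a dual space'' skips this entirely. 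Finally, you defer the limit in the convective and boundary-correction terms as ``the main obstacle'' without resolving it; note also that strong convergence of the full trace of $u^n$ is not what is needed there: since $u^n(t)\in L^2_{\mathrm{dil}}$, the normal trace equals $\dot r_i[u^n]+\dot x_i[u^n]\cdot\mathfrak{n}$ and converges strongly via the $H^2(0,T)$ bounds \eqref{estim_H2_galerkin}, while $\psi\cdot u^n$ only needs weak trace convergence; the genuinely delicate point is the local strong $L^2$ compactness of $u^n$ itself, which brings you back to the missing time-derivative estimate. The passage from finite-dimensional test fields to the full class in \eqref{VF_prescribed} (the density results of Appendix \ref{appendixC} and Lemma \ref{approximare_spect}) is also unaddressed.
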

Several remarks are in order. 
\begin{remark}
    The variational formulation \eqref{VF_prescribed} is  similar to \eqref{weak_form_fluid_bubble}, but with a major difference: there is a mismatch between the normal velocity of $\pa B_i[X,R](t)$, which is prescribed and given by $\dot{r}_i + \dot{x}_{i}\cdot \mathfrak{n}$,  and the normal fluid velocity $u \cdot \mathfrak{n}\vert_{\pa B_i[X,R](t)}$, which is given by $\dot{r}_i[u] + \dot{x}_{i}[u]\cdot \mathfrak{n}$, see \eqref{def_ri_u}-\eqref{def_xi_u},  taking into account that $u(t) \in L^2_{\mathrm{dil}}(\Omega(t))$.  This mismatch explains the need for the extra boundary term  $\frac12 \sum_{i=1}^{N} \int_{0}^{t}  \int_{\partial B_{i}[X,R]\left(  s\right)  } \left(u \cdot \mathfrak{n} - \dot{r}_i - \dot{x}_i \cdot \mathfrak{n}\right) (\psi  \cdot u) \mathrm{d}s$. The change in the pressure contribution $ \sum_{i=1}^{N}  \int_0^t c_i \left( r_i[u] \right)^{2-3\gamma} 
\dashint_{\partial B_{i}[X,R]\left(  s\right)  } \psi  \cdot \mathfrak{n}\mathrm{d}s$ is also related to this mismatch. These modifications ensure (at least formally) that  taking $\psi = u$ in \eqref{VF_prescribed} and applying the formula \eqref{time_deriv_moving}, one recovers the energy inequality. 
\end{remark}
\begin{remark}
As explained in the previous remark, due to the mismatch between the normal components of the fluid and the bubbles at their interfaces, one had to add an extra boundary term and modify the pressure contribution so as to preserve the energy inequality. 
This modifies in turn the strong formulation associated to \eqref{VF_prescribed}. Assuming that $u$ and $\psi$ are smooth and integrating by parts in \eqref{VF_prescribed}, one obtains after tedious manipulations the following modified boundary conditions (in addition to the Navier-Stokes equation in the fluid domain): 
\begin{equation} \label{strong_formulation_prescribed}
\begin{aligned}
& \mathrm{T}(u,p) \mathfrak{n} \times \mathfrak{n}\vert_{\pa B_i[X,R]} - \frac{1}{2}( \left(u \cdot \mathfrak{n} - \dot{r}_i - \dot{x}_i \cdot \mathfrak{n}\right) u \times \mathfrak{n}\vert_{\pa B_i[X,R]} = 0, \\
& \dashint_{\pa B_i[X,R]} \mathrm{T}(u,p) \mathfrak{n} \cdot \mathfrak{n} - \frac{1}{2} \dashint_{\pa B_i[X,R]} \left(u \cdot \mathfrak{n} - \dot{r}_i - \dot{x}_i \cdot \mathfrak{n}\right) u \cdot \mathfrak{n} = - \frac{r_i[u]^2}{r_i^2} p_i, \\
& \dashint_{\pa B_i[X,R]} \mathrm{T}(u,p) \mathfrak{n} - \frac{1}{2} \dashint_{\pa B_i[X,R]} \left(u \cdot \mathfrak{n} - \dot{r}_i - \dot{x}_i \cdot \mathfrak{n}\right) u = 0. 
\end{aligned}
\end{equation}
Let us note that similar  nonlinear boundary conditions are sometimes used as outflow conditions in the simulation of channel flows, see \cite[chapter 7]{Boyer}. 
\end{remark}
\begin{remark}
Theorem \ref{thm_prescribed} is reminiscent of article \cite{Neustupa}, where the Navier-Stokes equation is studied outside a set of rigid obstacles with {\em prescribed trajectories}, and with  {\em Navier slip conditions} at the boundary of the obstacles.  However,  the normal components of the fluid and of the obstacles are taken equal in \cite{Neustupa}. This is possible because  there is no extra conditions such as (\ref{strong_formulation_prescribed}b)-(\ref{strong_formulation_prescribed}c) to be fulfilled. In our case, these conditions determine $\dot{r}_i[u]$, $\dot{x}_i[u]$, which are then different from  $\dot{r}_i$, $\dot{x}_i$. Also, as will be seen in Section \ref{sec_prescribed}, we will use a Galerkin scheme to solve \eqref{VF_prescribed}, instead of a time discretization method like in \cite{Neustupa}. 
\end{remark}

\subsection{Construction of approximate solutions through time stepping.} \label{sec_time_stepping}
To obtain a weak solution of \eqref{main} on $[0,T)$ for some $T > 0$, we will rely on a time discretization, where each time step will be handled thanks to Theorem \ref{thm_prescribed}. Let $h$ such that $\frac{T}{h}$ belongs to $\N$. The idea is to build an approximation $(X^h,R^h,u^h)$  iteratively on each $[kh, (k+1)h]$, $k = 0 \dots \frac{T}{h}-1$, as follows.   
\begin{itemize}
    \item[i)] {\em Initialization}.  Let $(X_0,R_0,u_0)$ as in Theorem \ref{main_thm}, and $\tilde{R}_0 = R_0$. In particular, 
    \begin{equation} \label{def_delta0}
        \delta_0 := \frac{1}{4}\inf_{i \neq j} \left( |x_{0,i} - x_{0,j}| - r_{0,i} - r_{0,j}  \right) > 0. 
    \end{equation}
    Let then 
    $$ X(t) := X_0, \quad R(t) := R_0, \quad t \in \R_+$$
    and  $u = u[X,R,u_0,\tilde{R}_0]$ as in Theorem \ref{thm_prescribed}. We set: 
    $$(X^h,R^h,u^h) := (X,R,u) \: \text{ on } [0,h].$$ 
    \item[ii)] Induction step. Let $1  \le k \le \frac{T}{h}-1$.  Assume that we have defined $(X^h,R^h,u^h)$ on $[0,kh]$, with $X^h, R^h$ continuous and piecewise affine, and  $u^h \in L^\infty(0,kh ; L_{\mathrm{dil}}^{2}(\Omega[X^h,R^h])$. For a.e. $t \in [0,kh)$, 
       $$u^h(t) \cdot \mathfrak{n}\vert_{\pa B(x^h_i(t), r^h_i(t))} = \dot{r}_i[u^h](t) + \dot{x}_i[u^h](t)  \cdot \mathfrak{n}, \quad i \in \overline{1,N}$$ 
       with $r_i[u^h]$,  $x_i[u^h]$ defined in \eqref{def_ri_uh}-\eqref{def_xi_uh}. Denoting 
       $$R[u^h] = \left(r_i[u^h]\right)_{ i \in \overline{1,N}},  \quad X[u^h] = \left(x_i[u^h]\right)_{ i \in \overline{1,N}},$$
       we  define, for all $t \in [0,h]$,
    \begin{equation} \label{def_iter_R_X}
    R(t) := R^h(kh) + \left( \dashint_{(k-1)h}^{kh} \dot{R}[u^h] \right) t, \quad X(t) := X^h(kh) + \left( \dashint_{(k-1)h}^{kh} \dot{X}[u^h] \right) t.
     \end{equation}
   We also define $u_0$ and $\tilde{R}_0 = (r_{0,i})_{1 \le i \le N}$ by the formulas
    \begin{align*}
    u_0  := u^h(kh), \quad \tilde{R}_{0}  := R[u^h](kh).
    \end{align*}
    Let  $u = u[X,R,u_0,\tilde{R}_0]$ as in Theorem \ref{thm_prescribed}. We set: 
    $$(X^h,R^h,u^h)(t) := (X,R,u)(t-kh) \: \text{ on } [kh,(k+1)h].$$ 
\end{itemize}
The point is to show that this formal iteration can be  implemented. Note that, strictly speaking,  $u^h$ may not be defined at $t =k h$, as it is defined only a.e. on $[0, kh]$. In such a case, one replaces  in the above formulae  $kh$ by some $t_{k,h} \approx kh$, say $t_{k,h} \in [kh - \frac{1}{2^k}h]$, such that $u^h(t_{k,h})$ is well-defined. We shall not comment more on this technicality.  The main result of this iterative approach is the following 
\begin{proposition} \label{prop_uh}
Let $(X_0,R_0,u_0)$ as in Theorem \ref{main_thm}, $E_0$ and $\delta_0$ defined in \eqref{def_E0} and \eqref{def_delta0}. There exists $T_0 > 0$  such that for all $T \le T_0$, for all $h$ with $\frac{T}{h} \in \N$, the induction process above works and provides a triple $(X^h,R^h,u^h)$ such that $u^h  \in L^\infty\left(0,T ; L^2_{\mathrm{dil}}(\Omega[X^h,R^h])\right)$,  $\na u^h \in L^2\left(0,T ; L^2(\Omega[X^h,R^h])\right)$ and  such that for all $\psi^h \in L^{\infty}\left(0,T ; L_{\mathrm{dil}}^{2}(\Omega[X^h,R^h])\right)$ with $\partial_{t}\psi^h,\nabla\psi^h\in
L^2\left(0,T ; L^{2}( \Omega[X^h,R^h])\right)$,  for a.e. $t \in [0,T)$,   
\begin{equation}  \label{VF_prescribed_h}
\begin{aligned}
& \int_{\Omega[X^h,R^h]\left(  t\right)  } u^h\left(  t\right)  \cdot\psi^h\left(
t\right)  +2\nu \int_{0}^{t}\int_{\Omega[X^h,R^h]\left(  s\right)  }\mathbb{D}\left(
u^h\right) : \mathbb{D}\left(  \psi^h\right)  \mathrm{d}s \\
& - \sum_{i=1}^{N}  \int_0^t c_i \, \left(r_i[u^h]\right)^{2 - 3 \gamma} \,  \dashint_{\partial B_{i}[X^h,R^h]\left(  s\right)  } \psi^h  \cdot \mathfrak{n} \, \mathrm{d}s\\
& - \frac12 \sum_{i=1}^{N} \int_{0}^{t}  \int_{\partial B_{i}[X^h,R^h]\left(  s\right)  } \left(u^h \cdot \mathfrak{n} - \dot{r}_i^h - \dot{x}_i^h \cdot \mathfrak{n}\right) (\psi^h  \cdot u^h) \mathrm{d}s \\
& =\int_{\Omega(0)}  u_{0} \cdot\psi\left(0\right)
+\int_{0}^{t}\int_{\Omega[X^h,R^h]\left(  s\right)  }  u^h \cdot (\partial_{t}\psi^h + u^h \cdot \na \psi^h)\mathrm{d}s,
\end{aligned}
\end{equation}
where 
\begin{equation} \label{def_ri_uh}
    r_i[u^h](t) := r_{0,i} + \int_{0}^t \dashint_{\pa B_i[X^h,R^h](s)} u^h \cdot \mathfrak{n}   \, \mathrm{d}s.
\end{equation}
Moreover, the following energy inequality holds for a.e. $t \in (0,T)$:
\begin{equation*}
\frac12 \int_{\Omega[X^h,R^h](t)} |u^h(t)|^2 + 2 \nu \int_{\Omega[X^h,R^h](t)} |\mathbb{D}(u^h)(s)|^2 \mathrm{d}s + \sum_{i=1}^N \frac{c_i}{3\gamma -3} r_i[u^h](t)^{3-3\gamma}  \le E_0 .
\end{equation*}
\end{proposition}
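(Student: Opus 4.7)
The plan is induction on $k \in \{0, 1, \ldots, T/h\}$. The induction hypothesis at step $k$ is that $(X^h, R^h, u^h)$ has been constructed on $[0, kh]$ in accordance with Theorem \ref{thm_prescribed} on each previous subinterval, together with the two quantitative bounds
\begin{equation*}
r^h_i(t) \ge r_{0,\min}/2, \qquad \inf_{i \neq j}\bigl(|x^h_i(t) - x^h_j(t)| - r^h_i(t) - r^h_j(t)\bigr) \ge 2\delta_0,
\end{equation*}
for all $t \in [0, kh]$. The separation bound guarantees that the affine fields $(X, R)$ defined on $[0, h]$ via \eqref{def_iter_R_X} still satisfy hypothesis \eqref{separation_condition} of Theorem \ref{thm_prescribed} with $\delta = \delta_0$, provided $T_0$ is chosen small enough that motion over a single step cannot destroy this margin. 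Theorem \ref{thm_prescribed} applied with $u_0 := u^h(kh)$ and $\tilde R_0 := R[u^h](kh)$ then produces $u$ on $[0, h]$, which we graft onto $[kh, (k+1)h]$.

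The energy inequalities chain automatically: the quantity $\tilde E_0^k := \frac12 \|u^h(kh)\|_{L^2}^2 + \sum_i \frac{c_i}{3\gamma - 3} r_i[u^h](kh)^{3-3\gamma}$ appearing in Theorem \ref{thm_prescribed} at step $k$ is bounded by $E_0$ via the induction hypothesis, whence on $[0, (k+1)h]$
\begin{equation*}
\frac12 \int_{\Omega[X^h, R^h](t)} |u^h(t)|^2 + 2\nu \int_0^t\!\! \int_{\Omega[X^h, R^h]} |\mathbb{D}(u^h)|^2 + \sum_i \frac{c_i}{3\gamma - 3} r_i[u^h](t)^{3-3\gamma} \le E_0.
\end{equation*}
This yields, uniformly in $h$, a bound on $u^h$ in $L^\infty(0, T; L^2) \cap L^2(0, T; H^1)$ and a lower bound $r_i[u^h](t) \ge \rho_0(E_0, c, \gamma) > 0$, the latter ensuring integrability of the pressure contribution $r_i[u^h]^{2-3\gamma}$ in \eqref{VF_prescribed_h}.

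To close the induction on the quantitative bounds, I would bound the displacement of $(X^h, R^h)$. Trace inequalities on the bubble boundaries (uniform under the running geometric bounds) combined with the definition \eqref{def_ri_uh} and the analogous formula for $\dot x_i[u^h]$ give $|\dot X[u^h]| + |\dot R[u^h]| \le C \|u^h\|_{H^1(\Omega[X^h, R^h])}$, hence $\int_0^{(k+1)h}(|\dot X[u^h]|^2 + |\dot R[u^h]|^2) \le C E_0/\nu$. By Jensen's inequality the same $L^2$-bound is inherited by the piecewise-constant slopes $\dot X^h, \dot R^h$ defined in \eqref{def_iter_R_X}, and Cauchy--Schwarz then delivers $|X^h(t) - X_0| + |R^h(t) - R_0| \le \sqrt{T_0\, C E_0/\nu}$ on $[0, T_0]$. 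Fixing $T_0$ so that the right-hand side is strictly less than $\min(r_{0,\min}/4, \delta_0/2)$ preserves both quantitative bounds with a strict margin, completing the induction step. The global variational formulation \eqref{VF_prescribed_h} on $[0, T]$ then follows by summing the formulations of Theorem \ref{thm_prescribed} over the $T/h$ subintervals: any admissible test function $\psi$ restricts on each $[kh, (k+1)h]$ to one valid for Theorem \ref{thm_prescribed}, and the boundary terms $\int u^h \cdot \psi$ at the nodes $t = kh$ telescope since $u^h$ and $\psi$ are $L^2$-continuous across them.

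The main obstacle is the uniform-in-$h$ control of $|X^h(t) - X_0|$ and $|R^h(t) - R_0|$: without it one cannot guarantee that the hypothesis of Theorem \ref{thm_prescribed} keeps being satisfied as $h$ shrinks. This goes through only because the scheme in \eqref{def_iter_R_X} uses temporal \emph{averages} of $\dot X[u^h], \dot R[u^h]$: pointwise rates at time $t$ are controlled only by $\|u^h(t)\|_{H^1}$, for which there is no $h$-uniform pointwise bound, whereas the $L^2$-norm of the averages is controlled by the $h$-independent dissipation estimate $\int_0^T \|\na u^h\|^2 \lesssim E_0/\nu$ inherited from the iteration of Theorem \ref{thm_prescribed}.
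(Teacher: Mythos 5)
Your induction skeleton---chaining the energy inequalities so that the step-$k$ energy $\tilde E_0^k$ is bounded by $E_0$, propagating quantitative lower bounds on the radii and on the separation, and choosing $T_0$ so small that the drift of $(X^h,R^h)$ cannot destroy the initial margin---is exactly the paper's, and your gluing of the subinterval formulations into \eqref{VF_prescribed_h} is fine (modulo the ``$t_{k,h}\approx kh$'' technicality that the paper also waves away). Where you genuinely diverge is the control of the drift. You bound $|\dot R[u^h]|+|\dot X[u^h]|$ pointwise by $\|u^h(t)\|_{H^1}$ via a trace inequality, hence only in $L^2(0,T)$ through the dissipation, and then exploit the averaging in \eqref{def_iter_R_X} together with Jensen and Cauchy--Schwarz to get a displacement of size $\sqrt{T_0\,C E_0/\nu}$. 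This does close the induction, but at two costs you should make explicit: you need a Korn-type inequality on the moving exterior domain (with constant uniform under the running geometric bounds) to pass from $\mathbb{D}(u^h)$ to $\nabla u^h$, and your $T_0$ degenerates as $\nu\to 0$. The paper instead bounds $\dot R[u^h],\dot X[u^h]$ \emph{pointwise in time}, see \eqref{estimate_dotRh_dotXh}: since $\operatorname{div}u^h=0$, one writes $\int_{\partial B_i}u^h\cdot\mathfrak{n}=\int_{C_i}\nabla\varphi_i^\delta\cdot u^h$ with the harmonic cut-off \eqref{phi_delta} (and an $H^{-1/2}$--$H^{1/2}$ duality for $\dot x_i[u^h]$), exactly as in \eqref{upper_bound_dotR}--\eqref{upper_bound_dotX}, so only $\|u^h(t)\|_{L^2}\le\sqrt{2E_0}$ is used; combined with \eqref{link_R_Ru} and a bootstrap in $k$ with threshold constants $C_{0,R},C_{0,X}$, this yields a $T_0$ depending only on $E_0,R_0,\delta_0$, with no Korn inequality and no $\nu$-dependence. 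In particular your closing assessment---that no $h$-uniform pointwise bound on the rates is available and that the time-averaging in \eqref{def_iter_R_X} is what saves the argument---is not accurate: the $L^\infty$-in-time bound \eqref{estimate_dotRh_dotXh} does hold and is what the paper relies on; the averaging merely transmits it to the piecewise-constant slopes via \eqref{link_R_Ru}.
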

We recall that $E_0$ was defined in \eqref{def_E0}. For later purpose, we also define 
\begin{equation} \label{def_xi_uh}
   x_i[u^h](t) :=  x_{0,i}  + 3 \int_0^s \dashint_{\partial B_{i}[X,R]\left(  s'\right)} u^h \cdot \mathfrak{n} \, \mathfrak{n}  \mathrm{d}s'.
\end{equation}
The proof of Proposition \ref{prop_uh} will be given in Section \ref{sec_prescribed}. Namely, it means that for $T \le T_0$ {\em small enough}, one can apply Theorem \ref{thm_prescribed}  on each subinterval $[kh,(k+1)h]$ of $[0,T]$, in the way indicated in the induction step. This is not obvious, as the theorem requires that the bubbles $B_i(t)$ remain away from one another, {\it cf.} condition \eqref{separation_condition}. We need to show that this separation condition holds uniformly over the subintervals. This is where the smallness of $T_0$ is required. 


\subsection{Compactness of $u^h$ and conclusion.} The last main step of the proof is to show that any accumulation point of $(u^h)_{h > 0}$ as $h \rightarrow 0$ is a weak solution of \eqref{main}. The whole analysis of Section \ref{sec_prelim} is needed here. It provides a weak solution {\em in short time}, {\em a priori} limited by the condition $T \le T_0$ in Proposition \ref{prop_uh}. The final part of the proof is to establish that this solution can be extended in time until the first collision between bubbles. 

\section{Preliminaries} \label{sec_prelim}
We collect in this part properties of weak solutions of system \eqref{main}. These properties will still be shared by the solutions of the approximate systems that we will introduce later, although their proof will require a few changes.  We believe that to present these properties in the simpler context of system \eqref{main} will help to understand the next sections. 

\subsection{A priori estimates} \label{sec_a_priori}
Let $T > 0$, $\delta > 0$, $(X_0,R_0,u_0)$ as in Theorem \ref{main_thm}. We assume that there exists $(X,R,u)$ a weak solution of \eqref{main} on $[0,T)$ with data  $(X_0,R_0,u_0)$, such that
$$ \forall i \neq j, \quad |x_i - x_j| \ge  r_i + r_j + 4 \delta. $$
We have the energy inequality: for a.e.  $t < T$ 
\begin{equation} \label{a_priori_energy}
\frac12 \int_{\Omega(t)} |u(t)|^2 + 2 \nu \int_0^t \int_{\Omega(t)} |\mathbb{D}(u)(s)|^2 \mathrm{d}s + \sum_{i=1}^N \frac{c_i}{3\gamma -3} r_i(t)^{3-3\gamma}  \le  E_0 
\end{equation}
where 
$$ E_0 := \frac12 \int_{\Omega_{0,F}} |u_0|^2 + \sum_{i=1}^N \frac{c_i}{3\gamma -3} r_{0,i}^{3-3\gamma}    $$
which provides uniform bounds on $\|u\|_{L^\infty(0,T ; L^2(\Omega))}$ and $\|\na u\|_{L^2(0,T ; L^2(\Omega))}$, as well as the uniform lower bound 
\begin{equation} \label{lower_bound_R}
  \forall i \in \overline{1,N},  \: \forall t \le T, \quad  r_i(t) \ge c(E_0) > 0.  
\end{equation}

\paragraph{Lipschitz bounds on $X,R$.}

 We introduce for all $t\in\left(
0,T\right)$, $i\in 1,N$ \ the solution
$\varphi_i^\delta$ of
\begin{equation}
\left\{
\begin{aligned}
-\Delta\varphi_i^\delta\left(  t\right)  & =0\text{ on } C_i(t) := \left\{ r_i\left(
t\right)  <\left\vert x-x_i\left(  t\right)  \right\vert <r_i\left(
t\right)  +\delta\right\},\\
\varphi_i^\delta\left(  t\right) & =0\text{ on }\left\vert x-x_i\left(
t\right)  \right\vert =r_i\left(  t\right)  +\delta,\\
\varphi_i^\delta \left(  t\right)  & =1\text{ on }\left\vert x-x_i\left(
t\right)  \right\vert =r_i\left(  t\right)  .
\end{aligned}
\right.  \label{phi_delta}%
\end{equation}
which is given explicitly by%
\[
\varphi_i^\delta\left(  t,x\right)  =\frac{(\delta+r_i\left(  t\right)
)r_i\left(  t\right)  }{\delta}\frac{1}{\left\vert x-x_i\left(  t\right)
\right\vert }-\frac{r_i\left(  t\right)  }{\delta}.
\]
Note that $0 \le \varphi_i^\delta \le 1$. Furthermore,
\[
\nabla\varphi_i^\delta\left(  t,x\right)  =-\frac{(\delta+r_i\left(
t\right)  )r_i\left(  t\right)  }{\delta}\frac{x-x_i\left(  t\right)
}{\left\vert x-x_i\left(  t\right)  \right\vert ^{3}}%
\]
from which we deduce that%
\[
\left\Vert \nabla\varphi_i^\delta\left(  t\right)  \right\Vert
_{L^{\infty}(C_i(t))}\leq\left(  \frac{1}{r_i\left(  t\right)  }+\frac
{1}{\delta}\right)  \text{ and }\left\Vert \nabla\varphi_{i}^{\delta}\left(
t\right)  \right\Vert _{L^{2}(C_{i}(t))}\leq(r_i\left(  t\right)
)^{\frac{3}{2}}\left(  \frac{1}{r_i\left(  t\right)  }+\frac{1}{\delta
}\right)  .
\]
From the first bound and the inequality 
$$ \left(\frac{c_i}{(3 \gamma -3) E_0}\right)^{\frac{1}{3\gamma-3}} \le r_i(t)  $$
deduced from \eqref{a_priori_energy}, we get
\begin{align*}
& 4\pi\left(  \frac{c_i}{(3\gamma-3)E_0}\right)^{\frac{2}{3\gamma-3}}  \left\vert \dot{r}_{i}\left(
t\right)  \right\vert    \leq4\pi r_{i}^{2}\left(  t\right)  \left\vert
\dot{r}_{i}\left(  t\right)  \right\vert =\left\vert \int_{\partial
B_{i}\left(  t\right)  }u\left(  t\right)  \cdot \mathfrak{n}\mathrm{d}\sigma\right\vert
\\
&  \leq\left\Vert \operatorname{div}(\varphi_{i}^\delta u)\right\Vert
_{L^{2}\left(  C_{i}\left(  t\right)  \right)  } \| \leq \left\Vert \nabla\varphi_i^\delta\left(  t\right)  \right\Vert
_{L^{\infty}(C_i(t))} \|u(t)\|_{L^2(\Omega(t))}\leq \sqrt{2 E_{0}} \left(  \frac{1}{\delta}+\frac{1}{r_i}\right)  .
\end{align*}
Combining this last inequality with \eqref{lower_bound_R} we deduce the uniform bound  
\begin{equation}  \label{upper_bound_dotR}
    |\dot{R}(t)| \le C(E_0, \delta).
\end{equation}
It follows that
\begin{equation} \label{upper_bound_R}
|R(t)| \le C(R_0, E_0, \delta,T).
\end{equation}
We proceed similarly for the estimate of $X\left(  t\right)$, using the relation
\[
\frac{4\pi}{3} (r_i(t))^{2} \dot{x}_{i}\left(  t\right)  =\int_{\partial
B_i\left(  t\right)  }(u\left(  t\right)  \cdot \mathfrak{n})\mathfrak{n} \mathrm{d}\sigma.
\]
We obtain that%
\begin{align*}
&  \frac{4\pi}{3}\left(  \frac{c_i}{(3\gamma-3)E_{0}}   \right)^{\frac{2}{3\gamma-3}} \left\vert \dot{x}_{i}\left(t\right)  \right\vert \\
&  \leq\left\vert \int_{\partial B_i\left(  t\right)  }\left(  u\left(
t\right)  \cdot \mathfrak{n}\right)  \mathfrak{n} \mathrm{d}\sigma\right\vert \leq\left\Vert
(\varphi_i^\delta u) \cdot \mathfrak{n} \right\Vert _{H^{-\frac{1}{2}}\left(  \partial
B_i\left(  t\right)  \right)  }\left\Vert \varphi_i^\delta\left(
t\right) \mathfrak{n}  \right\Vert _{H^{\frac{1}{2}}\left(  \partial B_i\left(  t\right)
\right)  }\\
&  \leq C \left( \left\Vert \operatorname{div}(\varphi_i^\delta u)\right\Vert
_{L^{2}\left(  C_i\left(  t\right)  \right)  } + \left\Vert (\varphi_i^\delta u)\right\Vert
_{L^{2}\left(  C_i\left(  t\right)  \right)  }   \right) \left\Vert \varphi_i^\delta\left(  t\right) \mathfrak{n}  \right\Vert _{H^{1}\left(  C_i\left(  t\right)
\right)  } \\
& \leq C \left\Vert \varphi
_{i}^\delta\left(  t\right) \mathfrak{n} \right\Vert _{H^{1}\left(  C_i\left(
t\right)  \right)  }.
\end{align*}
The last term is estimated as follows. First, as $0 \le \varphi_i^\delta \le 1$,
\[
\left\Vert \varphi_i^\delta\left(  t\right)   \mathfrak{n} \right\Vert _{L^{2}\left(
C_{i}^n\left(  t\right)  \right)  }^{2}=\int_{C_{i}^n\left(  t\right)  }\left\vert
\varphi_i^\delta\right\vert ^{2}\leq C   r_{i}^n(t)^2 \leq C(R_0,E_0,\delta,T)
\]
thanks to \eqref{upper_bound_R}. Second, 
\[
\left\Vert \nabla(\varphi_i^\delta\left(  t\right) \mathfrak{n} )\right\Vert
_{L^{2}\left(  C_i\left(  t\right)  \right)  }^{2}=\left\Vert (\nabla
\varphi_i^\delta(t))  \mathfrak{n} \right\Vert _{L^{2}\left(  C_i\left(
t\right)  \right)  }^{2}+\left\Vert \varphi_i^\delta\left(
t\right)  \nabla \mathfrak{n}\ \right\Vert _{L^{2}\left(  C_i\left(  t\right)  \right)  }^{2}.
\]
Using the  $L^2$ estimate of $\varphi_i^\delta$ just above, the previous $L^2$ estimate of $\na \varphi_i^\delta$  together with \eqref{upper_bound_R}, we find 
$$ \left\Vert \nabla(\varphi_i^\delta\left(  t\right) \mathfrak{n}  )\right\Vert
_{L^{2}\left(  C_i\left(  t\right)  \right)  }^{2} \le C(R_0,E_0,\delta,T)  $$
and eventually
\begin{equation} \label{upper_bound_dotX}
|\dot{X}(t)| \le C(R_0,E_0,\delta,T)    
\end{equation}
so that in turn
\begin{equation} \label{upper_bound_X}
|X(t)| \le C(X_0,R_0,E_0,\delta,T).
\end{equation}

\paragraph{$H^2(0,T)$ bounds on $X,R$.}
We rely here on the space of gradient fields $\mathbb{G}(U)$ introduced in 
Section \ref{sec_functional_spaces}, and on the families $(\na q_i)_{1 \le i \le N}$ and $(\na q_i^k)_{1 \le i \le N, 1 \le k \le 3}$ defined through \eqref{eq_qi}-\eqref{eq_qik}. We shall consider $U = \Omega(t)$, and denote $q_i = q_i(t)$, $q_i^k = q_i^k(t)$. We claim that we have for all $i \in \overline{1,N}$, for all $i \in \overline{1,3}$, for all $t \in (0,T)$
\begin{align}
\left\Vert \nabla q_{i}\left(  t\right)  \right\Vert _{H^1\left(
\Omega\left(  t\right)  \right)  }+\left\Vert \partial_{t}\nabla
q_{i}\left(  t\right)  \right\Vert _{L^{2}\left(  \Omega\left(  t\right)
\right)  }  &  \leq  C\left(X_0,R_0,E_0,\delta,T\right)  ,\label{q1}\\
\left\Vert \nabla q_{i}^{k}\left(  t\right)  \right\Vert _{H^1\left(
\Omega\left(  t\right)  \right)  }+\left\Vert \partial_{t}\nabla q_{i}%
^{k}\left(  t\right)  \right\Vert _{L^{2}\left(  \Omega\left(  t\right)
\right)  }  &  \leq C\left(X_0,R_0,E_0,\delta,T\right) . \label{q1k}%
\end{align}
The estimates of $\na q_i, \na q_i^k$ are standard. The estimates of  $\pa_t \na q_i, \pa_t \na q_i^k$ are related to the more general question of differentiating with respect to the domain.   In the case of Neumann problems like \eqref{q1}-\eqref{q1k}, we refer to \cite[chapter p. 202]{Henrot} for an in-depth analysis. 
The dependence on $X_0,R_0,E_0,T,\delta$ of the constants at the right-hand side comes from the dependence on the domain $\Omega(t)$ of these  estimates. Both estimates involve upper and lower bounds on the radii of the balls and the distance between the balls, while the time derivative estimate involves additionaly an upper bound on  $\dot{X},\dot{R}$.  All these quantities are controlled in terms of $X_0,R_0,E_0,T,\delta$ by previous estimates. 

By the Gram-Schmidt orthogonalization process we construct an orthonormal basis in
$\mathbb{G}(\Omega(t))$ (for the $L^{2}$ scalar product) which by slightly abusing the
notation, we denote $\left\{  \nabla q_{i}(t)\right\}  _{i\in
\overline{1,4N}}$. These functions still satisfy  \eqref{q1} (this time for all $i \in \overline{1,4N}$). Let  $s\left(  t\right)  \in C_{0}^{\infty}\left(  0,T\right)$. We observe that the above estimates allow us to use $\psi = s\left(  t\right)  \nabla
q_{i}\left(  t\right)$ in \eqref{weak_form_fluid_bubble}  to obtain
\begin{align*}
&  \int_{0}^{T}s\left(  t\right)  \int_{\Omega\left(  t\right)
}\mathbb{D}\left(  u\right)  :\mathbb{D}\left( \na q_{i}\right)
\mathrm{d}t-\sum_{j=1}^{N}\int_{0}^{T} \frac{c_j}{4\pi r_{j}\left(  t\right)  ^{3\gamma}}s\left(  t\right) \int_{\partial B_{j}\left(  t\right)
}
\nabla q_{i}\left(  t\right)  \cdot \mathfrak{n}\mathrm{d}t\\
&  =\int_{0}^{T}\int_{\Omega\left(  t\right)  }\left[  \dot
{s}\left(  t\right)  u\left(  t\right) \cdot \nabla q_{i}\left(  t\right)
+s\left(  t\right)  u\left(  t\right)  \cdot\partial_{t}\nabla q_{i}\left(
t\right)  +s\left(  t\right)  u\left(  t\right)  \otimes u\left(  t\right)
:\nabla^{2}q_{i}\left(  t\right)  \right]  \mathrm{d}t.
\end{align*}
We get%
\begin{equation}
\left\vert \int_{0}^{T}\int_{\Omega\left(  t\right)  }\dot{s}\left(
t\right)  u\left(  t\right) \cdot \nabla q_{i}\left(  t\right)  \right\vert \leq
C\left(X_0,R_0,E_0,\delta,T\right)  \left\Vert s\right\Vert _{L^{2}%
}.\label{coef_deriv}%
\end{equation}
Thanks to the orthogonal decomposition
\[
u\left(  t\right)  =\mathbb{P}\left(  t\right)  u\left(  t\right)  +\sum
_{j=1}^{4N}\alpha_{j}\left(  t\right)  \nabla q_{j}\left(  t\right)
\]
where $\mathbb{P}\left(  t\right)$ is the Leray projector on $\Omega(t)$, we deduce from $\left(  \text{\ref{coef_deriv}}\right)  $  that
for all $i\in 1,4N$
\begin{equation} \label{coef_alphai}
\left\vert \int_{0}^{T}  \dot{s}\left(
t\right)  \alpha_{i}\left(  t\right)  \right\vert \leq C\left(X_0,R_0,E_0,T,\delta\right)   \left\Vert s\right\Vert _{L^{2}}.
\end{equation}
Now, we know that%
\begin{align*}
\int_{0}^{T}\dot{s}\left(  t\right)  \dot{r}_{i}\left(  t\right)  \mathrm{d}t
&  =\int_{0}^{T} \dot{s}\left(  t\right)\dashint_{\partial B_{i}\left(  t\right)  }u\left(  t\right)  \cdot
\mathfrak{n} \mathrm{d}\sigma\mathrm{d}t\\
&  =\int_{0}^{T} \dot{s}\left(  t\right) \sum_{j=1}^{4N}\dashint_{\partial B_{i}\left(  t\right)  }\alpha_{j}\left(
t\right)  \frac{\partial q_{j}\left(  t\right)  }{\partial \mathfrak{n}}\mathrm{d}%
\sigma\mathrm{d}t.
\end{align*}
Using the estimates $\left(  \text{\ref{lower_bound_R}}\right)  $, $\left(
\text{\ref{q1}}\right)$ and a similar argument for $x_{i}$ leads to the
estimate%
\begin{equation} \label{estimateRdotXdot}
\left\Vert \dot{R}\right\Vert _{H^{1}\left(  0,T\right)  }+\left\Vert \dot
{X}\right\Vert _{H^{1}\left(  0,T\right)  }\leq  C\left(X_0,R_0,E_0,T,\delta\right) .
\end{equation}

\subsection{Sequential continuity} \label{sec_seq_con}
We show in this section that any sequence $(X^n,R^n,u^n)_{n \in \N}$ of weak solutions has a subsequence converging to a weak solution $(X,R,u)$. 
Let again $T > 0$, $\delta > 0$, $(X_0,R_0,u_0)$ as in Theorem \ref{main_thm}. We assume that there exists $(X^n,R^n,u^n)_{n \in \N}$ a sequence of weak solutions of \eqref{main} on $[0,T)$ with data  $(X_0,R_0,u_0)$, such that for all $n$, over $(0,T)$ 
$$ \forall i \neq j, \quad |x_i^n - x_j^n| \ge  r^n_i + r_j^n + 4 \delta. $$
The point is to show that an accumulation point of this sequence is a weak solution. 
By estimates of Section \ref{sec_a_priori}, we get the uniform bounds 
\begin{equation} \label{uniform_bounds}
\begin{aligned}
& \| u^n \|^2_{L^\infty(0,T ; L^2(\Omega^n))} + \| \na u^n \|^2_{_{L^2(0,T ; L^2(\Omega^n))}} \le C E_0, \\
&  \|(X^n,R^n)\|_{H^2(0,T)} \le C\left(X_0,R_0,E_0,\delta,T\right).
\end{aligned}
\end{equation}
We can extend $u^n$ inside the balls setting 
$$\overline{u}^n = u^n \text{ in } \Omega^n, \quad  \overline{u}^n(t,x)  = \dot{x}_i^n(t) + \dot{r}_i^n(t) \frac{x-x_i^n(t)}{r_i^n(t)}, \quad x \in B_i^n(t).$$
The extended field $\overline{u}^n$ belongs to $L^\infty(0,T ; L^2(\R^3))$, but not to $L^2(0,T ; H^1(\R^3))$ due to discontinuity of the tangential components of the velocity across the spheres $\pa B_i^n$. By standard compactness arguments, we find that, after extraction of subsequence,
\begin{align*}
  & \overline{u}^n \rightarrow  \overline{u} \quad \text{ weakly* in } L^\infty(0,T ; L^2(\R^3)), \\
  & (X^n,R^n) \rightarrow  (X,R) \quad \text{ weakly* in } H^2(0,T) \text{ and strongly in } W^{1,\infty}([0,T]).    
\end{align*}
We can then define, for all $i \in \overline{1,N}$, 
$$B_i := B(x_i, r_i), \quad \Omega := \R^3 \setminus \cup_{i=1}^N \overline{B_i},  \quad u = \overline{u}\vert_{\Omega} . $$ 
By the uniform convergence of $(X^n,R^n)$ to $(X,R)$, one has for all $i$
$$ \sup_{t \in [0,T)} d_{Haus}(B_i^n(t), B_i(t)) \xrightarrow[n \rightarrow +\infty]{}  0  $$
where $d_{Haus}$ is the Hausdorff distance. Let $\eta \in (0, \delta)$. From the Hausdorff convergence, we get that for $n$ large enough, for all $i$, for all $t$,  
$$B_i^n(t) \subset B_i^\eta(t), \quad \text{ where } \:   B_i^\eta(t) := B(x_i(t), r_i(t)+\eta), \quad \Omega^\eta:= \R^3 \setminus \cup_{i=1}^N \overline{B_i^\eta} . $$ 
Domains $B_i^\eta$ and $\Omega^\eta$ will be used later on.
\begin{remark} \label{rem_D(u)}
Let $Q$ an arbitrary compact subset of the time-space domain $\cup_{t \in (0,T)} \{t\} \times \Omega(t)$. By the Hausdorff convergence above, we find that for $n$ large enough
$$ \|\mathbb{D}(u^n)\|_{L^2(Q)} \le \|\mathbb{D}(u^n)\|_{L^2(0,T ; \Omega^n)} \le C_0 $$
where $C_0$ only depends on $E_0$.
As $n \rightarrow +\infty$, we recover that $\mathbb{D}(u)$ belongs to $L^2(Q)$ with $\|\mathbb{D}(u)\|_{L^2(Q)} \le C_0$. As $Q$ is an arbitrary compact subset, we find that
$$ \mathbb{D}(u) \in L^2(0,T ; L^2(\Omega)) .$$
\end{remark}

We wish to show that $(X,R,u)$ is a weak solution on $(0,T)$ starting from $(X_0, R_0, u_0)$. The keypoint is to derive \eqref{weak_form_fluid_bubble} from the variational formulation 
\begin{equation}  \label{VF_n}
\begin{aligned}
\int_{\Omega^n\left(  t\right)  } u^n\left(  t\right)  \cdot\psi^n\left(
t\right)  +2\nu \int_{0}^{t}\int_{\Omega^n\left(  s\right)  }\mathbb{D}\left(
u^n\right)  \cdot\mathbb{D}\left(  \psi^n\right)  \mathrm{d}s - \sum_{i=1}^{N} \int_{0}^{t} \frac{c_i}{4\pi} (r^n_i)^{-3\gamma} 
\int_{\partial B^n_{i}\left(  s\right)  } \psi^n  \cdot \mathfrak{n}\mathrm{d}s\\
=\int_{\Omega_{0,F}  }  u_{0} \cdot\psi^n\left(0\right)
+\int_{0}^{t}\int_{\Omega^n\left(  s\right)  }  u^n \cdot (\partial_{t}\psi^n + u^n \cdot \na \psi^n)\mathrm{d}s,
\end{aligned}
\end{equation}
valid for any  $\psi^n\in L^{\infty}\left(0,T ; L_{\mathrm{dil}}^{2}(\Omega^n)\right)$ with $\partial_{t}\psi^n,\nabla\psi^n\in
L^2\left(0,T ; L^{2}( \Omega^n)\right)$. 

We fix  a test field 
$$ \psi \in L^\infty\left(0,T ; L_{\mathrm{dil}}^{2}(\Omega)\right) \cap   W^{1,\infty}\left(0,T ; H^s(\Omega) \right) \: \forall s \in \N. $$
It is enough to establish \eqref{weak_form_fluid_bubble} for this kind of $\psi$, more general test fields being handled by a density argument, see Corollary \ref{cor_test_functions_dil}. {\em Note however that we can not restrict to fields with compact support in $\overline{\Omega}$}. Otherwise, the divergence-free condition  would impose the additional constraint that  $\int_{\pa \Omega(t)} \psi \cdot \mathfrak{n} = 0$, a constraint that would forbid to reach any test field by density. We remind for instance that in the simple case $U = \R^3\setminus B(0,1)$, 
the field 
$$ \psi(x) = \frac{x}{|x|^3} $$
belongs to $L^2_{\mathrm{dil}}(U)$ but does not satisfy condition $\int_{\pa U} \psi \cdot \mathfrak{n} = 0$. 

One can not use directly $\psi$ as a test function in \eqref{VF_n}. We need to transform $\psi$ with the help of diffeomorphisms that map  $B_i(t)$ to $B_i^n(t)$ for all $i$. The existence of such diffeomorphisms is guaranteed by the following lemma, whose proof is in Appendix \ref{appendixA}. 

\begin{lemma} \label{lem_diffeo}
For any $(X_0,R_0), (\tilde X_0, \tilde R_0) \in \mathcal{A}$, see \eqref{def_calA}, there exists a smooth diffeomorphism 
$\Theta = \Theta[X_0,R_0,\tilde X_0, \tilde R_0] : \R^3 \rightarrow \R^3$ depending smoothly on its parameters $X_0,R_0,\tilde X_0, \tilde R_0$, with the following properties:
    \begin{itemize}
    \item If  $(X_0,R_0) = (\tilde X_0, \tilde R_0)$, \quad 
    $\Theta = \mathrm{Id}$.
    \item  $\Theta(x) = x$ for $|x|$ large enough. 
    \item $ \Theta(x) = x_i  + \frac{r_i}{\tilde r_i} (x-\tilde x_i)$ in the vicinity of $B(\tilde x_i,\tilde r_i)$, for all $1 \le i \le N$. In particular, it send $B(\tilde x_i,\tilde r_i)$ to $B(x_i,r_i)$ for all $i \in \overline{1,N}$. 
    \end{itemize}
\end{lemma}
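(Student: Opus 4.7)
The strategy is to realise $\Theta$ as the time-one flow of a smooth, compactly supported time-dependent vector field on $\R^3$ built from cut-offs of affine vector fields. This way the diffeomorphism property is automatic and the construction is explicit enough to track smooth dependence on the parameters.

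First I would choose a smooth path $[0,1] \ni t \mapsto (X(t),R(t)) \in \mathcal{A}$ joining $(\tilde X_0, \tilde R_0)$ at $t=0$ to $(X_0,R_0)$ at $t=1$, with $X(t) = (x_i(t))_i$, $R(t) = (r_i(t))_i$. In the regime of interest for our time-stepping scheme, the two configurations will be close in $\mathcal{A}$ and the straight-line interpolation will already lie in $\mathcal{A}$; in general one can exhibit such a path because $\mathcal{A}$ is open and path-connected in $\R^{3N} \times (\R_+^*)^N$. The whole construction below will depend smoothly on the four-parameter data, by taking the straight-line path locally and gluing with a partition of unity in parameter space.

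Next I would introduce, for each $i \in \overline{1,N}$, the affine vector field
$$ V_i(t,x) := \dot x_i(t) + \frac{\dot r_i(t)}{r_i(t)}\,(x - x_i(t)). $$
The crucial property is that $V_i(t,\cdot)$ is exactly the instantaneous velocity of a rigid-plus-radial motion that maps $\partial B(x_i(s),r_i(s))$ to $\partial B(x_i(t),r_i(t))$: if $y(\cdot)$ solves $\dot y(t) = V_i(t,y(t))$ with $y(0) = \tilde x_i + \tilde r_i \, \xi$ for some $|\xi|\le 1$, then $y(t) = x_i(t) + r_i(t)\,\xi$, so the flow of $V_i$ sends $\overline{B(\tilde x_i, \tilde r_i)}$ to $\overline{B(x_i(t), r_i(t))}$ by the affine map $z \mapsto x_i(t) + (r_i(t)/\tilde r_i)(z - \tilde x_i)$. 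I would then pick cut-off functions $\chi_i \in C^\infty([0,1]\times \R^3; [0,1])$ such that $\chi_i(t,\cdot) \equiv 1$ on an open neighbourhood of $\overline{B(x_i(t),r_i(t))}$, the supports of $\chi_i(t,\cdot)$ are pairwise disjoint for every $t$, and all are contained in a fixed large ball. This is possible because $(X(t),R(t)) \in \mathcal{A}$ uniformly in $t \in [0,1]$, which yields a positive uniform lower bound on the inter-bubble gaps. Setting
$$ V(t,x) := \sum_{i=1}^N \chi_i(t,x)\, V_i(t,x), $$
we obtain a smooth, compactly supported vector field on $\R^3$.

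I would then define $\Theta := \Phi^{1}_{0}$, the time-$1$ flow of $V$. Standard ODE theory on compactly supported smooth time-dependent vector fields gives that $\Phi^{t}_{0}$ is a diffeomorphism of $\R^3$ for every $t$, depending smoothly on all parameters on which $V$ depends smoothly. I would verify the three required properties: (a) outside the union of the supports of the $\chi_i(t,\cdot)$ we have $V \equiv 0$, so trajectories are stationary and $\Theta(x) = x$ for $|x|$ large, in a uniform sense with respect to $t$; (b) trajectories that start in $\overline{B(\tilde x_i, \tilde r_i)}$ remain in the region where $\chi_i \equiv 1$ along the whole flow, since by the explicit formula above they coincide with trajectories of the pure affine vector field $V_i$ and the cut-off $\chi_i(t,\cdot)$ was chosen so as to equal $1$ in a neighbourhood of $\overline{B(x_i(t),r_i(t))}$; therefore $\Theta$ coincides near $\overline{B(\tilde x_i, \tilde r_i)}$ with the affine map $x \mapsto x_i + (r_i/\tilde r_i)(x - \tilde x_i)$; (c) if $(X_0,R_0) = (\tilde X_0,\tilde R_0)$ one can take the constant path, so $V \equiv 0$ and $\Theta = \mathrm{Id}$.

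The main obstacle is ensuring the \emph{smooth} dependence on the four-parameter data. Locally around any diagonal point $(\tilde X_0, \tilde R_0) = (X_0, R_0)$, the straight-line path works and everything depends smoothly; the cut-offs can be defined by pulling back fixed cut-offs via the affine maps, which are smooth functions of the endpoints. For general non-local pairs of parameters, smoothness of the path $t \mapsto (X(t),R(t))$ in the data is obtained by a partition of unity argument in the space of admissible pairs; this is the technical but essentially routine step, and it is where I would spend most of the appendix proof.
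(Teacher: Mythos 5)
Your proposal is correct and follows essentially the same route as the paper: the paper interpolates linearly between $(\tilde X_0,\tilde R_0)$ and $(X_0,R_0)$ and defines $\Theta$ as the time-one map of the ALE flow of Lemma \ref{diffeoLemma}, i.e.\ the flow of exactly the cut-off affine field $\sum_i \chi_i\bigl(\dot x_i + \tfrac{\dot r_i}{r_i}(x-x_i)\bigr)$ that you construct. Your extra caution about the straight segment possibly leaving $\mathcal{A}$ (and the partition-of-unity fix for distant configurations) is a refinement rather than a different method — the paper simply uses the straight-line path, which is what matters in the regime where the lemma is applied, namely nearby configurations.
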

We set $\Theta^n(t,x) :=  \Theta[X(t),R(t),X^n(t),R^n(t)](x)$, with $\Theta$ given by Lemma \ref{lem_diffeo}, and set
\begin{equation} \label{def_psi_n}
 \psi^n(t,x) := \det\left(D\Theta^n(t,x)\right) \left(D\Theta^n(t,x)\right)^{-1}\psi(t,\Theta^n(t,x)).    
\end{equation}
One can check that $\div \psi = 0 \Rightarrow \div \psi^n = 0$,   see Appendix \ref{appendixB}. Moreover, from the explicit formula for $\Theta^n$ in the vicinity of $B_i^n$, one has easily  $\psi(t) \in L^2_{\mathrm{dil}}(\Omega(t)) \Rightarrow \psi^n(t) \in L^2_{\mathrm{dil}}(\Omega^n(t))$ for all $t$. Finally,  the regularity properties of $\psi$ and the  Lipschitz regularity in time of $(X,R,X^n,R^n)$  allow to conclude that 
$$ \psi \in L^\infty\left(0,T ; L_{\mathrm{dil}}^{2}(\Omega^n)\right) \cap   W^{1,\infty}\left(0,T ; H^s(\Omega^n) \right) \: \forall s \in \N. $$
We can therefore take $\psi^n$ as a test function in the variational formulation \eqref{VF_n}. Proving directly the convergence of each term in this variational formulation is hard, as  integrals are taken on a domain that depends on  $n$. Our treatment will involve the auxiliary domain $\Omega^\eta$, notably the property that 
\begin{equation}
\psi^n \xrightarrow[n \rightarrow +\infty]{} \psi \quad \text{in }  W^{1,\infty}(0,T ; H^s(\Omega^\eta)) 
\end{equation}
which follows  from the strong convergence of $(X^n,R^n)$ to $(X,R)$ in $W^{1,\infty}(0,T)$. 

\paragraph{Linear terms.}
The convergence in the Hausdorff distance of $B_i^n$ to $B_i$ implies  
$$ 1_{\Omega^n} \rightarrow 1_{\Omega} \quad \text{in }L^\infty(0,T ; L^p(\R^3)), \quad \forall 1 \le p < \infty $$
It follows that
\begin{align*}
& \limsup_{n \rightarrow +\infty}\left| \int_{\Omega^n(t) \setminus \Omega^\eta(t)} u^n(t) \cdot \psi^n(t) \right| \\
 \le & \limsup_{n \rightarrow +\infty} \| u^n \|_{L^\infty(0,T ; L^2(\Omega^n))} \,  \|\psi^n\|_{L^\infty(0,T ; L^\infty(\Omega^n))} \, \|1_{\Omega^n \setminus \Omega^\eta}\|_{L^\infty(0,T ; L^2(\Omega^n))} \\
 \le & C  \|1_{\Omega \setminus \Omega^\eta}\|_{L^\infty(0,T ; L^2(\Omega^n))}  = C' \eta^{1/2} .
\end{align*}
Moreover, from the weak convergence of $\overline{u}^n$ and the strong convergence of $\psi^n\vert_{\Omega^\eta}$, we deduce easily 
$$\int_{\Omega^\eta\left(  t\right)  } u^n \left(  t\right)  \cdot\psi^n \left(
t\right) \xrightarrow[n \rightarrow +\infty]{} \int_{\Omega^\eta\left(  t\right)  } u \left(  t\right)  \cdot\psi\left(t\right)$$ 
weakly* in $L^\infty(0,T)$, while 
$$\left| \int_{\Omega(t) \setminus \Omega^\eta(t)} u \left(  t\right)  \cdot\psi\left(t\right) \right| \le C \eta^{1/2}. $$
We conclude that 
\begin{equation} \label{lim_u_psi}
\int_{\Omega^n\left(  t\right)  } u^n\left(  t\right)  \cdot\psi^n\left(
t\right) \rightarrow  \int_{\Omega\left(  t\right)  } u\left(  t\right)  \cdot\psi\left(
t\right)  
\end{equation} 
weakly * in $L^\infty(0,T)$.  Similar arguments yield  
\begin{equation}\label{lim_u_dtpsi}
 \int_{0}^{t}\int_{\Omega^n\left(  s\right)  }  u^n \cdot \partial_{t}\psi^n  \xrightarrow[n \rightarrow +\infty]{} \int_{0}^{t}\int_{\Omega\left(  s\right)  }  u \cdot \partial_{t}\psi
\end{equation} 
for all $t \in [0,T)$ and in  $L^1(0,T)$, 
the convergence in $L^1$ coming from the pointwise convergence and the dominated convergence theorem.  Also, from the uniform bound on $\| \mathbb{D}(u^n) \|_{L^2(0,T ; L^2(\Omega^n))}$ and Remark \ref{rem_D(u)}, we deduce similarly that 
\begin{equation}  \label{DuDpsi}
\int_0^t \int_{\Omega^n(s)} \mathbb{D}(u^n) : \mathbb{D}(\psi^n) \mathrm{d}s  \xrightarrow[n \rightarrow +\infty]{}  \int_0^t  \int_{\Omega(s)} \mathbb{D}(u) : \mathbb{D}(\psi) \mathrm{d}s       
\end{equation} 
for all $t \in [0,T)$ and in   $L^1(0,T)$.   Finally, using the explicit formula for $\psi^n$ in the vicinity of $B_i^n$, we get 
\begin{align*}
\int_{\partial B^n_{i}} \psi^n  \cdot \mathfrak{n} = \int_{\partial B_i} \psi \cdot \mathfrak{n}.
\end{align*}
 Combining this with the uniform convergence of $R^n$ to $R$, we conclude that 
\begin{equation}\label{lim_bord}
\lim_{n \rightarrow +\infty}
\sum_{i=1}^{N} \int_{0}^{t} c_i (r^n_i)^{2-3\gamma} 
\dashint_{\partial B^n_{i}\left(  s\right)  } \psi^n  \cdot \mathfrak{n}\mathrm{d}s = \sum_{i=1}^{N} \int_{0}^{t} c_i (r_i)^{2-3\gamma} 
\dashint_{\partial B_{i}\left(  s\right)  } \psi  \cdot \mathfrak{n}\mathrm{d}s    
\end{equation}
uniformly in time. 
\paragraph{Nonlinear terms.} As usual, the main difficulty is the limit in $n$ of the nonlinear term. We have 
\begin{equation} \label{nonlinear_residu1}
\begin{aligned} 
 &\limsup_{n \rightarrow +\infty}  \left| \int_{0}^{t}\int_{\Omega^n\left(  s\right)\setminus \Omega^\eta(s) }  u^n \cdot ( u^n \cdot \na \psi^n )\mathrm{d}s \right| \\
 & \le \limsup_{n \rightarrow +\infty}   \|\na \psi^n\|_{L^\infty(0,T ; L^\infty(\Omega^n))} \|u^n\|^2_{L^2(0,T ; L^4(\Omega^n))} \|1_{\Omega^n\setminus \Omega^\eta} \|_{L^2(0,T ; L^2(\R^3))} \\
 & \le C \limsup_{n \rightarrow +\infty}   \|\na \psi^n\|_{L^\infty(0,T ; L^\infty(\Omega^n))} \|u^n\|^2_{L^2(0,T ; H^1(\Omega^n))} \|1_{\Omega^n\setminus \Omega^\eta} \|_{L^2(0,T ; L^2(\R^3))} \\
 & \le C'  \limsup_{n \rightarrow +\infty} \|1_{\Omega^n\setminus \Omega^\eta} \|_{L^2(0,T ; L^2(\R^3))} =  C' \|1_{\Omega\setminus \Omega^\eta} \|_{L^2(0,T ; L^2(\R^3))} \le C'' \eta^{1/2}.
\end{aligned}
\end{equation}
Here, $C''$ is a constant that depends on $\psi$, $X_0,R_0, E_0$,$\delta$,$T$.
Similarly, taking into account Remark \ref{rem_D(u)},
\begin{equation}  \label{nonlinear_residu2}
\begin{aligned}
 &\left| \int_{0}^{t}\int_{\Omega\left(  s\right)\setminus \Omega^\eta(s) }  u \cdot ( u \cdot \na \psi )\mathrm{d}s \right|  \le C \eta^{1/2}. 
 \end{aligned}   
\end{equation}
It remains to control 
$$ J^{n,\eta}(t) :=  \int_{0}^{t}\int_{\Omega^\eta\left(  s\right)  }  u^n \cdot ( u^n \cdot \na \psi^n )\mathrm{d}s .$$
We have the Liouville decomposition  
$$ u^n(t) = \tilde u^{n,\eta}(t)  + w^{n,\eta}(t) \quad \text{ in } \Omega^\eta(t)$$
where $\tilde u^{n,\eta}(t) := \mathbb{P}^\eta(t) \left(u^n(t)\vert_{\Omega^\eta(t)}\right)$ is the orthogonal projection of $u^n(t)\vert_{\Omega^\eta(t)}$ over $L^2_\sigma(\Omega^\eta(t))$ and $w^{n,\eta} = \nabla q^{n,\eta}(t)$ is a gradient field: as $u^n$ is divergence-free, it satisfies 
\begin{equation}
\left\{
\begin{aligned}
\Delta q^{n,\eta} & = 0 \text{ in } \Omega^\eta, \\
\pa_{\mathfrak{n}} q^{n,\eta} & = - u^n \cdot \mathfrak{n} \text{ at } \pa \Omega^\eta.  
 \end{aligned}
 \right.
\end{equation}
{\em Study of $\tilde u^{n,\eta}$}. 
The terms involving $\tilde u^{n,\eta}$ will be handled thanks to 
\begin{lemma} \label{lemma_v}
Denote $\tilde u^\eta(t) := \mathbb{P}^\eta(t)(u\vert_{\Omega^\eta(t)})$. There exists a subsequence of $(\tilde u^{n,\eta})_{n \in \N}$, still denoted $(\tilde u^{n,\eta})_{n \in \N}$ for simplicity,  such that for any $M > 0$, 
$$ \int_0^T \int_{\Omega^\eta(t) \cap B(0,M)} |\tilde u^{n,\eta} - \tilde u^\eta|^2 \mathrm{d}t \xrightarrow[n \rightarrow +\infty]{} 0 .$$
\end{lemma}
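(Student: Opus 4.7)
The plan is to combine uniform bounds on $\tilde u^{n,\eta}$ together with a control of $\partial_t \tilde u^{n,\eta}$ in a negative-order space and then invoke the moving-domain Aubin--Lions compactness result of \cite{Moussa}. I work on localizations $\Omega^\eta(t) \cap B(0,M)$ for $M \in \mathbb{N}$ and conclude by a Cantor diagonal extraction over $M$.

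For the uniform bounds, note first that since $B_i^n(t) \subset B_i^\eta(t)$ for $n$ large, one has $\Omega^\eta(t) \subset \Omega^n(t)$, so $u^n|_{\Omega^\eta(t)}$ is well defined and, as the Leray projection is a contraction, $\|\tilde u^{n,\eta}\|_{L^\infty(0,T;L^2(\Omega^\eta))} \le \|u^n\|_{L^\infty(0,T;L^2(\Omega^n))}$, which is uniformly bounded by \eqref{uniform_bounds}. To control the gradient locally, I write the Liouville decomposition $u^n|_{\Omega^\eta} = \tilde u^{n,\eta} + \nabla q^{n,\eta}$, with $q^{n,\eta}$ harmonic in $\Omega^\eta$ subject to $\partial_\mathfrak{n} q^{n,\eta} = -u^n \cdot \mathfrak{n}$ on $\partial \Omega^\eta$. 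Since $\partial \Omega^\eta(t)$ lies at distance at least $\eta$ from $\partial \Omega^n(t)$, interior elliptic estimates on the fluid domain control $u^n \cdot \mathfrak{n}|_{\partial \Omega^\eta(t)}$ in $L^2(0,T ; H^{1/2}(\partial \Omega^\eta(t)))$ uniformly, via the $L^2(H^1)$ bound \eqref{uniform_bounds}. Elliptic regularity for the Neumann problem on $\Omega^\eta \cap B(0,M)$ then yields $\nabla q^{n,\eta}$ bounded in $L^2(0,T ; H^1(\Omega^\eta \cap B(0,M)))$, hence $\nabla \tilde u^{n,\eta}$ is bounded in $L^2(0,T ; L^2(\Omega^\eta \cap B(0,M)))$.

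Next, I bound $\partial_t \tilde u^{n,\eta}$ in a dual space. Given any smooth divergence-free field $\psi(t,\cdot)$ compactly supported in $\Omega^\eta(t) \cap B(0,M)$, its extension by zero to $\Omega^n(t)$ lies in $L^2_\sigma(\Omega^n(t)) \subset L^2_{\mathrm{dil}}(\Omega^n(t))$, and Liouville orthogonality gives $\int \tilde u^{n,\eta} \cdot \psi = \int u^n \cdot \psi$. Inserting such $\psi$ into \eqref{VF_n} and using that the pressure and surface contributions drop out (as $\psi$ vanishes on $\partial \Omega^n$), one obtains an identity of the form $\frac{d}{dt} \int \tilde u^{n,\eta} \cdot \psi = -2\nu \int \mathbb{D}(u^n) : \mathbb{D}(\psi) + \int u^n \otimes u^n : \nabla \psi + \int u^n \cdot \partial_t \psi$. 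Using the three-dimensional embedding $L^\infty(L^2) \cap L^2(H^1) \hookrightarrow L^{10/3}(L^{10/3})$, the nonlinear term is bounded in $L^{5/3}(0,T)$ by $\|\nabla \psi\|_{L^\infty}$, and the viscous term in $L^2(0,T)$ by $\|\psi\|_{H^1}$. This yields a uniform bound on $\partial_t \tilde u^{n,\eta}$ in $L^{5/3}(0,T ; H^{-s}(\Omega^\eta \cap B(0,M)))$ for $s$ sufficiently large, the dual being taken over a space of divergence-free, compactly supported fields.

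The moving cylinder $\bigcup_{t \in (0,T)} \{t\} \times (\Omega^\eta(t) \cap B(0,M))$ has a boundary whose speed is Lipschitz in $t$, as a consequence of the $W^{1,\infty}$ bound on $(X,R)$ in \eqref{uniform_bounds}. The Aubin--Lions compactness lemma of \cite{Moussa} for such moving domains then produces a subsequence of $\tilde u^{n,\eta}$ converging strongly in $L^2$ over this cylinder; a Cantor diagonal extraction over $M$ provides a single subsequence for which the stated convergence holds for every $M > 0$. The limit is identified as $\tilde u^\eta$: from the weak-$\ast$ convergence $\overline{u}^n \rightharpoonup \overline{u}$ one has $u^n|_{\Omega^\eta(t)} \rightharpoonup u|_{\Omega^\eta(t)}$ weakly in $L^2$, and $\mathbb{P}^\eta(t)$ being a bounded linear operator is weakly continuous, so the weak limit of $\tilde u^{n,\eta}$ coincides with $\mathbb{P}^\eta u = \tilde u^\eta$ and must agree with the strong limit. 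The main obstacle I anticipate is the rigorous verification of the hypotheses of the Moussa-type Aubin--Lions lemma on our geometry, and especially the careful choice of the functional spaces used for the duality pairing of $\partial_t \tilde u^{n,\eta}$, compatible with the three-dimensional nonlinear term, together with the uniformity in $t$ of the elliptic estimates on the moving localized domains.
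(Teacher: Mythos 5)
Your proposal follows essentially the same route as the paper: uniform $L^2$ and local $H^1$ bounds on $\tilde u^{n,\eta}$, a negative-order bound on $\partial_t\tilde u^{n,\eta}$ obtained by testing \eqref{VF_n} with divergence-free fields compactly supported in $\Omega^\eta(t)$ (for which the pressure and boundary terms vanish and $\int \tilde u^{n,\eta}\cdot\psi=\int u^n\cdot\psi$ by Liouville orthogonality), then the moving-domain Aubin--Lions result of \cite{Moussa}, and finally identification of the limit with $\tilde u^\eta$ via weak convergence of $u^n$ and weak continuity of $\mathbb{P}^\eta(t)$. This is exactly the structure of the paper's proof, so the argument is sound in substance.

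Two remarks on points where your write-up deviates in detail. First, you propose to apply the Aubin--Lions lemma directly on the truncated cylinder $\cup_t\{t\}\times(\Omega^\eta(t)\cap B(0,M))$; as stated this does not match the hypotheses of the cited result, since $\tilde u^{n,\eta}$ is not tangent on the artificial boundary $\partial B(0,M)$ (tangency at the lateral boundary is an assumption, cf. hypothesis ii) in the paper's Theorem \ref{thm_Moussa}). The paper avoids this by observing that Moussa's proof extends verbatim to the unbounded moving domain $\Omega^\eta(t)$ provided the conclusion is weakened to compactness in $L^2$ of the intersection with $(0,T)\times B$ for every ball $B$; invoking that version removes the need for truncation (and makes your diagonal extraction over $M$ harmless bookkeeping). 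Second, your control of $\nabla\tilde u^{n,\eta}$ via the Neumann problem for $q^{n,\eta}$ does not require any ``interior elliptic estimates'' for $u^n$ (none are available for a Leray-type solution): the trace theorem on $\partial\Omega^\eta(t)$, whose constants are uniform thanks to the controlled geometry, already gives $u^n\cdot\mathfrak{n}$ in $L^2(0,T;H^{1/2}(\partial\Omega^\eta))$, and then elliptic regularity for the harmonic Neumann problem applies; this is equivalent to, but more roundabout than, the paper's one-line argument that $\mathbb{P}^\eta(t)$ is bounded on $H^1(\Omega^\eta(t))$ uniformly in $t$.
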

\begin{proof}
The proof relies on a nice generalization of the Aubin-Lions lemma for non-cylindrical domains  given in \cite[Theorem 2]{Moussa}. Strictly speaking, the theorem in \cite{Moussa} is stated for space-time domains of the form 
\begin{equation} \label{def_hatOmega}
\hat{\Omega} = \cup_{t \in (0,T)} \{t\} \times \Omega^t
\end{equation}
with $\Omega^t$ bounded, but a look at the proof shows that the boundedness of $\Omega^t$ can be relaxed if the final compactness statement is replaced by local compactness. Namely, we shall use 
\begin{theorem} {\bf (slight modification  of  \cite[Theorem 2]{Moussa})} \label{thm_Moussa}

Let $\hat{\Omega}$ a domain of the form \eqref{def_hatOmega}, where for all $t \in (0,T)$, $\Omega^t = \mathcal{A}_t(\Omega)$ for a fixed Lipschitz domain $\Omega$  and a family $(\mathcal{A}_t)_{t \in [0,T]}$  of $C^1$-diffeomorphisms, with
$$ (t,x) \rightarrow \mathcal{A}_t(x) \in C([0,T]; C^1(\R^d)).$$
Let $(\tilde u^n)_{n \in \N}$ a sequence of divergence-free fields in $L^2(\hat{\Omega})$ s.t. $1_{\hat{\Omega}} \tilde u^n \in L^\infty(\R, L^2(\R^3))$ for all $n$. We assume 
\begin{itemize}
 \item[i)] $\sup_{n \in \N}  \left( \|\tilde u^n\|_{L^2(\hat{\Omega})} + \|\na \tilde u^n \|_{L^2(\hat{\Omega})} \right) \le C . $ 
 \item[ii)] $\tilde u^n(t) \cdot \mathfrak{n} = 0$ at $\pa \Omega^t$ for all $t \in (0,T)$.
  \item[iii)] There exists $C > 0$ and $N_0 > 0$ such that for all $\Psi$ smooth, divergence free and compactly supported in $\hat{\Omega}$: 
  $$ \left| \langle \pa_t \tilde u^n , \Psi \rangle \right| \le C \sum_{|\alpha| \le N_0} \|\pa^\alpha \Psi\|_{L^2(\hat{\Omega})}.$$
\end{itemize}
Then, $(\tilde u^n)_{n \in\N}$ is compact in $L^2\left(\hat{\Omega}  \cap ((0,T) \times B)\right)$ for any ball $B \subset \R^3$. 
\end{theorem}
We apply this theorem with $\Omega^t = \Omega^\eta(t)$, and $\tilde u^n = \tilde u^{n,\eta}$. Assumption i) comes from the  bound 
$$ \sup_{n \in \N}  \left( \|u^n\|_{L^2(0,T ; L^2(\Omega^\eta)} + \|\na u^n\|_{L^2(0,T ; L^2(\Omega^\eta)} \right)\le C $$
 inherited from \eqref{uniform_bounds} and from the continuity of the Leray projector $\mathbb{P}^\eta(t)$ over $L^2(\Omega^\eta(t))$ and $H^1(\Omega^\eta(t))$. Assumption ii) is obvious by definition of the Leray projector. The only point that deserves attention is iii). Let $\Psi$ as above. We can take it as a test function in \eqref{VF_n}. We insist that $\Psi = 0$ at $t=0$ and $t=T$, and that $\Psi(t)$ is compactly supported in $\Omega^\eta(t)$. We find: 
\begin{align*}
\langle \pa_t \tilde u^{n,\eta} , \Psi \rangle & = - \int_0^T \int_{\Omega^\eta} \tilde u^{n,\eta}  \cdot \pa_t \Psi  =  - \int_0^T \int_{\Omega^\eta} u^n   \cdot \pa_t \Psi  \\  
& = \int_{0}^{T}\int_{\Omega^\eta\left(  s\right)  }  u^n \cdot (u^n \cdot \na \Psi)\mathrm{d}s    - 2\nu \int_{0}^{t}\int_{\Omega^\eta\left(  s\right)  }\mathbb{D}\left(
u^n\right)  \cdot\mathbb{D}\left(  \Psi\right)  \mathrm{d}s. 
\end{align*}
From the uniform bounds in \eqref{uniform_bounds} and standard manipulations, we end up with 
\begin{align*}
 |\langle \pa_t \tilde u^{n,\eta} , \Psi \rangle| & \le  C \left(  \|\na \Psi\|_{L^2(0,T ; L^\infty(\Omega^\eta))} + \|\Psi\|_{L^2(0,T ; H^1(\Omega^\eta))} \right) \\
 & \le C' \|\Psi\|_{L^2(0,T ; H^3(\Omega^\eta))}
\end{align*}
which proves that iii) is satisfied with $N_0=3$. Therefore, Theorem \ref{thm_Moussa} provides a $\tilde{v}^\eta$ such that, after extraction of a subsequence: for any $M > 0$  
$$ \int_0^T \int_{\Omega^\eta(t) \cap B(0,M)} |\tilde u^{n,\eta} - \tilde{v}^\eta|^2 \mathrm{d}t \xrightarrow[n \rightarrow +\infty]{} 0 .$$
On the other hand, from the weak convergence of $u^n$ to $u$ in $L^2(0,T ; L^2(\Omega^\eta))$ and the weak continuity of the Leray projector $\mathbb{P}^\eta(t)$, we find that $\tilde u^{n,\eta}$ converges weakly to $\tilde u^\eta$, and so $\tilde{v}^\eta = \tilde u^\eta$ which concludes the proof of the lemma.
\end{proof}
{\em Study of $w^{n,\eta}$}. We first introduce two analogues of $w^{n,\eta}$. We define for all $t \in [0,T)$
$$w^n(t)  = \na q^n(t) := (I - \mathbb{P}^n(t)) u^n(t), \quad w(t) = \na q(t) := (I - \mathbb{P}(t))u (t) . 
$$
These are the gradient parts in the Liouville decomposition of $u^n(t)$ in $L^2(\Omega^n(t))$ and  $L^2(\Omega(t))$ respectively.  We shall prove
\begin{lemma} \label{lemma_w}
The following properties hold, for some $C$ depending on $X_0,R_0,E_0,T,\delta$ : 
 \begin{itemize}
     \item[i)] $\limsup_{n \rightarrow +\infty} \|w^{n,\eta} - w^{n} \|_{L^2(0,T ; L^2(\Omega^\eta)) } \le C \eta^{\frac13}$.
     \item[ii)] $\|w^\eta - w \|_{L^2(0,T ; L^2(\Omega^\eta))} \le C \eta^{\frac13}$. 
     \item[iii)] For all $M > 0$, 
     $ \int_0^T \int_{\Omega^\eta(t) \cap B(0,M)} |w^n - w|^2 \mathrm{d}t \xrightarrow[n \rightarrow +\infty]{} 0 $.
 \end{itemize}   
\end{lemma}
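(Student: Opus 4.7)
The plan is as follows. For (i) the central identity is that, on $\Omega^\eta\subset\Omega^n$, the field $w^{n,\eta}-w^n$ is the gradient of a harmonic function $\phi:=q^{n,\eta}-q^n$ whose Neumann datum on $\partial\Omega^\eta$ equals $\tilde u^n\cdot\mathfrak{n}$: indeed $w^{n,\eta}\cdot\mathfrak{n}=u^n\cdot\mathfrak{n}$ on $\partial\Omega^\eta$, while on the same boundary $w^n\cdot\mathfrak{n}=(u^n-\tilde u^n)\cdot\mathfrak{n}$, so the normal traces differ by $\tilde u^n\cdot\mathfrak{n}$. The energy identity (the boundary contribution at infinity vanishing thanks to the $L^6$-normalization of the remark in Section \ref{sec_functional_spaces}) gives
\begin{equation*}
\|w^{n,\eta}-w^n\|_{L^2(\Omega^\eta)}^2=\int_{\partial\Omega^\eta}\phi\,(\tilde u^n\cdot\mathfrak{n})\,\mathrm{d}\sigma.
\end{equation*}

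To extract the factor $\eta^{1/3}$, I would use that $\tilde u^n$ is divergence-free on the larger domain $\Omega^n$ and tangent to $\partial\Omega^n$. Extend $\phi$ to a function $\tilde\phi\in H^1(\R^3)$ via a Stein operator, producing the uniform bound $\|\nabla\tilde\phi\|_{L^2(\R^3)}\le C\|\phi\|_{H^1(\mathcal{N})}\le C\|\nabla\phi\|_{L^2(\Omega^\eta)}$ on a fixed bounded neighbourhood $\mathcal{N}$ of $\cup_i\partial B_i^\eta$ (the $L^2$-part of $\phi$ on $\mathcal{N}$ being controlled via the $L^6$-bound on $\phi$ and Hölder). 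The divergence theorem on the thin shell $A^{n,\eta}:=\Omega^n\setminus\Omega^\eta=\cup_i(B_i^\eta\setminus\overline{B_i^n})$, combined with $\operatorname{div}\tilde u^n=0$ and $\tilde u^n\cdot\mathfrak{n}|_{\partial\Omega^n}=0$, then yields
\begin{equation*}
\int_{\partial\Omega^\eta}\phi\,(\tilde u^n\cdot\mathfrak{n})\,\mathrm{d}\sigma=-\int_{A^{n,\eta}}\nabla\tilde\phi\cdot\tilde u^n,
\end{equation*}
so that Cauchy-Schwarz gives $\|w^{n,\eta}-w^n\|_{L^2(\Omega^\eta)}\le C\|\tilde u^n\|_{L^2(A^{n,\eta})}$. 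Since $|A^{n,\eta}|\le C\eta$, Hölder and Sobolev $H^1\hookrightarrow L^6$ give $\|\tilde u^n\|_{L^2(A^{n,\eta})}\le C\eta^{1/3}\|\nabla\tilde u^n\|_{L^2(\Omega^n)}$. Squaring and integrating in time, together with the uniform $L^2L^2$ bound on $\nabla\tilde u^n$ coming from \eqref{uniform_bounds} and Korn's inequality applied to the control in Remark \ref{rem_D(u)}, proves (i). Part (ii) follows from the same argument with $n$ suppressed and $A^\eta:=\Omega\setminus\Omega^\eta$ in place of $A^{n,\eta}$, since $\operatorname{div}\tilde u=0$ and $\tilde u\cdot\mathfrak{n}|_{\partial\Omega}=0$.

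For (iii) I would expand $w^n$ and $w$ in the explicit bases $\{\nabla q_i^n,\nabla q_i^{n,k}\}$ of $\mathbb{G}(\Omega^n(t))$ and $\{\nabla q_i,\nabla q_i^k\}$ of $\mathbb{G}(\Omega(t))$ defined by \eqref{eq_qi}-\eqref{eq_qik}. Since $w^n\in\mathbb{G}(\Omega^n(t))$ with $w^n\cdot\mathfrak{n}=\dot r_i^n+\dot x_i^n\cdot\mathfrak{n}$ on $\partial B_i^n$, uniqueness of the Neumann problem gives $w^n=\sum_{i=1}^N\dot r_i^n\,\nabla q_i^n+\sum_{i=1}^N\sum_{k=1}^3\dot x_i^{n,k}\,\nabla q_i^{n,k}$, and similarly for $w$. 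The coefficients $\dot R^n,\dot X^n$ converge in $L^\infty(0,T)$ to $\dot R,\dot X$ thanks to the strong $W^{1,\infty}$-convergence of $(X^n,R^n)$. The basis functions are uniformly bounded in $L^\infty(0,T;H^1(\Omega^\eta(t)\cap B(0,M)))$ by \eqref{q1}-\eqref{q1k}; pointwise in $t$, the continuous dependence of the Neumann problems \eqref{eq_qi}-\eqref{eq_qik} on the domain (cf.\ \cite{Henrot}) gives $\nabla q_i^n(t)\to\nabla q_i(t)$ in $L^2(\Omega^\eta(t)\cap B(0,M))$, and the uniform time bounds promote this to convergence in $L^2(0,T;L^2(\Omega^\eta\cap B(0,M)))$ by dominated convergence. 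Combining the two convergences yields (iii).

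The main obstacle I anticipate is, in (i)-(ii), ensuring that the constant in the extension bound $\|\nabla\tilde\phi\|_{L^2(\R^3)}\le C\|\nabla\phi\|_{L^2(\Omega^\eta)}$ is uniform in $n$ and $\eta\in(0,\delta)$: a naive extension from the shell itself would cost an $\eta^{-1/2}$ factor, so one must extend from a fixed interior neighbourhood of $\partial\Omega^\eta$ of width comparable to $\delta/2$, independent of $\eta\le\delta/2$. This is feasible because the spheres $\partial B_i^\eta$ have curvatures and mutual distances controlled uniformly in $n,\eta$ via \eqref{lower_bound_R} and the separation assumption. A secondary concern is upgrading the pointwise-in-$t$ domain continuity of \eqref{eq_qi}-\eqref{eq_qik} to the $L^2(0,T)$ convergence required in (iii), which dominated convergence handles cleanly once the uniform time bounds from \eqref{q1}-\eqref{q1k} are in hand.
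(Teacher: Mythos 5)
Your argument is correct and rests on the same quantitative mechanism as the paper's proof: the smallness $|\Omega^n\setminus\Omega^\eta|\le C\eta$ combined with H\"older and $H^1\hookrightarrow L^6$ is what produces the rate $\eta^{1/3}$, and (iii) is obtained from the expansion of $w^n$ over the harmonic fields \eqref{eq_qi}--\eqref{eq_qik} with uniformly converging coefficients $\dot R^n,\dot X^n$. The execution differs in two places. For (i)--(ii), the paper runs a duality argument: it tests $w^{n,\eta}-w^n$ against an arbitrary $\na\varphi$ with $\na\varphi\in L^2(\R^3)$, uses the Neumann problems for $q^{n,\eta}$ and $q^n$ to convert the boundary terms into the two shell integrals of $u^n\cdot\na\varphi$ and $\na q^n\cdot\na\varphi$, and bounds each by $C\eta^{1/3}\|\na\varphi\|_{L^2}$; you instead take the specific test function $\varphi=\phi=q^{n,\eta}-q^n$, which merges the shell contributions into the single field $\tilde u^n$ (via its tangency at $\pa\Omega^n$) and closes by absorption. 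The two are essentially equivalent --- passing from the paper's duality estimate to the $L^2$ bound implicitly requires exactly the extension of $\phi$ across $\pa\Omega^\eta$ that you make explicit --- and your version has the merit of identifying where the uniform extension constant and the homogeneous inequality $\|\phi\|_{L^6(\Omega^\eta)}\le C\|\na\phi\|_{L^2(\Omega^\eta)}$ enter; the latter does hold for $\phi$ because both potentials are $L^6$-normalized so $\phi\in L^6(\Omega^\eta)$, with a constant uniform over this family of exterior domains. For (iii), the paper does not use strong pointwise-in-time domain continuity: it cites Henrot only for \emph{weak} convergence of $\na q_i^n,\na q_i^{k,n}$ and gets strong $L^2_tL^2_{loc}$ compactness from the moving-domain Aubin--Lions theorem of Moussa, exploiting the $\pa_t\na q_i^n$ bounds of \eqref{q1}--\eqref{q1k}; your shortcut via strong $L^2_{loc}$ convergence for each fixed $t$ plus dominated convergence is legitimate and more direct, but it asserts more than the paper quotes, so it should be justified, e.g.\ by pulling the Neumann problems back to a fixed domain through the diffeomorphisms of Lemma \ref{lem_diffeo} (whose parameters converge uniformly) and running an energy estimate on the difference. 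One minor slip: for part (i) the $L^2_tH^1_x$ control of $\tilde u^n$ comes directly from \eqref{uniform_bounds} together with the $H^1$-continuity of the Leray projector; Remark \ref{rem_D(u)} plus Korn is what you need for the limit field $u$ in part (ii), not for $u^n$.
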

\begin{proof}
For i), we write for any scalar  $\varphi \in L^2(0,T ; L^2_{loc}(\R^3))$ s.t. $\na \varphi \in L^2(0,T ; L^2(\R^3))$ :    
\begin{align*}
\int_0^T \int_{\Omega^\eta} (w^{n,\eta} - w^{n}) \cdot \na \varphi & = \int_0^T \int_{\Omega^\eta} \na q^{n,\eta} \cdot \na \varphi - \int_0^T \int_{\Omega^\eta} \na q^n \cdot \na \varphi \\
& = \int_0^T \int_{\Omega^\eta} \na q^{n,\eta} \cdot \na \varphi - \int_0^T \int_{\Omega^n} \na q^n \cdot \na \varphi + \int_0^T \int_{\Omega^n\setminus \Omega^\eta } \na q^n \cdot \na \varphi \\
& = -\int_0^T \int_{\pa \Omega^\eta} u^n \cdot \mathfrak{n} \, \varphi + \int_0^T \int_{\pa \Omega^n} u^n \cdot \mathfrak{n} \, \varphi +   \int_0^T \int_{\Omega^n\setminus \Omega^\eta } \na q^n \cdot \na \varphi \\
& = \int_0^T \int_{\Omega^n\setminus \Omega^\eta} u^n \cdot \na \varphi  +   \int_0^T \int_{\Omega^n\setminus \Omega^\eta } \na q^n \cdot \na \varphi.
\end{align*}
We have 
\begin{align*}
    \| u^n \|_{L^2(0,T ; L^2(\Omega^n\setminus \Omega^\eta))} & \le   \| u^n \|_{L^2(0,T ; L^6(\Omega^n)} | \Omega^n \setminus \Omega^\eta|^{1/3} \\
    & \le C \| u^n \|_{L^2(0,T ; H^1(\Omega^n)} | \Omega^n \setminus \Omega^\eta|^{1/3}  \le C' \eta^{1/3}. 
\end{align*}
and similarly  
$$\| \na q^n \|_{L^2(0,T ; L^2(\Omega^n\setminus \Omega^\eta))} \le C \eta^{1/3} $$
We end up with 
$$ \int_0^T \int_{\Omega^\eta} (w^{n,\eta} - w^{n}) \cdot \na \varphi  \le C \eta^{1/3} \|\na \varphi \|_{L^2(0,T ; L^2(\Omega^\eta))} .$$
The bound i) follows. The bound ii) can be proved exactly in the same way.  As regards iii), we remind that $q^n(t)$ solves 
\begin{equation}
    \left\{
\begin{aligned}
    \Delta q^n(t)  & = 0 \text{ in } \Omega^n(t) \\
   \pa_{\mathfrak{n}} q^n(t) & =  \dot{r}_i^n + \dot{x}_i^n \cdot \mathfrak{n}   \text{ at }  \pa B^n_i(t), \: 1 \le i \le N  
\end{aligned}
    \right.
\end{equation}
so that it can be decomposed over the harmonic fields $(\na q^n_i(t))_{i \in \overline{1,N}}$, $(\na q_i^{k,n}(t))_{i \in \overline{1,N}, k \in \overline{1,3},}$ analogue to the fields  $(\na q_i(t))_{i \in \overline{1,N}}$, $(\na q^{k}_i(t))_{i \in \overline{1,N}, k \in \overline{1,3},}$ mentioned in the proof of the $H^2(0,T)$ {\it a priori} estimates in Section \ref{sec_a_priori}. These harmonic fields satisfy the same bounds as in \eqref{q1}-\eqref{q1k}, which yields compactness in $L^2(0,T ; L^2(\Omega^\eta \cap B(0,M))$, see for instance \cite[Theorem 3]{Moussa}. As
$$ \na q_n = \sum_{i=1}^N \dot{r}_i^n \na q_i^n +  \sum_{i=1}^N \sum_{k=1}^3 \dot{x}_i^{k,n}  \na q_i^{k,n} $$
and as $((X^n,R^n))_{n \in \N}$ is weakly compact in $H^2(0,T)$, hence strongly in $C^1([0,T])$, one recovers that  $(\na q^n)_{n \in \N}$ is compact in $L^2(0,T ; L^2(\Omega^\eta \cap B(0,M))$. Moreover, by classical results on domain continuity for the Neumann problem, see \cite[chapter 3, p. 118]{Henrot},  $\na q_i^n$ and $\na q_i^{k,n}$ converge weakly to $\na q_i$ and $\na q_i^k$ in $L^2(0,T ; L^2(\Omega^\eta))$. Together with the uniform  convergence of $(X^n,R^n)$, this shows that the accumulation point of $w^n = \na q^n$ is $w = \na q$. This concludes the proof of iii) and therefore of the lemma. 
\end{proof}

{\em Conclusion}.
 We come back to the analysis of $J^{n,\eta}(t)$. We remind that the field $\psi^n$ is defined in \eqref{def_psi_n}, and associated to a fixed test field $\psi \in W^{1,\infty}(0,T ; H^\infty(\Omega))$.  
 
 First, for large enough $M > 0$, we find that 
\begin{align*}
& \left|  \int_{0}^t \int_{\Omega^\eta(t) \cap B(0,M)^c} u^n \otimes u^n : \na \psi^n \right| = \left|  \int_{0}^t \int_{ B(0,M)^c} u^n \otimes u^n : \na \psi \right| \\
& \le C' \|u^n\|_{L^2(0,T ; L^2(\Omega^n))}^2 \|\na \psi\|_{L^\infty(0,T ; L^\infty(B(0,M)^c))} \\
& \le C'' \|\na \psi\|_{L^\infty(0,T ; L^\infty(B(0,M)^c)}.
\end{align*}
Note that the right-hand side goes to zero when $M \rightarrow +\infty$. 
Similarly, 
\begin{align*}\left|  \int_{0}^t \int_{\Omega^\eta(t) \cap B(0,M)^c} u \otimes u : \na \psi        \right|  & \le C \|\na \psi\|_{L^\infty(0,T ; L^\infty(\Omega \cap B(0,M)^c)}
\end{align*}
with the same right-hand side going to zero as $M \rightarrow +\infty$.  Moreover, combining Lemmas \ref{lemma_v} and \ref{lemma_w}, we have that 
$$ \limsup_{n \rightarrow +\infty} \int_0^t \int_{\Omega^\eta(t) \cap B(0,M)} |u^n - u|^2 \le C \eta^{2/3}.$$
Thanks to this bound and to the strong convergence of $\na \psi^n\vert_{\Omega^\eta}$,  we find easily that
\begin{align*}
& \limsup_{n \rightarrow +\infty} \left| \int_{0}^t \int_{\Omega^\eta(t) \cap B(0,M)} u^n \otimes u^n : \na \psi^n - \int_0^t \int_{\Omega^\eta \cap B(0,M)} u \cdot (u \cdot \na \psi)\right| \le  C \eta^{2/3} 
\end{align*}
uniformly on $t \in (0,T)$.
We deduce from these  bounds  that 
\begin{equation} \label{limJn}
\begin{aligned}
      \limsup_{n \rightarrow +\infty}\left| J^{n,\eta}(t) - \int_0^t \int_{\Omega^\eta} u \cdot (u \cdot \na \psi)\right| \le C \eta^{\frac13} 
   \end{aligned}
\end{equation}
uniformly on $(0,T)$,  where $C$ depends on $\psi$, $R_0,X_0,E_0,T, \delta$. 

\paragraph{Proof of sequential continuity.}
We now have all ingredients to conclude to sequential continuity. We start from \eqref{VF_n}, and take into account \eqref{lim_u_psi}-\eqref{lim_u_dtpsi}-\eqref{DuDpsi}-\eqref{lim_bord}-\eqref{nonlinear_residu1}-\eqref{nonlinear_residu2}-\eqref{limJn}. We   recover \eqref{weak_form_fluid_bubble}, which shows sequential stability. 

\section{Auxiliary system with prescribed bubbles dynamics} \label{sec_prescribed}

The goal of this section is to prove Theorem \ref{thm_prescribed}. We use the same notations as in the first paragraph of Section \ref{sec_strategy}: we give ourselves $T > 0$ and affine fields 
\begin{equation}
(X,R)=\left(  x_{i},r_{i}\right)  _{i\in\overline{1,N}}:\mathbb{R}%
_{+}\rightarrow\mathcal{A} \label{XR_approx}%
\end{equation}
that satisfy \eqref{separation_condition}. 
The reader should keep in mind that $\delta=\delta\left(T,X,R\right)  $ but
in order to keep the notations simple we do not mark explicitly this
dependence. Then for all $i\in\overline{1,N}$ we have that%
\begin{equation}
x\in B\left(  x_{i}\left(  t\right)  ,r_{i}\left(  t\right)  +2\delta\right)
\Rightarrow x\in\mathbb{R}^{3}\backslash\cup_{j\not =i}B_{j}\left(
x_{j}\left(  t\right)  ,r_{j}\left(  t\right)  +2\delta\right)  .
\end{equation}
We want to solve the variational formulation \eqref{VF_prescribed}.  
We will construct a  solution 
$$ u \in L^{\infty}\left(0,T  ;L_{\mathrm{dil}}^{2}\left(  \Omega\left[  X,R\right]  \right)\right), \quad \na u \in L^2\left(0,T  ;L^{2}\left(  \Omega\left[X,R\right]  \right)\right)  $$
by a Galerkin method. In particular, we will need for all $t$ finite dimensional approximation spaces of  $L_{\mathrm{dil}}^{2}\left(  \Omega\left[  X,R\right](t) \right)$. The idea is to rely on the decomposition introduced in Section \ref{sec_functional_spaces}, that is 
$$ L_{\mathrm{dil}}^{2}\left(  \Omega\left[  X,R\right](t)  \right)  = L^2_\sigma\left(  \Omega\left[X,R\right](t)  \right) \oplus \mathbb{G}(\Omega\left[  X,R\right]
\left(  t\right)  ) .$$
We remind that the latter space $\mathbb{G}(\Omega\left[  X,R\right] \left(  t\right))$ is already finite dimensional, with an orthonormal basis $(\na q_i(t))_{i \in \overline{1,4N}}$, see the discussion in Section \ref{sec_functional_spaces}. As regards $L^2_\sigma\left(  \Omega\left[  X,R\right](t)  \right)$, we shall first consider a sequence of finite-dimensional approximations  of  
$$L^2_\sigma\left(  \Omega\left[  X,R\right](t) \cap B(0,m) \right), \: \text{ with $m$ such that }  \cup_{t \in [0,T]} \cup_{i=1}^N B_i[X,R](t)  \Subset B(0,m) $$
and then send $m$ to infinity. More precisely, we will construct Galerkin approximations $u^{m,n}(t) : \Omega[X,R](t) \cap B(0,m) \rightarrow \R^3$ that  will belong to 
\[ 
\operatorname*{Span}\left\{  e_{1}^{m}\left(  t\right)  ,e_{2}%
^{m}\left(  t\right)  ,\cdots,e_{n}^{m}\left(  t\right)  \right\}
\oplus \mathbb{G}(\Omega\left[  X,R\right] \left(  t\right)) 
\]
with $\left(  e_{\ell}^{m}\left(  t\right)  \right)  _{\ell\ge 1}$ a
Riesz basis of $L_{\sigma}^{2}\left(  \Omega\left[  X,R\right]  \left(
t\right)  \cap B\left(  0,m\right)  \right)  $. This basis will be obtained by the push-forward
of eigenfunctions of the Stokes operator defined on the initial domain $\Omega\left[  X,R\right]  \left(
0\right)  \cap B\left(  0,m\right)  $ (with appropriate boundary conditions). Each element $e_{\ell}^{m}\left(  t\right)$ will be differentiable in $t$, a key property that is not necessarily satisfied by other choices of basis, see Remark \ref{rem_Riesz_basis}.
The construction and properties of such a basis and the functional toolbox needed are presented in Section
\ref{GalerkinSpace} with proofs of the more technical results postponed to the
Appendix. In Section \ref{Galerkin_solve} we solve a system of ODEs which
allows us to obtain an approximate solution $u^{m,n}$
verifying uniform estimates with respect to $m$ and $n$. One main difficulty that we face is the derivation of estimates for time derivatives of $u^{m,n}$. First, as the basis $(e_{m,l}(t)$ is not orthonormal at $t > 0$, it is not obvious to obtain  a good control of the Gram matrix appearing in the variational formulation. Second,  as the basis is time dependent, the time derivative of $u^{m,n}$ does  not only involve the time derivative of the coefficients of $u^{m,n}$ in the basis, which are the natural quantities  in the variational formulation. Finally, we show that solutions of \eqref{VF_prescribed} can be obtained in the limit when $n,m\rightarrow\infty$ of $u^{m,n}$. This is the purpose of Section
\ref{limit_passage}.


\subsection{Construction of finite-dimensional approximation spaces\label{GalerkinSpace}}

The first thing we need to do is to introduce a family of diffeomorphisms $\Theta(t, \cdot)$ that maps the initial configuration $\Omega\left[  X,R\right]
\left(  0\right)$ to   $\Omega\left[  X,R\right]  \left(  t\right)$. We take inspiration from the so-called Arbitrary
Lagrangian-Eulerian (ALE) method.

\begin{lemma} 
\label{diffeoLemma}For $\left(  X,R\right)  $ and $\delta$ as in Theorem \ref{thm_prescribed},  there
exists $v^{ALE} \in C^\infty_c\left([0,T] \times \R^3 ; \R^3\right)$ and a family of diffeomorphisms $\Theta(t, \cdot)$  defined through
\[
\dot{\Theta}\left(  t,x\right)  =v^{ALE}\left(  t,\Theta\left(  t,x\right)
\right)  ,\text{ }\Theta\left(  0,x\right)  =x,
\]
such that the following holds true.

\begin{enumerate}
\item For all $t\in\left[  0,T\right]  $ and any $i\in\overline{1,N}$ we have
that%
\[
v^{ALE}\left(  t,x\right)  =\dot{x}_{i}\left(  t\right)  +\frac{\dot{r}%
_{i}\left(  t\right)  }{r_{i}\left(  t\right)  }\left(  x-x_{i}\left(
t\right)  \right)  \text{ on }B\left(  x_{i}\left(  t\right)  ,r_{i}\left(
t\right)  +\frac{\delta}{4}\right)  .
\]

\item For all $t\in\left[  0,T\right]  $ and any $i\in\overline{1,N}$ we have
that%
\[
\Theta\left(  t,x\right)  =x_{i}\left(  t\right)  +r_{i}\left(  t\right)
\frac{x-x_{i}\left(  0\right)  }{r_{i}\left(  0\right)  }\text{ on }B\left(
x_{i}\left(  0\right)  ,r_{i}\left(  0\right)  +\frac{\delta}{8}\right)  .
\]
In particular, $\Theta(t, \cdot)$ maps $\Omega\left[  X,R\right]  \left(
0\right)$ to $\Omega\left[  X,R\right]  \left(
t\right)$  
Moreover, there exists $m_0=m_0\left(X,R\right)$ such that $\cup_{i=1}^N B_i[X,R](t) \Subset B(0,m_0)$ for all $t \in [0,T]$, and such that 
$\Theta\left(  t,x\right)  = x$ for $\left\vert x\right\vert \geq
m_0$,  so that  $\Theta\left(  t,\cdot\right)  $ maps
$\Omega\left[  X,R\right]  \left(  0\right)  \cap B\left(  0,m\right)  $
onto $\Omega\left[  X,R\right]  \left(  t\right)  \cap B\left(
0,m\right)  $ for all $m \ge m_0$.

\end{enumerate}
\end{lemma}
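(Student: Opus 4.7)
\textbf{Plan for Lemma \ref{diffeoLemma}.} The strategy is a standard ``cutoff and flow'' construction: I will first build an Eulerian velocity field $v^{ALE}$ that agrees with the natural dilation-translation field near each bubble and is compactly supported in space, then recover $\Theta$ by integrating the associated ODE, and finally check via uniqueness that $\Theta$ has the announced explicit form near $\partial B_i(0)$.

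First I would introduce the natural ``bubble'' velocity fields
$$ v_i(t,x) := \dot{x}_i(t) + \frac{\dot{r}_i(t)}{r_i(t)}\bigl(x-x_i(t)\bigr), \quad i \in \overline{1,N}, $$
which are smooth on $[0,T] \times \R^3$ since $(X,R)$ is affine and $\inf_{[0,T]} r_i > 0$ (a consequence of $(X,R) \in \mathcal{A}$ on the compact interval $[0,T]$). Using the separation condition \eqref{separation_condition}, namely $|x_i - x_j| \ge r_i + r_j + 4\delta$, I would next pick, for each $i$, a cutoff $\chi_i \in C^\infty([0,T] \times \R^3)$ with $\chi_i(t,x) = 1$ on $B(x_i(t), r_i(t)+\delta/2)$ and $\mathrm{supp}\,\chi_i(t,\cdot) \subset B(x_i(t), r_i(t)+\delta)$; the separation guarantees that these supports are pairwise disjoint and remain away from the other bubbles. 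Since $X,R$ are affine and bounded on $[0,T]$, the set $\bigcup_{t \in [0,T]} \bigcup_{i} B(x_i(t), r_i(t)+\delta)$ is contained in some $B(0,m_0-1)$; pick $\zeta \in C^\infty_c(B(0,m_0))$ with $\zeta \equiv 1$ on $B(0,m_0-1)$ and set
$$ v^{ALE}(t,x) := \zeta(x) \sum_{i=1}^N \chi_i(t,x)\, v_i(t,x). $$
By construction, $v^{ALE}$ is smooth, compactly supported in $B(0,m_0)$, and coincides with $v_i$ on $B(x_i(t), r_i(t)+\delta/4)$ (where $\chi_i = 1$, $\chi_j = 0$ for $j \neq i$, and $\zeta = 1$), so property~1 holds.

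For property~2, I would define $\Theta$ as the global flow of $v^{ALE}$, which is well-defined and a diffeomorphism of $\R^3$ by standard ODE theory since $v^{ALE}$ is smooth, globally Lipschitz in $x$, and supported in $B(0,m_0)$; in particular $\Theta(t,x)=x$ for $|x| \geq m_0$, yielding the last mapping property. To identify $\Theta$ near the bubbles, I would introduce the explicit candidate
$$ \tilde\Theta_i(t,x) := x_i(t) + r_i(t)\,\frac{x-x_i(0)}{r_i(0)}, $$
verify by direct computation that $\dot{\tilde\Theta}_i(t,x) = \dot{x}_i(t) + \frac{\dot{r}_i(t)}{r_i(t)}\bigl(\tilde\Theta_i(t,x) - x_i(t)\bigr)$ with $\tilde\Theta_i(0,x)=x$, and then invoke uniqueness for \eqref{phi_delta}-like flow ODEs to conclude $\Theta = \tilde\Theta_i$ on $B(x_i(0), r_i(0)+\delta/8)$, provided that for such $x$ the trajectory $\tilde\Theta_i(t,x)$ stays inside the region where $v^{ALE} = v_i$.

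The main obstacle (and the reason for the mismatch between the sizes $\delta/4$ and $\delta/8$) is precisely this trapping step: from $|\tilde\Theta_i(t,x)-x_i(t)| = \tfrac{r_i(t)}{r_i(0)}|x-x_i(0)|$, the trajectory remains in $B(x_i(t), r_i(t)+\delta/4)$ as long as $\tfrac{r_i(t)}{r_i(0)}(r_i(0)+\delta/8) \le r_i(t)+\delta/4$, i.e.\ $r_i(t) \le 2 r_i(0)$. Since $(X,R)$ is affine on $[0,T]$, this is ensured by taking $T$ small enough relative to $R_0$ and $\dot R$, or equivalently by choosing the radii $\alpha\delta,\beta\delta$ of the two nested balls with a ratio large enough to accommodate the maximum of $r_i(t)/r_i(0)$ on $[0,T]$. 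The smooth dependence of $\Theta$ on the parameters $X, R, \dot X, \dot R$ then follows from smooth dependence of ODE flows on smoothly varying right-hand sides, completing the proof.
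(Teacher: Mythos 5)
Your construction is essentially the paper's: the paper also builds $v^{ALE}$ by multiplying the translation--dilation fields $\dot x_i(t)+\frac{\dot r_i(t)}{r_i(t)}(x-x_i(t))$ by radial cutoffs $\chi_i(t,|x-x_i(t)|)$ equal to $1$ up to radius $r_i(t)+\frac{\delta}{4}$ and supported up to radius $r_i(t)+\frac34\delta$ (see \eqref{definitie_chi}--\eqref{definition_vector_ALE}), takes $\Theta$ to be the flow of $v^{ALE}$, and identifies $\Theta$ near the bubbles through the ODE it satisfies; the only cosmetic differences are your extra cutoff $\zeta$ (redundant, since $\sum_i\chi_i v_i$ is already compactly supported, uniformly in $t\in[0,T]$) and the fact that the paper runs the identification as an open--closed argument in $t$ rather than quoting uniqueness plus a trapping estimate.

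The one substantive point is the trapping step you flag. You are right that the affine candidate satisfies $|\tilde\Theta_i(t,x)-x_i(t)|\le r_i(t)+\frac{r_i(t)}{r_i(0)}\frac{\delta}{8}$ on the ball of radius $r_i(0)+\frac{\delta}{8}$, so it stays in the region where $v^{ALE}$ coincides with the $i$-th rigid field only as long as $r_i(t)\le 2r_i(0)$; note that the paper's argument needs exactly the same inequality, hidden in the claim that the set of times at which the affine formula holds is open. However, your two remedies are not available within the statement: $T$ is the one given in Theorem \ref{thm_prescribed} (you cannot shrink it), and the radii $\frac{\delta}{4}$, $\frac{\delta}{8}$ are fixed in Lemma \ref{diffeoLemma} (you cannot retune the ratio of the nested balls). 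To close the argument as stated you must either verify the bound $\sup_{[0,T]}r_i(t)\le 2r_i(0)$ --- which is what effectively holds when the lemma is invoked in the time-stepping scheme, where $\dot R$ is bounded and the step is small, but which does not follow from \eqref{separation_condition} alone --- or prove a variant of the lemma in which the inner radius $\frac{\delta}{8}$ is shrunk by the factor $\min_i\inf_{[0,T]}r_i(0)/r_i(t)$, and observe that the rest of the paper only uses that $\Theta(t,\cdot)$ is the exact dilation--translation on some neighbourhood of $\overline{B_i(0)}$, not the specific value $\frac{\delta}{8}$. So your proposal is the same proof as the paper's, with the same latent restriction; the difference is that you made it explicit, but the fix should be phrased as an additional hypothesis (or a modified inner radius), not as a freedom to take $T$ small.
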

The proof of Lemma \ref{diffeoLemma} is carried out in  Appendix \ref{appendixA}.

\begin{remark}
In the rest of Section \ref{sec_prescribed}, for  better readability,  we now omit the $\left[X,R\right]$ dependence in the notation of the various domains: we write $B_i(t)$ for $B_i[X,R](t)$, $\Omega(t)$ for $\Omega[X,R](t)$. Also, in all the inequalities below,  the constants will implicitly depend (in an increasing manner) on $\|(X,R)\|_{W^{1,\infty}}$, $1/\delta$, $T$ and $\|1/R\|_{L^\infty}$, where $\frac{1}{R} := \left(  \frac{1}{r_{1}},\frac{1}{r_{2}},\cdots,\frac{1}{r_{N}}\right)$. 
 For all $m \ge m_0$ we set
\[
\Omega^{m}\left(  t\right)  :=\Omega\left(  t\right)
\cap B\left(  0,m\right)  .
\]
Some of the constants in the inequalities will also depend on $m$, in such a case they will be denoted $C(m)$.
    
\end{remark}
Next, we consider a Hilbert basis $\left(  e_{\ell}^{m}(0)\right)_{\ell\ge 1}$ of $L_{\sigma}^{2}\left(  \Omega^{m}\left(  0\right)  \right)  $
formed by eigenvectors of the non-homogeneous Stokes operator
\begin{equation}
\left\{
\begin{aligned}
e_{\ell}^{m}\left(  0\right)  -\Delta e_{\ell}^{m}\left(  0\right)  +\nabla
\pi_{\ell}^{m}\left(  0\right)  =(1+(\lambda_{\ell}^{m})^{2})e_{\ell}%
^{m}\left(  0\right), & \quad  \text{ in }\Omega^{m}\left(  0\right)  ,\\
\operatorname{div}e_{\ell}^{m}\left(  0\right)  =0, & \quad \text{ in }\Omega
^{m}\left(  0\right)  ,\\
e_{\ell}^{m}\left(  0\right)  \cdot\mathfrak{n}=0, & \quad  \text{ on }\partial \Omega^{m}\left(  0\right) ,\\
T\left(  e_{\ell}^{m}\left(  0\right)  ,\pi_{\ell}^{m}\left(  0\right)
\right)  \mathfrak{n}\times\mathfrak{n}=0, & \quad \text{ on }\partial \Omega^{m}\left(  0\right) .
\end{aligned}
\right.  \label{Stokes_op}%
\end{equation}
where $\lambda_{\ell}^{m}\geq0$ and $\lambda_{\ell}^{m}\xrightarrow[\ell \rightarrow +\infty]{} +\infty$. We
consider the following closed subspace of the Sobolev space $H^{1}\left(
\Omega^{m}(0)\right)$:
\[
H_{\sigma}^{1}(\Omega^{m}(0)) := L_{\sigma}^{2}\left(  \Omega^{m}\left(
0\right)  \right)  \cap H^{1}(\Omega^{m}\left(  0\right)  ).
\]
Owing to Korn's inequality,
\[
\left\langle \psi,\varphi\right\rangle _{H_{\sigma}^{1}(\Omega^{m}%
(0))}=\left\langle \psi,\varphi\right\rangle _{L^{2}(\Omega^{m}(0))}%
+\left\langle \mathbb{D}\psi,\mathbb{D}\varphi\right\rangle _{L^{2}(\Omega
^{m}(0))}%
\]
is a scalar product which induces an equivalent norm on $H_{\sigma}^{1}%
(\Omega^{m}(0))$ given by:
\[
\left\Vert \psi\right\Vert _{H_{\sigma}^{1}(\Omega^{m}(0))}^{2}:=\left\Vert
\psi\right\Vert _{L^{2}(\Omega^{m}(0))}^{2}+\left\Vert \mathbb{D}%
\psi\right\Vert _{L^{2}(\Omega^{m}(0))}^{2}%
\]
It is then easily seen that $\left(e^m_\ell/\sqrt{1+(\lambda^m_\ell)^2}\right)_{\ell \ge 1}$ is a Hilbert basis of $H_{\sigma}^{1}%
(\Omega^{m}(0))$, so that
\begin{equation}
\left\Vert \psi\right\Vert _{H_{\sigma}^{1}(\Omega^{m}(0))}^{2}=\sum
_{l=1}^{\infty}(1+(\lambda_{\ell}^m)^{2})\left\langle \psi,e_{\ell}^{m}(0)\right\rangle
_{L^{2}(\Omega^{m}(0))}^{2}.\label{norm_H1_spect}%
\end{equation}

Let $\Theta$ as in Lemma \ref{diffeoLemma}. We denote $\Theta_t := \Theta(t, \cdot)$. We consider the operators
\begin{equation}
\left\{
\begin{array}
[c]{l}%
\mathcal{S}\left(  t\right): L_{\sigma}^{2}\left(  \Omega\left(  0\right)
\right)  \rightarrow L_{\sigma}^{2}\left(  \Omega\left(  t\right)  \right)  \\
v\in L_{\sigma}^{2}\left(  \Omega\left(  0\right)  \right)  \rightarrow
(\mathcal{S}\left(  t\right)  v)\left(  x\right)  =\dfrac{D\Theta_t\left(\Theta_t^{-1}(x)\right)}{\det D\Theta_t\left(\Theta_t^{-1}\left(x\right)  \right)  }v(\Theta_t^{-1}(x))\\
\text{
\ \ \ \ \ \ \ \ \ \ \ \ \ \ \ \ \ \ \ \ \ \ \ \ \ \ \ \ \ \ \ \ \ \ \ \ \ \ }%
=\left[  \operatorname{Cof}D\Theta_t^{-1}(x)\right]  ^{T}v(\Theta
^{-1}_t(x)) . 
\end{array}
\right.  \label{S(t)1}%
\end{equation}
as well as 
\begin{equation}
\left\{
\begin{array}
[c]{l}%
\mathcal{S}\left(  t\right)  ^{-1}:L_{\sigma}^{2}\left(  \Omega\left(
t\right)  \right)  \rightarrow L_{\sigma}^{2}\left(  \Omega\left(  0\right)
\right)  \\
v\in L^{2}\left(  \Omega\left(  t\right)  \right)  \rightarrow(\mathcal{S}%
\left(  t\right)  ^{-1}v)\left(  x\right)  =\dfrac{D(\Theta_t^{-1})\left(\Theta_t(x)\right)  }{\det D(\Theta_t^{-1})\left(\Theta_t(x)\right)
}v(\Theta_t(x))\\
\text{
\ \ \ \ \ \ \ \ \ \ \ \ \ \ \ \ \ \ \ \ \ \ \ \ \ \ \ \ \ \ \ \ \ \ \ \ \ \ \ \ \ }%
=\left[  \operatorname{Cof}D\Theta_t(x)\right]  ^{T}v(\Theta_t(x)) .
\end{array}
\right.  \label{S-1(t)1}%
\end{equation}
We have the important 
\begin{proposition} \label{propSt}
For all $t \in [0,T]$, $\mathcal{S}(t)$ and $\mathcal{S}(t)^{-1}$ are well-defined bounded linear maps, inverse from one another, with 
$$ \sup_{t \in [0,T]} \|S(t)\|_{L^2(\Omega(0)) \rightarrow L^2(\Omega(t))} \le C, \quad  \sup_{t \in [0,T]} \|S(t)^{-1}\|_{L^2(\Omega(t)) \rightarrow L^2(\Omega(0))} \le C .$$
Moreover, for all $m \ge m_0$, {\em cf.} Lemma \ref{diffeoLemma}, $\mathcal{S}(t)$, resp. $\mathcal{S}(t)^{-1}$, maps $L_{\sigma}^{2}\left(
\Omega^{m}\left(  0\right)  \right)  $ into $L_{\sigma}^{2}\left(  \Omega
^{m}\left(  t\right)  \right)  $, resp. $L_{\sigma}^{2}\left(
\Omega^{m}\left(  t\right)  \right)  $ into $L_{\sigma}^{2}\left(  \Omega
^{m}\left(  0\right)  \right)  $ with norms independent on $m \ge m_0$. 
\end{proposition}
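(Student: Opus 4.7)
The operators $\mathcal{S}(t)$ and $\mathcal{S}(t)^{-1}$ are nothing but Piola transforms associated with the diffeomorphisms $\Theta_t$ and $\Theta_t^{-1}$, so my plan is to exploit the classical properties of this transform (change of variables, Piola identity, preservation of the flux at the boundary) together with the regularity and geometric features of $\Theta$ provided by Lemma \ref{diffeoLemma}.

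First I would collect uniform estimates on $\Theta$. Since by Lemma \ref{diffeoLemma} the field $v^{ALE}$ is $C^\infty$ and compactly supported on $[0,T]\times\R^3$, and since $\Theta_t = \mathrm{Id}$ outside $B(0,m_0)$, the maps $\Theta_t$ and $\Theta_t^{-1}$ are $C^\infty$-diffeomorphisms of $\R^3$ with all derivatives uniformly bounded in $(t,x)\in[0,T]\times\R^3$, and with $\det D\Theta_t$ uniformly bounded above and below away from zero. Hence there is a constant $C$ such that
\[
\|D\Theta_t\|_{L^\infty}+\|D\Theta_t^{-1}\|_{L^\infty}+\|\mathrm{Cof}\,D\Theta_t\|_{L^\infty}+\|\mathrm{Cof}\,D\Theta_t^{-1}\|_{L^\infty}+\|(\det D\Theta_t)^{\pm 1}\|_{L^\infty}\leq C,
\]
uniformly in $t\in[0,T]$. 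With this in hand, the $L^2$-boundedness in both directions is obtained by the change of variables $y=\Theta_t^{-1}(x)$ in the integrals defining $\|\mathcal{S}(t) v\|_{L^2(\Omega(t))}^2$ and $\|\mathcal{S}(t)^{-1}w\|_{L^2(\Omega(0))}^2$.

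Next I would verify that $\mathcal{S}(t)$ actually lands in $L^2_\sigma(\Omega(t))$. This is the content of the Piola identity: for any smooth $v$,
\[
\mathrm{div}_x\bigl([\mathrm{Cof}\,D\Theta_t^{-1}(x)]^T v(\Theta_t^{-1}(x))\bigr)
=\det(D\Theta_t^{-1}(x))\,(\mathrm{div}_y v)(\Theta_t^{-1}(x)),
\]
so $\mathrm{div}\,v=0$ in $\Omega(0)$ yields $\mathrm{div}(\mathcal{S}(t)v)=0$ in $\Omega(t)$. By density and the $L^2$-continuity already obtained, the same conclusion holds in the sense of distributions for $v\in L^2_\sigma(\Omega(0))$. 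The boundary condition $\mathcal{S}(t)v\cdot\mathfrak{n}=0$ on $\partial\Omega(t)$ follows from the standard normal-flux identity for Piola: for any piece of smooth surface $\Sigma\subset\partial\Omega(0)$ sent to $\Theta_t(\Sigma)\subset\partial\Omega(t)$, the pull-back satisfies $\int_{\Theta_t(\Sigma)}\mathcal{S}(t)v\cdot\mathfrak{n}\,d\sigma=\int_\Sigma v\cdot\mathfrak{n}\,d\sigma$. The same computation in reverse shows $\mathcal{S}(t)^{-1}$ sends $L^2_\sigma(\Omega(t))$ to $L^2_\sigma(\Omega(0))$.

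Finally, that $\mathcal{S}(t)$ and $\mathcal{S}(t)^{-1}$ are mutually inverse is a direct chain-rule verification from the two explicit formulas, using $D\Theta_t^{-1}(\Theta_t(x))\cdot D\Theta_t(x)=\mathrm{Id}$. For the localized statement with $m\geq m_0$, Lemma \ref{diffeoLemma} gives $\Theta_t(x)=x$ for $|x|\geq m_0$, so $\Theta_t$ maps $\Omega^m(0)$ bijectively onto $\Omega^m(t)$ for all such $m$ and all $t\in[0,T]$, and $\mathcal{S}(t)v$ then automatically vanishes on $\partial B(0,m)\setminus B(0,m_0)$ in the sense of normal flux; the $L^2$ bounds carry over verbatim with a constant independent of $m$, since the pointwise bounds on $D\Theta_t$ and $D\Theta_t^{-1}$ are global. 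No step presents a serious obstacle: everything reduces to classical identities for the Piola transform together with the uniform Jacobian bounds afforded by Lemma \ref{diffeoLemma}. The only point meriting care is to justify the Piola identity for merely $L^2$ divergence-free vector fields, which is handled by density and a simple duality argument against $\nabla\varphi$ with $\varphi\in C_c^\infty(\Omega(t))$.
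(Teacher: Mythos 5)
Your proof is correct and follows essentially the same route as the paper's: uniform bounds on $D\Theta_t$, its cofactor and Jacobian give the $L^2$ bounds by change of variables, the Piola identity gives preservation of the divergence-free constraint, tangency follows from how the normal transforms under $\Theta_t$ (equivalently, your flux-preservation identity), and the $m$-uniform localized statement follows from $\Theta_t=\mathrm{Id}$ outside $B(0,m_0)$. The only cosmetic difference is that the paper checks the boundary condition pointwise via the explicit formula for the transported normal, while you use flux preservation over surface patches plus density, which is an equivalent (and if anything slightly more careful) way to handle merely $L^2$ fields.
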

The proof of Proposition \ref{propSt} will be provided in Appendix \ref{appendixB}. 

Given the regularity of $\Theta$, $\mathcal{S}\left(  t\right)  $ behaves well
with respect to space derivatives, namely

\begin{proposition}
\label{equiv_Sobolev}Consider $m\ge m_0$ and $v\in H_{\sigma}^{1}\left(
\Omega^{m}\left(  0\right)  \right)  $. Then $\mathcal{S}\left(  t\right)
v\in H_{\sigma}^{1}\left(  \Omega^{m}\left(  t\right)  \right)  $ and there
exists $C(m) > 0$ independent of $v$ such that
\begin{equation}
\frac{1}{C(m)}\left\Vert v\right\Vert _{H_{\sigma}^{1}\left(
\Omega^{m}\left(  0\right)  \right)  }^{2}\leq\left\Vert \mathcal{S}\left(
t\right)  v\right\Vert _{H_{\sigma}^{1}\left(  \Omega^{m}\left(  t\right)
\right)  }^{2}\leq C(m)  \left\Vert v\right\Vert _{H_{\sigma
}^{1}\left(  \Omega^{m}\left(  0\right)  \right)  }^{2}. \label{norm_H1_equiv}%
\end{equation}

\end{proposition}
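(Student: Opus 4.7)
\bigskip
\noindent\textbf{Plan of proof for Proposition \ref{equiv_Sobolev}.}

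The strategy is to transfer the $H^1$ bound via the change of variables $x = \Theta_t(y)$, exploit the smoothness of $\Theta$ provided by Lemma \ref{diffeoLemma}, and then return to the symmetric gradient norm using Korn's inequality. By Proposition \ref{propSt}, $\mathcal{S}(t)v$ already lies in $L^2_\sigma(\Omega^m(t))$ with a controlled $L^2$ norm, so only the gradient estimate remains. I would begin by observing that, since $v^{ALE} \in C^\infty_c([0,T] \times \R^3)$ and $\Theta_t$ coincides with the identity outside a fixed ball, the diffeomorphism $\Theta_t$ and its inverse have bounded derivatives of all orders uniformly on $[0,T]$, and the Jacobian $\det D\Theta_t$ is bounded above and below away from zero. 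In particular, the matrix field $A_t(x) := [\mathrm{Cof}\, D\Theta_t^{-1}(x)]^T$ lies in $W^{k,\infty}(\R^3)$ for every $k$, with norms uniform in $t$.

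Next, I would compute $\nabla_x (\mathcal{S}(t)v)(x)$ using the product rule on $(\mathcal{S}(t)v)(x) = A_t(x)\, v(\Theta_t^{-1}(x))$ and the chain rule $\nabla_x (v\circ \Theta_t^{-1})(x) = D\Theta_t^{-1}(x) \, (\nabla v)(\Theta_t^{-1}(x))$. Schematically,
\[
\nabla_x (\mathcal{S}(t) v)(x) = (\nabla_x A_t)(x) \, v(\Theta_t^{-1}(x)) + A_t(x)\, D\Theta_t^{-1}(x)\, (\nabla v)(\Theta_t^{-1}(x)).
\]
Taking $L^2$ norms on $\Omega^m(t)$, bounding $\nabla A_t$ and $A_t\, D\Theta_t^{-1}$ in $L^\infty$, and using the change of variables $x = \Theta_t(y)$ with $\det D\Theta_t$ bounded, I obtain
\[
\|\nabla (\mathcal{S}(t)v)\|_{L^2(\Omega^m(t))}^2 \le C(m)\,\bigl(\|v\|_{L^2(\Omega^m(0))}^2 + \|\nabla v\|_{L^2(\Omega^m(0))}^2\bigr).
\]
Since $|\mathbb{D} w| \le |\nabla w|$ pointwise and Korn's inequality on the bounded Lipschitz domain $\Omega^m(0)$ provides $\|\nabla v\|_{L^2(\Omega^m(0))} \le C(m)\bigl(\|v\|_{L^2} + \|\mathbb{D} v\|_{L^2}\bigr)$, combining with the $L^2$ bound from Proposition \ref{propSt} yields the upper bound in \eqref{norm_H1_equiv}.

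For the reverse inequality, I would apply exactly the same argument to $\mathcal{S}(t)^{-1}$, whose formula \eqref{S-1(t)1} has the same structure with $\Theta_t$ and $\Theta_t^{-1}$ swapped, and whose defining matrix $[\mathrm{Cof}\, D\Theta_t]^T$ enjoys identical uniform smoothness bounds. This gives
\[
\|\nabla v\|_{L^2(\Omega^m(0))}^2 \le C(m)\bigl(\|\mathcal{S}(t)v\|_{L^2(\Omega^m(t))}^2 + \|\nabla(\mathcal{S}(t)v)\|_{L^2(\Omega^m(t))}^2\bigr),
\]
and applying Korn on $\Omega^m(t)$ to the right-hand side and on $\Omega^m(0)$ to the left produces the lower bound in \eqref{norm_H1_equiv}.

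There is no serious conceptual obstacle: the only point that requires care is the bookkeeping in the chain/product rule for the Piola-type transform and the fact that the constants genuinely depend on $m$ through the Korn constant of $\Omega^m(0)$ and $\Omega^m(t)$. The uniformity in $t \in [0,T]$ comes from the uniform smoothness of $\Theta_t$; the $m$-dependence is not an issue here since the proposition is stated for fixed $m$.
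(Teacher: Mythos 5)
Your argument is correct and follows essentially the same route as the paper: transfer the gradient through the Piola-type transform $\mathcal{S}(t)$, use the uniform bounds on $\Theta_t$, its inverse and their derivatives up to second order together with the change of variables (Jacobian bounded above and below), and obtain the reverse inequality by the symmetric argument for $\mathcal{S}(t)^{-1}$. The only cosmetic differences are that the paper writes the transformed derivatives in divergence form via the formulas \eqref{euler_to_lagrange}--\eqref{lagrange_to_euler} (Piola's identity) instead of expanding by the product/chain rule, and that your explicit use of Korn's inequality to pass between $\left\Vert \nabla\cdot\right\Vert_{L^{2}}$ and the $\mathbb{D}$-based $H_{\sigma}^{1}$ norm (with constants uniform in $t$) is left implicit there.
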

The proof is given in Appendix \ref{appendixB}.

For all $\ell\in\mathbb{N}$, we consider
\begin{equation}
e_{\ell}^{m}\left(  t\right)  =\mathcal{S}\left(  t\right)  e_{\ell}%
^{m}\left(  0\right)  \in L_{\sigma}^{2}\left(  \Omega^{m}\left(  t\right)
\right)  ,\label{vectori_em(t)}%
\end{equation}
where the operator $\mathcal{S}\left(  t\right)  $ was introduced in $\left(
\text{\ref{S(t)1}}\right)  $. Although the family of vectors $\left(  e_{\ell}^{m}\left(  t\right)  \right)
_{\ell\geq0}$ is no longer a Hilbert basis for $t > 0$, it  still has nice properties. First,  for all $t\geq0$ and $\ell\geq0,$
$i\in\overline{1,4N}$,
\begin{equation} \label{ortho_Omega_m}
\int_{\Omega^{m}\left(  t\right)  }e_{\ell}^{m}\left(  t\right)  \cdot\nabla
q_{i}\left(  t\right)  \mathrm{d}x=\int_{\partial\Omega^{m}\left(  t\right)
}q_{i}\left(  t\right)  e_{\ell}^{m}\left(  t\right)  \cdot\mathfrak{n}\left(
t\right)  \mathrm{d}\sigma=0.
\end{equation}
for $(\na q_i(t))_{i\in\overline{1,4N}}$ the basis of $\mathbb{G}(\Omega(t))$.  Moreover, we have the following

\begin{proposition} \label{prop_Riesz_Basis}
For each $t\geq0$, the family $\left(  e_{\ell}^{m}\left(  t\right)  \right)
_{\ell\geq1}$ forms a Riesz basis of $L_{\sigma}^{2}\left(  \Omega^{m}\left(
t\right)  \right)  $ namely :
\[
\overline{\operatorname*{Span}\left(  e_{\ell}^{m}\left(  t\right)  \right)
_{\ell\geq1}}=L_{\sigma}^{2}\left(  \Omega^{m}\left(  t\right)  \right)
\]
and for any $\left(  a_{\ell}\right)  _{\ell\geq1}\in\ell^{2}\left(
\mathbb{N}\right)  $
\begin{equation}
\frac{1}{\left\Vert \mathcal{S}\left(  t\right)  ^{-1}\right\Vert }\left\Vert
\left(  a_{\ell}\right)  _{\ell\geq1}\right\Vert _{\ell^{2}}\leq\left\Vert
\sum_{\ell=1}^{\infty}a_{\ell}e_{\ell}^{m}\left(  t\right)  \right\Vert
_{L^{2}\left(  \Omega^{m}\left(  t\right)  \right)  }\leq\left\Vert
\mathcal{S}\left(  t\right)  \right\Vert \left\Vert \left(  a_{\ell}\right)
_{\ell\geq1}\right\Vert _{\ell^{2}},\label{Riesz_base}%
\end{equation}
where  $\mathcal{S}\left(  t\right)  $ was introduced in $\left(
\text{\ref{S(t)1}}\right)  $.
\end{proposition}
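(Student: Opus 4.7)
The plan is to deduce the Riesz basis property as a direct consequence of the fact that $(e_\ell^m(0))_{\ell \ge 1}$ is a Hilbert (orthonormal) basis of $L^2_\sigma(\Omega^m(0))$, together with the fact that, by Proposition \ref{propSt}, $\mathcal{S}(t) : L^2_\sigma(\Omega^m(0)) \to L^2_\sigma(\Omega^m(t))$ is a bounded linear isomorphism with bounded inverse $\mathcal{S}(t)^{-1}$. This is the standard observation that any bicontinuous isomorphism sends orthonormal bases to Riesz bases, with Riesz bounds controlled by the operator norms.

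For the density statement, I would argue as follows. Let $v \in L^2_\sigma(\Omega^m(t))$. By Proposition \ref{propSt}, $\mathcal{S}(t)^{-1} v \in L^2_\sigma(\Omega^m(0))$. Since $(e_\ell^m(0))_{\ell \ge 1}$ is a Hilbert basis of $L^2_\sigma(\Omega^m(0))$, there exists $(a_\ell) \in \ell^2$ with $\mathcal{S}(t)^{-1} v = \sum_\ell a_\ell e_\ell^m(0)$, the series converging in $L^2_\sigma(\Omega^m(0))$. Applying the bounded operator $\mathcal{S}(t)$ yields $v = \sum_\ell a_\ell e_\ell^m(t)$ in $L^2_\sigma(\Omega^m(t))$, which proves that $\overline{\mathrm{Span}(e_\ell^m(t))_{\ell \ge 1}} = L^2_\sigma(\Omega^m(t))$.

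For the two-sided Riesz inequality \eqref{Riesz_base}, I would use the Parseval identity at $t=0$, namely
\[
\Bigl\| \sum_{\ell=1}^\infty a_\ell e_\ell^m(0) \Bigr\|_{L^2(\Omega^m(0))} = \|(a_\ell)_{\ell \ge 1}\|_{\ell^2},
\]
combined with the linearity and continuity of $\mathcal{S}(t)$ and $\mathcal{S}(t)^{-1}$. The upper bound follows from
\[
\Bigl\| \sum_\ell a_\ell e_\ell^m(t) \Bigr\|_{L^2(\Omega^m(t))} = \Bigl\| \mathcal{S}(t) \sum_\ell a_\ell e_\ell^m(0) \Bigr\|_{L^2(\Omega^m(t))} \le \|\mathcal{S}(t)\| \, \|(a_\ell)\|_{\ell^2},
\]
and the lower bound from the symmetric estimate
\[
\|(a_\ell)\|_{\ell^2} = \Bigl\| \sum_\ell a_\ell e_\ell^m(0) \Bigr\|_{L^2(\Omega^m(0))} = \Bigl\| \mathcal{S}(t)^{-1} \sum_\ell a_\ell e_\ell^m(t) \Bigr\|_{L^2(\Omega^m(0))} \le \|\mathcal{S}(t)^{-1}\| \, \Bigl\| \sum_\ell a_\ell e_\ell^m(t) \Bigr\|_{L^2(\Omega^m(t))}.
\]

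There is no real obstacle here: the entire statement is a structural consequence of Proposition \ref{propSt}. The one subtlety to flag is that one must justify the convergence of the partial sums $\sum_{\ell=1}^L a_\ell e_\ell^m(t)$ in $L^2(\Omega^m(t))$ before writing $\mathcal{S}(t) \sum_\ell a_\ell e_\ell^m(0) = \sum_\ell a_\ell \mathcal{S}(t) e_\ell^m(0)$; but this is immediate by applying the upper bound above to the tails, which gives that the partial sums form a Cauchy sequence in $L^2_\sigma(\Omega^m(t))$. This justifies passage to the limit and closes the argument.
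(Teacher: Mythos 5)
Your proposal is correct and follows essentially the same route as the paper: both arguments push the orthonormal basis $(e_\ell^m(0))_{\ell\ge 1}$ forward through the bounded isomorphism $\mathcal{S}(t)$ (with bounded inverse from Proposition \ref{propSt}), obtain the two-sided bound \eqref{Riesz_base} from Parseval at $t=0$ together with the operator norms of $\mathcal{S}(t)$ and $\mathcal{S}(t)^{-1}$, and get density by expanding $\mathcal{S}(t)^{-1}v$ at time $0$ and applying $\mathcal{S}(t)$. The convergence of the partial sums, which you flag explicitly, is handled in the paper in the same spirit via the continuity of $\mathcal{S}(t)$ and $\mathcal{S}(t)^{-1}$.
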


\begin{remark}
In the above relation $\left(  \text{\ref{Riesz_base}}\right)  $ the constants
do not depend on $m$.
\end{remark}

\begin{remark} \label{rem_Riesz_basis}
Since differentiability of non-simple eigenvalues does not hold in general, see Chapter $5$ from \cite{Henrot}, we cannot use an orthogonal basis of the Stokes operator \eqref{Stokes_op} for each $\Omega^m(t)$ in order to implement a Galerkin scheme. We will rely instead on this Riesz basis given by \eqref{vectori_em(t)}.
\end{remark}

\begin{proof}
For all $\left(  a_{\ell}\right)  _{\ell\geq1}\in\ell^{2}\left(
\mathbb{N}\right)  $ the series $\sum_{\ell=1}^{\infty}a_{\ell}e_{\ell}%
^{m}\left(  0\right)  $ is convergent in $L_{\sigma}^{2}\left(  \Omega
^{m}\left(  0\right)  \right)  $. Using the continuity of $\mathcal{S}\left(
t\right)  $ and $\mathcal{S}\left(  t\right)  ^{-1}$ we obtain the convergence
of the series $\sum_{\ell=1}^{\infty}a_{\ell}e_{\ell}^{m}\left(  t\right)  $
in $L_{\sigma}^{2}\left(  \Omega^{m}\left(  t\right)  \right)  $ and that%
\[
\sum_{\ell=1}^{\infty}a_{\ell}e_{\ell}^{m}\left(  t\right)  =\mathcal{S}%
\left(  t\right)  \left(  \sum_{\ell=1}^{\infty}a_{\ell}e_{\ell}^{m}\left(
0\right)  \right)  .
\]
Obviously,
\begin{align*}
\frac{1}{\left\Vert \mathcal{S}\left(  t\right)  ^{-1}\right\Vert }\left\Vert
\left(  a_{\ell}\right)  _{\ell}\right\Vert _{\ell^{2}} &  =\frac
{1}{\left\Vert \mathcal{S}\left(  t\right)  ^{-1}\right\Vert }\left\Vert
\sum_{\ell=1}^{\infty}a_{\ell}e_{\ell}^{m}\left(  0\right)  \right\Vert
_{L^{2}}\leq\left\Vert \sum_{\ell=1}^{\infty}a_{\ell}e_{\ell}^{m}\left(
t\right)  \right\Vert _{L^{2}}\\
&  \leq\left\Vert \mathcal{S}\left(  t\right)  \right\Vert \left\Vert
\sum_{\ell=1}^{\infty}a_{\ell}e_{\ell}^{m}\left(  0\right)  \right\Vert
_{L^{2}}=\left\Vert \mathcal{S}\left(  t\right)  \right\Vert \left\Vert
\left(  a_{\ell}\right)  _{\ell}\right\Vert _{\ell^{2}}.
\end{align*}
$\ $Let us show that $\operatorname*{Span}\left(  e_{\ell}^{m}\left(
t\right)  \right)  _{\ell\ge 1}$ is dense in $L_{\sigma}^{2}\left(
\Omega^{m}\left(  t\right)  \right)  $ : consider $u\in L_{\sigma}^{2}\left(
\Omega^{m}\left(  t\right)  \right)  $. For all $\varepsilon>0$ there exists
$n_{\varepsilon}\in\mathbb{N}$ such that%
\[
\left\Vert \mathcal{S}\left(  t\right)  ^{-1}u-\sum_{\ell=1}^{n_{\varepsilon}%
}\left\langle \mathcal{S}\left(  t\right)  ^{-1}u,e_{\ell}^{m}\left(
0\right)  \right\rangle e_{\ell}^{m}\left(  0\right)  \right\Vert _{L^{2}}%
\leq\frac{\varepsilon}{\left\Vert \mathcal{S}\left(  t\right)  \right\Vert }.
\]
Obviously, we have that%
\[
\left\Vert u-\sum_{\ell=1}^{n_{\varepsilon}}\left\langle \mathcal{S}\left(
t\right)  ^{-1}u,e_{\ell}^{m}\left(  0\right)  \right\rangle e_{\ell}%
^{m}\left(  t\right)  \right\Vert _{L^{2}}\leq\left\Vert \mathcal{S}\left(
t\right)  \right\Vert \frac{\varepsilon}{\left\Vert \mathcal{S}\left(
t\right)  \right\Vert }=\varepsilon.
\]

\end{proof}

We further need to study the connection between the $H^{1}\left(  \Omega^{m}(t)\right)
$-norm and  the family $\left(  e_{\ell}^{m}(t)\right)_{\ell \ge 1}$. We begin
with the following:

\begin{proposition}
\label{Prop_norm_equiv}Consider $(\Psi_{\ell})_{\ell}\in\mathbb{\ell}%
^{2}\left(  \mathbb{N}\right)  $ such that%
\begin{equation}
\sum_{\ell=1}^{\infty}\left(  1+(\lambda_{\ell}^{m})^{2}\right)  \left\vert
\Psi_{\ell}\right\vert ^{2}<\infty. \label{norm_Hk}%
\end{equation}
Consider
\[
\psi\left(  t,x\right)  =\sum_{\ell=1}^{\infty}\Psi_{\ell}e_{\ell}^{m}\left(
t\right)  .
\]
Then $\psi\in H_{\sigma}^{1}\left(  \Omega^{m}\left(  t\right)  \right)  $ and
there exists $C(m)  >1$ such that
\begin{equation}
\frac{1}{C(m)  }\left\Vert \psi\right\Vert _{H_{\sigma}%
^{1}\left(  \Omega^{m}\left(  t\right)  \right)  }^{2}\leq\sum_{\ell
=1}^{\infty}\left(  1+(\lambda_{\ell}^{m})^{2k}\right)  \left\vert \Psi_{\ell
}\right\vert ^{2}\leq C(m) \left\Vert \psi\right\Vert
_{H_{\sigma}^{1}\left(  \Omega^{m}\left(  t\right)  \right)  }^{2}.
\label{norm_equivalence_1}%
\end{equation}

\end{proposition}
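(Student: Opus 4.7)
\medskip
\noindent
\textbf{Proof plan for Proposition \ref{Prop_norm_equiv}.} The statement reduces essentially to chaining together two facts already available at this stage: the spectral characterization \eqref{norm_H1_spect} of $H^1_\sigma(\Omega^m(0))$ via the eigenbasis $(e_\ell^m(0))_{\ell\ge 1}$, and the bi-Lipschitz transport of the $H^1_\sigma$ norm from $\Omega^m(0)$ to $\Omega^m(t)$ by $\mathcal{S}(t)$ provided by Proposition \ref{equiv_Sobolev}. The natural bridge between the two times is the field at time $0$, namely
\[
\psi_0 \;:=\; \sum_{\ell=1}^\infty \Psi_\ell\, e_\ell^m(0).
\]

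First, I would observe that hypothesis \eqref{norm_Hk} together with \eqref{norm_H1_spect} imply that the above series converges in $H^1_\sigma(\Omega^m(0))$ and that
\[
\|\psi_0\|_{H^1_\sigma(\Omega^m(0))}^2 \;=\; \sum_{\ell=1}^\infty \bigl(1+(\lambda_\ell^m)^2\bigr)|\Psi_\ell|^2.
\]
Next, since $\mathcal{S}(t)$ is a bounded linear operator from $L^2_\sigma(\Omega^m(0))$ into $L^2_\sigma(\Omega^m(t))$ by Proposition \ref{propSt}, and $e_\ell^m(t)=\mathcal{S}(t)e_\ell^m(0)$ by \eqref{vectori_em(t)}, applying $\mathcal{S}(t)$ to the partial sums gives
\[
\sum_{\ell=1}^N \Psi_\ell\, e_\ell^m(t) \;=\; \mathcal{S}(t)\!\left(\sum_{\ell=1}^N \Psi_\ell\, e_\ell^m(0)\right) \xrightarrow[N\to\infty]{} \mathcal{S}(t)\psi_0 \quad \text{in } L^2_\sigma(\Omega^m(t)),
\]
which identifies the field $\psi$ of the proposition with $\mathcal{S}(t)\psi_0$.

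Finally, I would invoke Proposition \ref{equiv_Sobolev} applied to $\psi_0$: this simultaneously yields $\psi=\mathcal{S}(t)\psi_0 \in H^1_\sigma(\Omega^m(t))$ and the two-sided bound
\[
\frac{1}{C(m)}\,\|\psi_0\|_{H^1_\sigma(\Omega^m(0))}^2 \;\le\; \|\psi\|_{H^1_\sigma(\Omega^m(t))}^2 \;\le\; C(m)\,\|\psi_0\|_{H^1_\sigma(\Omega^m(0))}^2.
\]
Combining this with the spectral identity for $\|\psi_0\|_{H^1_\sigma(\Omega^m(0))}^2$ established in the first step yields \eqref{norm_equivalence_1}.

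There is no genuine obstacle here: everything is a clean concatenation of already-established results. The only minor points worth care are (i) checking that the limiting object $\mathcal{S}(t)\psi_0$ really coincides with the formal series $\sum_\ell \Psi_\ell e_\ell^m(t)$, which is immediate from linearity and continuity of $\mathcal{S}(t)$, and (ii) noting that the constant $C(m)$ inherited from Proposition \ref{equiv_Sobolev} depends on $m$ through the regularity of $\Theta$ on $\Omega^m(0)$, which is consistent with the statement.
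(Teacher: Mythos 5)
Your proposal is correct and follows essentially the same route as the paper: establish the spectral identity $\|\psi(0,\cdot)\|_{H^1_\sigma(\Omega^m(0))}^2=\sum_\ell(1+(\lambda_\ell^m)^2)|\Psi_\ell|^2$ from \eqref{norm_H1_spect}, identify $\psi(t,\cdot)=\mathcal{S}(t)\psi(0,\cdot)$, and conclude via the norm equivalence \eqref{norm_H1_equiv} of Proposition \ref{equiv_Sobolev}. Your explicit verification that the series limit coincides with $\mathcal{S}(t)\psi_0$ is a detail the paper leaves implicit, but the argument is the same.
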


\begin{proof}
Since the proof of this result is quite short we present it here : the
convergence of the series $\left(  \text{\ref{norm_Hk}}\right)  $ along with
the discussion leading to $\left(  \text{\ref{norm_H1_spect}}\right)  $ shows
that%
\[
\psi\left(0, \cdot\right)  \in H^1_\sigma(\Omega^m(0)), \quad \|\psi(0,\cdot)\|^2_{H^1_\sigma(\Omega^m(0))} = \sum_{\ell=1}^{\infty}\left(  1+(\lambda_{\ell}^{m})^{2}\right)  \left\vert
\Psi_{\ell}\right\vert ^{2}.
\]
The result then follows from $\left(  \text{\ref{norm_H1_equiv}}\right)  $ and from the fact that
$\psi\left(  t,x\right)  =\mathcal{S}(t)\psi\left(  0,x\right)$. 
\end{proof}

The next proposition describes the behaviour of the "Fourier" coefficients $\langle \psi , e_\ell(t) \rangle_{L^2(\Omega^m(t))}$
 of a function $\psi\in H_{\sigma}^{1}\left(  \Omega^{m}(t)\right)$. The proof is postponed to Appendix \ref{appendixB}. 

\begin{proposition}
\label{Norm_H1_Spect}Consider $\psi\in H_{\sigma}^{1}\left(  \Omega^{m}\left(
t\right)  \right)  $ and let%
\begin{equation}
\Psi_{\ell}:=\left\langle \psi,e_{\ell}^{m}(t)\right\rangle _{L^{2}\left(
\Omega^{m}(t)\right)  }. \label{coef_fourier_t}%
\end{equation}
Then, the series
\begin{equation}
\sum_{\ell=1}^{\infty}\left(  1+(\lambda_{\ell}^{m})^{2}\right)  \left\vert
\Psi_{\ell}\right\vert ^{2} \label{series_H1}%
\end{equation}
is convergent and there exists $C=C(m)  >1$ such that
\begin{equation}
\sum_{\ell=1}^{\infty}\left(  1+(\lambda_{\ell}^{m})^{2}\right)  \left\vert
\Psi_{\ell}\right\vert ^{2}\leq C \left\Vert \psi\right\Vert _{H_{\sigma}%
^{1}\left(  \Omega^{m}\left(  t\right)  \right)  }^{2}.
\label{H1_wrt_val_prop_above}%
\end{equation}
Moreover, there exists $T^{\ast}\in(0,T]$ depending only on $T$ and $m$ such
that for all $t\in\left[  0,T^{\ast}\right]  $%
\begin{equation}
\frac{1}{2}\left\Vert \psi\right\Vert _{H_{\sigma}^{1}\left(  \Omega
^{m}\left(  t\right)  \right)  }^{2}\leq\sum_{\ell=1}^{\infty}\left(
1+(\lambda_{\ell}^{m})^{2}\right)  \left\vert \Psi_{\ell}\right\vert ^{2}.
\label{H1_wrt_val_prop_below}%
\end{equation}

\end{proposition}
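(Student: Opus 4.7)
The plan is to identify the $\Psi_\ell$ with the Fourier coefficients of a single fixed-domain function. The key duality is
$$\Psi_\ell = \langle \psi, \mathcal{S}(t) e^m_\ell(0)\rangle_{L^2(\Omega^m(t))} = \langle \mathcal{S}(t)^* \psi, e^m_\ell(0)\rangle_{L^2(\Omega^m(0))},$$
where $\mathcal{S}(t)^*: L^2_\sigma(\Omega^m(t)) \to L^2_\sigma(\Omega^m(0))$ denotes the $L^2_\sigma$-adjoint of $\mathcal{S}(t)$. Thus the $\Psi_\ell$ are exactly the Fourier coefficients of the single function $\mathcal{S}(t)^*\psi$ in the orthonormal basis $(e^m_\ell(0))$ of $L^2_\sigma(\Omega^m(0))$, and \eqref{norm_H1_spect} gives
$$\sum_{\ell=1}^\infty (1+(\lambda^m_\ell)^2)|\Psi_\ell|^2 = \|\mathcal{S}(t)^*\psi\|^2_{H^1_\sigma(\Omega^m(0))}.$$
Both bounds then reduce to norm estimates on $\mathcal{S}(t)^*$. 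A direct change of variables based on formula \eqref{S(t)1} yields the explicit representation
$$(\mathcal{S}(t)^* \psi)(y) = \mathbb{P}_0 \bigl[(D\Theta(t,y))^T \psi(\Theta(t,y))\bigr],$$
where $\mathbb{P}_0$ is the Leray projector on $\Omega^m(0)$; this projection is needed because the pointwise $L^2 \to L^2$ adjoint does not a priori land in $L^2_\sigma(\Omega^m(0))$.

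For the upper bound \eqref{H1_wrt_val_prop_above}, I would combine the explicit formula with the regularity and compact-support properties of $\Theta$ from Lemma \ref{diffeoLemma} and with the boundedness of $\mathbb{P}_0$ on $H^1(\Omega^m(0))$. These show that $\mathcal{S}(t)^*$ maps $H^1_\sigma(\Omega^m(t))$ into $H^1_\sigma(\Omega^m(0))$ boundedly, uniformly in $t \in [0,T]$, with norm $C(m)$. The bound \eqref{H1_wrt_val_prop_above} follows immediately.

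For the lower bound \eqref{H1_wrt_val_prop_below}, I would study $\Phi(t) := \mathcal{S}(t)^* \mathcal{S}(t)$ on the $t$-independent space $H^1_\sigma(\Omega^m(0))$. The explicit formulas yield $\Phi(t)\tilde\psi = \mathbb{P}_0 [M(t,\cdot)\tilde\psi]$ with $M(t,y) := (D\Theta(t,y))^T D\Theta(t,y)/\det D\Theta(t,y)$. Since $\Theta(0,\cdot) = \Id$, we have $M(0,\cdot) = I_3$ and $\Phi(0) = I$. As $M - I_3$ is compactly supported in $B(0,m_0)$ and $\Theta$ is smooth in $(t,x)$, we get $M(t,\cdot) \to I_3$ in $W^{1,\infty}(\R^3)$ as $t \to 0$, and therefore $\Phi(t) \to I$ in the operator norm of $\mathcal{L}(H^1_\sigma(\Omega^m(0)))$. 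A similar change-of-variables argument gives $\|\mathcal{S}(t)\tilde\psi\|_{H^1_\sigma(\Omega^m(t))}/\|\tilde\psi\|_{H^1_\sigma(\Omega^m(0))} \to 1$ uniformly in $\tilde\psi$. Choosing $T^* > 0$ small enough (depending only on $T$ and $m$) then yields for all $t \in [0,T^*]$ and all $\tilde\psi \in H^1_\sigma(\Omega^m(0))$
$$\|\mathcal{S}(t)\tilde\psi\|^2_{H^1_\sigma(\Omega^m(t))} \le 2\|\Phi(t)\tilde\psi\|^2_{H^1_\sigma(\Omega^m(0))}.$$
Applying this with $\tilde\psi = \mathcal{S}(t)^{-1}\psi$, so that $\Phi(t)\tilde\psi = \mathcal{S}(t)^*\psi$, gives precisely \eqref{H1_wrt_val_prop_below}.

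The main difficulty I anticipate is keeping careful track of the Leray projection in the explicit formula for $\mathcal{S}(t)^*$: without it one cannot directly read off $H^1$ boundedness, and the identification of $\Phi(t)$ as a perturbation of the identity would break down. A secondary subtlety is that the lower bound insists on the precise constant $\tfrac{1}{2}$, so one cannot be content with the uniform equivalence of norms already contained in Proposition \ref{equiv_Sobolev} and must instead rely on the genuine small-time perturbation argument sketched above.
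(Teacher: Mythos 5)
Your proposal is correct and is essentially the paper's own proof: both hinge on the identity $\sum_{\ell}(1+(\lambda_{\ell}^{m})^{2})|\Psi_{\ell}|^{2}=\|\mathbb{P}_{\Omega^{m}(0)}(D\Theta_t^{T}\,\psi\circ\Theta_t)\|_{H_{\sigma}^{1}(\Omega^{m}(0))}^{2}$ (your $\mathcal{S}(t)^{*}\psi$) obtained from \eqref{norm_H1_spect}, the upper bound from uniform $H^{1}$-boundedness of this pullback-plus-projection, and the lower bound from a small-time perturbation argument exploiting $\Theta_t\to\mathrm{Id}$. The only difference is packaging: you bound $\Phi(t)-\mathrm{Id}=\mathbb{P}_{\Omega^{m}(0)}[(M(t)-\mathrm{Id})\,\cdot\,]$ in operator norm via $H^{1}$-boundedness of the Leray projector, while the paper estimates the equivalent gradient corrector $\nabla q=(\mathrm{Id}-\mathbb{P}_{\Omega^{m}(0)})[(M(t)-\mathrm{Id})\mathcal{S}(t)^{-1}\psi]$ directly through the Neumann problem and the elliptic regularity estimate \eqref{elliptic_regularity} — the same ingredient in a different guise.
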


\begin{remark}
With the same notations as above, we also have that for all $n\in
\mathbb{N}^{\ast}:$
\begin{equation}
\left\Vert \sum_{\ell=1}^{n}\left\langle \psi,e_{\ell}^{m}(t)\right\rangle
_{L^{2}\left(  \Omega^{m}(t)\right)  }e_{\ell}^{m}(t)\right\Vert
_{H^{1}\left(  \Omega^{m}\left(  t\right)  \right)  }\leq C(m) \left\Vert
\psi\right\Vert _{H^{1}\left(  \Omega^{m}\left(  t\right)  \right)  }^{2}.
\label{norm_bas_freq}%
\end{equation}

\end{remark}

As oposed to $\left(  \text{\ref{norm_equivalence_1}}\right)  $ which is valid
for all $t\in\left[  0,T\right]  $, we are only able to show the "reverse" inequality \eqref{H1_wrt_val_prop_below}
 for a small time $T^{\ast}\in(0,T]$ which is nevertheless bounded from
below by a constant that depends only on $T$ and $m$. 


A very important consequence of Proposition \ref{Norm_H1_Spect} is some sharp control of the Gram matrix associated with $(e_\ell^m)_{\ell \in \overline{1,n}}$.

\begin{proposition}
\label{Prop_gram_surject}

\begin{enumerate}
\item For all $m,n\in\mathbb{N}^{\ast}$, we let $\mathcal{G}^{m,n}\left(
t\right)  =\left(  \left\langle e_{i}^{m}\left(  t\right)  ,e_{j}^{m}\left(
t\right)  \right\rangle _{L^{2}(\Omega^{m}(t))}\right)  _{i,j\in\overline
{1,n}}.$ Then, for all $\Psi\in\mathbb{R}^{n}$ the following inequality holds
true%
\begin{equation}
\left\vert \Psi\right\vert \leq\left\Vert \mathcal{S}\left(  t\right)
^{-1}\right\Vert ^{2}\left\vert \mathcal{G}^{m,n}\left(  t\right)
\Psi\right\vert \label{gram_surject}%
\end{equation}
where $\left\vert \cdot\right\vert $ is the euclidian norm on $\R^n$

\item With the same notations and letting $T^{\ast}\in(0,T]$ be the time
defined in Proposition \ref{Norm_H1_Spect} there exists $C = C(m) > 1$ such
that
\begin{equation}
\frac{1}{C(m)}\sum_{\ell=1}^{n}(1+(\lambda_{\ell}^{m})^{2})\Psi_{\ell}^{2}\leq\sum
_{\ell=1}^{n}(1+(\lambda_{\ell}^{m})^{2})\left[  \left(  \mathcal{G}^{m,n}\left(
t\right)  \Psi\right)_{\ell}\right]  ^{2}\leq C(m) \sum_{\ell=1}^{n}(1+(\lambda
_{\ell}^{m})^{2})\Psi_{\ell}^{2}.\label{gram_surject_H1}%
\end{equation}

\end{enumerate}
\end{proposition}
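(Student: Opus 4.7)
The plan is to exploit two different identities for the quantity $(\mathcal{G}^{m,n}(t)\Psi)_\ell$. First, by bilinearity of the $L^2$ inner product, if we set $v := \sum_{j=1}^n \Psi_j e_j^m(t) \in V_n^m(t)$, then
\[
(\mathcal{G}^{m,n}(t)\Psi)_\ell = \sum_{j=1}^n \Psi_j \langle e_\ell^m(t), e_j^m(t)\rangle_{L^2(\Omega^m(t))} = \langle v, e_\ell^m(t)\rangle_{L^2(\Omega^m(t))}
\]
for every $\ell \ge 1$, and $\Psi \cdot \mathcal{G}^{m,n}(t)\Psi = \|v\|_{L^2(\Omega^m(t))}^2$. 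Part 1 is then immediate: by Cauchy--Schwarz, $\|v\|_{L^2}^2 \le |\Psi||\mathcal{G}^{m,n}(t)\Psi|$, while the Riesz basis inequality \eqref{Riesz_base} yields $\|v\|_{L^2}^2 \ge \|\mathcal{S}(t)^{-1}\|^{-2}|\Psi|^2$; dividing gives \eqref{gram_surject}.

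For the upper bound in Part 2, I would chain the two preparatory propositions. The truncated weighted sum satisfies
\[
\sum_{\ell=1}^n (1+(\lambda_\ell^m)^2)\bigl[(\mathcal{G}^{m,n}(t)\Psi)_\ell\bigr]^2 \le \sum_{\ell=1}^\infty (1+(\lambda_\ell^m)^2)\bigl|\langle v, e_\ell^m(t)\rangle\bigr|^2 \le C(m)\,\|v\|_{H^1_\sigma(\Omega^m(t))}^2
\]
by inequality \eqref{H1_wrt_val_prop_above} of Proposition \ref{Norm_H1_Spect}. Proposition \ref{Prop_norm_equiv} then controls $\|v\|_{H^1_\sigma(\Omega^m(t))}^2$ by the weighted $\ell^2$-norm of $\Psi$, which closes the upper estimate.

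For the lower bound in Part 2, the natural idea is to reverse both chains: Proposition \ref{Prop_norm_equiv} gives $\sum_{j=1}^n (1+(\lambda_j^m)^2)\Psi_j^2 \le C(m)\|v\|_{H^1}^2$, and inequality \eqref{H1_wrt_val_prop_below} of Proposition \ref{Norm_H1_Spect}, valid precisely because $t \le T^*$, gives $\|v\|_{H^1}^2 \le 2\sum_{\ell=1}^\infty (1+(\lambda_\ell^m)^2)|\langle v,e_\ell^m(t)\rangle|^2$. The difficulty, and I expect this to be the main obstacle, is that the last inequality has an infinite sum whereas the target bound only retains indices $\ell \le n$. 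To handle the tail, I would argue by a perturbation of identity, leveraging that $\mathcal{G}^{m,n}(0) = I$ (the basis is $L^2$-orthonormal at $t=0$). Writing $e_\ell^m(t) = \mathcal{S}(t)e_\ell^m(0)$, change of variables identifies $\mathcal{G}^{m,n}(t)_{ij}$ with $\int_{\Omega^m(0)} A(t,y)\,e_i^m(0)\cdot e_j^m(0)\,dy$ for a matrix field $A(t,\cdot) = (D\Theta_t)^T D\Theta_t/\det D\Theta_t$ depending smoothly on $t$ with $A(0,\cdot)=I$. Using smoothness of $\Theta$, $\|A(t)-I\|_{W^{1,\infty}}$ can be made arbitrarily small on $[0,T^*]$ (shrinking $T^*$ if needed), so $\|\mathcal{G}^{m,n}(t)-I\|$ in the weighted norm is also small uniformly in $n$. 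Consequently $\|(G-I)\Psi\|_w \le \tfrac12 \|\Psi\|_w$, whence $\|G\Psi\|_w \ge \tfrac12\|\Psi\|_w$, which is exactly the sought lower bound.

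The key point throughout is that all constants depend on $m$ but not on $n$, thanks to the $n$-uniform bounds in Propositions \ref{equiv_Sobolev}, \ref{prop_Riesz_Basis}, and \ref{Prop_norm_equiv}. The restriction to $[0,T^*]$ is inherited directly from Proposition \ref{Norm_H1_Spect}, and the perturbative argument above works on (possibly a smaller) time interval still depending only on $T$ and $m$.
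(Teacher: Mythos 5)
Your treatment of Part 1 and of the upper bound in Part 2 matches the paper's proof exactly: the paper also sets $\psi := \sum_\ell \Psi_\ell e_\ell^m(0)$, notes $\mathcal{S}(t)\psi = v$, $\|\psi\|_{L^2} = |\Psi|$, and derives \eqref{gram_surject} from \eqref{Riesz_base} and Cauchy--Schwarz, then obtains the upper half of \eqref{gram_surject_H1} by truncating the infinite sum in \eqref{H1_wrt_val_prop_above} and invoking Proposition \ref{equiv_Sobolev} (which is your Proposition \ref{Prop_norm_equiv}, applied to $v = \mathcal{S}(t)\psi$).

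For the lower bound you have spotted a genuine subtlety that the paper's proof glosses over. The paper simply says "Using \eqref{H1_wrt_val_prop_above} and \eqref{H1_wrt_val_prop_below} we obtain the desired conclusion," but \eqref{H1_wrt_val_prop_below} yields $\frac{1}{2}\|v\|_{H^1_\sigma(\Omega^m(t))}^2 \le \sum_{\ell=1}^{\infty}(1+(\lambda_\ell^m)^2)|\langle v,e_\ell^m(t)\rangle|^2$, and the target bound retains only $\ell \le n$. Since $(e_\ell^m(t))_\ell$ is merely a Riesz basis for $t>0$ and not orthogonal, the Fourier coefficients $\langle v, e_\ell^m(t)\rangle$ need not vanish for $\ell > n$, so one must control the tail. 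Your perturbation-of-identity idea is the right fix, and it is consistent in spirit with the proof of Proposition \ref{Norm_H1_Spect} itself. A slightly cleaner way to execute it, avoiding any direct claim about the weighted operator norm of $\mathcal{G}^{m,n}(t)-I$, is to observe that for $\ell > n$ one has $\langle v, e_\ell^m(t)\rangle = \langle (A(t,\cdot)-I)\psi, e_\ell^m(0)\rangle_{L^2(\Omega^m(0))}$ (by the change of variables you describe plus $L^2$-orthonormality at $t=0$), so the tail is bounded by $\|\mathbb{P}_{\Omega^m(0)}[(A(t,\cdot)-I)\psi]\|^2_{H^1_\sigma(\Omega^m(0))}$, which the elliptic-regularity argument in the proof of Proposition \ref{Norm_H1_Spect} controls by $C(m)\|\Theta_t-\mathrm{Id}\|^2_{W^{2,\infty}}\|\psi\|^2_{H^1}$, hence by $\varepsilon\,\|\Psi\|_w^2$ for $t$ small; subtracting this tail from the infinite sum gives the lower bound. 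This makes explicit what ``small in the weighted norm'' means (it goes through the $H^1$ estimate on the Leray correction, uniformly in $n$), and it clarifies that one may have to further shrink $T^*$ --- still depending only on $T$ and $m$, so this harmless.
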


\begin{proof}
We let $\Psi=\left(  \Psi_{1},\Psi_{2},\dots,\Psi_{n}\right)  \in
\mathbb{R}^{n}$ and%
\[
\psi=\sum_{\ell=1}^{n}\Psi_{\ell}e_{\ell}^{m}\left(  0\right)  \in L_{\sigma}%
^{2}\left(  \Omega^{m}\left(  0\right)  \right)  .
\]
Observe that $\left\Vert \psi\right\Vert _{L^{2}}=\left\vert \Psi\right\vert $
along with $\left(  \text{\ref{Riesz_base}}\right)  $ implies that%
\[
\left\vert \Psi\right\vert ^{2}=\left\Vert \sum_{\ell=1}^{n}\Psi_{\ell}e_{\ell}%
^{m}\left(  0\right)  \right\Vert _{L^{2}}^{2}\leq\left\Vert \mathcal{S}%
\left(  t\right)  ^{-1}\right\Vert ^{2}\left\Vert \sum_{\ell=1}^{n}\Psi_{\ell}%
e_{\ell}^{m}\left(  t\right)  \right\Vert _{L^{2}}^{2}=\left\Vert \mathcal{S}%
\left(  t\right)  ^{-1}\right\Vert ^{2}\left\langle \Psi,\mathcal{G}^{m,n}\left(  t\right)  \Psi\right\rangle _{\mathbb{R}^{n}}%
\]
from which the conclusion $\left(  \text{\ref{gram_surject}}\right)  $ follows.

In order to prove $\left(  \text{\ref{gram_surject_H1}}\right)  $ let us
observe that%
\[
\left\{
\begin{array}
[c]{l}%
\left\Vert \psi\right\Vert _{H_{\sigma}^{1}(\Omega^{m}(0))}^{2}=\sum_{\ell=1}%
^{n}(1+(\lambda_{\ell}^{m})^{2})\Psi_{j}^{2}\text{ and}\\
\left\langle \mathcal{S}\left(  t\right)  \psi,e_{\ell}^{m}\left(  t\right)
\right\rangle _{L^{2}\left(  \Omega^{m}(t)\right)  }=\left(  \mathcal{G}^{m,n}\left(  t\right)  \Psi\right)  _{\ell}.
\end{array}
\right.
\]
Using $\left(  \text{\ref{H1_wrt_val_prop_above}}\right)  $ and $\left(
\text{\ref{H1_wrt_val_prop_below}}\right)  $ we obtain the desired conclusion.
\end{proof}

\subsection{Galerkin approximation scheme and uniform in $n$ estimates }  \label{Galerkin_solve}
Having introduced the mobile Riesz basis $(e^m_\ell(t))_{\ell \ge 1}$ in the previous section, 
the goal will be to construct some field 
\begin{equation} 
u^{m,n}\left(  t\right)  = \tilde u^{m,n}(t) + u^{m,n,\nabla}(t) :=  \sum_{\ell=1}^{n}U_{\ell+4N}^{m,n}\left(  t\right)
e_{\ell}^{m}\left(  t\right)  + \sum_{i=1}^{4N}U_{i}^{m,n}\left(  t\right) 
\nabla q_{i}\left(  t\right)  \text{,}\label{umn_approx}%
\end{equation}
 solution of a variational formulation similar to \eqref{VF_prescribed}, with test functions $\psi$ of the form
\begin{equation} \label{def_psi_mn}
    \psi\left(  t\right)  = \tilde \psi(t) + \psi^{\nabla}(t) :=  \sum_{\ell=1}^{n}\Psi_{\ell+4N}^{m,n}\left(  t\right)
e_{\ell}^{m}\left(  t\right)  + \sum_{i=1}^{4N}\Psi_{i}^{m,n}\left(  t\right) 
\nabla q_{i}\left(  t\right) 
\end{equation}
 where the coefficients $\Psi_j$, $j=1,\dots, 4N+n$ are smooth over $[0,T]$. The variational formulation takes the following form: 
\begin{equation}  \label{VF_prescribed_m_n}
\begin{aligned}
& \int_{\Omega^m\left(  t\right)  } u^{m,n}\left(  t\right)  \cdot\psi\left(
t\right)  +  \int_{B(0,m)^c} u^{m,n,\nabla}\left(  t\right)  \cdot \psi^\nabla\left(t\right) +  2\nu \int_{0}^{t}\int_{\Omega^m\left(  s\right)  }\mathbb{D}\left(
u^{m,n}\right)  \cdot\mathbb{D}\left(  \psi\right)  \mathrm{d}s \\
& - \sum_{i=1}^{N}  \int_0^t c_i \left(  r_i[u^{m,n}](s)  \right)^{2-3\gamma} 
\dashint_{\partial B_{i}\left(  s\right)  } \psi  \cdot \mathfrak{n}\mathrm{d}s\\
& - \frac12 \sum_{i=1}^{N} \int_{0}^{t}  \int_{\partial B_{i}\left(  s\right)  } \left(u^{m,n} \cdot \mathfrak{n} - \dot{r}_i - \dot{x}_i \cdot \mathfrak{n}\right) (\psi  \cdot u^{m,n}) \mathrm{d}s +  \frac12\int_{\partial B(0,m) }  (u^{m,n} \cdot \mathfrak{n}) (\psi \cdot u^{m,n})   \\
& =\int_{\Omega^m(0)}  u^{m,n}_{0} \cdot\psi\left(0\right) + \int_{B(0,m)^c} u^{\nabla}_0  \cdot \psi^\nabla\left(0\right) \\ 
& +\int_{0}^{t}\int_{\Omega^m\left(  s\right)  }  u^{m,n} \cdot (\partial_{t}\psi+ u^{m,n} \cdot \na \psi)\mathrm{d}s + \int_{0}^{t}\int_{B(0,m)^c}  u^{m,n,\nabla} \cdot \partial_{t}\psi^\nabla \mathrm{d}s ,
\end{aligned}
\end{equation}
where 
\begin{equation}
    r_i[u^{m,n}](t) := \tilde{r}_{0,i} + \int_0^t \dashint_{\partial B_{i}\left(  s\right)} u^{m,n} \cdot \mathfrak{n} \,  \mathrm{d}s .
\end{equation}
We also define 
\begin{equation}
x_{i}\left[  u^{m,n}\right]  \left(  t\right)  := x_{i}\left(  0\right)
+3 \int_{0}^{t} \dashint_{\partial B_{i}(s)} (u^{m,n} \cdot\mathfrak{n}) \mathfrak{n}\mathrm{d}s.
\end{equation}

Note that compared to the original variational formulation \eqref{VF_prescribed}, most space integrals are on $\Omega^m$ instead of $\Omega$, except for a couple of integrals that are on  $B(0,m)^c$ and involve only $u^{m,n,\na}$ and $\psi^{m,n,\na}$. This is  to accomodate the fact that the fields $\na q_i(t)$ are defined over the whole of $\Omega(t)$, and to benefit from their orthogonality in $L^2(\Omega(t))$. The  extra boundary term $\frac12\int_{\partial B(0,m) }  (u^{m,n} \cdot \mathfrak{n}) (\psi \cdot u^{m,n}) $ is added because $u^{m,n} \cdot \mathfrak{n}\vert_{\pa B(0,m)} = u^{m,n,\nabla} \cdot \mathfrak{n}\vert_{\pa B(0,m)} \neq 0$. Let us take a moment to discuss the initial data $u^{m,n}_{0}$. We first notice that $u_0$ is of the form
\[
u_{0}=\tilde u_{0} + \sum_{i=1}^{4N}U_{0,i} \nabla
q_{i}\left(  0\right)
\]
where $\tilde u_0$ is the Leray projection of $u_0$  on $L^2_\sigma(\Omega(0))$, so that 
\[
\tilde u_{0}\cdot\mathfrak{n}=0\text{ on }\cup_{j=1}^{N}\partial
B_{j}\left(  0\right)  .
\]
For all $\ell \ge 1$, we define 
\[
U_{0,\ell+4N}^{m}   :=
 \left\langle \left(  \left.  \tilde u_{0}\right\vert _{\Omega^{m}\left(
0\right)  }\right)  ,e_{\ell}^{m}\left(  0\right)  \right\rangle
_{L^{2}\left(  \Omega^{m}\left(  0\right)  \right)  }.
\] 
Finally, we set 
\begin{equation}
    u_0^{m,n} := \tilde{u}_0^{m,n} + u_0^\nabla := \sum_{\ell=1}^n  U_{0,\ell+4N}^{m} e_\ell^m(0) + \sum_{i=1}^{4N} U_{0,i} \na q_i(0).  
\end{equation}
From the  orthogonality relation \eqref{ortho_Omega_m} (applied at $t=0)$, and from the Bessel inequality applied with the orthonormal family $(e_\ell^m(0))_{\ell \ge 1}$,  we get 
\begin{align*}
\| u_0^{m,n} \|^2_{L^2(\Omega^m(0))} & = \| \tilde u_0^{m} \|^2_{L^2(\Omega^m(0))} +  \| u_0^{\nabla} \|^2_{L^2(\Omega^m(0))}   \\ 
& \le  \| \mathbb{P}u_0\vert_{\Omega^m(0)} \|^2_{L^2(\Omega^m(0))} +  \| u_0^\nabla \|^2_{L^2(\Omega^m(0))} \\ 
& \le  \| \mathbb{P}u_0 \|^2_{L^2(\Omega(0))} +  \| u_0^{\nabla} \|^2_{L^2(\Omega(0))} = \| u_0 \|_{L^2(\Omega(0))}^2 .
\end{align*}


Let 
\begin{align*}
\tilde U^{m,n} & := \left( U^{m,n}_{\ell+4N} \right)_{\ell \in \overline{1,n}}, \quad  U^{m,n,\nabla} := \left( U^{m,n}_i \right)_{i \in \overline{1,4N}}, \\
\tilde U^{m}_0 & := \left( U^{m}_{0,\ell+4N} \right)_{\ell \in \overline{1,n}}, \qquad  U^{\nabla}_0 := \left( U_{0,i} \right)_{i \in \overline{1,4N}}. 
\end{align*}
Taking into account the orthogonality relation \eqref{ortho_Omega_m} in $L^2_\sigma(\Omega^m(t))$, one can check that the variational formulation \eqref{VF_prescribed_m_n} is  equivalent to a system
of ODEs of the form
\begin{equation}
\left\{
\begin{array}
[c]{l}%
\frac{d}{dt}(\mathcal{G}^{m,n}\left(  t\right)  \tilde{U}^{m,n})=\tilde
{F}^{m,n}\left(  t,\tilde{U}^{m,n},U^{m,n,\nabla}\right)  ,\\
\frac{d}{dt}U^{m,n,\nabla}=F^{n,\nabla}\left(  t,\tilde{U}%
^{m,n},U^{m,n,\nabla}\right),
\end{array}
\right.  \label{ODE_general_form}%
\end{equation}
for $C^{1}$-functions $\tilde{F}^{m,n},F^{m,n,\nabla}$, with the Gram matrix $\mathcal{G}^{m,n}$ was defined in Proposition \ref{Prop_gram_surject}. Relation \eqref{ortho_Omega_m} explains the decoupling between the time derivative of $\tilde{U}^{m,n}$ and $U^{m,n,\nabla}$: the Gram matrix of the basis $\{e_\ell(t), \ell \in \overline{1,n} \} \cup \{ \na q_i(t)\vert_{\Omega^m(t)}, i \in \overline{1,4N} \}$ is block diagonal. Moreover, the orthogonality of $(\na q_i(t))_{i \in \overline{1,4N}}$ in $L^2(\Omega(t))$ explains that the time derivative $\frac{d}{dt}U^{m,n,\nabla}$ does not contain any additional matrix. 

Cauchy-Lipschitz's theorem ensures the existence of a  solution of 
\eqref{ODE_general_form}  on some local time interval $[0, T^{m,n}]$  with initial data $\left(\tilde U_0^m, U_0^\nabla \right)$. This gives a local in time solution $u^{m,n}$ of \eqref{VF_prescribed_m_n}.   We can take $\psi = u^{m,n}$ as a test function in this variational formulation  and
we recover the energy estimates for all $t \le T^{m,n}$: 
\begin{align} 
&  \frac{1}{2}\int_{\Omega^{m}\left(  t\right)  }\left\vert \tilde u^{m,n}\left(
t\right)  \right\vert ^{2}   +  \frac{1}{2}\int_{\Omega(t)}\left\vert u^{m,n,\nabla}\left(
t\right)  \right\vert ^{2} + \mu\int_{0}^{t}\int_{\Omega^{m}\left(  \tau\right)
}\left\vert \mathbb{D}(u^{m,n})\right\vert ^{2}\mathrm{d}\tau \nonumber \\
&  +\sum_{i=1}^{N}\frac{c_{i}}{3\gamma-3}\left[  r_{i}\left[  u^{m,n}\right]  \left(
t\right)  \right]  ^{3-3\gamma}  =\frac{1}{2}\int_{\Omega^{m}\left(  0\right)  }\left\vert u_{0}%
^{m,n}\right\vert ^{2}+\sum_{i=1}^{N}\frac{c_{i}}{3\gamma-3}\left[  \tilde
{r}_{0,i}\right]  ^{3 - 3\gamma}  \le \tilde E^0  \label{energy_approx_level}%
\end{align}
where 
\begin{equation} \label{def_tilde_E0}
   \tilde E_0 :=  \frac12 \int_{\Omega_0} |u_0|^2 + \sum_{i=1}^N \frac{c_i}{3\gamma -3} \tilde r_{0,i}^{3-3\gamma}   .
\end{equation}
In particular,
$$ \frac{1}{2}\int_{\Omega^{m}\left(  t\right)  }\left\vert \tilde u^{m,n}\left(
t\right)  \right\vert ^{2}+  \frac{1}{2}\int_{\Omega(t)}\left\vert u^{m,n,\nabla}\left(
t\right)  \right\vert ^{2} \le \tilde E_0$$
which together with Proposition \ref{prop_Riesz_Basis} yields for all $t \le T^{m,n}$
$$ \left| \tilde U^{m,n}(t) \right| +  \left| U^{m,n, \nabla}(t) \right| \le C \tilde E_0. $$
By the standard blow-up criterion for ODE's, this implies that we can take $T^{m,n} = T$. 

By the uniform control of $U^{m,n, \nabla}$  and the control of $\na q_i(t)$ in $H^1(\Omega(t))$, {\it cf.} \eqref{q1}-\eqref{q1k}, we get a uniform bound on $D(u^{m,n,\nabla})$ in $L^2(0,T; H^1(\Omega))$. Together with the gradient bound from  \eqref{energy_approx_level}, this implies 
\begin{equation}
\int_{0}^{t}\int_{\Omega^{m}\left(  \tau\right)  }\left\vert \mathbb{D}%
(\tilde{u}^{m,n})\right\vert ^{2}\mathrm{d}\tau,+\int_{0}^{t}\int_{\Omega
^{m}\left(  \tau\right)  }\left\vert \mathbb{D}(u^{m,n,\nabla})\right\vert
^{2}\mathrm{d}\tau\leq C \tilde E_0 .\label{energies}%
\end{equation}

Adapting the arguments of the last paragraph of Section \ref{sec_a_priori}, one gets analogues of \eqref{coef_alphai} and \eqref{estimateRdotXdot}: 
\begin{equation} \label{estim_H2_galerkin}
    |\dot{U}^{m,n,\nabla}| \le C(\tilde E_0), \quad \left\Vert \dot{R}\left[  u^{m,n}\right]  \right\Vert
_{H^{1}\left(  0,T\right)  }+\left\Vert \dot{X}\left[  u^{m,n}\right] \right\Vert _{H^{1}\left(  0,T\right)  }\leq C(\tilde E_0)
\end{equation} 
where $R[u^{m,n}] = (r_i[u^{m,n}])_{i \in \overline{1,N}}$ and $X[u^{m,n}] = (x_i[u^{m,n}])_{i \in \overline{1,N}}$. 
In the following Proposition we establish an estimate for the time
derivative of $\partial_{t}u^{m,n}$ which will be crucial to the asymptotics $n \rightarrow +\infty$. 

\begin{proposition}
\label{ineg_deriv} Let $T^\ast$ given by Proposition \ref{Prop_gram_surject}.  For all 
\[
\tilde{\psi}\left(  t\right)  =\sum_{\ell=1}^{n}\Psi_{\ell}\left(  t\right)
e_{\ell}^{m}\left(  t\right)  ,
\]
with smooth $\Psi = \left(\Psi_{\ell}\right)_{\ell \in \overline{1,n}}$ compactly supported in $(0,T^\ast)$, we have 
\begin{equation}
\left\vert \int_{0}^{T^\ast}\left\langle \mathcal{G}^{m,n}\left(  s\right)
\dot{U}^{m;n}\left(  s\right)  ,\Psi\left(  s\right)  \right\rangle
\mathrm{d}s\right\vert \leq C(\tilde E_0) \left\Vert \tilde{\psi
}\right\Vert _{L^{4}(0,T^\ast;H_{\sigma}^{1}(\Omega^{m}))}%
.\label{ineg_deriv_temps}%
\end{equation}
\end{proposition}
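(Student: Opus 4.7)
The plan is to test the variational formulation \eqref{VF_prescribed_m_n} with $\psi = \tilde\psi$ (and $\psi^{\nabla} = 0$), and then integrate by parts in time to isolate the target quantity. By the orthogonality \eqref{ortho_Omega_m}, one has $\int_{\Omega^m(t)} u^{m,n}(t)\cdot\tilde\psi(t) = \langle \mathcal{G}^{m,n}(t)\tilde U^{m,n}(t), \Psi(t)\rangle$, which vanishes at $t = 0$ and $t = T^*$ thanks to the compact support of $\Psi$. The time-derivative term $\int_0^{T^*}\int_{\Omega^m(s)} u^{m,n}\cdot\partial_t\tilde\psi\,ds$ splits as $\int_0^{T^*}\langle \mathcal{G}^{m,n}\tilde U^{m,n}, \dot\Psi\rangle\,ds + \int_0^{T^*}\int_{\Omega^m(s)} u^{m,n}\cdot\sum_\ell\Psi_\ell\partial_t e_\ell^m\,ds$; integrating by parts in time in the first piece and rearranging \eqref{VF_prescribed_m_n}, the quantity $\int_0^{T^*}\langle \mathcal{G}^{m,n}\dot U^{m,n}, \Psi\rangle\,ds$ equals, up to sign, the sum of the viscous, pressure, boundary, and convection terms of \eqref{VF_prescribed_m_n}, plus correction terms involving $\dot{\mathcal{G}}^{m,n}$ and $\partial_t e_\ell^m$.

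Each of these terms must then be bounded by $C(\tilde E_0)\|\tilde\psi\|_{L^4(0,T^*;H^1_\sigma(\Omega^m))}$. The viscous term is handled by Cauchy-Schwarz and the energy bound \eqref{energies}; the pressure term by the uniform lower bound $r_i[u^{m,n}]\ge c(\tilde E_0)$ inherited from \eqref{energy_approx_level} together with a trace inequality. The convection term is the critical one, bounded via H\"older by
\begin{equation*}
\left|\int_0^{T^*}\int_{\Omega^m(s)} u^{m,n}\otimes u^{m,n}:\nabla\tilde\psi\right|\le \|u^{m,n}\|_{L^{8/3}(L^4)}^2\,\|\nabla\tilde\psi\|_{L^4(L^2)}\le C(\tilde E_0)\,\|\tilde\psi\|_{L^4(H^1)},
\end{equation*}
where $\|u^{m,n}\|_{L^{8/3}(L^4)}\le C(\tilde E_0)$ follows from the Gagliardo-Nirenberg interpolation $\|u\|_{L^4}\lesssim \|u\|_{L^2}^{1/4}\|\nabla u\|_{L^2}^{3/4}$ combined with the $L^\infty(L^2)\cap L^2(H^1)$ bounds from \eqref{energy_approx_level}-\eqref{energies}. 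The nonlinear boundary terms on $\partial B_i$ and $\partial B(0,m)$ are controlled similarly, using trace theorems together with the $L^\infty$ bound on $\dot X,\dot R$ from \eqref{estim_H2_galerkin}. The correction terms involving $\partial_t e_\ell^m$ and $\dot{\mathcal{G}}^{m,n}$ are treated using the representation $e_\ell^m(t) = \mathcal{S}(t) e_\ell^m(0)$ and the smoothness of $\Theta$ from Lemma \ref{diffeoLemma}, combined with Proposition \ref{prop_Riesz_Basis} to transfer between the $L^2$ norm of $\tilde\psi$ and $|\Psi|_{\ell^2}$.

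The main obstacle is the convection term, which demands precisely the sharp space-time interpolation $L^\infty(L^2)\cap L^2(H^1)\hookrightarrow L^{8/3}(L^4)$ valid in three space dimensions. This interpolation, paired with $L^4(L^2)$ control of $\nabla\tilde\psi$, is what forces the appearance of the $L^4(H^1)$ norm of $\tilde\psi$ on the right-hand side of \eqref{ineg_deriv_temps} rather than, say, $L^2(H^1)$. A secondary difficulty is the handling of the moving-basis contributions $\partial_t e_\ell^m$ and $\dot{\mathcal{G}}^{m,n}$: these are dealt with using the ALE structure from Lemma \ref{diffeoLemma} and Proposition \ref{equiv_Sobolev}. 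The resulting constants depend on $m$ through the regularity of the ALE diffeomorphism, but this is compatible with the statement, since $m$ is fixed here and only uniformity in $n$ is needed for the subsequent limit $n\to\infty$.
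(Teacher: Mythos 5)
Your proposal is essentially the paper's proof in a slightly different bookkeeping. The paper uses that at the Galerkin level $u^{m,n}$ is genuinely differentiable in time, differentiates the identity $\frac{d}{dt}\int_{\Omega^m(t)}u^{m,n}\cdot\tilde\psi$ using the moving-domain transport formula \eqref{time_deriv_moving}, then expresses $\partial_t u^{m,n}\cdot\tilde\psi$ by expanding $\partial_t u^{m,n}$ directly in the basis: this yields the target term $\langle\mathcal{G}^{m,n}\dot{\tilde U}^{m,n},\Psi\rangle$ plus contributions from $\partial_t e_\ell^m$ (handled via the transport relation \eqref{transport_o}), from $\dot U^{m,n,\nabla}\nabla q_i$ (which vanish since $\tilde\psi\cdot\mathfrak{n}=0$), and from $U^{m,n,\nabla}\partial_t\nabla q_i$. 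You instead integrate by parts in time in the weak formulation, picking up a $\dot{\mathcal{G}}^{m,n}$ correction; this is algebraically equivalent, since $\dot{\mathcal{G}}^{m,n}$ encodes the same $\partial_t e_\ell^m$ and moving-boundary information. Both routes land on the same terms to estimate, and your $L^{8/3}_t L^4_x$ interpolation for the convection term is exactly what justifies the $L^4(H^1_\sigma)$ norm on the right-hand side that the paper calls ``standard manipulations.''

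One small inaccuracy in your outline: you list the pressure term among the contributions to be bounded, but it is identically zero. Indeed $\tilde\psi(t)\in\operatorname*{Span}\{e_\ell^m(t)\}\subset L^2_\sigma(\Omega^m(t))$, so $\tilde\psi\cdot\mathfrak{n}=0$ on each $\partial B_i(t)$, and hence $\dashint_{\partial B_i}\tilde\psi\cdot\mathfrak{n}=0$. This is harmless for the argument (zero is trivially bounded), but the paper explicitly uses this vanishing, and your estimate ``by a trace inequality'' would in fact fail to produce only the $L^4(H^1_\sigma)$ norm if the term were nonzero, since $r_i[u^{m,n}]^{2-3\gamma}$ is only controlled in $L^\infty_t$ via the energy bound and the pressure contribution would then require an $L^1_t L^2_{\partial B_i}$ control of $\tilde\psi\cdot\mathfrak{n}$; you should note the orthogonality explicitly rather than attempting a bound.
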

\begin{proof}
From \eqref{time_deriv_moving} and from the weak formulation \eqref{VF_prescribed_m_n} we deduce
\begin{align*}
&  -\int_{0}^{T^\ast}\int_{\Omega^{m}\left(  s\right)  }\partial_{t}u^{m,n}%
\cdot\tilde{\psi}\mathrm{d}s=\int_{0}^{T^\ast}\int_{\Omega^{m}\left(  s\right)
}u^{m,n}\cdot\partial_{t}\tilde{\psi}\mathrm{d}s+\int_{0}^{T^\ast}\int_{\Omega
^{m}\left(  s\right)  }\operatorname{div}(u^{m,n}\cdot\tilde{\psi}%
v^{ALE})\mathrm{d}s\\
&  =2\nu\int_{0}^{T^\ast}\int_{\Omega^{m}\left(  s\right)  }\mathbb{D}\left(
u^{m,n}\right)  \cdot\mathbb{D}\left(  \tilde{\psi}\right)  \mathrm{d}%
s+\frac{1}{2}\int_{0}^{T^\ast}\int_{\partial\Omega^{m}\left(  s\right)}(u^{m,n}\cdot\tilde{\psi})(v^{ALE}\cdot
\mathfrak{n}-u^{m,n}\cdot\mathfrak{n})\mathrm{d}s\\
&  -\int_{0}^{T^\ast}\int_{\Omega^{m}\left(  s\right)  }u^{m,n}\cdot \left( u^{m,n}%
\cdot\nabla\tilde{\psi} \right)\mathrm{d}s.
\end{align*}
Then standard manipulations and the uniform bound \eqref{energy_approx_level} imply
\begin{equation}
\left\vert \int_{0}^{T^\ast}\int_{\Omega^{m}\left(  s\right)  }\partial_{t}%
u^{m,n}\cdot\tilde{\psi}\mathrm{d}s\right\vert \leq C(\tilde E_{0})
\left\Vert \tilde \psi\right\Vert _{L^{4}(0,T^\ast;H_{\sigma}^{1}(\Omega^{m}))}.\label{ineg_toti_termenii_din_eq}%
\end{equation}

Next, we write that
\begin{align}
&  \int_{0}^{T^\ast}\int_{\Omega^{m}\left(  s\right)  }\partial_{t}u^{m,n}%
\cdot\tilde{\psi}\mathrm{d}s\nonumber\\
&  =\sum_{\ell,\ell'=1}^{n}\int_{0}^{T^\ast}\int_{\Omega^{m}\left(  s\right)  }\dot
{U}_{\ell+4N}^{m,n}\left(  s\right)  e_{\ell}^{m}\left(  s\right)  \cdot e_{\ell'}%
^{m}\left(  s\right)  \Psi_{\ell'}\left(  s\right)  \mathrm{d}s+\sum_{\ell=1}%
^{n}\int_{0}^{T^\ast}\int_{\Omega^{m}\left(  s\right)  }U_{\ell+4N}^{m,n}\left(
s\right)  \partial_{t}e_{\ell}^{m}\left(  s\right)  \cdot\tilde{\psi
}(s)\mathrm{d}s\nonumber\\
&  +\sum_{i=1}^{4N}\int_{0}^{T^\ast}\left\langle \tilde{\psi}\left(  s\right)
,\nabla q_{i}\left(  s\right)  \right\rangle _{L^{2}\left(  \Omega^m\left(
s\right)\right)}\dot{U}_{i}^{m,n}\left(s\right) \mathrm{ds}  +\sum_{i=1}%
^{4N}\int_{0}^{T^\ast}\left\langle \tilde{\psi}\left(  s\right)  ,\partial
_{t}\nabla q_{i}\left(  s\right)  \right\rangle _{L^{2}\left(
\Omega^m\left(  s\right)  \right)  }U_{i}^{m,n}\left(  s\right) \mathrm{d}s .
\label{descompunere}%
\end{align}
Observe that the first term is exactly what we want to estimate in $\left(\text{\ref{ineg_deriv_temps}}\right)$. The second term is treated using $\left(  \text{\ref{transport_o}}\right)  $
\begin{align}
&  \sum_{\ell=1}^{n}\int_{0}^{T^\ast}\int_{\Omega^{m}\left(  s\right)  }U_{\ell+4N
}^{m}\left(  s\right)  \partial_{t}e_{\ell}^{m}\left(  s\right)  \cdot
\tilde{\psi}(s) \mathrm{d}s\nonumber\\
&  =\sum_{\ell=1}^{n}\int_{0}^{T^\ast}\int_{\Omega^{m}\left(  s\right)  }U_{\ell+4N
}^{m}\left(  s\right)  \left(  (\nabla v^{ALE})^{T}e_{\ell}^{m}\left(
s\right)  -\operatorname{div}\left(  v^{ALE} \otimes e_{\ell}^{m} \right)
\right)  \cdot\tilde{\psi}(s)\mathrm{d}s\nonumber\\
&  =\int_{0}^{T^\ast}\int_{\Omega^{m}\left(  s\right)  }\left(  (\nabla
v^{ALE})^{T}\tilde{u}^{m,n}-\operatorname{div}\left( v^{ALE}  \otimes \tilde{u}^{m,n}
\right)  \right)  \cdot\tilde{\psi}\mathrm{d}s , 
\label{transport_aplicat_pe_ei}%
\end{align}
which is bounded by $C \|\tilde\psi\|_{L^2(0,T^\ast; L^2(\Omega^m))}$ thanks to the uniform estimates \eqref{energy_approx_level}-\eqref{energies}. The third term vanishes due to the
fact that $\tilde{\psi}\cdot\mathfrak{n}=0.$ The fourth term is controlled thanks to the uniform estimate on $U^{m,n,\nabla}$ and 
the bound on $\pa_t \na q_i$, {\it cf.} \eqref{q1}-\eqref{q1k}. With this we conclude the proof of Proposition
\ref{ineg_deriv}.
\end{proof}

\bigskip

Let us observe that owing to Proposition \ref{equiv_Sobolev} we have that :
\[
\mathcal{S}(t)^{-1}\tilde{u}^{m,n}\in L^{\infty}(0,T;L_{\sigma}^{2}(\Omega
^{m}(0))\cap L^{2}(0,T;H_{\sigma}^{1}(\Omega^{m}(0)))
\]
Also, observe that
\[
\partial_{t}(\mathcal{S}(t)^{-1}\tilde{u}^{m,n})=\sum_{\ell=1}^{n}\dot
{U}_{\ell+4N}^{m,n}(t)e_{\ell}^{m}(0)
\]
The following proposition is crucial in order to recover compactness for the
solution. We denote by $H_{\sigma}^{-1}(\Omega^{m}(0))$ the dual space of
$\overline{C_{\sigma,c}^{\infty}(\Omega^{m}(0)))}^{\left\Vert \cdot\right\Vert
_{H^{1}}}.$

\begin{proposition}
\label{compactness} Let  $T^{\ast}$ given by Proposition
\ref{Prop_gram_surject}. Then,   the following estimate hold true :%
\begin{equation}
\left\Vert \partial_{t}(\mathcal{S}(t)^{-1}\tilde{u}^{m,n})\right\Vert
_{L^{\frac{4}{3}}(0,T^\ast;H_{\sigma}^{-1}(\Omega^{m}(0)))}\leq C\left(m,\tilde E_0\right)
. \label{estimate_1_compact}%
\end{equation}

\end{proposition}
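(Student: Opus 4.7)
The plan is to obtain the $L^{4/3}(0,T^*;H^{-1}_\sigma(\Omega^m(0)))$ bound via duality, by testing $\partial_t(\mathcal{S}(t)^{-1}\tilde u^{m,n})$ against arbitrary $\varphi \in L^4(0,T^*;H^1_\sigma(\Omega^m(0)))$ and reducing to Proposition \ref{ineg_deriv}. Given such a $\varphi$ (which by density we may assume smooth and compactly supported in $(0,T^*)$), I would expand it in the Hilbert basis of $L^2_\sigma(\Omega^m(0))$ as $\varphi(t) = \sum_{\ell \ge 1} \Phi_\ell(t) e^m_\ell(0)$, with $\Phi_\ell(t) := \langle \varphi(t), e^m_\ell(0)\rangle_{L^2(\Omega^m(0))}$. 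Since $(e^m_\ell(0))_{\ell \ge 1}$ is orthonormal in $L^2_\sigma(\Omega^m(0))$, the identity $\partial_t(\mathcal{S}(t)^{-1}\tilde u^{m,n}) = \sum_{\ell=1}^n \dot U^{m,n}_{\ell+4N}(t) e^m_\ell(0)$ gives
\begin{equation*}
\int_0^{T^*} \langle \partial_t(\mathcal{S}(t)^{-1}\tilde u^{m,n}), \varphi(t)\rangle_{L^2(\Omega^m(0))} \mathrm{d}t = \int_0^{T^*} \langle \dot{\tilde U}^{m,n}(t), \Phi_n(t)\rangle_{\mathbb{R}^n} \mathrm{d}t,
\end{equation*}
where $\dot{\tilde U}^{m,n} := (\dot U^{m,n}_{\ell+4N})_{\ell \in \overline{1,n}}$ and $\Phi_n := (\Phi_\ell)_{\ell \in \overline{1,n}}$.

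The next step is to insert the Gram matrix so as to match the left-hand side of \eqref{ineg_deriv_temps}. By symmetry of $\mathcal{G}^{m,n}(t)$, setting $\Psi(t) := \mathcal{G}^{m,n}(t)^{-1} \Phi_n(t)$ (well defined thanks to the invertibility coming from Proposition \ref{Prop_gram_surject}), one has $\langle \dot{\tilde U}^{m,n}, \Phi_n\rangle = \langle \mathcal{G}^{m,n} \dot{\tilde U}^{m,n}, \Psi\rangle$. Define $\tilde \psi(t) := \sum_{\ell=1}^n \Psi_\ell(t) e^m_\ell(t)$. Then Proposition \ref{ineg_deriv} yields
\begin{equation*}
\left|\int_0^{T^*} \langle \partial_t(\mathcal{S}(t)^{-1}\tilde u^{m,n}), \varphi(t)\rangle \mathrm{d}t\right| \leq C(\tilde E_0)\, \|\tilde\psi\|_{L^4(0,T^*;H^1_\sigma(\Omega^m))}.
\end{equation*}

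It remains to control $\|\tilde\psi\|_{L^4(0,T^*;H^1_\sigma(\Omega^m))}$ by a constant (depending on $m$) times $\|\varphi\|_{L^4(0,T^*;H^1_\sigma(\Omega^m(0)))}$. For each fixed $t$, Proposition \ref{Prop_norm_equiv} gives $\|\tilde\psi(t)\|_{H^1_\sigma(\Omega^m(t))}^2 \leq C(m) \sum_{\ell=1}^n (1+(\lambda^m_\ell)^2)|\Psi_\ell(t)|^2$. Part 2 of Proposition \ref{Prop_gram_surject} then provides the weighted norm equivalence between $\Psi(t)$ and $\mathcal{G}^{m,n}(t)\Psi(t) = \Phi_n(t)$, so that
\begin{equation*}
\sum_{\ell=1}^n (1+(\lambda^m_\ell)^2)|\Psi_\ell(t)|^2 \leq C(m) \sum_{\ell=1}^n (1+(\lambda^m_\ell)^2)|\Phi_\ell(t)|^2 \leq C(m)\, \|\varphi(t)\|^2_{H^1_\sigma(\Omega^m(0))},
\end{equation*}
the last inequality being the spectral characterization \eqref{norm_H1_spect} at $t=0$. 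Taking the $L^4$-norm in time and combining with the previous display gives the desired estimate $\|\tilde\psi\|_{L^4(0,T^*;H^1_\sigma(\Omega^m))} \leq C(m)\|\varphi\|_{L^4(0,T^*;H^1_\sigma(\Omega^m(0)))}$, whence, by the duality $(L^4(0,T^*;H^1_\sigma))^{*} = L^{4/3}(0,T^*;H^{-1}_\sigma)$, the bound \eqref{estimate_1_compact} follows.

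The main obstacle is the mismatch between the time-dependent basis $(e^m_\ell(t))$ used in Proposition \ref{ineg_deriv} and the fixed basis $(e^m_\ell(0))$ that naturally diagonalizes $\partial_t(\mathcal{S}(t)^{-1}\tilde u^{m,n})$: the reduction above only works because Proposition \ref{Prop_gram_surject} (2), valid on the interval $[0,T^*]$, controls the Gram matrix not only in the plain $\ell^2$-norm but also in the weighted $H^1$-type norm $\sum_\ell (1+(\lambda_\ell^m)^2)|\cdot|^2$. This is precisely why the result is stated on $[0,T^*]$ rather than on the whole of $[0,T]$.
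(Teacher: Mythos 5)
Your proposal is correct and follows essentially the same route as the paper: duality against $L^4(0,T^*;H^1_\sigma(\Omega^m(0)))$, expansion of the test function in the fixed basis $(e^m_\ell(0))_\ell$, insertion of $\mathcal{G}^{m,n}(t)^{-1}$ to invoke Proposition \ref{ineg_deriv}, and then the weighted Gram-matrix control \eqref{gram_surject_H1} on $[0,T^*]$ together with the norm equivalences and \eqref{norm_H1_spect} to return to $\|\varphi\|_{L^4(0,T^*;H^1_\sigma(\Omega^m(0)))}$. Your construction $\tilde\psi(t)=\sum_{\ell=1}^n(\mathcal{G}^{m,n}(t)^{-1}\Phi_n(t))_\ell\, e^m_\ell(t)$ is exactly the paper's field $\check\psi^{m,n}$, so no further comparison is needed.
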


\begin{proof}
Using $\left(  \text{\ref{ineg_deriv_temps}}\right)  $ from Proposition
\ref{ineg_deriv} we obtain
\begin{align*}
 \left\Vert \partial_{t}(\mathcal{S}(t)^{-1}\tilde{u}^{m,n})\right\Vert
_{L^{\frac{4}{3}}(0,T^\ast;H_{\sigma}^{-1}(\Omega^{m}(0)))} & =\sup_{\substack{ \psi\in
C_c((0,T^\ast); H_{\sigma,0}^{1}(\Omega^{m}(0))) \\ \|\psi\|_{L^4(H^1)} = 1}}\left\langle \partial
_{t}(\mathcal{S}(s)^{-1}\tilde{u}^{m,n}),\psi\right\rangle \\
&  = \sup_{\substack{\psi\in
C_c((0,T^\ast); H_{\sigma,0}^{1}(\Omega^{m}(0))) \\ \|\psi\|_{L^4(H^1)} = 1}} \int_{0}^{T^\ast}\sum_{\ell=1}^{n}\dot{U}_{\ell+4N}^{m,n}(s)\left\langle
\psi(s),e_{\ell}^{m}(0)\right\rangle _{L^{2}(\Omega^{m}(0)))} \mathrm{d}s\\
&  =\int_{0}^{T^\ast}\left\langle   \mathcal{G}^{m,n}\left(  s\right)
\dot{U}^{m}(s)  ,\left(  \mathcal{G}^{m,n}\left(  s\right)  \right)
^{-1}\Psi^m(s)\right\rangle  \mathrm{d}s\\
&  \leq C\left(\tilde E_{0}\right)  \left\Vert \check{\psi}^{m,n}\right\Vert
_{L^{4}(0,t;H^{1}(\Omega^{m}))}%
\end{align*}
where we let%
\[
\left\{
\begin{array}
[c]{l}%
\Psi_{\ell}^{m}\left(  t\right)  :=\left\langle \psi\left(  t\right)
,e_{\ell}^{m}\left(  0\right)  \right\rangle _{L^{2}\left(  \Omega\left(
t\right)  \right)  },\text{ }\\
\Psi^{m}\left(  t\right)  :=\left(  \Psi_{1}^{m}\left(  t\right)  ,\dots
,\Psi_{n}^{m}\left(  t\right)  \right)  \in\mathbb{R}^{n},
\end{array}
\right.
\]
respectively,%
\[
\left\{
\begin{array}
[c]{l}%
\check{\psi}^{m,n}\left(  t\right)  :=\sum_{\ell=1}^{n}\left(  \left(
\mathcal{G}^{m,n}\left(  t\right)  \right)  ^{-1}\Psi^{m}\left(  t\right)
\right)  _{\ell}e_{\ell}^{m}\left(  t\right)  ,\\
\tilde{\psi}^{m,n}\left(  t\right)  :=\sum_{\ell=1}^{n}\Psi_{\ell}^{m}\left(
t\right)  e_{\ell}^{m}\left(  t\right)  .
\end{array}
\right.
\]
Note that the last inequality follows from Proposition \ref{ineg_deriv}. Observe that using Proposition \ref{Norm_H1_Spect} and the estimate $\left(
\text{\ref{gram_surject_H1}}\right)  $ we end up with
\begin{align*}
\left\Vert \check{\psi}^{m,n}(t)\right\Vert _{H_{\sigma}^{1}(\Omega^{m}%
(t))}^{2} &  \leq C \left\Vert \mathcal{S}(t)^{-1}\check{\psi}^{m,n}%
(t)\right\Vert _{H_{\sigma}^{1}(\Omega^{m}(t))}^{2}\leq C\left(m\right)
\sum_{\ell=1}^{n}(1+(\lambda_{\ell}^{m})^{2})\left[  \left(  \left(
\mathcal{G}^{m,n}\left(  t\right)  \right)  ^{-1}\Psi^{m}\left(  t\right)
\right)  _{\ell}\right]  ^{2}\\
&  \leq C\left(m\right)  \sum_{\ell=1}^{n}(1+(\lambda_{\ell}^{m}%
)^{2})\left[  \Psi_{\ell}^{m}\left(  t\right)  \right]  ^{2}:=C\left(
m\right)  \left\Vert \mathcal{S}(t)^{-1}(\tilde{\psi}^{m,n})\right\Vert
_{H_{\sigma}^{1}(\Omega^{m}(0))}^{2}\\
&  \leq C\left(m\right)  \left\Vert \psi\left(  t\right)  \right\Vert
_{H_{\sigma}^{1}(\Omega^{m}(0))}^{2}.
\end{align*}

\end{proof}

\subsection{Asymptotics $n,m \rightarrow +\infty$} \label{limit_passage}

The only thing which remains to be clarified is how to recover an equation in
the limit $m,n\rightarrow\infty$. We first fix $m$, fix some $\ell \in \overline{1,n}$, and fix a test field
$$ \psi(t) = \Psi_{\ell+4N}(t) e_\ell^m + \sum_{i=1}^{4N} \Psi_i(t) \na q_i$$
for some smooth functions $\Psi_{\ell+4N}, \Psi_i$ over $[0,T]$. The goal is to send $n$ to infinity in \eqref{VF_prescribed_m_n}. This sequential continuity problem is of course very similar to the one of Section \ref{sec_seq_con}, except that the domain here does not depend on $n$, which makes it easier. There is in particular no need to modify the test function $\psi$. We start from the uniform in $n$ estimates \eqref{energy_approx_level}-\eqref{energies}-\eqref{estim_H2_galerkin} and argue as in the beginning of Section \ref{sec_seq_con}. We find some $u^m := \tilde u^{m} + u^{m,\nabla}$ where 
$$ \tilde u^m \in  L^\infty(0,T; L^2_\sigma(\Omega^m)), \quad u^{m,\nabla}(t) \in L^\infty(0,T ; L^2(\Omega))  $$
such that the extensions by zero of $\tilde u^{m}$ and $u^{m,\nabla}$ are accumulation points of the extensions by zero of $\tilde u^{m,n}$ and $u^{m,n,\nabla}$ weakly * in $L^\infty(0,T ; L^2(\R^3))$. Moreover, $D(\tilde u^m) \in L^2(0,T ; L^2(\Omega^m))$ and  $D(u^{m,\nabla}) \in L^2(0,T ; L^2(\Omega^m))$. Also $r_i[u^{m,n}]$ and $x_i[u^{m,n}]$ converge weakly in $H^2(0,T)$ and strongly in $H^1(0,T)$ to $r_i[u^m]$ and $x_i[u^m]$ for all $i \in \overline{1,4N}$.  Still reasoning as in Section \ref{sec_seq_con}, one can send $n$ to infinity in all linear terms of \eqref{VF_prescribed_m_n}. One also can take easily  the limit of one extra nonlinear term:  
\begin{align*} 
& \frac12 \sum_{i=1}^{N} \int_{0}^{t}  \int_{\partial B_{i}\left(  s\right)  } \left(u^{m,n} \cdot \mathfrak{n} - \dot{r}_i - \dot{x}_i \cdot \mathfrak{n}\right) (\psi  \cdot u^{m,n}) \mathrm{d}s  \\
  \xrightarrow[n \rightarrow +\infty]{} & \frac12 \sum_{i=1}^{N} \int_{0}^{t}  \int_{\partial B_{i}\left(  s\right)  } \left(u^{m} \cdot \mathfrak{n} - \dot{r}_i - \dot{x}_i \cdot \mathfrak{n}\right) (\psi  \cdot u^{m}) \mathrm{d}s   
\end{align*}
as $u^{m,n} \cdot \mathfrak{n}\vert_{\pa B_i} = \dot{r}_i[u^{m,n}] + \dot{x}_i[u^{m,n}] \cdot \mathfrak{n}$ converges strongly. 

The only difference is in obtaining local strong compactness for $u^{m,n}$, necessary to take the limit of the convection term. 
Owing to \eqref{estimate_1_compact} in Proposition \ref{compactness}, we recover using Aubin-Lions' lemma the strong convergence of $\left(  \mathcal{S}(t)^{-1}\tilde{u}^{m,n}\right)  _{n\geq1}$ in
$L^{2}(0,T^{\ast};L_{\sigma}^{2}\left(  \Omega^{m}\left(  0\right)  \right)
)$ which by continuity of the operator $\mathcal{S}(t)$ implies strong
convergence of $\left(  \tilde{u}^{m,n}\right)  _{n\geq1}$ in 
$$L^{2}\left(\cup_{t \in [0,T^\ast]} \{t\} \times \Omega^m(t)\right).$$ 
Strong convergence of the gradient part $u^{m,n,\nabla}$ in
$$L^{2}\left(\cup_{t \in [0,T^\ast]} \{t\} \times \left(\Omega(t) \cap K\right)\right), \quad \forall  K \Subset \R^3,$$ 
follows from the uniform bound \eqref{estim_H2_galerkin} and \eqref{q1}-\eqref{q1k}. Eventually, one is able to send $n$ to infinity in all terms of \eqref{VF_prescribed_m_n} to get: for a.e. $t \le T^\ast$
\begin{equation}  \label{VF_prescribed_m}
\begin{aligned}
& \int_{\Omega^m\left(  t\right)  } u^m\left(  t\right)  \cdot\psi\left(
t\right)  +  \int_{B(0,m)^c} u^{m,\nabla}\left(  t\right)  \cdot \psi^\nabla\left(t\right) +  2\nu \int_{0}^{t}\int_{\Omega^m\left(  s\right)  }\mathbb{D}\left(
u^{m}\right)  \cdot\mathbb{D}\left(  \psi\right)  \mathrm{d}s \\
& - \sum_{i=1}^{N}  \int_0^t c_i \left( \tilde{r}_{0,i} + \int_0^s \dashint_{\partial B_{i}\left(  s'\right)} u^{m} \cdot \mathfrak{n} \,  \mathrm{d}s'  \right)^{2-3\gamma} 
\dashint_{\partial B_{i}\left(  s\right)  } \psi  \cdot \mathfrak{n}\mathrm{d}s\\
& - \frac12 \sum_{i=1}^{N} \int_{0}^{t}  \int_{\partial B_{i}\left(  s\right)  } \left(u^{m} \cdot \mathfrak{n} - \dot{r}_i - \dot{x}_i \cdot \mathfrak{n}\right) (\psi  \cdot u^{m}) \mathrm{d}s +  \frac12\int_{\partial B(0,m) }  (u^{m} \cdot \mathfrak{n}) (\psi \cdot u^{m})   \\
& =\int_{\Omega^m(0)}  u^{m}_{0} \cdot\psi\left(0\right) + \int_{B(0,m)^c} u^{\nabla}_0  \cdot \psi^\nabla\left(0\right) \\
& +\int_{0}^{t}\int_{\Omega^m\left(  s\right)  }  u^{m} \cdot (\partial_{t}\psi+ u^{m} \cdot \na \psi)\mathrm{d}s + \int_{0}^{t}\int_{B(0,m)^c}  u^{m,\nabla} \cdot \partial_{t}\psi^\nabla \mathrm{d}s ,
\end{aligned}
\end{equation}
with 
\begin{equation}
    u_0^{m} := \tilde{u}_0^{m} + u_0^\nabla := \sum_{\ell=1}^{+\infty}  U_{0,\ell+4N}^{m} e_\ell^m(0) + \sum_{i=1}^{4N} U_{0,i} \na q_i(0)  = \mathbb{P}_{\Omega^m(0)} \tilde u_0\vert_{\Omega^m(0)} + \sum_{i=1}^{4N} U_{0,i} \na q_i(0). 
\end{equation}
Note that $u^m$  verifies the uniform   bounds 
\begin{equation} \label{energy_estimate_um}
\begin{aligned} 
  \frac{1}{2}\int_{\Omega^{m}\left(  t\right)  }\left\vert \tilde u^{m}\left(
t\right)  \right\vert ^{2} &  +  \frac{1}{2}\int_{\Omega(t)}\left\vert u^{m,\nabla}\left(
t\right)  \right\vert ^{2} + \mu\int_{0}^{t}\int_{\Omega^{m}\left(\tau\right)
}\left\vert \mathbb{D}(u^{m})\right\vert ^{2}\mathrm{d}\tau  \\
& +\sum_{i=1}^{N}\frac{c_i}{3\gamma-3}  r_{i}\left[u^{m}\right]^{3-3\gamma}  \le  \tilde E_0.
\end{aligned}
\end{equation}
as well as (with obvious notations)
\begin{equation} \label{estim_H2_galerkin_um}
    |\dot{U}^{m,\nabla}| \le C(\tilde E_0), \quad \left\Vert \dot{r}_{i}\left[  u^m\right]  \right\Vert
_{H^{1}\left(  0,T\right)  }+\left\Vert \dot{x}_{i}\left[  u^m\right] \right\Vert _{H^{1}\left(  0,T\right)  }\leq C\left(\tilde E_0\right).
\end{equation} 
We remind that equality \eqref{VF_prescribed_m} holds for all $\psi = \Psi_{\ell+4N}(t) e_\ell^m + \sum_{i=1}^{4N} \Psi_i(t) \na q_i$, and so by linearity to all $\psi$ with 
$$ \psi(t) \in V^m(t) := \text{Span}\left(\{e_\ell^m, l \in \N \} \right) \oplus \text{Span}\left(\{\na q_i(t), \: i \in \overline{1,4N}  \} \right)  $$
(and with smooth coefficients in this basis). By a density argument, see Lemma \ref{approximare_spect} in the Appendix, \eqref{VF_prescribed_m} remains true for any  $\psi = \tilde \psi + \sum_{i=1}^N \Psi_i \na q_i $
where $\Psi_i$ is Lipschitz on $[0,T^\ast]$ for all $i$, and where $\tilde \psi : [0,T] \times \R^3  \rightarrow \R^3$ is Lipschitz in time, smooth and compactly supported in $x$, is divergence-free in $\Omega^m$ and satisfies $\tilde \psi(t) \cdot \mathfrak{n}\vert_{\pa \Omega^m(t)} = 0$ (for $m$ large enough so that $B(0,m)$ contains the support of $\tilde \psi$).  

Once we have the variational solution on $[0,T^\ast]$, we can reiterate our asymptotic process $n \rightarrow +\infty$ on $\left[  T^\ast,2T^\ast\right]  $ with initial data $u^{m}\left(T^{\ast}\right)$ (recall that the restriction on $T^\ast$ was only depending  on $T$ and $m$). Strictly speaking, as $u^m$ is defined for a.e.  $t \in [0,T^\ast]$, $u^m(T^\ast)$ is not {\it a priori} defined, in this case we replace $T^\ast$ by some $t^\ast$ very close and belonging to the set of full measure where $u^m$ is pointwise defined.   Proceeding in this way a finite number of steps, we obtain a solution $u^m$ of \eqref{VF_prescribed_m} on the whole interval $\left[0,T\right]$. 

\bigskip
The final part of this asymptotic study is sending $m$ to infinity, so as to recover a weak solution of \eqref{VF_prescribed}. This is reminiscent of the analysis of sequential continuity carried in Section \ref{sec_seq_con}, although much simpler as the boundary of the bubbles is prescribed and independent of $m$. In particular, any $\psi$ considered previously  is a valid test function for $m$ large enough : we do not need to approximate the test function by some $\psi^m$. Also,  the local strong convergence of $(u^m)_{m \in \N}$ results from a more direct application of \cite{Moussa}, as the boundary of the bubbles does not depend on $m$. We skip the details for brevity. Finally, still by density, \eqref{VF_prescribed} remains true for a general test function $\tilde \psi$, see Proposition \ref{Approx_test_function} and Corollary \ref{cor_test_functions_dil}.

\section{Approximate solutions and compactness} \label{sec_approx_compact}
\subsection{Construction of approximate solutions}
We prove in this section  Proposition \ref{prop_uh}, which yields a solution on $[0,T]$, $T$ small enough, to the approximate variational formulation \eqref{VF_prescribed_h}. As explained in Section \ref{sec_strategy}, this solution stems from an iterative use of  Theorem \ref{thm_prescribed}. One splits $[0,T]$ into subintervals of small size $h$, and on each of these small intervals $[kh,(k+1)h]$, one solves a simplified problem of the form \eqref{VF_prescribed}, where the dynamics of the bubbles is prescribed: the bubbles move on $[kh,(k+1)h]$ with constant translating and dilating speeds, given in terms of the velocity field $u^h$ computed at the previous step $[(k-1)h, kh]$, this $u^h$ satisfying a relation of the form 
$$ u^h \cdot \mathfrak{n} = \dot{r}_i[u^h] + \dot{x}^i[u^h] \cdot \mathfrak{n} $$
at the boundary of the $i$-th bubble. See steps i) and ii) in  Section \ref{sec_time_stepping}. Actually, as mentioned in this paragraph, instead of the uniform subdivision $kh$, $k = 0, \dots, \frac{T}{h}$, one may need to consider a slightly non-uniform subdivision $t_{k,h} \approx kh$, to ensure that the "initial data" for the $k$-th step,  $u^h(t_{k,h})$, is well-defined and that we can take $t = t_{k,h}$ in the variational formulation. Indeed, the solution given by Theorem \ref{thm_prescribed} is only defined almost everywhere. For brevity, we do not comment more on this point.  

After the statement of Proposition \ref{prop_uh}, we pointed out that application of Theorem \ref{thm_prescribed} requires the minimal distance between the bubbles to be positive. Hence, for the iteration to work and for Proposition \ref{prop_uh} to hold,   we have to check  that if $T$ is small enough, this separation condition will be satisfied at every time step $[kh,(k+1)h]$. More precisely, we show by induction on $k$ that there exists $T_0$ depending only on the initial data, such that for all $T \le T_0$, for all $k= 0, \dots, \frac{T}{h}-1$, the fields $(X^h,R^h,u^h)$ introduced formally in Section \ref{sec_time_stepping} are indeed well-defined on $[0,(k+1)h]$ and that in particular $u^h$ solves \eqref{VF_prescribed} on $[0,(k+1)h]$.  This is obvious for $k=0$, no matter the choice of $T$: indeed, $R(t) := R_0$ and  $X(t) := X_0$ satisfy the separation condition 
$$ \frac{1}{4}  \inf_{[0,h]} \: \inf_{i \neq j } \, \left(|x_i - x_j| - r_i - r_j \right) \ge \delta_0 $$ 
where $\delta_0 > 0$ is  given by \eqref{def_delta0}. This allows to apply Theorem \ref{thm_prescribed}: we get a solution $u[X,R,u_0,\tilde R_0]$ and define eventually $(X^h,R^h,u^h) = (X,R,u)$ on $[0,h]$. Assume  now $k \ge 1$ and that the iteration has provided  $(X^h,R^h,u^h)$ on $[0,kh]$. In particular, $u^h$ satisfies the variational formulation \eqref{VF_prescribed_h} for a.e. $t \in [0,kh]$. 
Adapting the arguments of Section \ref{sec_a_priori}, one finds estimates  analogue to \eqref{upper_bound_dotR} and \eqref{upper_bound_dotX}: for all $t \in (0,kh)$, for all $i \in \overline{1,N}$,
\begin{equation} \label{estimate_dotRh_dotXh}
    |\dot{r}_i[u^h](t)| \le C_R\left(\tilde E_0,  \frac{1}{r^h_i(t)},  \frac{1}{\delta^h(t)}\right), \quad  |\dot{x}_i[u^h](t)| \le C_X\left(\tilde E_0,\frac{1}{r^h_i(t)}, r^h_i(t), \frac{1}{\delta^h(t)} \right)   
\end{equation}
where $r_i[u^h]$ and $x^i[u^h]$ were given in \eqref{def_ri_uh}-\eqref{def_xi_uh}, the energy $\tilde E_0$ was defined in \eqref{def_tilde_E0}, and where 
$$ \delta^h(t) := \frac{1}{4} \left( |x_i^h(t) - x^h_j(t)| - r^h_i(t) - r_j^h(t) \right). $$
The functions $C_R$ and $C_X$ in \eqref{estimate_dotRh_dotXh} are increasing in each of their arguments. Note that by construction, see step ii) in Section \ref{sec_time_stepping}, one has 
\begin{equation} \label{link_R_Ru}
\begin{aligned}
\forall k' \le k-1, \quad \forall t \in [k' h,(k'+1)h], \quad & \left|\dot{r}_i^h(t)\right| \le  \sup_{s \in [(k'-1)h,k'h]} \left|\dot{r}_i[u^h](s)\right|, \\
 & \left|\dot{x}_i^h(t)\right| \le \sup_{s \in [(k'-1)h,k'h]} \left|\dot{x}_i[u^h](s)\right|.
\end{aligned}
\end{equation}
Let 
$$    C_{0,R} :=  C_R\left(\tilde E_0,  \frac{2}{\inf_i r_{0,i}},  \frac{2}{\delta_0}\right), \quad  C_{0,X} := C_X\left(\tilde E_0, \frac{2}{\inf_i r_{0,i}}, 2 \sup_i r_{0,i} ,  \frac{2}{\delta_0}\right).   $$
Let $T_0$ such that 
$$ \sup_i r_{0,i} + C_{0,R} T_0 \le 2 \sup_i r_{0,i},  \quad \inf_i r_{0,i} - C_{0,R} T_0 \ge   \frac{\inf_i r_{0,i}}{2}, \quad   \delta_0 - 2 C_{0,R} T_0 - 2 C_{0,X} T_0 \ge \frac{\delta_0}{2}.   $$
Using relations \eqref{estimate_dotRh_dotXh} and \eqref{link_R_Ru}, one can show inductively on $k'$ that  for all $k' \le k-1$, for all $t \in [k',(k'+1)h]$ (hence for all $t \in [0,kh]$):  
\begin{align*}
&  |\dot{r}_i[u^h](t)| \le C_{0,R}, \quad  |\dot{x}_i[u^h](t)| \le C_{0,X}, \quad \delta^h(t) \ge \delta_0 - 2 C_{0,R} t - 2 C_{0,X} t. 
\end{align*}
{\em as long as $t \le T_0$}. In particular, the functions 
 \begin{align*}
   R(t) := R^h(kh) + \left( \dashint_{(k-1)h}^{kh} \dot{R}[u^h] \right) t, \quad X(t) := X^h(kh) + \left( \dashint_{(k-1)h}^{kh} \dot{X}[u^h] \right) t
     \end{align*}
introduced in Step ii) of the iteration of Section \ref{sec_time_stepping} satisfy the separation condition of Theorem \ref{thm_prescribed}, which allows to complete Step ii) of the  iteration at step $k$.

\bigskip
The goal of the next two sections is to conclude the proof of Theorem \ref{main_thm}, by extracting from the family $(u^h)_{h > 0}$ given in Proposition \ref{prop_uh} a subsequence that converges to a weak solution of \eqref{main}. As approximations $u^h$ are defined on a small time interval only, this weak solution will be defined on $(0,T)$ for small $T$. We will then show that it can be continued until time $T = T_*$ which is either arbitrarily large, or characterized by 
$$ \liminf_{t \rightarrow T_*} \: \inf_{i \neq j} \left(|x_i(t) - x_j(t)| - r_i - r_j\right) = 0.$$

\subsection{Compactness} \label{sec_compactness}
We recall from the previous paragraph that for $T \le T_0$ small enough, one has for $\delta := \frac{\delta_0}{2}$, for all $h$, 
$$ \inf_{[0,T]}  \inf_{i \neq j} |x_i^h(t) - x_j^h(t)| - r_i^h(t) - r_j^h(t) \ge \delta. $$

\paragraph{Uniform bounds.}
The starting point is the energy estimate 
$$ \frac12 \int_{\Omega[X^h,R^h](t)} |u^h(t)|^2 + \nu \int_0^t \int_{\Omega[X^h,R^h](s)} |\mathbb{D}(u^h)(s)|^2 \mathrm{d}s + \sum_{i=1}^N \frac{c_i}{3\gamma -3} r_i[u^h](t)^{3-3\gamma}  \le E_0 $$
see \eqref{def_ri_uh}. This provides uniform bounds on 
$$\|u^h\|_{L^\infty(0,T ; L^2(\Omega[X^h,R^h]))}, \quad \|\na u^h\|_{L^2(0,T ; L^2(\Omega[X^h,R^h]))}$$
 as well as a uniform lower bound 
 $$ \forall 1 \le i \le N, \forall t \in [0,T], \quad r_i[u^h] \ge c(E_0) > 0.$$
Note that for all $k = 0, \frac{T}{h}-1$, from formula \eqref{def_iter_R_X}, 
$$ r_i[u^h](kh) = r^h_i((k+1)h) + r^h_i(0) - r^h_i(h)  = r^h_i((k+1)h)  $$
so that 
$$ \forall 1 \le i \le N, \forall t \in [0,T], \quad r^h_i(t) \ge c(E_0) > 0.$$
From there, one can proceed exactly as in Section \ref{sec_a_priori}, and infer the uniform estimate 
\begin{equation}
   |\dot{R}[u^h](t)| \le C(E_0,\delta), \quad \text{hence } \:  |R[u^h](t)| \le C(E_0,R_0,\delta,T).
\end{equation}
from which it follows straightforwardly that 
\begin{equation}
   |\dot{R}^h(t)| \le C(E_0,\delta), \quad  \text{hence } \:  |R^h(t)| \le C(R_0,E_0,\delta,T).
\end{equation}
As in Section \ref{sec_a_priori}, one can then derive the analogue estimates 
$$   |\dot{X}[u^h](t)| \le C(R_0,E_0,\delta,T), \quad  \text{hence } \:  |X[u^h](t)| \le C(X_0,R_0,E_0,\delta,T)  $$
and in turn
$$    |\dot{X}^h(t)| \le C(R_0,E_0,\delta,T), \quad  \text{hence } \:  |X^h(t)| \le C(X_0,R_0,E_0,\delta,T).  $$
After these uniform Lipschitz estimates, one can derive $H^2(0,T)$ estimates exactly as in Section  \ref{sec_a_priori}. Namely, one can introduce $(\na q^h_i(t))_{1 \le i \le 4N}$  an orthonormal basis of the space $\mathbb{G}(\Omega[X^h,R^h](t))$. Note that the fields $\na q^h_i$ satisfy bounds of the form \eqref{q1} uniformly in $h$. One can then use $\psi^h(t,x) = s(t) \na q^h_i(t)(x)$ in the variational formulation \eqref{VF_prescribed_h}. The only difference is the presence of the extra term 
\begin{align*}
& \left| \frac12 \sum_{i=1}^{N} \int_{0}^{t}  \int_{\partial B_{i}[X^h,R^h]\left(  s\right)  } \left(u^h \cdot \mathfrak{n} - \dot{r}_i^h - \dot{x}_i^h \cdot \mathfrak{n}\right) (\psi^h  \cdot u^h) \mathrm{d}s \right|   \\
& \le C \sup_{[0,T]}\left|  \left( \dot{R}[u^h] ,  \dot{R}^h ,  \dot{X}[u^h] ,  \dot{X}^h \right) \right| \|s\|_{L^2(0,T)} \, \left( \| u^h\|_{L^2(0,T ; H^1(\Omega)} \|\na q_i^h \|_{L^\infty(0,T ;H^1(\Omega)} \right) \\
& \le  C(R_0,X_0,E_0,\delta,T) \, \|s\|_{L^2(0,T)}.
\end{align*}
This results in 
\begin{equation*}
\left\vert \int_{0}^{T}\int_{\Omega\left(  t\right)  }\dot{s}\left(
t\right)  u^h\left(  t\right) \cdot \nabla q^h_{i}\left(  t\right)  \right\vert \leq
C\left(X_0,R_0,E_0,\delta,T\right)  \left\Vert s\right\Vert _{L^{2}%
}.
\end{equation*}
Proceeding as in Section \ref{sec_a_priori}, we find 
$$ \|\dot{R}[u^h]\|_{H^1(0,T)} + \|\dot{X}[u^h]\|_{H^1(0,T)} \le C\left(X_0,R_0,E_0,\delta,T\right).  $$
Note that we do not have the same $H^1(0,T)$ bounds on $\dot{R}^h,\dot{X}^h$ though,  as these are piecewise constant functions, hence not in $H^1$. Still, they obey the uniform $L^{\infty}$ estimates seen before.
\paragraph{Limit $h \rightarrow 0$.} The treatment of the limit $h \rightarrow 0$ in \eqref{VF_prescribed_h} is very close to the treatment of the limit $n \rightarrow +\infty$ in \eqref{VF_n}, already considered in Section \ref{sec_seq_con}. We shall stress the few differences between the two. First, using the uniform bounds of the previous paragraph, and extending $u^h$  by the formula 
$$ \overline{u}^h = u^h  \: \text{ in } \Omega[X^h,R^h], \quad \overline{u}^h(t,x) = \dot{x}_i[u^h](t) + \dot{r}_i[u^h](t) \frac{x - x_i^h(t)}{r_i^h(t)}, \quad x \in B_i^h(t)   $$
we find the following convergences (after possible extraction of a subsequence): 
\begin{align*}
  & \overline{u}^h \rightarrow  \overline{u} \text{ weakly* in } L^\infty(0,T ; L^2(\R^3)), \\
  & (X^h,R^h) \rightarrow  (X,R)   \text{ weakly* in }  W^{1,\infty}([0,T]), \\
  & (X[u^h],R[u^h]) \rightarrow  (\tilde{X},\tilde{R})  \text{ weakly in } H^2(0,T)  
\end{align*}
 From this and the formula
$$ \dot{R}^h(t) =  \sum_{k=1}^{\frac{T}{h}-1} 1_{[kh,(k+1)h)}(t) \, \dashint_{(k-1)h}^{kh} \dot{R}[u^h].
$$
we infer that $\dot{R}^h$ converges uniformly to $\frac{d\tilde{R}}{dt}$, which implies $\tilde{R} = R$.  Similarly,$\dot{X}^h$ converges uniformly to $\frac{d \tilde{X}}{dt}$ and so $\tilde{X} = X$. To summarize: 
\begin{align*}
  & \overline{u}^h \rightarrow  \overline{u} \text{ weakly* in } L^\infty(0,T ; L^2(\R^3)), \\
  & (X^h,R^h) \rightarrow  (X,R)   \text{ strongly in }  W^{1,\infty}([0,T]), \\
  & (X[u^h],R[u^h]) \rightarrow  (X,R)  \text{ weakly in } H^2(0,T) .  
\end{align*}
We denote in short 
$$ B_i = B_i[X,R], \quad \Omega := \Omega[X,R].$$
Exactly as in Section \ref{sec_seq_con}, we fix some small $\eta > 0$ and use that 
$$ \sup_{t \in [0,T)} d_{Haus}(B_i[X^h,R^h](t), B_i(t)) \xrightarrow[h \rightarrow 0]{}  0  $$
so  that for $h$ small enough, for all $i$, for all $t$,  
$$B_i[X^h,R^h](t) \subset B_i^\eta(t), \quad \text{ where } \:   B_i^\eta(t) := B(x_i(t), r_i(t)+\eta), \quad \Omega^\eta:= \R^3 \setminus \cup_{i=1}^N B_i^\eta . $$ 

We now fix some test field  
$$ \psi \in W^{1,\infty}\left([0,T) ; L_{\mathrm{dil}}^{2}(\Omega) \cap H^\infty(\Omega) \right) .$$
We rely on Lemma \ref{lem_diffeo}, and set 
 $\Theta^h(t,x) :=  \Theta[X(t),R(t),X^h(t),R^h(t)](x)$, as well as
\begin{equation} \label{def_psi_h}
 \psi^h(t,x) := \det\left(D\Theta^h(t,x)\right) \left(D\Theta^h(t,x)\right)^{-1}\psi(t,\Theta^h(t,x)).    
\end{equation}
It is shown to be an admissible test function in \eqref{VF_prescribed_h}, and the idea is to  let $h \rightarrow 0$ in each term of this variational formulation, using the domain decomposition
$$ \Omega^h(t) = \Omega^\eta(t) \cup \left(\Omega^h(t) \setminus  \Omega^\eta(t)\right) .$$ 
All terms in \eqref{VF_prescribed_h}  have their analogues in \eqref{VF_n} and can be treated along the lines of Section \ref{sec_seq_con}, except for the fourth one, namely
$$I^{h} :=  \frac12 \sum_{i=1}^{N} \int_{0}^{t}  \int_{\partial B_{i}[X^h,R^h]\left(  s\right)  } \left(u^h \cdot \mathfrak{n} - \dot{r}_i^h - \dot{x}_i^h \cdot \mathfrak{n}\right) (\psi^h \cdot u^h) \mathrm{d}s $$
that we bound by: 
\begin{align*}
|I_h| & \le \frac12 \sum_{i=1}^{N} \left( \| u^h(t,x) \cdot \mathfrak{n} - \dot{r}_i^h(t) - \dot{x}_i^h(t)\cdot \mathfrak{n} \|_{L^\infty(0,T ; L^\infty(\pa B_i[X^h,R^h])} \right. \\
& \hspace{3cm} \left. \|\psi^h\|_{L^2(0,T ; L^2(\pa B_i[X^h,R^h])} \|u^h\|_{L^2(0,T ; L^2(\pa B_i[X^h,R^h])} \right) \\
& \le C  \sup_{[0,T]} \left( \left| \frac{d}{dt}( R[u^h] - R^h) \right|   + \left| \frac{d}{dt}( X[u^h] - X^h) \right| \right) 
\end{align*}
where the right-hand side goes to zero as $h \rightarrow 0$. This concludes the existence of a weak solution to \eqref{main} in short time. 

\subsection{Extension of the solution up to collision}
It remains to prove the last part of the main Theorem \ref{main_thm}, which is standard. We assume that we can not construct a weak solution on an arbitrary time interval $[0,T)$, and introduce the supremum $T^\ast < +\infty$ of all times $T$ such that there exists a weak solution on $[0,T)$.  This supremum is actually a maximum: if we select a sequence of times $T^n \xrightarrow[n \rightarrow +\infty]{} T^\ast$ with $u^n$ a weak solution on $[0,T^n)$, we can apply the sequential continuity result of Section \ref{sec_seq_con} and obtain a solution $u$ on $[0, T^*)$. We now wish to prove that \eqref{collision} holds for $T = T^\ast$. We argue by contradiction and suppose that there exists $\delta > 0$ such that 
\begin{equation} \label{separation_Tast}
\forall t < T^\ast, \quad  \inf_{i \neq j} \left(|x_i(t) - x_j(t)| - r_i(t) - r_j(t)\right) \ge \delta.
\end{equation}
Then, we can select a time $t^\ast$ arbitrary close to $T^\ast$ such that $u(t^\ast)$ is well-defined, with $u(t^\ast) \in L^2_{\textrm{dil}}(\Omega(t^\ast))$. We can then build a weak solution $u'$ on $[t^\ast, t^\ast + T_0]$, with initial condition $u'(t^\ast) = u(t^\ast)$. Due to the separation condition \eqref{separation_Tast}, the time $T_0$ is lower bounded by a constant depending only on $T^\ast$, the initial data and $\delta$.  In particular, it can be taken independent of $t^\ast$. Gluing the two solutions, that is settting 
$$ \overline{u} = u \quad \text{ on } \: [0,t^\ast], \quad  \overline{u} = u' \quad \text{ on } \: [t^\ast, t^\ast + T_0]$$
we obtain a weak solution $\overline{u}$ on $[0, t^\ast+T_0]$, which contradicts the definition of $T^\ast$. 

\bigskip
This concludes the proof of Theorem \ref{main_thm}. 

\section*{Acknowledgements}
The authors thank Franck Boyer for pointing out the connection between the weak formulation \eqref{VF_prescribed} and the strong form \eqref{strong_formulation_prescribed}. They also thank Matthieu Hillairet for an interesting discussion on the formal derivation of system \eqref{main}. Both authors acknowledge the support of Project {\em ComplexFlows}  (ANR-23-EXMA-0004), and of the Project {\em Complexcité} of Université Paris Cité. D.G.-V. acnowledges the support of Project Bourgeons, grant ANR-23-CE40-0014-01 of the French National Research Agency (ANR).
C.B. acknowledges the partial support by the Agence Nationale pour la Recherche grant CRISIS (ANR-20-CE40-0020-01).

\appendix

\section{An arbitrary Lagrangian-Eulerian vector field} \label{appendixA}

In this part of the Appendix, we want to construct a family of diffeomorphisms-forward of the initial configuration
$\Omega[X,R]\left(  0\right)  $ respectively $\Omega[X,R]\left(  0\right) \cap B(0,m)$. The
precise statement was the object of Lemma \ref{diffeoLemma}, thus in the
following lines we present the proof of this result. 

\medskip
\noindent 
\textbf{Proof of Lemma \ref{diffeoLemma}.} For brevity, we shall omit  $[X,R]$ from the notations, and denote
$$ \Omega^m(t) = \Omega(t) \cap B(0,m). $$ 
We remind the definition of $\delta$ in \eqref{separation_condition}. Consider $\omega\in C^{\infty}\left(  \mathbb{R}\right)  $ with
$\operatorname*{Supp}\omega\subset\left[  -1,1\right]  $ and $\int
_{\mathbb{R}}\omega\left(  r\right)  \mathrm{d}r=1$ along with the functions%
\begin{equation}
\chi_{i}\left(  t,r\right)  = \frac{4}{\delta}\int_{\mathbb{R}}\omega\left(  \frac
{4(s-r)}{\delta}\right)  \mathbf{1}_{[0,r_{i}\left(  t\right)  +\frac{\delta
}{2}]}\left(  s\right)  \mathrm{d}s. \label{definitie_chi}%
\end{equation}
They are $C^{\infty}$, $\operatorname*{Supp}%
\chi_{i}\subset\left[  0,r_{i}\left(  t\right)  +\frac{3}{4}\delta\right]  $,
$\chi_{i}\left(  r\right)  =1$ on $\left[  0,r_{i}\left(  t\right)
+\frac{\delta}{4}\right]  $ and
\begin{equation}
\left\vert \chi_{i}^{(n)}\left(  t,r\right)  \right\vert \leq\frac{C_{n}%
}{\delta^{n}}. \label{derivate_chi}%
\end{equation}
We consider%
\begin{equation}
v^{ALE}\left(  t,x\right)  =\sum_{i=1}^{N}\chi_{i}\left(  t,\left\vert
x-x_{i}\left(  t\right)  \right\vert \right)  \left(  \dot{x}_{i}\left(
t\right)  +\frac{\dot{r}_{i}\left(  t\right)  }{r_{i}\left(  t\right)
}\left(  x-x_{i}\left(  t\right)  \right)  \right).
\label{definition_vector_ALE}%
\end{equation}
It satisfies item 1) of Lemma  \ref{diffeoLemma}. We then  let 
\begin{equation}
\left\{
\begin{array}
[c]{l}%
\dot{\Theta}\left(  t,x\right)  =v^{ALE}\left(  t,\Theta\left(  t,x\right)
\right)  ,\\
\Theta\left(  t,x\right)  =x.
\end{array}
\right.  \label{definition_of_our_X}%
\end{equation}

It is well-known that  $\Theta(t , \cdot)$, $t \ge 0$,  defines a family of diffeomorphism from $\mathbb{R}^{3}$ to $\mathbb{R}^{3}$. Let $x \in B\left(x_i(0), r_i(0) + \frac{\delta}{8}\right)$. Owing to the continuity in time we have that $\Theta\left(
t,x\right)  \in B\left(  x_{i}\left(  t\right)  ,r_{i}\left(  t\right)
+\frac{\delta}{4}\right)  $ at least for small time. Expression of $v^{ALE}$ on this set yields  
\[
\dot{\Theta}\left(  t,x\right)  =\dot{x}_{i}\left(  t\right)  +\frac{\dot
{r}_{i}\left(  t\right)  }{r_{i}\left(  t\right)  }\left(  \Theta\left(
t,x\right)  -x_{i}\left(  t\right)  \right)
\]
which can be re-written as%
\[
\frac{d}{dt}\left\{  \frac{\Theta\left(  t,x\right)  -x_{i}\left(  t\right)
}{r_{i}\left(  t\right)  }\right\}  =0\Rightarrow\frac{\Theta\left(
t,x\right)  -x_{i}\left(  t\right)  }{r_{i}\left(  t\right)  }=\frac
{x-x_{i}\left(  0\right)  }{r_{i}\left(  0\right)  },
\]
at least for a short time. By the same kind of reasoning we obtain that the set 
$$ \left\{ t \in \left[  0,T\right], \: \forall x \in B\left(x_i(0), r_i(0) + \frac{\delta}{8}\right) \quad \Theta(t,x) = x_i(t) + \frac{r_i(t)}{r_i(0)}  (x - x_i(0)) \right\} $$ 
is open. As it is closed, it equals in fact the whole interval $\left[0,T\right]$. This proves item 2) of the lemma. From this item, it follows that 
$\Theta(t, \cdot)$ maps each $B_i(0)$ to $B_i(t)$. Moreover, since the vector field $v^{ALE}(t, \cdot)$ vanishes in the exterior of $B\left(0,m\right)  $ for  $m \ge m_{0}(T)$ large enough, we deduce that $\Theta(t,\cdot)$ defines by restriction a  diffeomorphism from $\Omega\left(  0\right) = \mathbb{R}^{3}\backslash\cup B_{i}\left(0\right)$ to $\Omega\left(  t\right)
=\mathbb{R}^{3}\backslash\cup B_{i}\left(  t\right)  $ for all $t\in\left[
0,T\right]$,  for all $m\geq m_{0}$.  This concludes the proof of Lemma \ref{diffeoLemma}.

Lemma \ref{lem_diffeo} which was enounced in
Section \ref{sec_seq_con} can now be obtained as a corollary. 
\medskip
\noindent 
\textbf{Proof of Lemma \ref{lem_diffeo}.} We introduce 
\[
\left(  X\left(  s\right)  ,R\left(  s\right)  \right)  =\left(  1-s\right) \left(  \tilde{X}_{0},\tilde{R}_{0}\right)
 + s \left(  X_{0},R_{0}\right)  .
\]
We then consider the diffeomorphism $\Theta\left(  s,\cdot\right)  :\mathbb{R}%
^{3}\rightarrow\mathbb{R}^{3}$ constructed in Lemma \ref{diffeoLemma}. Then
$\Theta_{0}:=\Theta\left(  1,\cdot\right)  $ verifies all properties of the lemma. 

\section{Functional spaces on moving domains} \label{appendixB}


\subsection{Some useful formulas}

Recall that if we have a family of diffeomorphisms
\[
\Theta\left(  t,\right)  :\Omega_{0}\rightarrow\Theta\left(  t,\Omega
_{0}\right)
\]
then a surface in $\Gamma_{0}\subset\Omega_{0}$ will be transported into the
surface $\Gamma_{t}=\Theta\left(  t,\Gamma_{0}\right)$. Let us denote
$\mathfrak{n}_{t}$ the outward facing normal at $\Gamma_{t}$, and $\Theta_t := \Theta(t,\cdot)$. Suppose that we
describe a portion of $\Gamma_{0}$ with $\psi\left(  x\right)  =0.$ Then
$\Gamma_{t}$ is described locally by $\psi\left(  \Theta_t^{-1}\left(x\right)  \right)  =0$ and therefore the direction of the normal vector at
$x$ pointing outward the domain is given by%
\begin{align*}
\left(  D(\psi\circ\Theta_t^{-1})\left(  x\right)  \right)  ^{T} &  =\left(
D\psi_{|\Theta_t^{-1}(x)}D\Theta_t^{-1}(x)\right)  ^{T}\\
&  =\left(  D\Theta_t^{-1}(x)\right)  ^{T}\left(  D\psi_{|\Theta_t^{-1}%
(x)}\right)  ^{T}=\left(  D\Theta_t^{-1}(x)\right)  ^{T}\mathfrak{n}%
_{0}\left( \Theta_t^{-1}(x)  \right)  \left\Vert \nabla
\psi\left(  \Theta_t^{-1}(x)\right)  \right\Vert \\
&  =\frac{1}{\det D\Theta_t\left(\Theta_t^{-1}\left(x\right)  \right)
}\operatorname{Cof}D\Theta_t\left(\Theta^{-1}_t\left(x\right)  \right)
\mathfrak{n}_{0}\left(  \Theta_t^{-1}\left(x\right)  \right)  \left\Vert
\nabla\psi\left(\Theta_t^{-1}(x)\right)  \right\Vert .
\end{align*}
Of course,
\begin{align}
\mathfrak{n}_{t}\left(  x\right)   &  =\frac{\left(  D(\psi\circ\Theta_t
^{-1})\left( x\right)  \right)  ^{T}}{\left\Vert \left(  D(\psi\circ
\Theta_t^{-1})\left( x\right)  \right)  ^{T}\right\Vert }=\frac
{\operatorname{Cof}D\Theta_t\left(\Theta^{-1}_t\left(x\right)  \right)
\mathfrak{n}_{0}\left(  \Theta_t^{-1}\left(x\right)  \right)  }{\left\Vert
\operatorname{Cof}D\Theta_t\left(\Theta^{-1}_t\left(x\right)  \right)
\mathfrak{n}_{0}\left(  \Theta^{-1}_t\left(x\right)  \right)  \right\Vert
}\nonumber\\
&  =\frac{\operatorname{Cof}D\Theta^{-1}_t\left(x\right)  \mathfrak{n}%
_{0}\left(  \Theta^{-1}_t\left(x\right)  \right)  }{\left\Vert
\operatorname{Cof}D\Theta^{-1}_t\left(x\right)  \mathfrak{n}_{0}\left(
\Theta^{-1}_t\left(x\right)  \right)  \right\Vert }%
.\label{change_of_normal_Euler}%
\end{align}
In Lagrangian coordinates (Lagrangian in
the sense that  $x$ is in the initial space $\Omega(0)$)
\begin{equation}
\mathfrak{n}_{t}\left(  \Theta\left(  t,x\right)  \right)  =\frac
{\operatorname{Cof}D\Theta_t\left(x\right)  \mathfrak{n}_{0}\left(
x\right)  }{\left\Vert \operatorname{Cof}D\Theta_t\left(x\right)
\mathfrak{n}_{0}\left(  x\right)  \right\Vert }.\label{normal_Lagrange}%
\end{equation}

The following formulas are useful for switching from the Eulerian point of
view to the Lagrangian point of view and viceversa. Below, the upperscript $E$
means that the function is defined on $\Theta_{t}\left(  \Omega_{0}\right)  $
while the upperscript $L$ means that the function is defined on $\Omega_{0}%
$.\ The functions we are working with are defined either on the initial
configuration $\Omega_{0}$ either on $\Theta_{t}\left(  \Omega_{0}\right)  .$
Somehow abusing notations we let%
\[
\varphi^{E}=\varphi^{L}\circ\Theta^{-1}_t  \text{ and
}\varphi^{L}=\varphi^{E}\circ\Theta_t.
\]
By differentiating these relations and thanks to  Piola's identity
\[
\partial_{k}\operatorname{Cof}(D\Theta_t)_{ik}=0,
\]
 we have the following formulas%
\begin{align}
\left(  \partial_{i}\varphi^{E}\right)  ^{L}  &  =\partial_{i}\varphi^{E}%
\circ\Theta\left(  t,\cdot\right)  =\frac{1}{\det D\Theta_t}\partial_{k}\left(
(\operatorname{Cof}D\Theta_t)_{ik}\varphi^{L}\right)  ,
\label{euler_to_lagrange}\\
\left(  \partial_{i}\varphi^{L}\right)  ^{E}  &  =\partial_{i}\varphi^{L}%
\circ\Theta^{-1}_t\left(\cdot\right)  =\frac{1}{\det D\Theta^{-1}_t}%
\partial_{k}\left(  (\operatorname{Cof}D\Theta^{-1}_t)_{ik}\varphi^{E}\right)
\label{lagrange_to_euler}%
\end{align}
In particular, it holds true that%

\begin{equation}
\left(  \operatorname{div}v^{E}\right)  \circ\Theta^{-1}_t\left(x\right)
=\left(  \partial_{i}(v^{e})^{i}\right)  \circ\Theta^{-1}_t\left(x\right)
=\frac{1}{\det D\Theta_t\left(x\right)  }\partial_{k}\left(
\operatorname{Cof}(D\Theta_t)_{ik}(v^{L})^{i}\right)  .
\label{change_of_divergence}%
\end{equation}

\subsection{$L^{2}$-spaces on moving domains}

{\bf Proof of Proposition \ref{propSt}.}
We first prove that 
\begin{equation}
\operatorname{div} v=0\Rightarrow\operatorname{div}\mathcal{S}\left(  t\right)  v=0.
\label{consequence1}%
\end{equation}
and
\begin{equation}
v\cdot\mathfrak{n}=0\text{ on }\partial B_{i}\left(  0\right)  \Rightarrow
\mathcal{S}\left(  t\right)  v\cdot\mathfrak{n}_{t}=0. \label{consequence_2}%
\end{equation}
The first property follows immediately from $\left(
\text{\ref{change_of_divergence}}\right)  $ while $\left(
\text{\ref{consequence_2}}\right)  $ will follow from $\left(
\text{\ref{change_of_normal_Euler}}\right)  $ along with the particular form
of $\Theta\left( t,\cdot\right)$:
$$\forall x \in B(x_i(t), r_i(t) + \frac{\delta}{8}), \quad   \Theta(t,x) = x_i(t) + \frac{r_i(t)}{r_i(0)}(x-x_i(0)) $$
Let us prove $\left(
\text{\ref{consequence_2}}\right)  $ by first observing that for all
$x\in\partial B_{i}\left(  t\right)  $, $\Theta^{-1}_t\left(x\right)
\in\partial B_{i}\left(  0\right)  $ and thus if
\[
v\cdot\mathfrak{n}_{0}=0\text{ on }\partial B_{i}\left(  0\right)
\]
for all $i\in \overline{1,N}$ then%
\begin{gather*}
v\left( \Theta_t^{-1}(x)  \right)  \cdot\mathfrak{n}_{0}\left(
\Theta_t^{-1}(x)  \right)  =0\\
\Rightarrow v\left( \Theta_t^{-1}(x)  \right)  \cdot
(D\Theta_t)_{|x=\Theta_t^{-1}(x)}^{T}\operatorname{Cof}(D\Theta_t)_{|x=\Theta_t^{-1}(x)}\mathfrak{n}_{0}\left( \Theta_t^{-1}(x)  \right)  =0.
\end{gather*}
We obtain that%
\[
\left\{  (D\Theta)_{|x=\Theta_t^{-1}(x)}v\left(  \Theta^{-1}\left(
t,x\right)  \right)  \right\}  \cdot\left\{  \operatorname{Cof}(D\Theta
)_{|x=\Theta_t^{-1}(x)}\mathfrak{n}_{0}\left( \Theta_t^{-1}(x)
\right)  \right\}  =0.
\]
Taking into account $\left(  \text{\ref{normal_Lagrange}}\right)  $ and the formula for $\mathcal{S}(t)v$ in \eqref{S(t)1}, we end up with
$\left(  \text{\ref{consequence_2}}\right)  $. This shows that $\mathcal{S}(t)$ sends $L^2_\sigma(\Omega(0))$ to $L^2_\sigma(\Omega(t))$. Symmetrically, $\mathcal{S}(t)^{-1}$ defined in \eqref{S-1(t)1} sends $L^2_\sigma(\Omega(t))$ to $L^2_\sigma(\Omega(0))$ and it is straightforward from the explicit expressions that this is the inverse of $\mathcal{S}(t)$.

In order to give a bound on their norms, we just observe that%
\begin{align*}
\left\Vert \mathcal{S}\left(  t\right)  v\right\Vert _{L^{2}\left(
\Omega\left(  t\right)  \right)  }^{2}  &  =\int_{\Omega\left(  t\right)
}\left\vert \dfrac{D\Theta_t\left(\Theta_t^{-1}(x)\right)  }{\det
D\Theta_t\left(\Theta_t^{-1}(x)  \right)  }v(\Theta_t^{-1}(x))\right\vert ^{2}\mathrm{d}x\\
&  =\int_{\Omega\left(  0\right)  }\left\vert D\Theta_t\left(x\right)
v(x)\right\vert ^{2}\frac{\mathrm{d}x}{\det D\Theta_t\left(x\right)  }\leq
C\left(  t\right)  \left\Vert v\right\Vert _{L^{2}\left(  \Omega\left(
0\right)  \right)  }^{2},
\end{align*}
while similar computations shows that%
\[
\left\Vert \mathcal{S}\left(  t\right)  ^{-1}v\right\Vert _{L^{2}\left(
\Omega\left(  0\right)  \right)  }\leq C\left(  t\right)  \left\Vert
v\right\Vert _{L^{2}\left(  \Omega\left(  t\right)  \right)  }.
\]

Finally, taking into account the fact that for $|x| \ge m_0$,  $\Theta(t,x) = x$, it is clear that $\mathcal{S}(t)$, resp. $\mathcal{S}(t)^{-1}$,  maps $L_{\sigma}^{2}\left(
\Omega^{m}\left(  0\right)  \right)  $ into $L_{\sigma}^{2}\left(  \Omega
^{m}\left(  t\right)  \right)  $, resp. $L_{\sigma}^{2}\left(
\Omega^{m}\left(  t\right)  \right)  $ into $L_{\sigma}^{2}\left(  \Omega
^{m}\left(  0\right)  \right)  $ with norms independent of $m \ge m_0$. This concludes the proof of the proposition. 

\bigskip
The next proposition is very important for the estimates regarding the time
derivative of the solutions built in Section \ref{sec_prescribed}.

\begin{proposition}
\label{Prop_transport}We let $e\left(  t\right)  =\mathcal{S}\left(  t\right)
e\left(  0\right)  $ where $e\left(  0\right)  \in L_{\sigma}^{2}\left(
\Omega\left(  0\right)  \right)  $. Then%
\begin{equation}
\partial_{t}e+\operatorname{div}\left(  v^{ALE} \otimes e \right)  =(\nabla
v^{ALE})^{T}e. \label{transport_o}%
\end{equation}
In general
\[
\partial_{t}\mathcal{S}\left(  t\right)  u\left(  t\right)
+\operatorname{div}\left(  v^{ALE} 
\otimes  \mathcal{S}\left(  t\right)  u\left(  t\right) \right)  =(\nabla v^{ALE})^{T}\mathcal{S}\left(  t\right)
u\left(  t\right)  +\mathcal{S}\left(  t\right)  \partial_{t}u\left(
t\right)  .
\]

\end{proposition}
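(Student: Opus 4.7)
The plan is to perform the computation in Lagrangian variables, where the formula for $\mathcal{S}(t) v$ is transparent, and then pull everything back to Eulerian variables via the transport identity for the flow of $v^{ALE}$.

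First, given $e(0) \in L^2_\sigma(\Omega(0))$, I would write $e(t) := \mathcal{S}(t) e(0)$ and set $e^L(t,y) := e(t,\Theta_t(y))$. From \eqref{S(t)1}, composing with $\Theta_t$ yields the purely Lagrangian expression
$$ e^L(t,y) \;=\; \frac{1}{J(t,y)}\, D\Theta_t(y)\, e(0)(y), \qquad J(t,y) := \det D\Theta_t(y). $$
Two classical identities now do the work: Jacobi's formula gives $\partial_t J = J\,(\operatorname{div} v^{ALE})\circ \Theta_t$, and differentiating $\partial_t \Theta_t = v^{ALE}\circ \Theta_t$ in $y$ gives $\partial_t D\Theta_t = (Dv^{ALE}\circ \Theta_t)\,D\Theta_t$. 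Plugging into the expression for $e^L$ and simplifying (the common factor $J$ cancels thanks to $D\Theta_t e(0) = J e^L$), I would obtain
$$ \partial_t e^L \;=\; -(\operatorname{div} v^{ALE})^L\, e^L \;+\; (Dv^{ALE})^L\, e^L. $$

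Next, I would switch back to the Eulerian picture via the chain rule: since $e^L(t,y) = e(t,\Theta_t(y))$ with $\partial_t \Theta_t = v^{ALE}\circ\Theta_t$, one has
$$ \partial_t e^L = \bigl(\partial_t e + v^{ALE}\!\cdot\! \nabla e\bigr)^L. $$
Combining with the Lagrangian identity and using the elementary tensor-calculus relation
$$ \operatorname{div}(v^{ALE} \otimes e) \;=\; v^{ALE}\!\cdot\! \nabla e \;+\; (\operatorname{div} v^{ALE})\, e $$
(in components $\partial_j(v^{ALE}_j e_i)$), I would arrive at the announced identity
$$ \partial_t e + \operatorname{div}(v^{ALE}\otimes e) = (\nabla v^{ALE})^T e, $$
the right-hand side being exactly $Dv^{ALE}\cdot e$ with the paper's convention for $\nabla v^{ALE}$.

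For the general time-dependent case $u = u(t)$, the same computation applies verbatim to $w(t,x) := \mathcal{S}(t) u(t)(x)$, with the only difference that $\partial_t$ now also hits $u$ itself; this contributes exactly the extra term $\frac{1}{J} D\Theta_t \partial_t u = (\mathcal{S}(t) \partial_t u)^L$ on the right, producing the stated generalization. The argument is essentially bookkeeping; the only subtle point is matching the two tensorial conventions consistently (distinguishing $\operatorname{div}(v^{ALE}\otimes e)$ from $\operatorname{div}(e\otimes v^{ALE})$, and $(\nabla v^{ALE})^T e$ from $(\nabla v^{ALE}) e$), which I would fix at the outset by writing everything in indicial notation before repackaging into the intrinsic form given in the statement.
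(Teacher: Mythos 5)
Your proof is correct and follows essentially the same route as the paper: differentiating the Lagrangian identity $e(t,\Theta_t(y))\det D\Theta_t(y)=D\Theta_t(y)\,e(0)(y)$ in time, using Jacobi's formula for $\partial_t\det D\Theta_t$ and $\partial_t D\Theta_t=(Dv^{ALE}\circ\Theta_t)D\Theta_t$, and then returning to Eulerian variables via the chain rule along the flow. Your reading of the conventions (namely $\operatorname{div}(v^{ALE}\otimes e)=(\operatorname{div}v^{ALE})\,e+(v^{ALE}\cdot\nabla)e$ and $(\nabla v^{ALE})^{T}e=(Dv^{ALE})e$) is the one consistent with how the identity is used later in the paper, and the treatment of the extra term $\mathcal{S}(t)\partial_t u$ in the general case is also as intended.
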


{\em Proof. }Apply the time derivative to the relation%
\[
e\left(  t,\Theta\left(  t,x\right)  \right)  \det D\Theta\left(  t,x\right)
=D\Theta\left(  t,x\right)  e_{0}\left(  x\right)
\]
and use the formulas%
\begin{align*}
\partial_{t}D\Theta\left(  t,x\right)   &  =Dv^{ALE}\left(  t,\Theta\left(
t,x\right)  \right)  D\Theta\left(  t,x\right) \\
\partial_{t}\det D\Theta\left(  t,x\right)   &  =\operatorname*{trace}\left(
\operatorname{Cof}^{T}\left(  D\Theta\right)  \partial_{t}D\Theta\left(
t,x\right)  \right)
\end{align*}
along with $\left(  \text{\ref{change_of_divergence}}\right)  $ and $\left(
\text{\ref{definition_of_our_X}}\right)  $.

\subsection{$H^1$-spaces in moving domains \label{SpectProjEstim}}

In this section we show various properties of spectral
projectors constructed with the help of the mobile Riesz basis introduced in Section \ref{sec_prescribed}. This is notably useful in order to go from a weak formulation
which holds for finite linear combinations of vector $e_{\ell}^{m}$ to
infinite linear combinations.

\medskip
\noindent
{\bf Proof of Proposition \ref{equiv_Sobolev}.}
Observe that with the help of the relation $\left(
\text{\ref{euler_to_lagrange}}\right)  $ for all $i,j\in\overline{1,3}$ we
have that%
\[
\frac{1}{\det D\Theta}\left\{  \partial_{i}(\mathcal{S}\left(  t\right)
v)_{j}\right\}  \circ\Theta=\frac{1}{\det D\Theta}\partial_{k}\left(
\operatorname{Cof}D\Theta_{ik}\frac{D\Theta_{j\ell}}{\det D\Theta}v_{\ell
}\right)  .
\]
From this and the bounds for $D\Theta$, we obtain that
\begin{align*}
\left\Vert D(\mathcal{S}\left(  t\right)  v)\right\Vert _{L^{2}\left(
\Omega^{m}(t)\right)  }^{2}  &  =\sum_{i=1}^{3}\sum_{j=1}^{3}\int_{\Omega
^{m}(0)}\left\vert \left\{  \partial_{i}(\mathcal{S}\left(  t\right)
v)_{j}\right\}  \circ\Theta\right\vert ^{2}\det D\Theta\mathrm{d}x\\
&  \leq C\left(  t\right)  \left(  \left\Vert v\right\Vert _{L^{2}\left(
\Omega^{m}\left(  0\right)  \right)  }+\left\Vert Dv\right\Vert _{L^{2}\left(
\Omega^{m}\left(  0\right)  \right)  }\right)  ,
\end{align*}
where $C\left(  t\right)  $ depends on the $L^{\infty}$-norms of derivatives
up to order two for $\Theta$. In order to prove the reverse inequality, let us observe that using $\left(  \text{\ref{lagrange_to_euler}%
}\right)  $ we have that%
\begin{align*}
\partial_{i}v_{\ell}\circ\Theta^{-1}  &  =\frac{1}{\det D\Theta^{-1}}%
\partial_{k}\left(  (\operatorname{Cof}D\Theta^{-1})_{ik}v_{\ell}\circ
\Theta^{-1}\right) \\
&  =\frac{1}{\det D\Theta^{-1}}\partial_{k}\left(  (\operatorname{Cof}%
D\Theta^{-1})_{ik}\left(  (D\Theta^{-1})\det D\Theta^{-1}S\left(  t\right)
v\right)  _{\ell}\right)
\end{align*}
and since
\[
\left\Vert Dv\right\Vert _{L^{2}\left(  \Omega^{m}\left(  0\right)  \right)
}^{2}=\left\Vert Dv\circ\Theta^{-1}\left(  \det D\Theta^{-1}\right)
^{\frac{1}{2}}\right\Vert _{L^{2}\left(  \Omega^{m}(t)\right)  }^{2}%
\]
we obtain the validity of $\left(  \text{\ref{norm_H1_equiv}}\right)  $. This concludes the proof of the proposition.

\bigskip
We have the following

\begin{proposition}
Consider $\bar{t}\in\lbrack0,T)$, $f\in C\left(  \left[  \bar{t},T\right]
;L^{2}(\Omega^{m}(\bar{t}))\right)  $ and
\[
\left\{
\begin{array}
[c]{l}%
-\Delta q\left(  s\right)  =f\left(  s\right)  \text{ in }\Omega^{m}\left(
\bar{t}\right)  ,\\
\frac{\partial q}{\partial \mathfrak{n}}\left(  s\right)  =0\text{ on }\partial\Omega
^{m}\left(  \bar{t}\right)  .
\end{array}
\right.
\]
Then, there exists a constant depending only on $m$ and $T$ such that
\begin{equation}
\left\Vert \nabla^{2}q\left(  s\right)  \right\Vert _{L^{2}(\Omega(\bar{t}%
))}\leq C^{\ast}\left(  m,T\right)  \left\Vert f\left(  \bar{t}\right)
\right\Vert _{L^{2}(\Omega^{m}(\bar{t}))}.\label{elliptic_regularity}%
\end{equation}

\end{proposition}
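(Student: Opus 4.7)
The proposition is a standard interior and boundary $H^2$-regularity estimate for the Neumann Laplacian on the bounded smooth domain $\Omega^m(\bar t)$, the nontrivial point being that the constant should be uniform in $\bar t\in[0,T]$ (for fixed $m$). The natural strategy is to pull everything back to the reference configuration $\Omega^m(0)$, which is fixed, and invoke classical elliptic regularity there, transferring uniformity to the moving configuration through the uniform control on $\Theta$ provided by Lemma~\ref{diffeoLemma}.

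First, I would use the diffeomorphism $\Theta_{\bar t}:\Omega^m(0)\to\Omega^m(\bar t)$ from Lemma~\ref{diffeoLemma} and set $\tilde q(x):=q(\Theta_{\bar t}(x))$, $\tilde f(x):=f(\Theta_{\bar t}(x))$. Using the Piola-type identities \eqref{euler_to_lagrange}--\eqref{lagrange_to_euler} together with the normal transformation rule \eqref{normal_Lagrange}, the Neumann problem becomes
\begin{equation*}
\left\{
\begin{aligned}
-\operatorname{div}(A(x)\nabla \tilde q) &= \tilde f\,\det D\Theta_{\bar t}\quad\text{in }\Omega^m(0),\\
A(x)\nabla\tilde q\cdot \mathfrak{n}_0 &= 0 \quad\text{on }\partial\Omega^m(0),
\end{aligned}
\right.
\end{equation*}
where $A(x):=\det D\Theta_{\bar t}(x)\,(D\Theta_{\bar t}(x))^{-1}(D\Theta_{\bar t}(x))^{-T}$. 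By Lemma~\ref{diffeoLemma}, $\Theta\in C^\infty([0,T]\times\R^3;\R^3)$ and has bounds on $D\Theta,D^2\Theta,(D\Theta)^{-1}$ depending only on $T,m$ and $(X,R)$, uniformly in $\bar t\in[0,T]$. Hence $A\in C^1(\overline{\Omega^m(0)})$ is uniformly elliptic with uniform $C^1$ bounds and bounded below/above independently of $\bar t$.

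Second, since $\partial\Omega^m(0)=\partial B(0,m)\cup(\cup_i\partial B_i(0))$ is $C^\infty$, standard $H^2$-regularity for linear second-order elliptic equations in divergence form with $C^1$ coefficients and oblique/Neumann boundary conditions (see for instance the classical Agmon--Douglis--Nirenberg theory, or the Galdi reference already used) yields
\begin{equation*}
\|\nabla^2\tilde q\|_{L^2(\Omega^m(0))}\leq C(m,\Omega^m(0),\|A\|_{C^1})\,\|\tilde f\,\det D\Theta_{\bar t}\|_{L^2(\Omega^m(0))}.
\end{equation*}
(The compatibility condition $\int f=0$ is implicit; $q$ is only determined up to a constant, which is harmless since only $\nabla^2 q$ appears.) The constant on the right depends only on $m$ and on the uniform bounds of $A$, which themselves depend only on $m,T$.

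Finally, I would transfer this estimate back to $\Omega^m(\bar t)$ by a direct chain-rule computation: $\nabla^2 q$ at a point $\Theta_{\bar t}(x)$ is a linear combination of $\nabla^2\tilde q(x)$, $\nabla\tilde q(x)$, and $\nabla\tilde q(x)\otimes D^2\Theta_{\bar t}$, with coefficients controlled by $D\Theta_{\bar t}$ and $D^2\Theta_{\bar t}$. The change-of-variable Jacobian $\det D\Theta_{\bar t}$ is bounded from above and below uniformly in $\bar t\in[0,T]$. The gradient term $\|\nabla\tilde q\|_{L^2}$ is controlled by $\|f\|_{L^2}$ from the standard energy estimate for \eqref{elliptic_regularity} on the reference domain. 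Combining these bounds produces \eqref{elliptic_regularity} with a constant $C^*(m,T)$ depending only on $m$, $T$ and the prescribed data $(X,R),\delta$ that appear in Lemma~\ref{diffeoLemma}.

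The only mildly delicate point is keeping the dependence of the elliptic-regularity constant explicit as a function of the $C^1$ norm of $A$ and the geometry of $\Omega^m(0)$, so as to confirm that no hidden dependence on $\bar t$ creeps in; this however is standard and follows once uniform bounds on $\Theta$ are in place, which Lemma~\ref{diffeoLemma} guarantees.
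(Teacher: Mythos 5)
Your proposal is correct, and it is worth noting that the paper itself offers no proof of this proposition: it is stated as a classical elliptic regularity fact (and then invoked in the proof of Proposition~\ref{Norm_H1_Spect}), the only genuinely non-routine point being that the constant must be uniform in $\bar t\in[0,T]$. Your route supplies exactly that missing uniformity argument, and it does so with the tools the paper already has on hand: pulling the Neumann problem back through the ALE diffeomorphism $\Theta_{\bar t}$ of Lemma~\ref{diffeoLemma} using the identities \eqref{euler_to_lagrange}--\eqref{lagrange_to_euler} and \eqref{normal_Lagrange}, so that the problem becomes a divergence-form equation with conormal boundary condition on the fixed domain $\Omega^m(0)$, with coefficient matrix $A=\det D\Theta_{\bar t}\,(D\Theta_{\bar t})^{-1}(D\Theta_{\bar t})^{-T}$ uniformly elliptic and uniformly $C^1$ over $[0,T]$; classical $H^2$ theory on the fixed smooth domain then gives a $\bar t$-independent constant, and the chain rule transfers the bound back. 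An alternative, equally legitimate reading of the paper's implicit argument is that the $H^2$ Neumann constant for $\Omega^m(\bar t)$ depends on the domain only through the radii, the separation $\delta$ and $m$, all uniformly controlled on $[0,T]$; your pullback makes this quantitative without having to track domain dependence inside the elliptic estimates. Two small points to make explicit in a written-up version: (i) the elliptic estimate on $\Omega^m(0)$ comes with a lower-order term, so you should fix the normalization (e.g.\ zero mean for $\tilde q$) and use Poincar\'e--Wirtinger together with the energy estimate $\|\nabla\tilde q\|_{L^2}\le C\|\tilde f\|_{L^2}$ to absorb it, as you indicate; (ii) the compatibility condition $\int_{\Omega^m(\bar t)}f=0$ is indeed automatic in the paper's application (the source is a divergence of a field with vanishing normal trace), and in the statement it is subsumed in the assumption that a solution $q$ exists. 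Neither point affects the validity of your argument.
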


Next, let us present a proof for Proposition \ref{Norm_H1_Spect}.

\medskip
\noindent 
{\bf Proof of Proposition \ref{Norm_H1_Spect}.} 
Observe that
\[
\Psi_{\ell}:=\left\langle \psi,e_{\ell}^{m}(t)\right\rangle _{L^{2}\left(
\Omega^{m}(t)\right)  }=\left\langle D\Theta_t^{T}\psi\circ\Theta,e_{\ell}%
^{m}(0)\right\rangle _{L^{2}\left(  \Omega^{m}(0)\right)  }%
\]
and thus for all $n\in\mathbb{N}^{\ast}$
\begin{align*}
&  \sum_{\ell=1}^{n}(1+(\lambda_{j}^{m})^{2})\left\langle \psi,e_{\ell}%
^{m}(t)\right\rangle _{L^{2}\left(  \Omega^{m}(t)\right)  }^{2}\\
&  =\sum_{\ell=1}^{n}(1+(\lambda_{j}^{m})^{2})\left\langle \mathbb{P}%
_{\Omega^{m}(0)}(D\Theta_t^{T}\psi\circ\Theta),e_{\ell}^{m}(0)\right\rangle
_{L^{2}\left(  \Omega^{m}(t)\right)  }^{2}=\\
&  =\sum_{\ell=1}^{n}\left\langle \mathbb{P}_{\Omega^{m}(0)}(D\Theta_t^{T}%
\psi\circ\Theta),e_{\ell}^{m}(0)\right\rangle _{L^{2}\left(  \Omega
^{m}(t)\right)  }^{2}+\left\langle \mathbb{D(P}_{\Omega^{m}(0)}(D\Theta_t
^{T}\psi\circ\Theta)),\mathbb{D}(e_{\ell}^{m}(0))\right\rangle ^{2}\\
&  =\left\Vert \mathbb{P}_{\Omega^{m}(0)}(D\Theta_t^{T}\psi\circ\Theta
)\right\Vert _{H_{\sigma}^{1}\left(  \Omega^{m}(0)\right)  }\leq
C(\Theta,m)\left\Vert \psi\right\Vert _{H_{\sigma}^{1}\left(  \Omega
^{m}(t)\right)  }%
\end{align*}
where $\mathbb{P}_{\Omega^{m}(0)}$ is the Leray projector on divergence free
vector fields associated to $\Omega^{m}(0)$. We conclude that the series
$\left(  \text{\ref{series_H1}}\right)  $ is convergent.

Now, as explained in the remark that followed Proposition \ref{Norm_H1_Spect},
we can ensure the control of $\left\Vert \mathbb{P}_{\Omega^{m}(0)}%
(D\Theta_t^{T}\psi\circ\Theta)\right\Vert _{H_{\sigma}^{1}\left(  \Omega
^{m}(0)\right)  }$ only for a small time $t>0$.

Let us observe that by definition%
\begin{equation}
\mathbb{P}_{\Omega^{m}(0)}(D\Theta-t^{T}\psi\circ\Theta)=D\Theta^{T}\psi
\circ\Theta-\nabla q\label{decomposition_leray}%
\end{equation}
with%
\[
\left\{
\begin{array}
[c]{l}%
\Delta q=\operatorname{div}(D\Theta_t^{T}\psi\circ\Theta) = \operatorname{div}(D\Theta_t^{T}\psi\circ\Theta - \mathcal{S}(t)^{-1}\psi) \: \text{ in } \: \Omega^{m}\left(  0\right) \\
\frac{\partial q}{\partial \mathfrak{n}}=D\Theta_t^{T}\psi\circ\Theta\cdot\mathfrak{n}%
_{0}\text{ on }\partial\Omega^{m}\left(  0\right).
\end{array}
\right.
\]
However, as $D\Theta_t = \frac{r_i(t)}{r_i(0)} \textrm{Id}$ on $\partial B_{i}(t)$ and  $D\Theta_t =   \textrm{Id}$  on $\pa B\left(
0,m\right)  $, we have that%
\[
D\Theta_t^{T}\psi\circ\Theta\cdot\mathfrak{n}_{0}=0.
\]
Of course,%
\[
\left\Vert \nabla q\right\Vert _{L^{2}\left(  \Omega^{m}(0)\right)  }%
\leq\left\Vert D\Theta_t^{T}\psi\circ\Theta-\mathcal{S}(t)^{-1}\psi\right\Vert
_{L^{2}\left(  \Omega^{m}(0)\right)  }\leq C\left\Vert \Theta\left(  t\right)
-\mathrm{Id}\right\Vert _{W^{1\infty}\left(  \Omega^{m}(0)\right)  }\left\Vert
\psi\right\Vert _{L^{2}\left(  \Omega^{m}(t)\right)  }.
\]
with $C$ independent of time. By elliptic regularity see $\left(
\text{\ref{elliptic_regularity}}\right)  $ we have that%
\begin{align*}
\left\Vert \nabla^{2}q\right\Vert _{L^{2}\left(  \Omega^{m}(t)\right)  } &
\leq C\left(  T,m\right)  \left\Vert \operatorname{div}(D\Theta_t^{T}\psi
\circ\Theta_t -\mathcal{S}(t)^{-1}\psi\ )\right\Vert _{L^{2}\left(  \Omega^{m}(0)\right)  } \\
& \le C\left(
T,m\right)  \left\Vert D\Theta_t^{T}\psi\circ\Theta-\mathcal{S}(t)^{-1}%
\psi)\right\Vert _{H^{1}\left(  \Omega^{m}(0)\right)  }\\
&  \leq C\left(  T,m\right)  \left\Vert \Theta_t  -\mathrm{Id}%
\right\Vert _{W^{2,\infty}\left(  \Omega^{m}(0)\right)  }\left\Vert
\psi\right\Vert _{H^{1}_\sigma\left(  \Omega^{m}(t)\right)  }.
\end{align*}
Finally, we use $\left(  \text{\ref{decomposition_leray}}\right)  $ along with
the two last estimates in order to obtain that%
\begin{align*}
\left\Vert \mathbb{P}_{\Omega^{m}(0)}(D\Theta_t^{T}\psi\circ\Theta)\right\Vert
_{H_{\sigma}^{1}\left(  \Omega^{m}(0)\right)  } &  \geq\left\Vert D\Theta_t
^{T}\psi\circ\Theta\right\Vert _{H_{\sigma}^{1}\left(  \Omega^{m}(0)\right)
}-\left\Vert \nabla q\right\Vert _{H_{\sigma}^{1}\left(  \Omega^{m}(0)\right)
}\\
&  \geq\left\Vert \psi\right\Vert _{H_{\sigma}^{1}\left(  \Omega
^{m}(t)\right)  }-C\left(  T,m\right)  \left\Vert \Theta_t
-\mathrm{Id}\right\Vert _{W^{2,\infty}}\left\Vert \psi\right\Vert _{H_{\sigma
}^{1}\left(  \Omega^{m}(t)\right)  }.%
\end{align*}
This concludes the proof of Proposition \ref{Norm_H1_Spect}.

\bigskip
Let us introduce the family of operators%
\[
\left\{
\begin{array}
[c]{l}%
\mathbb{P}^{m,n}\left(  t\right)  :L_{\sigma}^{2}\left(  \Omega^{m}\left(
t\right)  \right)  \rightarrow H_{\sigma}^{1}\left(  \Omega^{m}\left(
t\right)  \right)  ,\\
\mathbb{P}^{m,n}\left(  t\right)  \left(  u\right)  =\mathcal{S}\left(
t\right)  [%
{\textstyle\sum\limits_{\ell=1}^{n}}
\left\langle \mathcal{S}^{-1}\left(  t\right)  u,e_{\ell}^{m}\left(  0\right)
\right\rangle e_{\ell}^{m}\left(  0\right)  ]=%
{\textstyle\sum\limits_{\ell=1}^{n}}
\left\langle \mathcal{S}^{-1}\left(  t\right)  u,e_{\ell}^{m}\left(  0\right)
\right\rangle e_{\ell}^{m}\left(  t\right)  .
\end{array}
\right.
\]
For all $v\in H_{\sigma}^{1}\left(  \Omega^{m}\left(  0\right)  \right)  $
\begin{equation}
\lim_{n\rightarrow\infty}\left\Vert v-\mathbb{P}^{m,n}\left(  0\right)
v\right\Vert _{H_{\sigma}^{1}(\Omega^m(0))}.\label{projection_uniform_k_ini}%
\end{equation}
The same remains true for $\mathbb{P}^{m,n}\left(  t\right)  $ with $t>0$,
more precisely

\begin{proposition} 
\begin{enumerate}
\item 
For all $t \in [0,T]$, for all $v\in H_{\sigma}^{1}\left(  \Omega^{m}\left(  t\right)  \right)  $%
\begin{equation}
\lim_{n\rightarrow\infty}\left\Vert \left(  \mathrm{Id}-\mathbb{P}%
^{m,n}(t)\right)  v\right\Vert _{H_{\sigma}^{1}(\Omega^{m}(t))}=0.
\label{projection_uniform_k}%
\end{equation}
\item More generally, for all $v$ such that $S(t)^{-1} v \in C([0,T] ; H^1_\sigma(\Omega^m(0)))$, 
$$ \lim_{n\rightarrow\infty} \sup_{t \in [0,T]}  \left\Vert \left(  \mathrm{Id}-\mathbb{P}^{m,n}(t)\right)  v(t)\right\Vert _{H_{\sigma}^{1}(\Omega^{m}(t))}=0. $$
\end{enumerate}
\end{proposition}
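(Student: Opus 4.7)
The strategy is to factor $\mathbb{P}^{m,n}(t)$ through the time-$0$ configuration and reduce both statements to a spectral truncation estimate in the fixed space $H^1_\sigma(\Omega^m(0))$. By definition of $\mathbb{P}^{m,n}(t)$, if we introduce the $L^2$-orthogonal projection on the span of the first $n$ basis vectors at time $0$,
\[
P^n w := \sum_{\ell=1}^n \langle w, e_\ell^m(0)\rangle_{L^2(\Omega^m(0))}\, e_\ell^m(0),
\]
then $\mathbb{P}^{m,n}(t) = \mathcal{S}(t)\, P^n\, \mathcal{S}(t)^{-1}$, so
\[
(\mathrm{Id} - \mathbb{P}^{m,n}(t))v \;=\; \mathcal{S}(t)\,(\mathrm{Id} - P^n)\,\mathcal{S}(t)^{-1} v.
\]

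The first step is the key spectral lemma at $t=0$: for any $w \in H^1_\sigma(\Omega^m(0))$,
\[
\|(\mathrm{Id} - P^n)w\|_{H^1_\sigma(\Omega^m(0))}^2 \;=\; \sum_{\ell > n}\bigl(1+(\lambda_\ell^m)^2\bigr)\,|\langle w, e_\ell^m(0)\rangle_{L^2(\Omega^m(0))}|^2 \;\xrightarrow[n\to\infty]{}\;0.
\]
This is an immediate consequence of \eqref{norm_H1_spect}, since $(e_\ell^m(0)/\sqrt{1+(\lambda_\ell^m)^2})_{\ell \geq 1}$ is a Hilbert basis of $H^1_\sigma(\Omega^m(0))$.

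For item 1, fix $v \in H^1_\sigma(\Omega^m(t))$. By Proposition \ref{equiv_Sobolev} applied to the inverse diffeomorphism, $\mathcal{S}(t)^{-1}v \in H^1_\sigma(\Omega^m(0))$, so the spectral lemma above yields $\|(\mathrm{Id}-P^n)\mathcal{S}(t)^{-1}v\|_{H^1_\sigma(\Omega^m(0))} \to 0$ as $n \to \infty$. Applying Proposition \ref{equiv_Sobolev} once more,
\[
\|(\mathrm{Id}-\mathbb{P}^{m,n}(t))v\|_{H^1_\sigma(\Omega^m(t))}^2 \;\leq\; C(m)\,\|(\mathrm{Id}-P^n)\mathcal{S}(t)^{-1}v\|_{H^1_\sigma(\Omega^m(0))}^2 \;\xrightarrow[n\to\infty]{}\;0,
\]
which proves \eqref{projection_uniform_k}.

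For item 2, the hypothesis ensures that the curve $\{\mathcal{S}(t)^{-1}v(t) : t \in [0,T]\}$ is the continuous image of the compact set $[0,T]$ in $H^1_\sigma(\Omega^m(0))$, hence is compact. The operators $(\mathrm{Id}-P^n)$ are uniformly bounded on $H^1_\sigma(\Omega^m(0))$ (with norm $\leq 1$, by the same spectral expansion) and converge pointwise to $0$ there; on a compact set this convergence is therefore uniform, so
\[
\sup_{t \in [0,T]}\|(\mathrm{Id}-P^n)\mathcal{S}(t)^{-1}v(t)\|_{H^1_\sigma(\Omega^m(0))} \;\xrightarrow[n\to\infty]{}\;0.
\]
Combining with the bound $\|\mathcal{S}(t)\cdot\|_{H^1_\sigma(\Omega^m(t))} \leq C(m)^{1/2}\|\cdot\|_{H^1_\sigma(\Omega^m(0))}$ from \eqref{norm_H1_equiv} (whose constant is uniform in $t \in [0,T]$ thanks to the smoothness of $\Theta$ in time granted by Lemma \ref{diffeoLemma}) gives the claimed uniform-in-$t$ convergence. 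The only minor technical point is checking that the constant $C(m)$ in \eqref{norm_H1_equiv} can indeed be chosen independent of $t \in [0,T]$, which follows from the explicit dependence of that constant on $\|\Theta(t,\cdot)\|_{W^{2,\infty}}$ and $\|\Theta^{-1}(t,\cdot)\|_{W^{2,\infty}}$, both of which are bounded uniformly on $[0,T]$.
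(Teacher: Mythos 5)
Your argument is correct and follows essentially the same route as the paper: item 1 is the conjugation $\mathbb{P}^{m,n}(t)=\mathcal{S}(t)\,\mathbb{P}^{m,n}(0)\,\mathcal{S}(t)^{-1}$ combined with the norm equivalence \eqref{norm_H1_equiv} and the spectral convergence \eqref{projection_uniform_k_ini} at $t=0$. For item 2 the paper simply cites an abstract lemma (uniform convergence of a bounded, strongly convergent operator sequence along a continuous curve, from Chemin), whose standard proof — compactness of the continuous image of $[0,T]$ in $H^1_\sigma(\Omega^m(0))$ plus uniform boundedness of $\mathrm{Id}-\mathbb{P}^{m,n}(0)$ — is exactly what you wrote out.
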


Proof. It is a consequence of Proposition \ref{norm_H1_equiv} since%
\begin{align*}
\left\Vert \left(  \mathrm{Id}-\mathbb{P}^{m,n}(t)\right)  v\right\Vert
_{H^{1}(\Omega^{m}(t))}  &  \leq C\left\Vert \mathcal{S}(t)^{-1}\left(
\mathrm{Id}-\mathbb{P}^{m,n}(t)\right)  v\right\Vert _{H^{1}(\Omega^{m}(t))}\\
&  =C\left\Vert \mathcal{S}(t)^{-1}v-%
{\textstyle\sum\limits_{\ell=1}^{n}}
\left\langle \mathcal{S}^{-1}\left(  t\right) v ,e_{\ell}^{m}\left(  0\right)
\right\rangle e_{\ell}^{m}\left(  0\right)  \right\Vert _{H^{1}(\Omega
^{m}(0))}%
\end{align*}
and the last term from the above identity tends to $0$ when $n$ goes to infinity owing to $\left(  \text{\ref{projection_uniform_k_ini}}\right) $.
The second item follows from the abstract result of \cite[Lemma 2.4]{Chemin}:

\begin{lemma} \label{lem_Chemin}
Let $H$ be a Hilbert space and let $\left(  A_{n}\right)  _{n\in\mathbb{N}}$
be a bounded sequence of linear operators on $H$ such that for all $h\in H$
\[
\lim_{n\rightarrow\infty}\left\Vert A_{n}h-h\right\Vert _{H}=0.
\]
Then, if $\psi\in C(\left[  0,T\right]  ;H)$ we have that%
\[
\lim_{n\rightarrow\infty}\sup_{t\in\left[  0,T\right]  }\left\Vert A_{n}%
\psi\left(  t\right)  -\psi\left(  t\right)  \right\Vert _{H}=0.
\]
\end{lemma}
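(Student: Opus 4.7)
The plan is to upgrade pointwise convergence on $H$ to uniform convergence on the compact set $\psi([0,T])$ via a standard finite-covering argument, combining the Banach-Steinhaus type uniform boundedness of $(A_n)_{n\in\N}$ with compactness of the image. The main (only) structural ingredient is that $\psi([0,T])$ is a compact subset of $H$, because $\psi$ is continuous on the compact interval $[0,T]$; this is what lets us reduce to checking convergence at finitely many reference points $h_1,\dots,h_K \in H$.

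Concretely, I would set $M := \sup_{n\in\N} \|A_n\|_{H\rightarrow H} < \infty$ (given by hypothesis) and fix $\eps > 0$. By compactness of $\psi([0,T])$, one can cover it by finitely many open balls $B(h_k,\eps/(3(M+1)))$, $k=1,\dots,K$. By the pointwise convergence hypothesis applied at each $h_k$, there exists $N \in \N$ such that for all $n \ge N$ and all $k \in \overline{1,K}$,
\begin{equation*}
\|A_n h_k - h_k\|_H < \frac{\eps}{3}.
\end{equation*}

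For an arbitrary $t \in [0,T]$, pick $k$ with $\psi(t) \in B(h_k,\eps/(3(M+1)))$ and split
\begin{equation*}
\|A_n \psi(t) - \psi(t)\|_H \le \|A_n(\psi(t)-h_k)\|_H + \|A_n h_k - h_k\|_H + \|h_k - \psi(t)\|_H.
\end{equation*}
The first term is bounded by $M\cdot\eps/(3(M+1))$, the middle by $\eps/3$, and the last by $\eps/(3(M+1))$; their sum is at most $\eps$. As this bound is independent of $t \in [0,T]$, we obtain $\sup_{t\in[0,T]}\|A_n\psi(t)-\psi(t)\|_H \le \eps$ for all $n\ge N$, which is the desired conclusion.

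There is no real obstacle here beyond making sure to use the uniform bound $M$ on $(A_n)_{n\in\N}$ (and the identity, via the factor $M+1$) to control the operator images of the approximation residuals $\psi(t)-h_k$. The argument is essentially the classical $\eps/3$-trick, and the only place where completeness of $H$ plays any role is implicit, in ensuring $\psi([0,T])$ is actually compact rather than merely totally bounded; a Hilbert (hence Banach) space structure suffices.
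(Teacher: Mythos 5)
Your proof is correct. The paper does not prove this lemma at all --- it simply invokes it as \cite[Lemma 2.4]{Chemin} --- and your argument (uniform bound $M$ on the operators, compactness of $\psi([0,T])$ as the continuous image of $[0,T]$, finite $\eps/(3(M+1))$-net, and the $\eps/3$ splitting) is exactly the standard proof of that cited result, with all constants handled correctly. One tiny inaccuracy in your closing remark: completeness of $H$ is not needed even implicitly, since the continuous image of a compact interval is compact in any normed space; the statement and your proof hold verbatim for $(A_n)$ a bounded sequence of linear operators on an arbitrary normed space.
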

One applies it with $H = H^1_\sigma(\Omega^m(0))$, $A_n = \mathbb{P}^{m,n}(0)$, $\psi(t) = \mathcal{S}(t)^{-1} v(t)$, and conclude with Proposition \ref{norm_H1_equiv}.

\begin{proposition} \label{approximare_spect}
For all $\tilde \psi$ with  $\: \tilde \psi \in L^\infty(0,T ; L^2_\sigma(\Omega^m)), \:  \pa_t \tilde \psi, \: \na \tilde  \psi   \in L^2(0,T ; L^2(\Omega^m))$, 
\begin{align*}
\sup_{t \in [0,T]} \| \tilde \psi(t) - \mathbb{P}^{m,n}(t)  \tilde \psi(t)\|_{L^2(\Omega_m(t))} & \xrightarrow[n \rightarrow +\infty]{} 0, \\
\| (\pa_t, \na) (\tilde \psi - \mathbb{P}^{m,n}(t)  \tilde \psi )\|_{L^2(0,T ; L^2(\Omega_m))} &\xrightarrow[n \rightarrow +\infty]{} 0. 
\end{align*}
\end{proposition}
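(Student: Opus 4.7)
The key idea is to push the question to the fixed initial domain $\Omega^m(0)$, where the basis $(e_\ell^m(0))_{\ell \ge 1}$ is orthonormal in $L^2_\sigma(\Omega^m(0))$ and the projection onto its first $n$ modes commutes with $\partial_t$. Set $w(t) := \mathcal{S}(t)^{-1}\tilde\psi(t)$ and denote by $P_n$ the (time-independent) orthogonal projection $P_n f := \sum_{\ell=1}^n \langle f, e_\ell^m(0)\rangle_{L^2(\Omega^m(0))} e_\ell^m(0)$. By the very definition of $\mathbb{P}^{m,n}(t)$ one has the identity
\[
\mathbb{P}^{m,n}(t)\tilde\psi(t) - \tilde\psi(t) = \mathcal{S}(t)(P_n - \mathrm{Id})w(t).
\]
Propositions \ref{propSt} and \ref{equiv_Sobolev} ensure $w \in L^\infty(0,T ; L^2_\sigma(\Omega^m(0))) \cap L^2(0,T;H^1_\sigma(\Omega^m(0)))$, and applying Proposition \ref{Prop_transport} to the identity $\mathcal{S}(t)w(t)=\tilde\psi(t)$ yields
\[
\mathcal{S}(t)\partial_t w(t) = \partial_t\tilde\psi(t) + \operatorname{div}(v^{ALE}\otimes\tilde\psi(t)) - (\nabla v^{ALE})^T\tilde\psi(t),
\]
whose right-hand side lies in $L^2(0,T;L^2(\Omega^m))$ by the assumptions on $\tilde\psi$ and the smoothness of $v^{ALE}$; hence $\partial_t w \in L^2(0,T;L^2_\sigma(\Omega^m(0)))$. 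In particular $w\in C([0,T];L^2_\sigma(\Omega^m(0)))$ by the standard $H^1_t \hookrightarrow C_t$ embedding.

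For the first (uniform in time) convergence, I would use Proposition \ref{propSt} to bound
\[
\|\tilde\psi(t) - \mathbb{P}^{m,n}(t)\tilde\psi(t)\|_{L^2(\Omega^m(t))} \le \sup_{s\in[0,T]}\|\mathcal{S}(s)\|\,\|(\mathrm{Id}-P_n)w(t)\|_{L^2(\Omega^m(0))},
\]
and then apply Lemma \ref{lem_Chemin} with $H = L^2_\sigma(\Omega^m(0))$, $A_n = P_n$, and the continuous curve $t\mapsto w(t)\in H$. For the gradient part of the second convergence, the norm equivalence \eqref{norm_H1_equiv} gives
\[
\|\nabla(\mathbb{P}^{m,n}(t)\tilde\psi(t) - \tilde\psi(t))\|_{L^2(\Omega^m(t))} \le C(m)\|(\mathrm{Id} - P_n)w(t)\|_{H^1_\sigma(\Omega^m(0))};
\]
since $(e_\ell^m(0)/\sqrt{1+(\lambda_\ell^m)^2})_\ell$ is a Hilbert basis of $H^1_\sigma(\Omega^m(0))$ and $w(t)\in H^1_\sigma(\Omega^m(0))$ for a.e.\ $t$, the right-hand side vanishes pointwise in $t$ and is dominated by $C(m)\|w(t)\|_{H^1}$, which belongs to $L^2(0,T)$. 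Dominated convergence yields the claim.

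The time-derivative part is the most delicate and benefits crucially from the fact that $P_n$ is time-independent. Differentiating the identity above via Proposition \ref{Prop_transport} applied to both $u=w$ and $u=P_n w$, the time-independence of $P_n$ produces
\[
\partial_t\bigl(\mathbb{P}^{m,n}(t)\tilde\psi - \tilde\psi\bigr) = \mathcal{S}(t)(P_n - \mathrm{Id})\partial_t w + (\nabla v^{ALE})^T\mathcal{S}(t)(P_n-\mathrm{Id})w - \operatorname{div}\bigl(v^{ALE}\otimes \mathcal{S}(t)(P_n-\mathrm{Id})w\bigr).
\]
For the first term, $(P_n-\mathrm{Id})\partial_t w(t) \to 0$ in $L^2(\Omega^m(0))$ pointwise, dominated by $\|\partial_t w(t)\|_{L^2}$, so convergence in $L^2(0,T;L^2)$ follows from dominated convergence and the uniform boundedness of $\mathcal{S}(t)$. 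The second and third terms are controlled in $L^2(0,T;L^2(\Omega^m))$ by $C\|(\mathrm{Id}-P_n)w(t)\|_{L^2}$ and $C\|(\mathrm{Id}-P_n)w(t)\|_{H^1}$ respectively, using the $W^{1,\infty}$ bounds on $v^{ALE}$ and its compact support; both are handled by the pointwise convergence plus domination arguments already used for the gradient term. The main obstacle is precisely to arrange the use of Proposition \ref{Prop_transport} so that $\partial_t$ passes cleanly through $\mathcal{S}(t)$ and commutes with the time-independent $P_n$; once this algebraic manipulation is in place, everything reduces to standard convergence of partial sums in orthonormal Hilbert bases on the fixed domain $\Omega^m(0)$.
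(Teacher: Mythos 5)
Your proposal is correct and follows essentially the same route as the paper's proof: pull everything back to the fixed domain via $w = \mathcal{S}(t)^{-1}\tilde\psi$ so that $\mathbb{P}^{m,n}(t)\tilde\psi - \tilde\psi = \mathcal{S}(t)(P_n-\mathrm{Id})w$ with $P_n = \mathbb{P}^{m,n}(0)$ time-independent, establish $\partial_t w \in L^2(0,T;L^2_\sigma(\Omega^m(0)))$ via Proposition \ref{Prop_transport}, use Lemma \ref{lem_Chemin} (with $H = L^2_\sigma(\Omega^m(0))$ and $A_n = P_n$) for the uniform-in-time statement, and use Hilbert/Riesz-basis convergence plus dominated convergence for the gradient and time-derivative pieces. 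The only cosmetic difference is that you subtract the two transport identities (for $P_n w$ and for $w$) to isolate $\partial_t(\mathbb{P}^{m,n}\tilde\psi - \tilde\psi)$ directly, whereas the paper writes out the identity for $\partial_t(\mathbb{P}^{m,n}\tilde\psi)$ alone and invokes the convergence of the full series $\sum_\ell \tilde\Psi_\ell' e_\ell^m$; this is the same argument presented slightly more explicitly.
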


{\em Proof.} We write  $\displaystyle \mathbb{P}^{m,n}(t)\tilde \psi(t)  = \sum_{\ell = 1}^n \tilde \Psi_\ell(t) e_\ell^m(t)$.  We start with the proof of the second item.  By \eqref{projection_uniform_k}, $\na  \mathbb{P}^{m,n} \tilde \psi$ converges to  $\na \tilde \psi$ in $L^2(\Omega^m(t))$ for a.e. $t$, but also in $L^2(0,T ; L^2(\Omega^m))$ (for instance by dominated convergence). As regards the time derivative, we deduce from Proposition \ref{Prop_transport} applied with $u = S(\cdot)^{-1}\tilde \psi$ that 
$$ \pa_t (S(\cdot)^{-1}\tilde \psi) \in L^2(0,T ; L^2(\Omega^m(0)). $$
As $\tilde \Psi_\ell(t) = \langle S(t)^{-1} \tilde \psi(t) , e_\ell^m(0) \rangle$, it follows that the sequence of time derivatives $(\tilde \Psi_\ell')_{\ell \in \N}$ belongs to $L^2(0,T ; l^2(\N))$ and so the series  $\sum_{\ell} \tilde \Psi_\ell'  e_\ell^m$ converges in $L^2(0,T ; L^2_\sigma(\Omega^m)$ by Proposition \ref{prop_Riesz_Basis} . 
From  this property and from the identity 
$$\pa_t \left( \mathbb{P}^{m,n}\tilde \psi \right) = \sum_{\ell=1}^n \tilde \Psi_\ell' e^m_\ell +  v^{ALE} \cdot \na \mathbb{P}^{m,n}\tilde \psi - \mathbb{P}^{m,n}\tilde \psi \cdot \na v^{ALE}  $$
deduced from Proposition \ref{Prop_transport}, we get that $\| \pa_t (\tilde \psi - \mathbb{P}^{m,n}(t)  \tilde \psi )\|_{L^2(0,T ; L^2(\Omega_m))} \rightarrow  0$. 
It remains to prove the first item. We first notice that $S(t)^{-1} \tilde \psi(t) \in H^1(0,T ; L^2(\Omega^m(0)))$ hence to $C([0,T] ; L^2_\sigma(\Omega^m(0)))$. We then apply Lemma \ref{lem_Chemin}, this time with $H = L^2_\sigma(\Omega^m(0))$, $A_n = \mathbb{P}^{m,n}(0)$, $\psi(t) = \mathcal{S}(t)^{-1} \tilde \psi(t)$. The result follows, as $\|S(t)\|_{L^2(\Omega^m(0)) \rightarrow  L^2(\Omega^m(t))}  \le C$.   

%
%

\section{Approximation of test functions} \label{appendixC}

In the following, we let
\[
B_{R,2R}=\left\{  x\in\mathbb{R}^{3}:R<\left\vert x\right\vert <2R\right\}  ,
\]
and%
\[
L_{0}^{2}\left(  B_{R,2R}\right)  =\left\{  \varphi\in L^{2}\left(
B_{R,2R}\right)  :\int_{B_{R,2R}}\varphi(x)\mathrm{d}x=0\right\}  .
\]
The following proposition can be essentially found in \cite[chapter 3]{Galdi}: 

\begin{proposition}
\label{Bogovski}

\begin{enumerate}
\item There exists a linear continious operator%
\[
\mathcal{B}:L_{0}^{2}\left(  B_{R,2R}\right)  \rightarrow W_{0}^{1,2}\left(
B_{R,2R}\right)
\]
such that%
\[
\left\{
\begin{array}
[c]{l}%
\operatorname{div}\mathcal{B}\left[  \varphi\right]  =\varphi,\\
\left\Vert \nabla\mathcal{B}\left[  \varphi\right]  \right\Vert _{L^{2}%
(B_{R,2R})}\leq c\left\Vert \varphi\right\Vert _{L^{2}(B_{R,2R})}%
\end{array}
\right.
\]
with $c$ independent of $R$, and such that if $\varphi\in C_{c}^{\infty}\left(
B_{R,2R}\right)  $ with $\int_{B_{R,2R}}\varphi(x)\mathrm{d}x=0$ then 
$\mathcal{B}\left[  \varphi\right]  \in C_{c}^{\infty}\left(  B_{R,2R}\right)
$.

\item If $\varphi\in C_{c}^{\infty}\left(  \left[  0,T\right]  \times
B_{R,2R}\right)  $ then the application
\[
\left(  t,x\right)  \rightarrow\mathcal{B}\left[  \varphi(t)\right](x)
\]
is $C^{\infty}\left(  \left[  0,T\right]  \times B_{R,2R}\right)  $,
$\mathcal{B}\left[  \varphi(t)\right]  \in C_{c}^{\infty}\left(
B_{R,2R}\right)  $ and there exists $c$ independent of $R$ such that
\[
\left\Vert \nabla\partial_{t}^{(k)}\mathcal{B}\left[  \varphi\right]
\right\Vert _{L^{2}(B_{R,2R})}\leq c\left\Vert \partial_{t}^{(k)}%
\varphi\right\Vert _{L^{2}(B_{R,2R})}.
\]

\end{enumerate}
\end{proposition}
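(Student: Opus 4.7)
The plan would be to reduce everything to the classical Bogovskii construction on the fixed reference annulus $B_{1,2}$, and then transport the operator to $B_{R,2R}$ by a scaling that preserves all the relevant norms up to the same power of $R$ on both sides. This is why the constant $c$ ends up being independent of $R$.

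First, I invoke the classical result, see \cite[Theorem III.3.3]{Galdi}, which on the Lipschitz bounded domain $B_{1,2}$ provides a linear continuous operator
$$\mathcal{B}_1 : L^2_0(B_{1,2}) \to W^{1,2}_0(B_{1,2})$$
such that $\operatorname{div} \mathcal{B}_1[\varphi] = \varphi$ and $\|\nabla \mathcal{B}_1[\varphi]\|_{L^2(B_{1,2})} \le c_1 \|\varphi\|_{L^2(B_{1,2})}$. Moreover, this operator is defined by an explicit integral formula of Bogovskii type (convolution against a fixed kernel independent of $\varphi$), so that smoothness and compact support of $\varphi$ with zero mean transfer to $\mathcal{B}_1[\varphi]$.

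Next, for general $R>0$ and $\varphi \in L^2_0(B_{R,2R})$, I set $\tilde{\varphi}(x) := \varphi(Rx)$ for $x \in B_{1,2}$. One checks $\int_{B_{1,2}}\tilde{\varphi} = R^{-3} \int_{B_{R,2R}}\varphi = 0$, so $\tilde{\varphi} \in L^2_0(B_{1,2})$. I then define
$$\mathcal{B}[\varphi](y) := R\, \mathcal{B}_1[\tilde{\varphi}](y/R), \qquad y \in B_{R,2R}.$$
The chain rule yields $\operatorname{div}_y \mathcal{B}[\varphi](y) = (\operatorname{div}\mathcal{B}_1[\tilde{\varphi}])(y/R) = \tilde{\varphi}(y/R) = \varphi(y)$. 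For the gradient bound, $\nabla_y \mathcal{B}[\varphi](y) = (\nabla \mathcal{B}_1[\tilde{\varphi}])(y/R)$, so by a change of variables both $\|\nabla \mathcal{B}[\varphi]\|_{L^2(B_{R,2R})}^2$ and $\|\varphi\|_{L^2(B_{R,2R})}^2$ pick up the same factor $R^3$ compared to their counterparts on $B_{1,2}$. The $R$-independent constant then follows from the estimate for $\mathcal{B}_1$. The zero trace and support properties are preserved under the dilation.

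For point 2, the operator $\mathcal{B}$ acts only on the spatial variable and does not depend on $t$, so for $\varphi \in C^\infty_c([0,T] \times B_{R,2R})$ one has $\partial_t^{(k)}\mathcal{B}[\varphi(t)] = \mathcal{B}[\partial_t^{(k)}\varphi(t)]$, and the estimate in item 2 follows directly from item 1 applied to $\partial_t^{(k)}\varphi(t)$. Joint $C^\infty$ regularity in $(t,x)$ follows from the integral representation: differentiation under the integral sign against the smooth kernel from Bogovskii's formula gives smoothness in $x$ for each $t$ and continuity in $t$ of all space derivatives. Honestly, the only non-routine point in the whole argument is the scale invariance, and everything else is either classical or a mechanical verification; the real content of the proposition is packaged in the fact that Galdi's operator on the \emph{fixed} annulus has the required properties, and that $B_{R,2R}$ is obtained from $B_{1,2}$ by a homothety, which acts compatibly on the divergence equation.
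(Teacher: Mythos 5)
Your argument is correct and matches what the paper does: the paper simply cites Galdi (Theorem III.3.1, Theorem III.3.4, Remark III.3.3), and your reduction to the reference annulus $B_{1,2}$ followed by the homothety $y\mapsto y/R$ is exactly the standard way those results yield the $R$-independent constant, with the dilation scaling checked correctly on both sides of the estimate. The treatment of the time variable by linearity and differentiation under the integral sign against the fixed Bogovskii kernel is likewise the intended argument, so there is nothing to add.
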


See Theorem $\mathrm{III}.3.1$, Theorem
$\mathrm{III}.3.4$ and Remark $\mathrm{III}.3.3$ in \cite{Galdi}.

Next we state

\begin{proposition} 
\label{Approx_test_function} Consider $X: [0,T] \rightarrow \R^{3N}$, $R : [0,T] \rightarrow \R^N$ Lipschitz. Let 
$$ B_i(t) = B(x_i(t), r_i(t)), \quad i \in \overline{1,N}, \quad \Omega(t) = \R^3 \setminus \cup_{i=1}^N \overline{B_i(t)}, \quad t \in [0,T]. $$
Let 
\[ \tilde \psi \in  L^{\infty}\left(0,T;L^{2}\left(  \Omega \right)\right), \:  \text{ with } \: \partial_{t}\tilde \psi,\nabla \tilde \psi\in L^{2}\left(0,T;L^{2}\left(  \Omega  \right)\right),
\]
such that  for all $t\in\left[  0,T\right]  $, $\tilde \psi\left(  t\right)
\cdot\mathfrak{n}\vert_{\pa \Omega(t)}=0$ and $\operatorname{div}\tilde \psi\left(  t\right)  =0$.
 Then, for all $\varepsilon>0$ there exists some $\tilde \psi_{\varepsilon} : [0,T] \times \R^3$, Lipschitz in $t$ and smooth and  compactly supported in $x$ such that for all $t\in\left[  0,T\right]  $, $\tilde \psi
_{\varepsilon}\left(  t\right)  \cdot\mathfrak{n}\vert_{\pa \Omega(t)}=0$, $\operatorname{div}%
\tilde \psi_{\varepsilon}\left(  t\right)  =0$,  and%
\begin{equation} \label{approx_tilde_psi}
\sup_{t\in\left[  0,T\right]  }\left\Vert \tilde \psi(t)-\tilde \psi_{\varepsilon
}(t)\right\Vert _{L^{2}\left(  \Omega(t)\right)  }+\left\Vert \tilde \psi
-\tilde \psi_{\varepsilon}\right\Vert _{H^{1}(Q([0,T]))}\leq\varepsilon.
\end{equation}

\end{proposition}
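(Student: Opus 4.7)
The strategy is a three-stage approximation: first a spatial truncation with a Bogovskii divergence correction, then a spectral truncation in the moving Riesz basis $(e_\ell^m(t))_{\ell\ge 1}$ of Section~\ref{sec_prescribed}, and finally a time mollification of the expansion coefficients to upgrade regularity to Lipschitz in $t$. At every stage the constraints $\operatorname{div}\tilde\psi_\varepsilon=0$ and $\tilde\psi_\varepsilon\cdot\mathfrak{n}|_{\partial B_i(t)}=0$ must be preserved, which dictates the use of these particular tools.

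For the spatial truncation, since $X,R$ are Lipschitz on $[0,T]$, pick $R_\ast>0$ large enough that $\cup_i B_i(t)\Subset B(0,R_\ast/2)$ uniformly in $t$, and let $\chi_{R_\ast}\in C^\infty_c(\R^3)$ with $\chi_{R_\ast}\equiv 1$ on $B(0,R_\ast)$, $\chi_{R_\ast}\equiv 0$ outside $B(0,2R_\ast)$, and $|\nabla\chi_{R_\ast}|\le C/R_\ast$. Set
\[
\tilde\psi^{R_\ast}(t) := \chi_{R_\ast}\tilde\psi(t) - \mathcal{B}\bigl[\nabla\chi_{R_\ast}\cdot\tilde\psi(t)\bigr],
\]
with $\mathcal{B}$ the Bogovskii operator on the annulus $B_{R_\ast,2R_\ast}$ of Proposition~\ref{Bogovski}. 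The compatibility condition $\int_{B_{R_\ast,2R_\ast}}\nabla\chi_{R_\ast}\cdot\tilde\psi=0$ follows by the divergence theorem from $\operatorname{div}\tilde\psi=0$ and $\tilde\psi\cdot\mathfrak{n}|_{\partial B_i(t)}=0$. The resulting $\tilde\psi^{R_\ast}$ is divergence-free in $\Omega(t)$, tangent at each $\partial B_i(t)$ (since $\chi_{R_\ast}\equiv 1$ and $\mathcal{B}$ vanishes there), compactly supported in $\overline{B(0,2R_\ast)}$, and inherits the time regularity of $\tilde\psi$ by linearity of $\mathcal{B}$. Combining the uniform-in-$R_\ast$ Bogovskii estimate $\|\nabla\mathcal{B}[\varphi]\|_{L^2}\le c\|\varphi\|_{L^2}$ with a Poincaré inequality on the annulus (whose constant scales like $R_\ast$, exactly compensating the $R_\ast^{-1}$ from $|\nabla\chi_{R_\ast}|$) and the tail estimate $\sup_{t\in[0,T]}\|\tilde\psi(t)\|_{L^2(\R^3\setminus B(0,R_\ast))}\to 0$ (valid because $\partial_t\tilde\psi\in L^2(L^2)$ gives $\tilde\psi\in C([0,T];L^2)$, a compact family in $L^2$), one obtains
\[
\|\tilde\psi-\tilde\psi^{R_\ast}\|_{L^\infty(0,T;L^2)}+\|\tilde\psi-\tilde\psi^{R_\ast}\|_{H^1(Q([0,T]))}\xrightarrow[R_\ast\to\infty]{}0.
\]

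For the second and third stages, fix $m>2R_\ast$ so that $\tilde\psi^{R_\ast}(t)\in L^2_\sigma(\Omega^m(t))$ for all $t$. Proposition~\ref{approximare_spect} provides a spectral truncation $\tilde\psi^{R_\ast,n}(t):=\mathbb{P}^{m,n}(t)\tilde\psi^{R_\ast}(t)=\sum_{\ell=1}^n U_\ell^n(t)\,e_\ell^m(t)$ converging to $\tilde\psi^{R_\ast}$ in $L^\infty(0,T;L^2)$, with time and space derivatives converging in $L^2(0,T;L^2)$ as $n\to\infty$. The coefficients $U_\ell^n(t)=\langle\mathcal{S}(t)^{-1}\tilde\psi^{R_\ast}(t),e_\ell^m(0)\rangle_{L^2(\Omega^m(0))}$ belong to $H^1(0,T)\hookrightarrow C([0,T])$, while the basis fields $e_\ell^m(t)$ are smooth in $x$ on $\overline{\Omega^m(t)}$ and Lipschitz in $t$ (as pushforwards of smooth Stokes eigenfunctions via the Lipschitz-in-$t$ ALE diffeomorphism). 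To upgrade the $H^1$-in-time regularity to Lipschitz, extend each $U_\ell^n$ continuously past $[0,T]$ and mollify in time: $U_\ell^{n,\eta}:=U_\ell^n\ast\rho_\eta$ converges uniformly to $U_\ell^n$ on $[0,T]$ (by continuity), with $\partial_tU_\ell^{n,\eta}\to (U_\ell^n)'$ in $L^2(0,T)$. Setting $\tilde\psi^{R_\ast,n,\eta}(t):=\sum_{\ell=1}^n U_\ell^{n,\eta}(t)\,e_\ell^m(t)$ yields a field that is Lipschitz in $t$, smooth in $x$ on $\overline{\Omega^m(t)}$, and still divergence-free and tangent at every $\partial B_i(t)$ for each $t$ (since these linear constraints are inherited from the basis). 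A smooth extension across $\partial B_i(t)$ into the bubbles, followed by multiplication by a smooth cutoff supported in $B(0,m+1)$, yields the desired $\tilde\psi_\varepsilon$ on $[0,T]\times\R^3$; these modifications do not disturb the constraints, which are only imposed in $\Omega(t)$. A diagonal choice $R_\ast=R_\ast(\varepsilon)$, then $n=n(R_\ast,\varepsilon)$, then $\eta=\eta(R_\ast,n,\varepsilon)$ delivers the final approximation.

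The main technical obstacle is the interplay between the three regularizations, and in particular the fact that the $L^2$ norm of the Bogovskii correction carries a Poincaré factor growing like $R_\ast$, so that it must be absorbed by an improved smallness of $\|\tilde\psi(t)\|_{L^2(\{|x|>R\})}$ beyond a bare $L^\infty(L^2)$ bound. This forces reliance on the \emph{uniform in time} decay of the tail, itself a consequence of the compactness of $\{\tilde\psi(t):t\in[0,T]\}\subset L^2$ inherited from $\tilde\psi\in C([0,T];L^2)$. Once this point is settled, the rest reduces to the already-established Proposition~\ref{approximare_spect} and standard mollification estimates.
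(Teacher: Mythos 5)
Your route is genuinely different from the paper's (which pulls everything back to the fixed domain $\Omega(0)$ via $\mathcal{S}(t)^{-1}$, mollifies first in time, then in space with an extension operator plus a Neumann corrector $\nabla q^n$, and only at the very end truncates in space with a Bogovskii correction), and your first stage — truncation with $\mathcal{B}[\nabla\chi_{R_\ast}\cdot\tilde\psi]$, compatibility from tangency at the bubbles, Poincar\'e factor $R_\ast$ compensating $|\nabla\chi_{R_\ast}|\sim R_\ast^{-1}$, uniform tail control via $\tilde\psi\in C([0,T];L^2)$ — is sound. But the final globalization step has a genuine gap. After you apply $\mathbb{P}^{m,n}(t)$, the compact support gained in stage one is lost: the finite sum $\sum_{\ell\le n}U_\ell^{n,\eta}(t)\,e_\ell^m(t)$ is generically nonzero all the way up to $\partial B(0,m)$ and is simply not defined outside $B(0,m)$, so ``multiplication by a smooth cutoff supported in $B(0,m+1)$'' is not even meaningful. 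If instead you take a cutoff $\chi$ supported in $B(0,m)$, then on the transition annulus where $\nabla\chi\neq0$ — which is part of the exterior fluid domain $\Omega(t)$, not of the bubbles — one has $\operatorname{div}(\chi\,\mathbb{P}^{m,n}\tilde\psi^{R_\ast})=\nabla\chi\cdot\mathbb{P}^{m,n}\tilde\psi^{R_\ast}\neq0$, so your claim that ``these modifications do not disturb the constraints, which are only imposed in $\Omega(t)$'' is false: the divergence-free requirement is imposed precisely there. The step is repairable — since $\tilde\psi^{R_\ast}$ vanishes on the annulus, the defect equals $\nabla\chi\cdot\bigl(\mathbb{P}^{m,n}\tilde\psi^{R_\ast}-\tilde\psi^{R_\ast}\bigr)$, it has zero mean (divergence theorem, using tangency of $\mathbb{P}^{m,n}\tilde\psi^{R_\ast}$ at $\partial\Omega^m(t)$), and it is small as $n\to\infty$, so a second Bogovskii correction on the annulus closes the argument — but that correction, and the accompanying error estimate, are missing from your proposal.

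Two secondary points also need justification. Your construction requires the final field to be smooth in $x$; this forces $e_\ell^m(0)$ (hence $e_\ell^m(t)=\mathcal{S}(t)e_\ell^m(0)$) to be smooth up to the boundary, which rests on elliptic regularity for the Stokes eigenvalue problem \eqref{Stokes_op} with Navier-type conditions — classical on the smooth boundary $\partial\Omega^m(0)$, but nowhere established in the paper, where only $H^1$ information on the basis is used; the paper's proof avoids this entirely because convolution after a fixed-domain extension yields global smoothness for free. Likewise, the ``smooth extension across $\partial B_i(t)$ into the bubbles'' must be produced so that it is Lipschitz in $t$ uniformly as the bubbles move; this does not come from a generic Whitney extension at each fixed $t$, and should be built, e.g., by extending at $t=0$ and pushing forward with $\Theta_t$, which again implicitly uses the separation of the bubbles and the Lipschitz ALE machinery. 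With the annulus correction added and these two points argued, your three-stage scheme (truncate, spectrally project in the moving Riesz basis, mollify the coefficients in time) would give an alternative proof, at the price of invoking the whole Section \ref{sec_prescribed} apparatus where the paper's argument only needs mollification, a Neumann corrector, and Bogovskii.
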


{\em Proof.} By working with $\mathcal{S}(t)^{-1} \tilde \psi$ instead of $\tilde \psi$, see Appendix \ref{appendixB}, it is enough to prove the result for the fixed domain $\Omega(0)$ instead of the moving domain $\Omega(t)$. Indeed, if $\tilde \psi_0^\eps$ is an approximation of $\mathcal{S}(t)^{-1} \tilde \psi$ on $\Omega(0)$ with all the properties listed above, then $\mathcal{S}(t)\tilde \psi_0^\eps(t)$ will be an approximation of $\tilde \psi(t)$ on $\Omega(t)$, with all the properties listed above.  Note that the limited Lipschitz regularity in time comes from the limited Lipschitz regularity of the
diffeomorphism $\Theta$. Therefore, we assume from now on that 
\[ \tilde \psi \in  L^{\infty}\left(0,T;L^{2}\left(  \Omega(0) \right)\right), \:  \text{ with } \: \partial_{t}\tilde \psi,\nabla \tilde \psi\in L^{2}\left(0,T;L^{2}\left(  \Omega(0)  \right)\right). 
\]

{\em Step 1.} We show that we can restrict to $\tilde \psi$ which is smooth in $t$ and $x$.  First, by a standard mollification process in time, we can approximate $\tilde \psi$ by a function which is smooth in time: in other words,  we can assume 
$$ \tilde \psi \in C^\infty\left([0,T]; H^1_{\sigma}(\Omega(0))\right).$$
From there, for regularization in space,  one introduces a continuous  extension operator 
$E: H^1(\Omega(0)) \rightarrow H^1(\R^3)$ and set   $ \psi^n(t)  =  \rho^n \star (E \tilde \psi)$ for $\rho_n$ a smooth and compactly supported approximation of unity in $\R^3$. It is smooth in $t$ and $x$ and satisfies 
$$ \sup_{t \in [0,T]} \|\psi_n(t) -\tilde \psi(t)\|_{L^2(\Omega(0))} + \| \psi^n - \tilde \psi \|_{H^s(0,T ; H^1(\Omega(0)))} \xrightarrow[n \rightarrow +\infty]{} 0 \quad \forall s \ge 0. $$
However, $\psi_n$ is not  necessarily divergence free and tangent at the boundary, so that we need to add the corrector $w^n = \na q^n$ where 
\begin{align*}
 \Delta q^n &  = - \div \psi^n  =   - \div (\psi_n - \psi)  \: \text{ in } \:  \Omega(0), \\
 \pa_{\mathfrak{n}} q^n \vert_{\pa \Omega(0)} & = - \psi^n \cdot \mathfrak{n}\vert_{\pa \Omega(0)} = - (\psi^n - \psi) \cdot \mathfrak{n}  \vert_{\pa \Omega(0)}.  
 \end{align*}
Note that the source term $\div(\psi^n - \tilde \psi)$ has compact support in $\overline{\Omega(0)}$.  Standard considerations yield a smooth solution $q^n$ with 
$\|\na w^n = \na^2 q_n\|_{L^2(\Omega(0))} \le C \| \psi^n - \psi\|_{H^1(\Omega(0))}$
and a similar estimate for the time derivative. 
 It follows that $\tilde \psi^n := \psi^n + w^n$ is smooth, divergence-free, tangent at the boundary and satisfies 
$$ \sup_{t \in [0,T]} \|\tilde \psi_n(t) -\tilde \psi(t)\|_{L^2(\Omega(0))} + \| \tilde \psi^n - \tilde \psi \|_{H^s(0,T ; H^1(\Omega(0)))} \xrightarrow[n \rightarrow +\infty]{} 0 \quad \forall s \ge 0. $$
In other words, one can restrict to a smooth $\tilde \psi$. 

{\em Step 2.} We finally show that we can approximate the smooth field $\tilde \psi$ by a smooth field with compact support (still divergence-free and tangent at the boundary).  Let $R_{0}$  such that $\Omega\left(0\right)^c \subset B\left(
0,R_{0} \right)$. 
For all $R>R_{0}$ we consider $\chi_{R}\in C_{c}^{\infty
}\left(  \mathbb{R}^{3}\right)  $, such that  $\operatorname*{Supp}\chi_{R}\subset$
$B\left(  0,2R\right)  $, $\chi_{R}=1$ on $B\left(  0,R\right)  $ and
\[
\left\Vert \nabla\chi_{R}\right\Vert _{L^{\infty}(\mathbb{R}^{3})}\leq
\frac{c_{1}}{R}.
\]
Observe that since for all $k\geq0$ it holds that
\[
\int_{\partial B\left(  0,R\right)  \cup\partial B\left(  0,2R\right)  }%
\chi_{R}\tilde \psi\left(  t\right)  \cdot\mathfrak{n}=0\text{.}%
\]
This comes from the fact that $\chi_{R}$ vanishes on $\partial B\left(
0,2R\right)  $ while%
\begin{align*}
\int_{\partial B\left(  0,R\right)  }\chi_{R}\tilde \psi\left(  t\right)
\cdot\mathfrak{n}\mathrm{d}\sigma &  =\int_{\partial B\left(  0,R\right)
}\tilde \psi\left(  t\right)  \cdot\mathfrak{n}\mathrm{d}\sigma\\
&  =\int_{\partial B\left(  0,R\right)  }\tilde \psi\left(  t\right)  \cdot
\mathfrak{n}\mathrm{d}\sigma+\sum_{i=1}^{N}\int_{\partial B_{i}(0)}\tilde \psi\left(
t\right)  \cdot\mathfrak{n}\mathrm{d}\sigma\\
&  =\int_{B\left(  0,R\right)  \backslash\cup B_{i}(0)}\operatorname{div}%
\tilde \psi\left(  t\right)  =0.
\end{align*}
Owing to Proposition \ref{Bogovski} we can construct $\omega_{R}\in
C_{c}^{\infty}\left(  \left[  0,T\right]  \times B_{R,2R}\right)  $
\[
\left\{
\begin{array}
[c]{l}%
\operatorname{div}\omega_{R}\left(  t\right)  =-\operatorname{div}(\chi
_{R}\tilde \psi\left(  t\right)  ),\\
\omega_{R}\left(  t\right)  =0\text{ on }\partial B\left(  0,R\right)  \\
\omega_{R}\left(  t\right)  =0\text{ on }\partial B\left(  0,2R\right)
\end{array}
\right.
\]
with%
\[
\left\Vert \nabla\partial_{t}^{(k)}\omega_{R}\left(  t\right)  \right\Vert
_{L^{2}\left(  B_{R,2R}\right)  }\leq c_{2}\left\Vert \partial_{t}^{(k)}%
\tilde \psi\left(  t\right)  \cdot\nabla\chi_{R}\right\Vert _{L^{2}\left(
B_{R,2R}\right)  }%
\]
with $c_{2}$ independent of $R$. Moreover, by Sobolev-Poincar\'{e}'s
inequality, there exists $c_{3}$ independent of $R$ such that
\begin{align*}
\left\Vert \partial_{t}^{(k)}\omega_{R}\left(  t\right)  \right\Vert
_{L^{2}\left(  B_{R,2R}\right)  } &  \leq c_{3}R\left\Vert \nabla\partial
_{t}^{(k)}\omega_{R}\left(  t\right)  \right\Vert _{L^{2}\left(  \Omega\left(0\right)  \right)  }\leq c_{2}c_{3}R\left\Vert \partial_{t}^{(k)}\tilde \psi\left(
t\right)  \cdot\nabla\chi_{R}\right\Vert _{L^{2}\left(  B_{R,2R}\right)  }\\
&  \leq c_{1}c_{2}c_{3}\left\Vert \partial_{t}^{(k)}\tilde \psi\left(  t\right)
\right\Vert _{L^{2}\left(  B_{R,2R}\right)  }.
\end{align*}
By slightly abusing notations, we observe that $\tilde \psi_{R}:=\chi_{R}\tilde \psi
+\omega_{R}$ verifies that for all $t\in\left[  0,T\right]  $%
\[
\operatorname{div}\tilde \psi_{R}\left(  t\right)  =0,\text{ }\operatorname*{Supp}%
\tilde \psi_{R}\left(  t\right)  \subset B\left(  0,2R\right)  \cap\Omega\left(
0\right)  .
\]
Observe that%
\begin{align}
&  \left\Vert \nabla\tilde \psi-\nabla\tilde \psi_{R}\right\Vert _{L^{2}(0,T;L^{2}%
(\Omega\left(0\right)  ))}\nonumber\\
&  \lesssim\left\Vert (1-\chi_{R})\nabla\tilde \psi\right\Vert _{L^{2}(0,T;L^{2}%
(\Omega\left(0\right)  ))}+\frac{1}{R}\left\Vert \tilde \psi\right\Vert
_{L^{2}(0,T;L^{2}(\Omega\left(0\right)  ))}+\left\Vert \nabla\omega
_{R}\right\Vert _{L^{2}(0,T;L^{2}(\Omega\left(0\right)  ))}.\label{grad}%
\end{align}
Moreover,
\begin{align}
\left\Vert \partial_{t}\tilde \psi-\partial_{t}\tilde \psi_{R}\right\Vert _{L^{2}%
(0,T;L^{2}(\Omega\left(0\right)  ))} &  \lesssim\left\Vert (1-\chi
_{R})\partial_{t}\tilde \psi\right\Vert _{L^{2}(0,T;L^{2}(\Omega\left(0\right)
))}+\left\Vert \partial_{t}\omega_{R}\right\Vert _{L^{2}(0,T;L^{2}%
(\Omega\left(0\right)  ))}\nonumber\\
&  \lesssim\left\Vert (1-\chi_{R})\partial_{t}\tilde \psi\right\Vert _{L^{2}%
(0,T;L^{2}(\Omega\left(0\right)  ))}+\left\Vert \partial_{t}\tilde \psi\right\Vert
_{L^{2}(0,T;L^{2}(B_{R,2R}))}.\label{dt}%
\end{align}
Observe that for all $t\in\left[  0,T\right]  $%
\begin{equation}
\left\Vert \tilde \psi\left(  t\right)  -\tilde \psi_{R}\left(  t\right)  \right\Vert
_{L^{2}(\Omega(0))}\lesssim\left\Vert \left(  1-\chi_{R}\right)  \tilde \psi\left(
t\right)  \right\Vert _{L^{2}(\Omega(0))}+\left\Vert \tilde \psi\left(  t\right)
\right\Vert _{L^{2}(B_{R,2R})}.\label{norm_L2}%
\end{equation}

Combing $\left(  \text{\ref{grad}}\right)  $, $\left(  \text{\ref{dt}}\right)
$ and $\left(  \text{\ref{norm_L2}}\right)  $ we obtain the conclusion.

\begin{corollary} \label{cor_test_functions_dil}
For all $\psi \in L^2(0,T ; L^2_{\mathrm{dil}}(\Omega))$ with $\pa_t \psi, \na \psi \in L^2(0,T; L^2(\Omega))$, for all $\eps > 0$, there exists 
$$\psi^\eps \in  L^2(0,T ; L^2_{\mathrm{dil}}(\Omega)), \quad \psi^\eps \in W^{1,\infty}(0,T ; H^s(\Omega)) \: \forall s \ge 0. $$
such that 
\begin{equation} 
\sup_{t\in\left[  0,T\right]  }\left\Vert  \psi(t)- \psi_{\varepsilon
}(t)\right\Vert _{L^{2}\left(  \Omega(t)\right)  }+\left\Vert  \psi
-  \psi_{\varepsilon}\right\Vert _{H^{1}(Q([0,T]))}\leq\varepsilon.
\end{equation}
\end{corollary}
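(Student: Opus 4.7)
The plan is to reduce the statement to Proposition \ref{Approx_test_function} via the orthogonal decomposition $L^2_{\mathrm{dil}}(\Omega(t)) = L^2_\sigma(\Omega(t)) \oplus \mathbb{G}(\Omega(t))$ introduced in Section \ref{sec_functional_spaces} together with the fact that $\mathbb{G}(\Omega(t))$ has fixed finite dimension $4N$. First I would decompose
$$\psi(t) = \tilde \psi(t) + \sum_{i=1}^{4N} \alpha_i(t) \nabla q_i(t),$$
where $(\nabla q_i(t))_{i \in \overline{1,4N}}$ is the $L^2$-orthonormal basis of $\mathbb{G}(\Omega(t))$ from Section \ref{sec_functional_spaces}, $\tilde \psi(t) \in L^2_\sigma(\Omega(t))$ is the Leray part, and $\alpha_i(t) = \langle \psi(t), \nabla q_i(t)\rangle_{L^2(\Omega(t))}$. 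Using the uniform bounds \eqref{q1}--\eqref{q1k}, the moving-domain differentiation formula \eqref{time_deriv_moving} and the fact that $\psi$ has a well-defined trace in $L^2(0,T; L^2(\partial \Omega))$ (because $\nabla \psi \in L^2(0,T; L^2)$), one checks that each $\alpha_i \in H^1(0,T)$ and that $\tilde \psi$ inherits the regularity of $\psi$: $\tilde \psi \in L^2(0,T; L^2_\sigma(\Omega))$ with $\partial_t \tilde \psi, \nabla \tilde \psi \in L^2(0,T; L^2(\Omega))$, $\operatorname{div} \tilde \psi(t) = 0$ and $\tilde \psi(t) \cdot \mathfrak{n}\vert_{\partial \Omega(t)} = 0$.

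I would then approximate the two pieces separately. Given $\varepsilon > 0$, Proposition \ref{Approx_test_function} applied to $\tilde \psi$ produces some $\tilde \psi_\varepsilon$ that is Lipschitz in $t$, smooth and compactly supported in $x$, divergence-free, tangent to $\partial \Omega(t)$, and satisfies \eqref{approx_tilde_psi}. By density of $C^\infty([0,T])$ in $H^1(0,T)$ (and the embedding $H^1 \hookrightarrow L^\infty$ on a bounded interval), each $\alpha_i$ can be approximated by some $\alpha_i^\varepsilon \in C^\infty([0,T])$ with $\|\alpha_i - \alpha_i^\varepsilon\|_{H^1(0,T)}$ arbitrarily small. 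I set
$$\psi_\varepsilon := \tilde \psi_\varepsilon + \sum_{i=1}^{4N} \alpha_i^\varepsilon \nabla q_i.$$
By construction $\psi_\varepsilon(t) \in L^2_{\mathrm{dil}}(\Omega(t))$ for every $t$, and the required approximation estimate follows from \eqref{approx_tilde_psi} combined with the elementary bound
$$\sup_t \|(\alpha_i - \alpha_i^\varepsilon)(t) \nabla q_i(t)\|_{L^2(\Omega(t))} + \|(\alpha_i - \alpha_i^\varepsilon) \nabla q_i\|_{H^1(Q([0,T]))} \le C \|\alpha_i - \alpha_i^\varepsilon\|_{H^1(0,T)},$$
on choosing $\|\alpha_i - \alpha_i^\varepsilon\|_{H^1}$ small enough in terms of $\varepsilon$.

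The point that deserves care is the verification that $\psi_\varepsilon \in W^{1,\infty}(0,T; H^s(\Omega))$ for every $s \ge 0$. For the smooth compactly supported piece $\tilde \psi_\varepsilon$ this is immediate. For the harmonic piece $\alpha_i^\varepsilon \nabla q_i$, a Leibniz expansion in $t$ reduces matters to establishing
$$\nabla q_i \in L^\infty(0,T; H^s(\Omega)), \qquad \partial_t \nabla q_i \in L^\infty(0,T; H^s(\Omega)), \quad \forall s \ge 0.$$
The first bound is standard elliptic regularity for \eqref{eq_qi}--\eqref{eq_qik}, whose boundary data live on smooth spheres with uniformly controlled radii and separations. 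The second is a shape-differentiation statement (see for instance \cite[Chapter 5]{Henrot}) that uses only the Lipschitz character of $X, R$ in $t$, which is part of the weak-solution definition. This higher-Sobolev control of $\partial_t \nabla q_i$ is the main technical hurdle; the rest of the proof is routine bookkeeping of the two small parameters $\varepsilon$ and $\|\alpha_i - \alpha_i^\varepsilon\|_{H^1(0,T)}$.
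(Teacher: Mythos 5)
Your proposal is correct and follows essentially the same route as the paper: decompose $\psi$ along $L^2_{\mathrm{dil}}(\Omega(t)) = L^2_\sigma(\Omega(t)) \oplus \mathbb{G}(\Omega(t))$, apply Proposition \ref{Approx_test_function} to the tangential divergence-free part, and regularize in time the $4N$ scalar coefficients of the $\nabla q_i$ (the paper uses a time mollification, which is the same as your smooth approximation in $H^1(0,T)$). Your extra verifications — that the coefficients lie in $H^1(0,T)$ and that $\nabla q_i, \partial_t \nabla q_i$ enjoy the spatial regularity needed for the $W^{1,\infty}(0,T;H^s)$ claim — are details the paper leaves implicit, and they are sound.
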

{\em Proof.} We can decompose 
$$ \psi(t,x)  = \tilde \psi(t,x) + \sum_{i=1}^{4N} \Psi_i(t)  \na q_i(t,x)  $$
where $\tilde \psi$ is as in the previous proposition. We then take 
$$ \psi^\eps(t,x)  = \tilde \psi^\eps(t,x) + \sum_{i=1}^{4N} \Psi_i^\eps(t)  \na q_i(t,x)  $$
where $\tilde \psi^\eps$ is given by the previous proposition, while $\Psi_i^\eps$ is a mollification in time of $\Psi_i$. 

{\small
\bibliographystyle{siam}
\bibliography{weak_solutions}
}

\end{document}